 \newtheorem{theorem}{Theorem}[section]
 \newtheorem{corollary}[theorem]{Corollary}
 \newtheorem{lemma}[theorem]{Lemma}
 \newtheorem{proposition}[theorem]{Proposition}
 \theoremstyle{definition}
 \newtheorem{definition}[theorem]{Definition}
 \theoremstyle{remark}
 \newtheorem{remark}[theorem]{Remark}
 \newtheorem*{example}{Example}
 \numberwithin{equation}{section}
\newcommand{\1}{{1_\A}}
\newcommand{\A}{\mathsf{A}}
\newcommand{\ad}{{\rm ad}}
\newcommand{\alg}{\mathsf{A}}
\newcommand{\Alg}{\mathcal{A}}
\newcommand{\blg}{\mathsf{B}}
\newcommand{\B}{\mathcal{B}}
\newcommand{\C}{\mathbb{C}}
\newcommand{\bc}{\mathbb{C}}
\newcommand{\D}{\mathcal{D}}
\newcommand{\e}{\varepsilon}
\newcommand{\F}{\mathcal{F}}
\newcommand{\HA}{\mathcal{H}}
\newcommand{\hil}{\mathsf{h}}
\newcommand{\id}{\mbox{id}}
\newcommand{\m}{\cdot}
\newcommand{\N}{\mathbb{N}}
\newcommand{\pol}{{\rm Pol}\,}
\newcommand{\R}{\mathbb{R}}
\newcommand{\Pol}{\textup{Pol}}
\newcommand{\Irr}{\textup{Irr}}
\newcommand{\QG}{\mathbb{G}}
\newcommand{\QH}{\mathbb{H }}
\newcommand{\Ldag}{\mathcal{L}^{\dagger}}
\newcommand{\RepG}{\tu{Rep}_{\QG}}
\newcommand{\IrrG}{\tu{Irr}_{\QG}}
\newcommand{\PolG}{\tu{Pol}(\QG)}
\newcommand{\tu}{\textup}
\newcommand{\ot}{\otimes}
\newcommand{\Com}{\Delta}
\newcommand{\la}{\langle}
\newcommand{\ra}{\rangle}
\begin{document}

\title[L\'evy Processes on Quantum Permutation Groups]
 {L\'evy Processes on Quantum Permutation Groups}

\author{Uwe Franz}

\address{%
D\'epartement de math\'ematiques de Besan\c{c}on \\
Universit\'e de Franche-Comt\'e \\
16, route de Gray \\
25 030 Besan\c{c}on cedex, France}

\email{uwe.franz@univ-fcomte.fr}

\thanks{UF was supported by the ANR Project OSQPI (ANR-11-BS01-0008), by the Alfried Krupp Wissenschaftskolleg in Greifswald, and by MAEDI/MENESR and DAAD through the PROCOPE programme.}

\author{Anna Kula}
\address{%
Instytut Matematyczny \\
Uniwersytet Wroc{\l}awski, \\
pl. Grunwaldzki 2/4,
50-384 Wroc{\l}aw, Poland}
\email{Anna.Kula@math.uni.wroc.pl}

\thanks{\noindent AK was supported by the FUGA grant 2012/04/S/ST1/00102 of the Polish National Science Center.}

\author{Adam Skalski}
\address{%
Institute of Mathematics of the Polish Academy of Sciences \\
ul.~\'Sniadeckich 8, 00--656 Warszawa, Poland}

\thanks{\noindent AS is partially supported by the HARMONIA NCN grant
2012/06/M/ST1/00169}
\email{a.skalski@impan.pl}
\subjclass{Primary 46L65; Secondary 17B37, 43A05}

\keywords{Compact quantum group, free permutation group, (quantum) L\'{e}vy process}

\begin{abstract}
We describe basic motivations behind quantum or noncommutative probability, introduce quantum L\'evy processes on compact quantum groups, and discuss several aspects of  the study of the latter in the example of quantum permutation groups. The first half of this paper is a survey on quantum probability, compact quantum groups, and L\'evy processes on compact quantum groups. In the second half the theory is applied to quantum permutations groups. Explicit examples are constructed and certain classes of such L\'evy processes are classified.
\end{abstract}

\maketitle
\tableofcontents

\section{Introduction}
\label{sec-intro}

Classical probability, which concentrates on properties of random variables
with values in $\R$, $\C$ or $\R^n$, can be generalized to more abstract
algebraic structures  in -- at least -- two ways. On one hand, we can consider
random variables with values in ``non-classical'' structures, such as
infinite-dimensional vector spaces, groups (abelian or not),  Banach algebras,
etc. On the other hand, we can consider the usual distributions on $\R$
or $\C$  but replace the random variables by elements of some algebra, as it
is done for example in random matrix theory.

Several different motivations for studying such constructions exist. First of all, some specific physical systems require the use of the random as well as the algebraic descriptions to characterize them. As an example, the behavior of a sphere with fixed centre, kept in a liquid {\it with random flow}, can be analyzed in terms of probability distributions on $SO(3)$, the group of three dimensional rotations. Also, if we consider a system in which a given set of data is transformed sequentially by some machines, ruled by linear transformations, but committing errors from time to time, then the final result corresponds to the multiplication of an appropriate number of random matrices. Secondly, the probabilistic approach can also be helpful in studying deterministic problems. This is, for instance, the case with the method of constructing Banach spaces with specific properties as limits of randomly generated finite dimensional spaces
(see \cite{mankiewicz+tomczak03}). More recently, this idea was used to provide
counterexamples for some matricial inequalities related to entropy (see \cite{hayden+winter08}), solving a problem that has for many years resisted deterministic techniques. Last but not least, consideration of standard problems in the wider context yields a better understanding of the advantages and limitations of the classical theory.

Generalizations of the results which are valid for $\R$-valued random variables usually involve some technical problems. The basic tools, such as Fourier transform, need to be reformulated and often lose properties which make them easily applicable. It happens that some assumptions, trivially satisfied in classical case, have to be added to generalizations of well-known theorems. However, the methods of functional analysis remain useful in many cases. In fact, only in this context, all their power and beauty can appear. Much more information on the theme of probability on algebraic structures  can be found in relatively accessible books \cite{grenander63} and \cite{heyer04} as well as in a complete monograph \cite{heyer77}.

In this survey we combine both generalisations, i.e., we will consider probability laws on noncommutative algebras equiped with an additional structure that plays the r\^ole of a group multiplication. This allows us to define ``quantum L\'evy processes'', i.e., quantum stochastic processes with independent and stationary increments.

L\'evy processes on groups are group-valued stochastic processes with stationary and independent increments. They are also characterized as time- and space-homogeneous Markov processes, where time- and space-homogeneity means that the transition probabilities are invariant under time translation and space translation, with the latter defined by left multiplication by a group element (see, e.g.,  \cite{applebaum14,cecherini-silberstein+al08,diaconis88,heyer77,saloff-coste04}).

In the generalisation from classical to quantum (or noncommutative) probability, stochastic processes with values in a classical space are replaced by families of $*$-homomorphisms from a ``quantum space''  into a noncommutative probability space. Both the ``quantum space'' and the noncommutative probability space are certain $*$-algebras. For such a family one can define a notion of independence, and if the ``quantum space'' is actually a ``quantum group'' then there exists also a natural notion of increment. This leads to L\'evy processes on quantum groups. 

It was observed, cf.\ \cite{schurmann93}, that any L\'evy process corresponds to a functional on the ($*$-algebraic version of the) quantum group, called the generating functional,  and -- through a GNS-type construction -- to a triple consisting of a representation of the quantum group algebra, a cocycle and the functional itself. This allows to study probabilistic problems,  e.g.\ the classification of the L\'evy processes or the description of their symmetries, via objects of a more algebraic nature.

In this paper, we shall focus on processes on (``with values in'') ``free'' or ``quantum'' permutation groups, and more generally, quantum symmetry groups. Quantum permutation group is a noncommutative analogue of the algebra of functions on the group of permutations of $n$ elements. The latter can be seen as the universal algebra generated by $n^2$ commuting orthogonal projections that -- presented in a square matrix -- sum up to 1 in each row and each column. The quantum version is obtained by the liberation precedure, which bases on omitting the commutativity assumption in the relations generating the universal algebra. L\'evy processes on this quantum group are the central issue in this paper. 

The paper is organized as follows. Sections \ref{cqg-chap}-\ref{sec-markov} provide a short introduction to the theory of compact quantum groups and L\'evy processes on them. Section \ref{sec-q-perm} gives an introduction to Wang's quantum permutations groups. We describe their representation theory (i.e., the \emph{corepresentations} of the Hopf algebra ${\rm Pol}(S_n^+)$) and also introduce the more general notion of quantum symmetry groups.

Starting from Section \ref{sec-class}, we present different types of problems that appear in the theory of L\'evy processes on quantum groups and, as far as possible, discuss their solutions in the special case of $S_n^+$. In Section \ref{sec-class} we show that L\'evy processes on $S_n^+$ are completely characterised by their cocycle, and that these cocycles are determined by a family of $n$ vectors satisfying certain relations.

In Section \ref{sec-cohomology}, we describe all cocycles associated to certain representations of ${\rm Pol}(S_n^+)$, e.g., those coming from a classical permutation or from a Fourier-Hadamard matrix. In Section \ref{sec-sym-ad-inv}, we characterise the generating functionals of L\'evy processes on $S_n^+$ with symmetric (i.e., self-adjoint) Markov semigroup). We also give a classification of ``central'' L\'evy processes on $S_n^+$, i.e., those whose marginal distributions are invariant under the adjoint action.

Finally, in Section \ref{classicalstoch} we give an explicit (but informal) description of L\'evy processes on $S_n^+$ that can be constructed from a classical permutation. We also describe their Sch{\"u}rmann triples.

The survey part of this article (roughly corresponding to the content of Section \ref{cqg-chap} - \ref{sec-schurmann}) has appeared (in Polish) in a survey on probability on topological quantum groups written by the third named author (\cite{WM}). 

\section{Compact quantum groups}
\label{cqg-chap}
\subsection{Noncommutative (quantum) mathematics}
\label{noncomm_math_sec}

Although the notion of \emph{noncommutative mathematics} has no precise meaning, it has become a popular and useful name since 1994 (when the book ``Noncommutative geometry'' by Alain Connes, \cite{connes94} appeared). It does not denote any specific area of mathematics, but rather represents a research program, originally inspired  by quantum mechanics. The starting point is the following observation: the properties of a given space are often reflected by (and thus can be observed through) families of $\C$-valued functions on this space.

The choice of the family depends on the properties in question. For instance, if the space $X$ is equipped with a $\sigma$-algebra, then the natural candidate to study is the family of functions measurable with respect to the $\sigma$-algebra. In our case, $X$ will be endowed with a topological structure and the related family of functions will be $C(X)$, the space of all functions from $X$ to $\C$, continuous with respect to the given topology of $X$. Moreover, we shall assume that $X$ is compact, in which case $C(X)$ becomes  a normed space with the natural norm $\|f\|=\sup_{x\in X} |f(x)|$.

A crucial feature of passing from the set $X$ to the family $C(X)$ is that the
latter has the additional structure of an \emph{involutive unital Banach algebra} with respect to the poinwise
multiplication of functions, the involution $f^*(x)=\overline{f(x)}$ for $f\in
C(X)$ and $x\in X$, and the constant function $f(x)=1$ for all $x\in X$ as the
unit. Being an involutive Banach algebra means that $C(X)$ is complete with
respect to the norm $\|.\|$, that the multiplication is continuous in both
variables: $\|fg\| \leq \|f|\|g\|$, for $f,g\in C(X)$, and that the involution
is isometric: $\|f^*\|=\|f\|$ for $ f\in C(X)$.

Moreover, $C(X)$ satisfies an important condition, which connects its algebraic (the multiplication and the involution)
and analytic (the norm) aspects:
$$ \|f^* f \|=\|f\|^2, \quad f\in C(X). $$

We shall specify the class of algebras with this property in the following
definition. The motivation for considering this class is explained by next theorem.
\begin{definition}
 A (unital) involutive Banach algebra $\A$  is called a \emph{(unital) $C^*$-algebra  } if
$$ \|a^* a \|=\|a\|^2, \quad a\in \A. $$
\end{definition}

\begin{theorem}[Gelfand-Najmark, 1943]
Every commutative unital $C^*$-algebra $\mathsf{A}$ is isometrically isomorphic to the
algebra $C(X_\A)$ of continuous complex function on  some compact {\it (topological)} space $X_\A$. Moreover, if $Y$ is a compact {\it (topological)} space, then $\A$ and $C(Y)$ are isometrically isomorphic if and only if $X_\A$ and $Y$ are homeomorphic.
\end{theorem}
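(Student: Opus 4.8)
The plan is to realize the space $X_\A$ as the \emph{Gelfand spectrum} of $\A$: the set of all nonzero algebra homomorphisms (characters) $\chi \colon \A \to \C$, equipped with the weak-$*$ topology inherited from the dual $\A^*$. The strategy is then to recover $\A$ as the algebra of continuous functions on this spectrum through the \emph{Gelfand transform}, and to read off uniqueness from the fact that the spectrum of $C(Y)$ is $Y$ itself.

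First I would check that $X_\A$ is a compact Hausdorff space. Every character is automatically contractive, since $\chi(a)$ lies in the spectrum of $a$ and the spectrum is norm-bounded, so $|\chi(a)| \le \|a\|$; being also unital, $X_\A$ lies inside the closed unit ball of $\A^*$, which is weak-$*$ compact by Banach--Alaoglu. A routine verification that a weak-$*$ limit of characters remains multiplicative and unital shows $X_\A$ is weak-$*$ closed, hence compact, and it is Hausdorff because the weak-$*$ topology separates points. Next I would define $\Gamma \colon \A \to C(X_\A)$ by $\Gamma(a)(\chi) = \chi(a)$; by construction each $\Gamma(a)$ is weak-$*$ continuous, and $\Gamma$ is plainly a unital algebra homomorphism.

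The two facts that upgrade $\Gamma$ to an isometric $*$-isomorphism both rest on the $C^*$-identity. To see that $\Gamma$ is a $*$-homomorphism, that is $\Gamma(a^*) = \overline{\Gamma(a)}$, I would first establish that self-adjoint elements have real spectrum, forcing $\chi$ to map them into $\R$; splitting $a$ into real and imaginary parts then yields $\chi(a^*) = \overline{\chi(a)}$. For the isometry I would use that in a commutative unital algebra the range of $\Gamma(a)$ equals the spectrum $\sigma(a)$, so that $\|\Gamma(a)\|_\infty$ is the spectral radius $r(a)$; applying the $C^*$-identity to the self-adjoint element $a^*a$ gives $\|a^*a\| = r(a^*a)$ (via $\|b^{2^n}\| = \|b\|^{2^n}$ for self-adjoint $b$ together with the spectral radius formula), whence $\|\Gamma(a)\|_\infty^2 = \|\Gamma(a^*a)\|_\infty = r(a^*a) = \|a^*a\| = \|a\|^2$. \textbf{This spectral-radius-equals-norm step is the main obstacle}, being the one place where the analytic $C^*$-condition is genuinely indispensable.

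Finally, surjectivity follows from Stone--Weierstrass: the image $\Gamma(\A)$ is a unital subalgebra of $C(X_\A)$ that separates points (distinct characters differ on some element) and is closed under complex conjugation (by the $*$-property just proved), hence dense; since $\Gamma$ is isometric its image is complete and therefore closed, so $\Gamma(\A) = C(X_\A)$. For the uniqueness clause I would invoke the standard identification of the spectrum of $C(Y)$ with $Y$ (every character of $C(Y)$ is evaluation at a point): then an isometric isomorphism $\A \cong C(Y)$ forces $Y \cong X_{C(Y)} \cong X_\A$, while conversely a homeomorphism $X_\A \cong Y$ induces an isometric $*$-isomorphism $C(X_\A) \cong C(Y)$, giving the asserted equivalence.
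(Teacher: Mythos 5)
Your proof is correct, and it is exactly the route the paper has in mind: the paper states this classical theorem without proof, remarking only that $X_\A$ is the space of characters of $\A$ and that the isomorphism is the Gelfand transform, which is precisely what you construct and verify. Your fleshed-out argument (Banach--Alaoglu compactness of the character space, isometry via the spectral radius and the $C^*$-identity, surjectivity via Stone--Weierstrass, and uniqueness via identifying the spectrum of $C(Y)$ with $Y$) is the standard complete proof of this statement.
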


The space $X_\A$ mentioned in the Theorem can be explicitly described as the space of characters of $\A$, and the isomorphism between $\A$ and $C(X_\A)$ is given by the so-called \emph{Gelfand transform}, which can be seen as a generalization of the Fourier transform.

The Gelfand-Najmark Theorem states that there is a correspondence between objects in the category of compact topological spaces and commutative unital $C^*$-algebras. We still however need a closer look at the relation between morphisms of these categories.

If $X$ and $Y$ are compact spaces and if $T:X\to Y$ is continuous, then $T$
induces a map $j_T: C(Y)\to C(X)$ by the formula
$$j_T(f)=f\circ T, \quad f\in C(Y). $$
Notice that the arrows are ``reversed'':
\begin{eqnarray*}
 &X\stackrel{T}{\longrightarrow} Y \\
 & C(X) \stackrel{j_T}{\longleftarrow} C(Y)
\end{eqnarray*}
The map $j_T$ is a $*$-homomorphism which preserves the unit. Moreover, it turns out that each such map $\phi: C(Y)\to C(X)$ comes from a unique continuous transformation from $X$ to $Y$. We see that the study of the usual (``classical'') topology on compact sets corresponds to the theory of commutative unital $C^*$-algebras. So what can we call the ``noncommutative'' (more precisely, ``not necessarily commutative'') topology? The answer should be clear: ``noncommutative'' topology is the theory of ``not necessarily commutative'' unital $C^*$-algebras.

Considering generalizations of this kind is not only a purely mathematical habit. With the birth of quantum mechanics in the 1920s, it became clear that in the micro-world models functions (observables) describing physical quantities should be replaced by operators on a Hilbert space.
This makes a fundamental difference. First of all, even though such operators are still called \emph{observables} and we can still work with them (running calculations, doing experiments), there is no such thing as a phase space on which the observables are defined (the space $X$ disappears). What is more, we can still multiply operators acting on a Hilbert space, but this operation is no longer commutative.

The relation between the theory of $C^*$-algebras and the dicussion above is
explained by another Gelfand-Najmark Theorem: each $C^*$-algebra can be
realized as a closed $*$-subalgebra of $B({\sf h})$, the algebra of all
bounded operators on some Hilbert space ${\sf h}$. For example, the
commutative $C^*$-algebra of continuous functions on the interval $[0,1]$ can be identified with the algebra of the operators of multiplication by continuous functions acting on the Hilbert space $L^2([0,1])$. The (unital) $*$-homomorphisms from a $C^*$-algebra $\A$
to $B({\sf h})$ are called \emph{representations}. It is obvious that the possibility to realize each $C^*$-algebra as a sub-algebra of
$B({\sf h})$ has a strong influence on the methods used in developing ``noncommutative'' or ``quantum'' topology.

If we endow the $C^*$-algebra in question with a supplementary structure (a dense subalgebra of ``$C^\infty$-functions'', for instance), we can consider in a similar way noncommutative geometry or noncommutative measure theory. In the next section we shall show how to define the noncommutative analogue of group theory and in particular the notion of quantum (semi)groups -- for obvious reasons, we do not use here the term ``noncommutative groups'', reserved for classical, non-abelian groups.

\subsection{The group structure in the noncommutative setting}
\label{sec-cqg}

Before we give the definition of a compact quantum group, we need to understand
how classical groups can be described in the $C^*$-language. We start with the case where $G$ is a semigroup with the additional assumption that it is compact and has a neutral element -- this setup is sufficient for the introduction of the convolution of measures.

Let $G$ be a topological group, i.e.\ a Hausdorff space endowed with the continuous operations $\m : G \times G \to G$ and ${}^{-1}: G \to G$ which make $(G, \m)$ a group. We shall denote by $e$ the neutral element of $G$. In fact, each group can be seen as a topological group with respect to the discrete topology. In this paper, we assume that the group $G$ is \emph{compact} (as a topological space). In fact, it would be more natural to work with general locally compact groups. However, especially when passing to the noncommutative context, the compact framework yields many simplifications, while it still allows to consider a wide class of examples, containing, in particular, all finite groups and classical compact Lie groups.

Let $G$ be a compact semigroup with a neutral element -- it is a compact space equipped with an associative and continuous multiplication $\m : G \times G \to G$ and with a distinguished element $e\in G$, being the neutral element for $\m$. As we explained in the previous section, maps between the  spaces induce maps between the function algebras that go in the opposite direction. In this way we obtain maps
$$\Delta: C(G) \to C(G\times G), \quad \e : C(G) \to \C, $$
called \emph{comultiplication} and \emph{counit} (note that $\C$ is the algebra of function on a space with only one element\footnote{A distinguished element in the set $X$ can be treated as a transformation from the 1-point set $\{\mbox{pt}\}$ to $X$. The algebra of the continuous functions on $\{\mbox{pt}\}$ with values in $\C$ is just $\C$ itself, so we obtain the transformation $\:C(G) \to C($\{\mbox{pt}\}$)\approx \C$}).
They are defined by the formulas:
\begin{equation} \label{comm_comultipliaction}
 \Delta (f)(s,t)=f(s\m t), \quad f\in C(G), \; s,t \in G,
\end{equation}
and
$$\e (f)=f(e), \quad f\in C(G).$$
The comultiplication and the counit are $*$-homomorphisms preserving the unit
(i.e.\ the constant function $f(g)=1$ for all $g\in G$).

Keeping in mind that we want to replace $C(G)$ by a $C^*$-algebra $\A$, which is not related to any concrete set $G$, we need to find an alternative description of $C(G\times G)$. Here, the following observation can help us: if the group $G$ is finite, then the $*$-algebra $C(G\times G)$ is isomorphic to the (algebraic) tensor product $C(G)\odot C(G)$ (all tensor products appearing in the paper are considered over $\C$). In case $G$ is an arbitrary compact group, we have $C(G\times G) \approx C(G)\otimes C(G)$, where $\otimes$ denotes the \emph{minimal} tensor product in the category of $C^*$-algebras (see \cite{Mur}). Intuitively, this statement says that each continuous function on $G\times G$ can be uniformly approximated  by the finite sums of products of functions from $C(G)$, which is a direct consequence of the Stone-Weierstrass theorem.

The next challenge is to express, in the language of $\Delta$ and $\e$, the associativity of the multiplication in $G$ and the unit property of the distinguished element $e$. The associativity can be written down in the following symbolic way:
$$ \m \circ (\id_G \times \m)=\m \circ (\m \times \id_G),$$
where both sides are mappings from $G\times G$ to $G$. By analysing the definition of the comultiplication, we see that the relation above corresponds to the condition:
$$ (\id_{C(G)} \otimes \Delta) \circ \Delta = (\Delta \otimes \id_{C(G)}) \circ \Delta, $$
where on both sides we have $*$-homomorphisms from $C(G)$ to $C(G \times G \times G) \approx C(G) \otimes C(G) \otimes C(G)$. The formal proof is again an easy application of the Stone-Weierstrass Theorem.

In the same manner, the defining condition of the neutral element can be rewritten in terms of the counit as
$$ (\id_{C(G)} \otimes \e) \circ \Delta = (\e \otimes \id_{C(G)}) \circ \Delta =\id_{C(G)}.$$

In this way we obtained a full description of the structure of a compact semigroup
$G$ with a neutral element, in terms of the algebra $C(G)$ and the maps
$\Delta$ and $\e$ defined on it. Motivated by the previous section we arrive at the following definition.

\begin{definition}
An \emph{algebra of continuous functions on a compact quantum semigroup} is a unital $C^*$-algebra $\A$ with a $*$-homomorphism $\Delta: \A \to \A \otimes \A$ (called \emph{comultiplication}) which preserves the unit $1_\A$ and satisfies
$$ (\id \otimes \Delta) \circ \Delta = (\Delta \otimes \id) \circ \Delta. $$
If, moreover, $\A$ is equipped with a unital $*$-homomorphism
$\e:\A \to \C$ (called \emph{counit}) such that
$$ (\id \otimes \e) \circ \Delta = (\e \otimes \id) \circ \Delta =\id,$$
then we call $\A$ the \emph{algebra of continuous functions on a compact quantum semigroup with a neutral element}.
\end{definition}

If $\A$ is an algebra of continuous functions on a compact quantum semigroup which is commutative as a $C^*$-algebra, then there exists a unique compact semigroup $G$ such that $\A$ is isomorphic to $C(G)$ (in the sense that the isomorphism preserves the respective comultiplications, and also the counits, if the latter are defined on $\A$). We will often use the notation $\A=C(\mathbb{S})$ to emphasise that $\A$ should be considered as the algebra of functions on a
compact quantum semigroup $\mathbb{S}$. This approach has its advantages, but
requires some care. If $C(\mathbb{S})$ is not abelian, the ``semigroup''
$\mathbb{S}$ is not formally defined, it is rather an abstract concept
which helps to build the analogy with the classical theory. Still we will sometimes simply say: `let $\mathbb{S}$ be a compact quantum semigroup' and work with the algebra $C(\mathbb{S})$. Finally let us also remark that sometimes it is the pair $(\A, \Com)$ which is called a compact quantum semigroup.

\medskip

Now the question is how to define an algebra of continuous functions on a compact quantum \emph{group}. This
problem turns out to be much more difficult. A good definition should -- first of all --
generalize the definition in the classical setting (i.e.\ commutative
$C^*$-algebras in the new class of objects should correspond to algebras of
continuous functions on usual groups). Furthermore, it should be precise enough to become the foundation of a non-trivial theory. And finally, it should be sufficiently flexible to provide a large class of examples.

The first idea is that the theory of compact quantum groups can be developed just by considering the operation of taking the inverse and then by repeating the same reasoning as in the beginning of this section.  The aim would be to define a mapping on the algebraic level (a \emph{coinverse} or an \emph{antipode}) which would reflect the properties of the inverse operation (treated as a continuous transformation). Such a path leads to an interesting theory of \emph{(compact) Kac algebras} (first appearing in an early form in \cite{Kac}, see also the book \cite{Enock}), which was intensively studied starting form the 1970s. However, in the mid 1980s Woronowicz provided in \cite{woronowicz87} an example of a deformation of the classical algebra of continuous functions on the group $SU(2)$ for which the coinverse in only densely defined and can not be extended to the whole algebra.

In the same paper and in later lecture notes \cite{woronowicz98},
Woronowicz suggested a definition of a compact quantum group, which was soon
accepted as the one satisfying all the conditions mentioned above. It is based
on the following observation.

\begin{lemma}
 Let $(G,\cdot)$ be a compact semigroup (possibly without neutral element) for which the \emph{cancellation law} holds, i.e.\ for any $g,g',g''\in G$ either of the relations $g\cdot g'=g\cdot g''$ and $g' \cdot g = g'' \cdot g$ implies $g'=g''$. Then $G$ is a compact group.
\end{lemma}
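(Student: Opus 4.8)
The plan is to deduce the three defining properties of a compact group in turn: that $G$ has a two-sided neutral element, that every element is invertible, and that inversion is continuous. The central tool throughout will be the existence of an idempotent in any compact (Hausdorff) semigroup, after which the cancellation law will do most of the work.

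First I would produce an idempotent. Order the family of nonempty closed subsemigroups of $G$ by inclusion; every chain has a lower bound, namely its intersection (which is a subsemigroup, is closed, and is nonempty by the finite intersection property for the compact space $G$), so by Zorn's lemma there is a minimal nonempty closed subsemigroup $S$. Fixing $s\in S$, the set $sS$ is a nonempty closed subsemigroup contained in $S$, so minimality forces $sS=S$; consequently $\{x\in S : sx=s\}$ is a nonempty closed subsemigroup of $S$, again equal to $S$. Taking $x=s$ gives $s\cdot s = s$, so $e:=s$ is idempotent. Now cancellation takes over: from $e(eg)=(ee)g=eg$ I cancel $e$ on the left to get $eg=g$, and from $(ge)e=g(ee)=ge$ I cancel $e$ on the right to get $ge=g$, for every $g\in G$. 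Thus $e$ is a two-sided neutral element, and since any idempotent is forced to be a neutral element in this way, $e$ is the unique idempotent of $G$.

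Next I would construct inverses. For a fixed $g$, the image $gG$ is compact, hence closed, and it contains every positive power of $g$ (note $g=ge\in gG$). Therefore the closed abelian subsemigroup $\overline{\{g^n : n\geq 1\}}$ lies inside $gG$; since it too is a compact semigroup it contains an idempotent, which must be $e$ by uniqueness, so $e\in gG$, i.e.\ $gh=e$ for some $h$. The symmetric argument gives $h'g=e$, and then $h=eh=(h'g)h=h'(gh)=h'e=h'$, so $g$ has a genuine two-sided inverse $g^{-1}$. Hence $G$ is algebraically a group. Finally, for continuity of inversion I would take a net $g_\alpha\to g$ and, using compactness, pass to an arbitrary convergent subnet $g_\beta^{-1}\to h$; joint continuity of multiplication then yields $gh=\lim g_\beta g_\beta^{-1}=e$, whence $h=g^{-1}$. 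As every convergent subnet of $(g_\alpha^{-1})$ has limit $g^{-1}$ and $G$ is compact, $g_\alpha^{-1}\to g^{-1}$, proving inversion continuous.

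The main obstacle is the very first step, the existence of the idempotent, as it is the only point where compactness enters in an essential, non-elementary way (through the minimal closed subsemigroup produced by Zorn's lemma). Once the idempotent and the neutral element are secured, cancellation and routine compactness arguments dispatch invertibility and the continuity of inversion.
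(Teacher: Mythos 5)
Your proof is correct. Note that the paper itself offers no proof of this lemma: it is stated as a known classical fact (this is essentially Numakura's theorem that a compact Hausdorff cancellative topological semigroup is a topological group), used only as motivation for Woronowicz's definition, so there is no argument in the paper to compare against. Your argument is the standard one and all steps check out: Zorn's lemma (in its dual form, with the intersection of a chain nonempty by the finite intersection property) yields a minimal nonempty closed subsemigroup, whose minimality forces $sS=S$ and then $\{x\in S: sx=s\}=S$, producing an idempotent; cancellation promotes the idempotent to a two-sided neutral element, and hence to the \emph{unique} idempotent; the closure of $\{g^n : n\geq 1\}$ is a compact subsemigroup of the closed set $gG$, so it contains an idempotent, necessarily $e$, giving one-sided inverses on both sides which then coincide; and the subnet argument for continuity of inversion is the standard compactness fact that a net in a compact Hausdorff space all of whose convergent subnets have the same limit converges to that limit. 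Two cosmetic remarks: the word ``abelian'' in the inverse-construction step is true but plays no role in your argument, and you should make explicit (as you implicitly use) that the closure of a subsemigroup is again a subsemigroup, which follows from joint continuity of multiplication by first multiplying on the right by elements of the subsemigroup and then on the left by elements of the closure.
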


The cancellation law can be easily translated into the languge of
algebras. Moreover, it turns out that in the Woronowicz theory -- like in the
lemma above -- the counit plays only a secondary role and often can be
defined only on the dense subset (like the coinverse). This is reflected in the following definition.

\begin{definition}[Woronowicz, 1987] \label{def_cqg}
A pair $(\A,\Delta)$, consisting of a $C^*$-algebra $\A$ and a unital $*$-homomorphism $\Delta: \A \to \A \otimes \A$, is called the \emph{algebra of continuous functions on a compact quantum group} if the comultiplication $\Delta$ is \emph{coassociative}:
 $$ (\id_A \otimes \Delta) \circ \Delta = (\Delta \otimes \id_A) \circ \Delta $$
and the \emph{quantum cancellation laws} hold:
$$ \overline{\rm Lin} \{\Delta(a)(1\otimes b): a,b\in \A \} = \overline{\rm Lin}\{\Delta(a)(b\otimes 1): a,b\in \A \} = \A \otimes \A.$$
(Here $\overline{\rm Lin}\, F$ denotes the closure of the set of all linear combinations of elements of $F$.) Again, we will usually write $\A=C(\mathbb{G})$ and call $\QG$ a compact quantum group.
\end{definition}

We shall also use the name the algebra of functions on a \emph{finite quantum group}
for the pair  $(\A,\Delta)$ which satisfies Definition \ref{def_cqg}, if $\A$
is finite-dimensional. Finite quantum groups can also be described in an axiomatic way with no use of the cancellation law, see \cite{vandaele97}.

\medskip

Before discussing benefits of the definition of Woronowicz, let us have a look on two examples.

\begin{example}
It follows from Pontriagin's theory of duality that each abelian (locally) compact group $G$ is isomorphic (in a canonical way) to its second dual $\hat{\hat{G}}$. By the dual group $\hat{G}$ we mean the set of characters on $G$, i.e. the homomorphisms from $G$ to the unit circle $\mathbb{T}$, furnished with the natural operations and topology (the pointwise multiplication of characters, the complex conjugate of a character as the inverse and the topology of uniform convergence on compact sets). Moreover, the theory establishes the duality between compact and discrete abelian groups. Namely, an abelian group $G$ is discrete if and only if $\hat{G}$ is compact. On the other hand, we already know that each commutative algebra of functions on a compact quantum group  is of the form $(C(G), \Delta)$, where $G$ is a compact group and $\Delta$ reflects the multiplication in $G$, according to formula \eqref{comm_comultipliaction}.
This suggest that a discrete group $\Gamma$ should also correspond to a
$C^*$-algebra, which leads to  a compact quantum group. In the special case where
$\Gamma$ is abelian, the resulting object should be the algebra of continuous functions on the dual group of $\Gamma$. We shall show now how to establish such a correspondence.

Let $\Gamma$ be a discrete group and let $\ell^2(\Gamma)$ denotes the Hilbert space of all square-summable functions on $\Gamma$. The space $\ell^2(\Gamma)$ is spanned by the orthonormal basis $\{\delta_g\}_{g\in \Gamma}$, where as usual $\delta_g(h)=1$ if $g=h$ and $\delta_g(h)=0$ otherwise. Then each element $g \in \Gamma$ defines the operator $\lambda_g : \ell^2(\Gamma) \to \ell^2(\Gamma)$ given by the linear extension of the formula
$$ \lambda_g(\delta_h) = \delta_{gh}, \quad h\in \Gamma.$$
Each $\lambda_g$ ($g\in \Gamma$) is a unitary operator and $\lambda_e=\id_{\ell^2(\Gamma)}$.
The closure of the $*$-algebra generated by $\{\lambda_g: g\in \Gamma\}$ in
$B(\ell^2(\Gamma))$ is denoted by $C^*_r(\Gamma)$ and called the \emph{reduced
  $C^*$-algebra}\footnote{One can also define the \emph{universal}
  $C^*$-algebra of the group, denoted by $C^*(\Gamma)$. The two algebras are
  isomorphic if and only if $\Gamma$ is amenable, cf. \cite{pedersen79}.} or
the \emph{group $C^*$-algebra} of $\Gamma$.

The mapping $\Delta$ defined by $\Delta(\lambda_g)=\lambda_g \otimes
\lambda_g$ extends (in a unique way) to a $*$-homomorphism from
$C^*_r(\Gamma)$ to $C^*_r(\Gamma)\otimes C^*_r(\Gamma)$ which preserves the
unit. The pair $(C^*_r(\Gamma), \Delta)$ is the algebra of functions on a compact quantum group. It is cocommutative, i.e.\ the comultiplication is invariant under the flip $\sigma:\A \otimes \A\to \A \otimes \A$,
$\sigma(a\otimes b)=b\otimes a$. One can show that each algebra of continuous functions on a compact quantum group which is cocommutative is essentially\footnote{Formally, this can
also be for example the universal $C^*$-algebra, see the previous footnote.} of this form. To follow the notational analogy with the classical Pontriagin duality, we sometimes write $C^*_r(\Gamma):=C(\hat{\Gamma})$, even if $\Gamma$ is not abelian.
\end{example}

\begin{example}
We shall explain here in few words the famous construction of Woronowicz from
\cite{woronowicz87}. Let $SU(2)$ denotes the classical Lie group of all
unitary complex $2\times 2$ matrices of determinant 1, equipped with the
topology induced by the natural embedding of $SU(2)$ into the topological
vector space $M_2(\C)\cong \C^4$. It is easy to check that
$$SU(2)=\left\{ \left (\begin{array}{cc} \alpha & -\bar{\gamma} \\ \gamma &\bar{\alpha} \end {array} \right):
\alpha, \gamma \in \C, |\alpha|^2+|\gamma|^2=1 \right \}.$$
This suggest that we can consider the algebra $C(SU(2))$ as the commutative
$C^*$-algebra generated by two elements $\alpha$ and $\gamma$, understood as
the functions which associate to a matrix from $SU(2)$ the values of the
coefficients in the left upper corner (in case of $\alpha$) and the left
bottom corner (for $\gamma$). These functions satisfy the relation
\begin{equation}\label{su2_determinant}
\alpha \alpha^*+\gamma \gamma^*=1.
\end{equation}

Since
$$ \left (\begin{array}{cc}
\alpha_1 & -\bar{\gamma}_1 \\ \gamma_1 &\bar{\alpha}_1
\end {array} \right) \cdot \left (\begin{array}{cc}
\alpha_2 & -\bar{\gamma}_2 \\ \gamma_2 &\bar{\alpha}_2
\end {array} \right)
=
\left (\begin{array}{cc} \alpha_1\alpha_2-\bar{\gamma}_1 \gamma_2 & \ldots \\ \gamma_1 \alpha_2+\bar{\alpha}_1 \gamma_2 & \ldots  \end {array} \right),$$
the comultiplication in $C(SU(2))$ is determined by the conditions
\begin{equation}\label{su2_comultiplication}
  \Delta(\alpha)=\alpha \otimes \alpha - \gamma^* \otimes \gamma, \quad
\Delta(\gamma)=\gamma \otimes \alpha + \alpha^* \otimes \gamma.
\end{equation}
The algebra of functions on the compact quantum group $SU_q(2)$ arises as a deformation of the algebra $C(SU(2))$ in the sense that it is a $C^*$-algebra generated by two operators satisfying a modification of the conditions \eqref{su2_determinant} and \eqref{su2_comultiplication} depending on a parameter $q\in [-1,0)\cup (0,1]$. More precisely, $C(SU_q(2))$ is the universal unital $C^*$-algebra generated by $\alpha$ and $\gamma$ subject to the following relations
\begin{eqnarray*}
 & \alpha^* \alpha+\gamma^* \gamma=1, \quad \alpha \alpha^*+q^2\gamma \gamma^*=1, \\
 & \gamma^* \gamma = \gamma \gamma^*, \quad \alpha\gamma = q \gamma \alpha, \quad  \alpha\gamma^* = q \gamma^* \alpha
\end{eqnarray*}
with the comultiplication extended uniquely to a unital homomorphism from the formulas
\begin{equation*}\label{suq2_comultiplication}
  \Delta(\alpha)=\alpha \otimes \alpha - q\gamma^* \otimes \gamma, \quad
\Delta(\gamma)=\gamma \otimes \alpha + \alpha^* \otimes \gamma.
\end{equation*}
Note that the universal $C^*$-algebra generated by a family of generators and relations is the completion of the corresponding universal $*$-algebra (say $\Alg$). This completion is taken with respect to the norm defined as the supremum over the norms of all representations $\pi:\Alg \to B(\hil)$, where $\hil$ is a Hilbert space.  I.e., for an element $a$ of the universal $*$-algebra associated to the given family of generators and relations, we set
\[
\|a\| = \sup_{(\pi,\hil)} \|\pi(a)\|_{B(\hil)}.
\]
It is not difficult to see that this supremumm is always finite for the relations given above, because the relations imply that $\alpha$ and $\gamma$ are cofficients of a unitary matrix.

It turns out that the pair $(C(SU_q(2)),\Delta)$ is a compact quantum group in
the sense of Definition \ref{def_cqg}, and (of course) if $q=1$, then we
recover the algebra of continuous functions on $SU(2)$ group.

After the Woronowicz's work appeared, it was shown that similar quantum
deformations can be constructed for every compact simple Lie group. We refer
the reader to the books \cite{klimyk+schmudgen97} and
\cite{korogodski+soibelman98} for more details on the relation of these
constructions with classical Poisson geometry and on deformations of the
universal enveloping algebras of simple Lie groups \`{a} la Drinfeld and Jimbo.
\end{example}

\section{Convolution}

\subsection{Probability measures and convolution on (classical) groups}
\label{sec-prob-cl-gr}

The notion of convolution of measures on the real line $\R$ usually appears very early and plays an important role in any course on classical probability. It exploits obviously the additive structure of $\R$ and can be generalized to ``decent'' measures defined on an arbitrary (locally compact) semigroup.

For a compact group  $G$, let us denote by $M(G)$ the set of all regular Borel measures on $G$, and by
$C(G)$ the algebra of complex continuous functions on $G$. Thanks to the Riesz Theorem, we can identify measures from $M(G)$ with continuous functionals on $C(G)$; the correspondence $\mu \mapsto \psi_\mu$ is given by the formula:
\[
\psi_\mu(f)=\int_G f(s) \; {\rm d}\mu(s), \quad f\in C(G).
\]
In particular, if $\mu$ is a probability measure, then $\psi_\mu$ is a positive functional of norm 1.
Moreover, the identification $M(G) \approx (C(G))^*$ induces the natural norm and the weak (more precisely, weak$^*$) topology on $M(G)$. We shall say that $(\mu_i)_{i\in I}$ \emph{tends weakly} to $\mu$  if for any function $f\in C(G)$ we have $\lim_{i\in I} \mu_i(f) = \mu(f)$.

\medskip

The convolution of two measures $\mu, \nu\in M(G)$ is defined, via the identification above, by the formula
\begin{equation} \label{clasical_conv}
(\mu \star \nu)(f):= \int_{G} \int_G f(s\cdot t) \;{\rm d} \mu (s)\;{\rm d} \nu(t),
\quad f\in C(G).
\end{equation}
It is easy to check (ignoring for the moment the fact thar $\R$ is not a compact group) that if $(G, \cdot)=(\R, +)$, the definition above
corresponds to the formula for the classical convolution of probability
measures on the real line. In general, the convolution of two (probability) measures from $M(G)$ is again a (probability) measure
and the associativity of the multiplication on $G$ implies that the convolution is associative, too.
So $(M(G), \star)$ is a Banach algebra.

The notion of the convolution is closely related to
stochastic independence. Denote by $\mathcal{L}(X)$ the \emph{law} or
\emph{distribution} of a random variable $X$, i.e.\ the measure induced by $X$
on its range. If two  $\R$-valued random variables $X$ and $Y$ are
independent, then the distribution of their sum is given by the formula $\mathcal{L}(X+Y) =\mathcal{L}(X) \star \mathcal{L}(Y)$.
This will be important in Section \ref{quantum_levy_sec}.

Let us observe that, in the discussion above, we did not make any use of the
operation of taking the inverse in the group -- everything we have said about the convolution until
now remains true if $G$ is a \emph{compact semigroup}.

Once the convolution defined, we can ask about the existence of invariant measures. The following classical theorem due to Haar and Weyl answers this question.
\begin{theorem}
 Let $G$ be a compact group. Then there exists a unique regular Borel probability measure $\mu_h$, which is left and right invariant:
$$ \mu_h (gS) = \mu_h (S) = \mu_h (Sg), \quad g\in G, S\in {\rm Bor} (G).$$
We shall call it the \emph{Haar measure}.
\end{theorem}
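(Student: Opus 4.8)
The plan is to pass, via the Riesz correspondence already used above, from the measure $\mu_h$ to a positive, unital, left-invariant functional on $C(G)$, and to carry out the entire argument inside the convolution semigroup $(M(G),\star)$. Recall that a probability measure $\mu$ is \emph{left-invariant} precisely when $\delta_g\star\mu=\mu$ for every $g\in G$, and \emph{right-invariant} precisely when $\mu\star\delta_g=\mu$; once such a $\mu$ is produced, regularity is not an extra issue, since the measure delivered by the Riesz representation theorem on a compact Hausdorff space is automatically regular. I would first establish the existence of a left-invariant probability measure (the analytic heart of the theorem) and then obtain right-invariance and uniqueness by a short algebraic argument in $(M(G),\star)$.

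For existence, let $P(G)\subset M(G)\approx C(G)^*$ be the set of probability measures. By the Riesz identification together with the Banach--Alaoglu theorem, $P(G)$ is a convex, weak$^*$-compact subset of $C(G)^*$. For $g\in G$ the left-translation map $T_g\colon\nu\mapsto\delta_g\star\nu$ is an affine weak$^*$-homeomorphism of $P(G)$ onto itself, and $g\mapsto T_g$ is a jointly continuous action of the compact group $G$ on $P(G)$; here one uses that for $f\in C(G)$ the function $t\mapsto\int_G f(st)\,{\rm d}\nu(s)$ again lies in $C(G)$, so that $\star$ is separately weak$^*$-continuous. A left-invariant measure is exactly a common fixed point of all the $T_g$. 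I would produce one by a fixed-point/compactness argument: since the $T_g$ do not commute, the Markov--Kakutani theorem does not apply directly, and instead one invokes Kakutani's fixed-point theorem for the equicontinuous affine action of the compact group $G$ (equivalently, one may fix any $\mu\in P(G)$ whose support generates $G$, pass to a weak$^*$-cluster point $\omega$ of the Ces\`{a}ro means $\frac1N\sum_{k=1}^N\mu^{\star k}$, and check that $\omega$ is an idempotent whose support is a closed subgroup, necessarily all of $G$). This is the step I expect to be the main obstacle: producing a measure that is \emph{exactly}, not merely approximately, invariant is a genuine fixed-point problem, and the naive idea of ``averaging over $G$'' is circular, since it presupposes the very measure we are constructing.

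The same construction applied to right translations yields a right-invariant measure $\mu_R$. To finish, I would work in the semigroup $(P(G),\star)$. A left-invariant $\mu_L$ satisfies $\rho\star\mu_L=\mu_L$ for every $\rho\in P(G)$ (integrate $\delta_g\star\mu_L=\mu_L$ against $\rho$), i.e.\ $\mu_L$ is a right-zero; symmetrically $\mu_R$ is a left-zero, so $\mu_R\star\nu=\mu_R$ for all $\nu$. Evaluating $\mu_R\star\mu_L$ two ways gives $\mu_R\star\mu_L=\mu_R$ and $\mu_R\star\mu_L=\mu_L$, hence $\mu_L=\mu_R=:\mu_h$; in particular $\mu_h$ is simultaneously left- and right-invariant. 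Uniqueness follows from the same computation: any bi-invariant measure is both a left- and a right-zero, and two such elements of $(P(G),\star)$ must coincide. Translating $\mu_h$ back through Riesz yields the desired unique bi-invariant regular Borel probability measure.
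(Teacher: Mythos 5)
Your proposal cannot be matched against a proof in the paper, because the paper does not give one: this theorem is stated as the classical Haar--Weyl result, with the reader referred to the harmonic analysis literature, and the only argument the paper supplies is the easy remark that the convolution identity $\nu\star\mu_h=\mu_h=\mu_h\star\nu$ (for probability measures $\nu$) yields the set-wise invariance upon taking $\nu=\delta_g$. Your argument fills this gap correctly, and along standard lines. The algebraic half is sound and is precisely the converse of the paper's remark: left-invariance is equivalent to $\delta_g\star\mu=\mu$ for all $g$, integrating this identity against $\rho\in P(G)$ (Fubini for the jointly continuous function $(s,t)\mapsto f(st)$) upgrades it to $\rho\star\mu_L=\mu_L$, so $\mu_L$ is a right zero of $(P(G),\star)$, symmetrically $\mu_R$ is a left zero, and evaluating $\mu_R\star\mu_L$ both ways gives $\mu_L=\mu_R$ together with uniqueness. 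The analytic half (Kakutani's fixed-point theorem applied to the affine action of $G$ on the convex weak$^*$-compact set $P(G)$) is a legitimate and classical existence proof. The one hypothesis you assert but do not verify is the equicontinuity of $\{T_g\}_{g\in G}$ in the weak$^*$ topology; it does hold, because for each $f\in C(G)$ the orbit $\{f(g\,\cdot\,):g\in G\}$ is norm-compact in $C(G)$ (uniform continuity of $f$ on the compact group), so finitely many of these functions control $\bigl|(\mu-\nu)\bigl(f(g\,\cdot\,)\bigr)\bigr|$ uniformly in $g$ on the norm-bounded set $P(G)$. This is exactly the point where compactness of $G$ enters, and spelling it out would complete the proof.

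Two caveats on your parenthetical alternative via Ces\`aro means of $\mu^{\star k}$. First, a probability measure whose support generates a dense subgroup is easy to produce when $G$ is metrizable (hence separable), but its existence is not obvious for an arbitrary compact group, so that route proves the theorem in lesser generality than the fixed-point route. Second, the step ``an idempotent of $(P(G),\star)$ is the invariant measure on its support'' is itself a small theorem: one needs that the support is a closed subsemigroup, that a compact subsemigroup of a group is a subgroup, and a maximum-principle argument applied to $h(x)=\int_G f(xt)\,{\rm d}\omega(t)$ to get invariance. Since you offer this only as an aside and your main route is complete, these caveats do not affect the correctness of the proposal.
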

The invariance condition can be rewritten in terms of the convolution as
$$    \mu_h \star \nu = \mu_h = \nu \star \mu_h \quad \mbox{for any} \quad \nu\in M(G).$$
Indeed, the last displayed formula implies the one before simply by considering the Dirac mass supported at a point $g \in G$ as the measure $\nu$.

Let us finally mention that the existence of the Haar measure on the group $G$ opens the door to the development of harmonic analysis on $G$ and to the study of its representation theory (for more details, see for example \cite{HR}).

\subsection{Convolution on compact quantum (semi-)groups and Haar state}
\label{sec-conv-haar}

From the discussion in the previous subection,
we know that positive measures on a compact group $G$ (or a semigroup $S$) correspond, via the Riesz Theorem, to continuous positive functionals on $C(G)$ (resp.\ $C(S)$), and that these functionals are of norm 1 for probability measures. Therefore, natural counterparts of probability measures in the framework of unital $C^*$-algebras
are \emph{states}, i.e.\ continuous functionals $\omega:\A\to\C$ which satisfy
$$\omega(\1)=1 \quad \mbox{and} \quad \omega(a^*a) \geq 0, \; a\in \A. $$

Let us denote by $\A^*$ the space of all continuous functionals on a $C^*$-algebra $\A$. A careful look at the formulas \eqref{clasical_conv} and \eqref{comm_comultipliaction} suggests the following generalization of the classical convolution to the quantum semigroups:
\begin{definition}
 Let $\mathbb{S}$ be a compact quantum semigroup, put $\A=C(\mathbb{S})$ and let $\mu, \nu\in \A^*$. The \emph{convolution} of $\mu$ and $\nu$, denoted by $\mu \star \nu$, is defined by
\begin{equation} \label{conv_quantum}
 \mu \star \nu = (\mu \otimes \nu) \circ \Delta.
\end{equation}
\end{definition}

The pair $(\A^*, \star)$ is a Banach algebra, and the counit $\e$ of $\A$ (if exists) is the unit of $(\A^*, \star)$, i.e. $\mu \star \e = \e \star \mu = \mu$ for all $\mu \in \A^*$. Also, it is easy to check that the convolution of two states on $\A$ is a state on $\A$.

Given the definition of the convolution of states, we can introduce the invariance condition in the quantum framework and, consequently, we may ask about the existence of an analogue of the Haar measure. The positive answer to this question, provided by Woronowicz in \cite{woronowicz98}, is a crucial observation in the theory of compact quantum groups.

\begin{theorem}
 Let $\QG$ be a compact quantum group. Then there exists a unique state $h\in C(\QG)^*$ such that
$$ h\star \omega = \omega \star h = h$$
for any state $\omega \in C(\QG)^*$. The state $h$ is called the \emph{Haar state}.
\end{theorem}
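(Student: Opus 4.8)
I would dispose of uniqueness first, since it is formal: if $h$ and $h'$ both satisfy the stated condition, then taking $\omega=h'$ in $h\star\omega=h$ gives $h\star h'=h$, while taking $\omega=h$ in $\omega\star h'=h'$ gives $h\star h'=h'$, whence $h=h'$. Conceptually I would record the underlying structure: since $\mu\star\nu$ is a state whenever $\mu,\nu$ are (noted in the excerpt) and $(\mu\star\nu)(a)=\mu\big((\id\otimes\nu)\Delta(a)\big)$ exhibits $\star$ as separately weak$^*$ continuous, the state space $S(\A)$ of $\A=C(\QG)$ is a compact convex semigroup under convolution, and a Haar state is exactly a two-sided zero of $(S(\A),\star)$; a zero is unique whenever it exists.

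For existence my plan is to produce this zero by averaging. As $\A$ is unital, $S(\A)$ is weak$^*$ compact (Banach--Alaoglu). I would fix an arbitrary state $\phi$, form the Ces\`aro means $\phi_n=\frac1n\sum_{k=1}^n\phi^{\star k}$ (again states), and pass to a weak$^*$ cluster point $h$. Because $\|\phi\star\phi_n-\phi_n\|=\frac1n\|\phi^{\star(n+1)}-\phi\|\le\frac2n$, the limit satisfies $\phi\star h=h\star\phi=h$; and since $\phi^{\star k}\star h=h$ for all $k$ one gets $\phi_n\star h=h$ and hence, by separate continuity, $h\star h=h$. This yields an idempotent state invariant under the single state $\phi$, using only coassociativity and compactness.

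The hard part will be upgrading invariance under $\phi$ to invariance under every state, i.e.\ showing $h$ is the zero of $(S(\A),\star)$. Here I would introduce the unital completely positive maps $E=(\id\otimes h)\Delta$ and $F=(h\otimes\id)\Delta$; coassociativity together with $h\star h=h$ makes both idempotent, hence projections onto their fixed-point spaces, and full invariance amounts to $E(a)=F(a)=h(a)\1$ for all $a$ (states separate points, so it suffices to collapse these fixed-point spaces to $\C\1$). This is exactly the step that should force me to use the quantum cancellation laws: the density of $\Delta(\A)(1\otimes\A)$ and of $\Delta(\A)(\A\otimes 1)$ in $\A\otimes\A$ is what rules out nonscalar fixed points of $E$ and of $F$ respectively, the two conditions treating the two sides symmetrically. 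I expect this to be the genuine obstacle, and it is precisely where one uses that $\QG$ is a group and not merely a semigroup: in the classical model it is the translation invariance of Haar measure, and for a general compact quantum semigroup, where cancellation can fail, $(S(\A),\star)$ may have no zero at all. (Once representation theory is developed one could instead establish invariance first on the dense Hopf $*$-algebra $\Pol(\QG)$ through the orthogonality relations and then extend by continuity, but at this point in the paper the averaging argument is the natural route.)
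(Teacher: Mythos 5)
Your uniqueness argument and your Ces\`aro construction are both correct: the bound $\|\phi\star\phi_n-\phi_n\|\le 2/n$, the passage to a weak$^*$ cluster point, and the conclusions $\phi\star h=h\star\phi=h$ and $h\star h=h$ all go through via separate weak$^*$ continuity of $\star$. (For the record, the paper offers no proof of this theorem at all --- it quotes it from Woronowicz --- so the only question is whether your argument stands on its own.)

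The gap is exactly at the step you yourself flagged as the hard one, and it is not a missing technicality but a false claim. Your $h$ is built from \emph{one} fixed state $\phi$, and you assert that the cancellation laws force the idempotent $E=(\id\otimes h)\Delta$ to have only scalar fixed points. This is not true for an arbitrary cluster point of an arbitrary $\phi$: take $\A=C(G)$ for a nontrivial compact group $G$ (where the cancellation laws certainly hold) and $\phi=\delta_e$. Then $\phi^{\star k}=\delta_e$ for every $k$, so every Ces\`aro mean equals $\delta_e$ and $h=\delta_e$, which is idempotent and $\phi$-invariant; but $E=(\id\otimes\delta_e)\Delta=\id_{C(G)}$, whose fixed-point space is all of $C(G)$. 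More generally your $h$ can be the Haar state of a proper closed (quantum) subgroup, and $E$ is then the conditional expectation onto the algebra of the corresponding quotient space, with many nonscalar fixed points. Note also that ``$E$ has range $\C 1$'' is \emph{literally equivalent} to ``$\omega\star h=h$ for all states $\omega$'' (states separate points of $\A$), so as written your step 3 reduces the theorem to itself and then appeals to cancellation without any mechanism. The missing idea is that $\phi$ must vary. In Van Daele's proof of this theorem one shows --- and this is where the density of $\Delta(\A)(1\otimes\A)$ and of $\Delta(\A)(\A\otimes 1)$ is genuinely used --- that if $\phi\star h=h$ and $0\le\rho\le C\phi$ for some constant $C$, then $\rho\star h=\rho(1)h$, and symmetrically on the right. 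Then, given finitely many states $\rho_1,\dots,\rho_k$, applying your Ces\`aro step to $\phi=\tfrac1k(\rho_1+\cdots+\rho_k)$ and invoking this domination lemma shows that the weak$^*$-closed sets $K_\rho=\{h:\ \rho\star h=h\star\rho=h\}$ have the finite intersection property; weak$^*$ compactness of the state space then yields a point in $\bigcap_\rho K_\rho$, which is the Haar state. Your construction supplies the nonemptiness of each $K_\rho$; what it cannot supply is a single $\phi$ whose invariant idempotents are automatically invariant under everything.
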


As in the classical case, the existence of the Haar state is of fundamental importance. In particular it is a starting point of the quantum version of the classical Peter-Weyl theory (see \cite{HR}). To make it more precise we introduce a notion of a (finite-dimensional, unitary)  representation of a compact quantum group.

\begin{definition} \label{fdurep}
A \emph{unitary representation of a quantum group} $\QG$ (of dimension $n$) is a unitary matrix $u=(u_{ij})_{i,j=1}^n\in M_n(C(\QG))$ such that
\[ \Com(u_{ij}) = \sum_{k=1}^n u_{ik} \ot u_{kj}, \;\;\;i,j=1,\ldots,k.\]
If the set $\{u_{ij}:i,j=1,\ldots,k\}$ generates $C(\QG)$ as a $C^*$-algebra, we say that $u$ is a \emph{fundamental unitary representation} of $\QG$.
\end{definition}

We write $\RepG$ to denote the set of all finite-dimensional representations of $\QG$. If $u \in \RepG$, then each element in the linear span of $\{u_{ij}:i,j =1, \ldots,n\}$ is called a \emph{coefficient} of $u$.  The key consequence of the quantum Peter-Weyl theorem proved in \cite{woronowicz98} is that $\PolG$, the space spanned inside $C(\QG)$ by the coefficients of all representations in $\RepG$, is a dense unital $*$-subalgebra of $C(\QG)$, equipped with the natural Hopf $*$-algebra structure (with the coproduct inherited from $C(\QG)$). In particular it admits an antipode $S:\PolG \to \PolG$; if the antipode satisfies the condition $S^2 =\tu{id}_{\PolG}$ (equivalently, the Haar state is tracial), we say that $\QG$ is \emph{of Kac type}.

Two representations $u_1,u_2 \in \RepG$ of dimension $n$ are said to be \emph{unitarily equivalent} if there exists a scalar unitary matrix $V \in M_n$ such that $u_1 = V u_2 V^*$. Further given $u_1,u_2 \in \RepG$ one can construct in an obvious way their direct sum $u_1 \oplus u_2 \in \RepG$. We say that $u \in \RepG$ is \emph{irreducible} if it is not unitarily equivalent to a direct sum of two non-zero representations. The set of all equivalence classes of irreducible representations of $\QG$ will be denoted by $\IrrG$; if we choose for each $\alpha \in \IrrG$ a representative $u^{(\alpha)} \in M_{n_{\alpha}} (C(\QG))$ then the set $\{u^{(\alpha)}_{ij}:i, j =1, \ldots, n_{\alpha}, \alpha \in \IrrG\}$ is a linear basis in $\PolG$. Moreover the quantum Peter-Weyl theorem says that any representation in $\RepG$ decomposes as a finite direct sum of irreducible representations and that the Haar state is \emph{faithful} on $\PolG$ (i.e.\ if $a \in \PolG$ and $h(a^*a)=0$, then $a=0$). The algebra $\PolG$ carries in a sense all the information about $\QG$; in particular it can be described abstractly as a particular type of a Hopf $*$-algebra, and it can be always completed at least in two, potentially different ways to the algebra of the type $C(\QG)$. The first completion is simply the universal completion of $\PolG$, as discussed when the example of $SU_q(2)$ was mentioned, and the second is the GNS completion (\cite{Mur}) of $\PolG$ with respect to the Haar state. When the two completions yield isomorphic $C^*$-algebras, the quantum group $\QG$ is said to be \emph{coamenable}. Thus for example if $\Gamma$ is a discrete group, $\hat{\Gamma}$ is coamenable if and only if $\Gamma$ is amenable.

The existence of the Haar state also allows  us to define the (say left) regular representation of $\QG$ (which is in general infinite-dimensional, so one needs to modify suitably Definition \ref{fdurep}) and to generalize the Tannaka-Krein duality, which leads to the construction and further a characterization of the dual of a compact group in terms of a certain tensor-type category (in fact given by the representation category of $\QG$). Tannaka-Krein duality for compact quantum groups was established in \cite{woronowicz88}. The starting point for the development of this concept is the observation that given $U,V \in \RepG$ one can construct the tensor product $U \ot V \in \RepG$. If one tensorizes two irreducible representations, one in general gets a reducible representation (so a non-trivial direct sum of irreducibles). The description of this operation on the level of a map from $\IrrG \times \IrrG$ into the ring $\mathbb{Z}[\IrrG]$ is often called the description of \emph{fusion rules} for $\QG$; we will see an example of that later on.

Let us finally remark that sometimes the name \emph{corepresentation of the pair $(C(\QG), \Com)$} is used instead of that of a representation of $\QG$.

\subsection{Convolution semigroup of states}
\label{sec-conv-sg}

In this section we shall assume that $\mathbb{S}$ is a compact quantum semigroup with a neutral element and put $\A=C(\mathbb{S})$.

\begin{definition} \label{conv_semigroup}
 A \emph{convolution semigroup of states} on $\A$ is a family of states $(\omega_t)_{t\geq 0}$ which satisfies:
\begin{enumerate}
 \item $\omega_{s+t}=\omega_s \star \omega_t$ for $s,t\geq 0$,
 \item $\omega_0 (a) =\e(a)$ for $a\in \A$,
 \item $\omega_t(a) \stackrel{t\to 0^+}{\longrightarrow} \omega_0(a)$ for $a\in \A$.
\end{enumerate}
\end{definition}

The main tool in the study of convolution semigroups on quantum semigroups is --
like in case of semigroup of operators on Banach spaces -- the analysis of the
generator.

\begin{definition} \label{generating_functional_def}
Let $(\omega_t)_{t\geq 0}$ be a convolution semigroup of states on $\A$.
The functional $L: \A \supset \D_L \to \C$ given by
\begin{eqnarray*}
L(a) &:=& \lim_{t\to 0^+} \frac{\omega_t(a)-\e(a)}{t}, \\
a\in \D_L &:=& \left\{ a\in \A: \mbox{ there exists } \lim_{t\to 0^+} \frac{\omega_t(a)-\e(a)}{t} \right\},
\end{eqnarray*}
is called the \emph{generating functional} of $(\omega_t)_{t\geq 0}$.
\end{definition}

The classical L\'evy-Khinchin theorem states that each convolution semigroup of probability measures $(\mu_t)_{t\geq 0}$ on $\R^n$ is given by the following equation (in fact the equation describes the characteristic functions of $\mu_t$ for $t\geq 0$): for $\vec{u}\in \R^n$ and $t\geq 0$ we have
\begin{eqnarray*}
\lefteqn{\phi_t(\vec{u}) := \int_{\R^n} \exp (i\vec{u}\cdot \vec{x}) \,{\rm d}\mu_t (\vec{x})} \\
&=& \exp \left( t \big (
i\vec{b}\cdot \vec{u} - \frac12 \vec{u}\cdot A\vec{u}
+ \int_{\R^n \setminus \{0\}} \hspace{-.5cm}[\exp (i\vec{u}\cdot \vec{y})-1-i\vec{u}\cdot \vec{y} \mathbf{1}_{B_n}(\vec{y})] \,{\rm d}\nu (\vec{y})
\big )\right ),
\end{eqnarray*}
where $\vec{b}\in \R^n$, $A\in M_n(\R)$ is a symmetric, positive definite matrix, $\nu$ is a \emph{L\'evy measure} on $\R^n \setminus \{0\}$ (that is, $\int_{\R^n} (\|y\|^2\wedge 1) \,{\rm d}\nu (\vec{y})<\infty$), $B_n$ denotes the unit ball $\{\vec{x}\in \R^n : \|\vec{x}\|\leq 1\}$, and $\mathbf{1}_{B_n}$ its indicator function. Conversely, every function of the form as above gives rise to a convolution semigroup of measures. In this sense, the L\'{e}vy-Khinchin formula provides a parametrization of all such semigroups on $\R^n$.

The L\'evy-Khinchin formula was later extended by Hunt from $\R^n$ to all Lie
groups (possibly non-compact). Next, using the fact that each (locally)
compact group is an inverse limit of a sequence of Lie groups, one can obtain the
corresponding characterization for all (locally) compact groups. Hunt's
formula uses the L\'evy-Khinchin expression  rewritten in terms of the
generating functional of the semigroup $(\mu_t)_{t\geq 0}$ -- the partial derivatives appearing therein in the case $\R^n$ are replaced by the respective vector fields on the Lie group $G$. The precise formulations and proofs of the relevant theorems, as well as many interesting associated facts can be found in the book \cite{heyer77}.

\medskip

In the quantum group framework we would also like to characterize convolution semigroups of states on quantum (semi-)groups.
 The problem becomes more tractable, if the
continuity condition in Definition \ref{conv_semigroup} is strengthened.

\begin{definition} \label{regular_def}
 A convolution semigroup of states $(\omega_t)_{t\geq 0}$ is called \emph{regular} if $\omega_t \stackrel{t\to 0^+}{\longrightarrow} \omega_0=\e$ uniformly.
\end{definition}

The following result was obtained in \cite{lindsay+skalski08}. It can be seen as a noncommutative generalization of the classical \emph{Sch\"{o}nberg correspondence}, describing the relation between conditionally positive definite functions and their positive definite exponentials.
\begin{theorem} \label{regular_tw}
Each regular convolution semigroup of states is of \emph{Poisson type}, which means that there exists a bounded functional $L\in \A^*$ such that for any $t\geq 0$ we have
\begin{equation}\label{semigroup_exp}
 \omega_t = \exp_\star (tL) := \sum_{n=0}^\infty \frac{t^n}{n!} L^{\star n} .
\end{equation}
The functional $L$ satisfies $L(1_\A)=0$ and $L|_{{\rm Ker}\, \e} \geq 0$.

Conversely, for each such functional $L$ the family $(\omega_t)_{t\geq 0}$ defined by the formula \eqref{semigroup_exp} is a regular convolution semigroup of states.
\end{theorem}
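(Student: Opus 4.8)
The plan is to recognise $(\omega_t)_{t\geq 0}$ as a uniformly continuous one-parameter semigroup in the unital Banach algebra $(\A^*,\star)$, whose unit is the counit $\e$, and to run the standard \emph{bounded generator} argument, paying separate attention to how positivity of the states translates into the two stated properties of $L$.

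For the forward direction I would first upgrade regularity to norm-continuity of $t\mapsto\omega_t$ on all of $[0,\infty)$: since each $\omega_t$ is a state we have $\|\omega_t\|=\omega_t(\1)=1$, so
$$\|\omega_{t+h}-\omega_t\| = \|\omega_t\star(\omega_h-\e)\| \leq \|\omega_h-\e\| \xrightarrow[h\to 0^+]{} 0,$$
and similarly from the left. Then $r\mapsto\omega_r$ is Bochner integrable on compact intervals, so I set $V(s)=\int_0^s\omega_r\,dr\in\A^*$ and observe that $\tfrac1s V(s)\to\e$ in norm as $s\to0^+$; hence for small $s$ the element $V(s)$ is invertible in $(\A^*,\star)$ via a Neumann series about $\e$. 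The key computation, using $\omega_h\star\omega_r=\omega_{h+r}$, is
$$\tfrac1h(\omega_h-\e)\star V(s) = \tfrac1h\Big(\int_s^{s+h}\omega_r\,dr-\int_0^h\omega_r\,dr\Big) \xrightarrow[h\to 0^+]{} \omega_s-\e,$$
so $\tfrac1h(\omega_h-\e)\to(\omega_s-\e)\star V(s)^{-1}=:L$ exists in norm, exhibiting $L$ as a bounded functional (with $\D_L=\A$). A routine differentiation gives $\tfrac{d}{dt}\omega_t=L\star\omega_t=\omega_t\star L$, and uniqueness for this linear Banach-algebra ODE (Gronwall) identifies $\omega_t=\exp_\star(tL)$. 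The properties of $L$ fall out of positivity: $L(\1)=\lim_{h\to0^+}\tfrac{\omega_h(\1)-1}{h}=0$, while for $a\in\ker\e$ one has $\e(a^*a)=\overline{\e(a)}\e(a)=0$, so $L(a^*a)=\lim_{h\to0^+}\tfrac1h\omega_h(a^*a)\geq0$ because each $\omega_h$ is a state.

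For the converse, boundedness of $L$ makes $\exp_\star(tL)=\sum_n\tfrac{t^n}{n!}L^{\star n}$ norm-convergent; the semigroup law follows since $L$ commutes with itself under $\star$, $\omega_0=\e$ is immediate, and regularity is $\|\omega_t-\e\|\leq e^{t\|L\|}-1\to0$. Normalisation is easy too: $\Delta(\1)=\1\otimes\1$ gives $L^{\star n}(\1)=L(\1)^n$, vanishing for $n\geq1$. The genuine content — and the main obstacle — is positivity of each $\omega_t$, i.e.\ the actual Sch\"onberg correspondence. I would \emph{not} try to show $\e+sL$ positive (it is not, because of the cross terms $\overline{\e(a)}L(b)+\e(a)L(b^*)$ from the splitting $a=\e(a)\1+b$, $b\in\ker\e$); instead I would use the GNS-type data attached to the conditionally positive $L$. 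Conditional positivity makes
$$\langle\eta(a),\eta(b)\rangle := L(a^*b)-\overline{\e(a)}\,L(b)-\e(b)\,L(a^*)$$
a positive semidefinite form on $\A$ (it equals $L(a_0^*b_0)$ with $a_0=a-\e(a)\1$), yielding a pre-Hilbert space $D$, a bounded cocycle $\eta(a)=[a-\e(a)\1]$ and a $*$-representation $\pi$ with $\pi(a)\eta(b)=\eta(ab)-\e(b)\eta(a)$: the bounded Sch\"urmann triple of $L$. From here the cleanest route is to pass to the Markov semigroup $T_t=(\id\otimes\omega_t)\circ\Delta$ on $\A$, note $\omega_t=\e\circ T_t$, and write $T_t=\exp(tL_\#)$ for the bounded generator $L_\#=(\id\otimes L)\circ\Delta$; conditional positivity of $L$ upgrades to conditional complete positivity of $L_\#$, so the bounded Gorini--Kossakowski--Sudarshan--Lindblad (Christensen--Evans) theorem forces each $T_t$ to be unital completely positive, whence $\omega_t=\e\circ T_t$ is a state. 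Equivalently one invokes Sch\"urmann's reconstruction theorem to realise $\omega_t(a)=\langle\Omega,j_t(a)\Omega\rangle$ for a $*$-homomorphic L\'evy process $j_t$ with Fock vacuum $\Omega$, so that $\omega_t(a^*a)=\|j_t(a)\Omega\|^2\geq0$ directly.

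I expect the forward direction and the algebraic bookkeeping in the converse to be routine; the delicate point is the positivity in the converse, where conditional positivity of $L$ must be converted into positivity of the whole exponential, and where the well-definedness and boundedness of $\pi$ (equivalently, the conditional complete positivity of $L_\#$) is the step that uses more than the Banach-algebra semigroup machinery.
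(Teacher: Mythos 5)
Your proof is correct, but its main route differs from the one the paper relies on, so a comparison is in order. The paper does not actually prove Theorem \ref{regular_tw} in the text: it quotes the result from \cite{lindsay+skalski08} and indicates that the hard (converse) direction rests on the construction sketched in the proof of Theorem \ref{ReconstructionThm - Analytic} --- from the bounded Sch\"urmann triple $(\rho,\eta,L)$ one solves a quantum stochastic differential equation on Fock space, producing unital $*$-homomorphisms $k_t$ with $\omega_\Omega\circ k_t=\exp_\star(tL)$, so that positivity of each $\omega_t$ is automatic, being a vacuum expectation of a $*$-homomorphism. That is exactly your secondary (``equivalently'') route. Your primary route --- passing to the Markov semigroup $T_t=(\id\otimes\omega_t)\circ\Delta=e^{tL_\#}$ with $L_\#=(\id\otimes L)\circ\Delta$, showing $L_\#$ is conditionally completely positive, and invoking the Christensen--Evans theorem to conclude each $T_t$ is unital completely positive, hence $\omega_t=\e\circ T_t$ a state --- is genuinely different: it avoids quantum stochastic calculus entirely and stays within $C^*$-algebraic semigroup theory, at the price of replacing Sch\"urmann-type reconstruction by another deep citable theorem; the paper's route buys more, namely a Fock-space realization of the process itself, which the survey needs anyway. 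To make your route airtight you should write out the CCP verification: for $\sum_j a_jb_j=0$ set $c=\sum_j\Delta(a_j)(b_j\otimes\1)$, note $(\id\otimes\e)(c)=\sum_j a_jb_j=0$, and expand $(\id\otimes L)(c^*c)$ using your form $\langle\eta(\cdot),\eta(\cdot)\rangle$; the cross terms vanish precisely because $(\id\otimes\e)(c)=0$, the remaining term is positive by a Gram-matrix argument, and a norm-density argument handles the fact that $\Delta(a_j)$ lies in the completed tensor product (slice maps are bounded there). Likewise, well-definedness and boundedness of $\pi$ is the GNS construction for the bounded positive functional $L|_{\ker\e}$ on the $C^*$-ideal $\ker\e$. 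Your forward direction is the standard bounded-generator argument in the Banach algebra $(\A^*,\star)$ and is fine as written, including the derivation of $L(\1)=0$ and $L|_{\ker\e}\geq 0$ from positivity of the states.
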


The proof of this theorem is based on earlier results by Sch\"urmann, cf.\ \cite{schurmann93}. We shall comment more on it in Section \ref{quantum_levy_sec}, and especially in Section \ref{sec-schurmann}. The theorem shows also that many interesting convolution semigroups are not regular, e.g., the heat semigroup (i.e.\ the convolution semigroup of a Brownian motion), since their generating functionals are unbounded.

The theorem is also valid for locally compact quantum groups. This is particularly important in the case of discrete quantum groups (these are the quantum groups ``dual'' to compact quantum groups, see \cite{lindsay+skalskiMZ} for the exact definition) as explains the main result from \cite{lindsay+skalskiMZ} which  we cite below.

\begin{theorem}
Each convolution semigroup of states on discrete quantum group is regular and thus of Poisson type.
\end{theorem}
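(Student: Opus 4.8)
The plan is to prove regularity directly from the structure of the $C^*$-algebra of a discrete quantum group, after which the ``Poisson type'' conclusion is immediate from Theorem~\ref{regular_tw}. Write $\hat{\QG}$ for the discrete dual of a compact quantum group $\QG$. The relevant algebra is the $c_0$-direct sum of matrix blocks indexed by the irreducible corepresentations of $\QG$,
\[
c_0(\hat{\QG}) = \bigoplus_{\alpha\in\IrrG}^{c_0} M_{n_\alpha}(\C),
\]
and its counit $\hat{\e}$ is nothing but evaluation on the block $\alpha=0$ attached to the trivial corepresentation, which is one-dimensional. The decisive structural feature — and the reason the discrete case is so much easier than the general compact one — is that the support of $\hat{\e}$ is the minimal central projection $p_0$ (the unit of the trivial block), and that $p_0$, being finitely supported, genuinely lies in $c_0(\hat{\QG})$ itself.

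First I would record that a state $\omega$ on $c_0(\hat{\QG})$ (a positive functional of norm one, equivalently a unital state on the multiplier algebra) is encoded by a family of positive ``densities'' $(\rho_\alpha)_\alpha$, with $\rho_\alpha\in M_{n_\alpha}(\C)$, $\rho_\alpha\geq 0$, via $\omega(x)=\sum_\alpha \tu{Tr}(\rho_\alpha x_\alpha)$, and that the norm-one (total mass) condition reads $\sum_\alpha \tu{Tr}(\rho_\alpha)=1$. In this picture $\hat{\e}$ corresponds to $\rho_0=1$, $\rho_\alpha=0$ for $\alpha\neq 0$. The key computation is then an exact formula for the distance of an arbitrary state $\omega$ from the counit: using that $c_0(\hat{\QG})^*$ is the $\ell^1$-direct sum of the trace-class duals of the blocks, and writing $a:=\omega(p_0)=\tu{Tr}(\rho_0)\in[0,1]$,
\[
\|\omega-\hat{\e}\| = |a-1| + \sum_{\alpha\neq 0}\tu{Tr}(\rho_\alpha) = (1-a)+(1-a) = 2\bigl(1-\omega(p_0)\bigr),
\]
where I used positivity of the off-trivial densities and conservation of total mass, $\sum_{\alpha\neq0}\tu{Tr}(\rho_\alpha)=1-a$.

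With this identity the theorem falls out. Let $(\omega_t)_{t\geq 0}$ be a convolution semigroup of states. Condition (3) of Definition~\ref{conv_semigroup} is precisely weak-$*$ convergence $\omega_t(x)\to\hat{\e}(x)$ for every $x\in c_0(\hat{\QG})$; applying it to the element $x=p_0$, which belongs to the algebra, gives $\omega_t(p_0)\to\hat{\e}(p_0)=1$. Substituting into the displayed identity yields $\|\omega_t-\hat{\e}\|=2\bigl(1-\omega_t(p_0)\bigr)\to 0$, i.e.\ the semigroup converges to the counit uniformly and is therefore regular in the sense of Definition~\ref{regular_def}. Theorem~\ref{regular_tw} then shows it is of Poisson type, completing the proof.

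I expect the only genuine obstacle to be foundational rather than computational: one must set up states, the counit, and the convergence in Definition~\ref{conv_semigroup} correctly in the \emph{non-unital} (infinite $\IrrG$) locally compact framework, since the unital definition of state quoted earlier does not literally apply. Once states are taken to be norm-one positive functionals on $c_0(\hat{\QG})$ (equivalently normalised densities with $\sum_\alpha\tu{Tr}(\rho_\alpha)=1$) and $\omega_0=\hat{\e}$ is read as the counit on the multiplier algebra, the argument is entirely elementary. Notably it uses neither the convolution nor the semigroup property — only that each $\omega_t$ is a state converging weakly to $\hat{\e}$ — which reflects the fact that the whole force of the statement is carried by the single observation that the counit of a discrete quantum group is supported on an isolated minimal central projection lying in the algebra.
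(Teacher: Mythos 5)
Your argument is correct, but there is nothing in the paper to compare it against: the statement is quoted as the main result of \cite{lindsay+skalskiMZ}, and the survey gives no proof of it. Judged on its own merits, your proof captures exactly the structural reason the cited result is true: the counit of a discrete quantum group is ``isolated'', being implemented by the minimal central projection $p_0$ of the trivial block, which lies in $c_0(\hat{\QG})$ itself; hence the weak$^*$ continuity condition can be tested on the single element $p_0$ and then upgraded to norm convergence. Your exact identity $\|\omega-\hat{\e}\|=2\bigl(1-\omega(p_0)\bigr)$, read off from the $\ell^1$-direct-sum description of $c_0(\hat{\QG})^*$, is a clean and fully rigorous way to do the upgrade; an alternative that avoids the block decomposition, and works on any $C^*$-algebra carrying a character $\chi$ implemented by a projection $p$ (i.e.\ $pap=\chi(a)p$), is the Cauchy--Schwarz estimate $\|\omega-\chi\|\le 2\sqrt{1-\omega(p)}+\bigl(1-\omega(p)\bigr)$, which is closer in spirit to the argument of \cite{lindsay+skalskiMZ}. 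Two caveats you should keep explicit: first, as you note, the non-unital framework (states as norm-one positive functionals, the counit as a character on $c_0(\hat{\QG})$, continuity tested pointwise on algebra elements) must be set up as in \cite{lindsay+skalskiMZ}, since the definitions in the survey are stated only for unital algebras; second, the step from regularity to Poisson type is not proved by your argument but imported from Theorem \ref{regular_tw}, and its validity in the locally compact setting is itself part of \cite{lindsay+skalskiMZ}, so your proof is self-contained only for the regularity half of the statement --- which is, as you correctly observe, where all the content lies, and which uses neither the convolution nor the semigroup law.
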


Theorem \ref{regular_tw} states that if $(\omega_t)_{t\geq 0}$ is regular, then generating functional introduced in Definition \ref{generating_functional_def} is exactly the bounded functional appearing in Theorem \ref{regular_tw}, and furthermore its domain $\D_L$ is the whole algebra $\A$.

In general, we have the following theorem (cf. \cite{lindsay+skalskiMZ}).

\begin{theorem}
 Let $\mathbb{S}$ be a compact quantum semigroup with a neutral element and let  $(\omega_t)_{t\geq 0}$ be a convolution semigroup of states on $\A=C(\mathbb{S})$. Then
\begin{enumerate}
 \item $\D_L$ is dense in $\A$,
 \item $1 \in \D_L$ and $a^*\in \D_L$ provided $a\in \D_L$,
 \item $L(1)=0$ and $L(a)=\overline{L(a^*)}$ for $a\in \D_L$,
 \item $L(x) \geq 0$ if $x \in \mathsf{A}_+ \cap \textup{Ker}(\epsilon)\cap \mathcal{D}_L$.
\end{enumerate}
The functional $L$ determines uniquely the semigroup $(\omega_t)_{t\geq 0}$. Moreover, if $\D_L=\A$, then $(\omega_t)_{t\geq 0}$ is regular.
\end{theorem}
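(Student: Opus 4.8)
The plan is to shift attention from the functionals to the associated \emph{Markov semigroup} of operators. For a state $\omega\in\A^*$ put $T_\omega:=(\id\ot\omega)\circ\Delta:\A\to\A$; this is a unital positive (indeed completely positive) map, hence a contraction. Setting $T_t:=T_{\omega_t}$, coassociativity of $\Delta$ together with the convolution identity $\omega_{s+t}=\omega_s\star\omega_t$ yields the semigroup law $T_{s+t}=T_s\circ T_t$, while the counit property $(\e\ot\id)\circ\Delta=\id$ gives $T_0=(\id\ot\e)\circ\Delta=\id$. The identity that ties everything back to the functionals is $\e\circ T_t=\omega_t$ (again from the counit property), so that for every $a\in\A$
\[
\frac{\omega_t(a)-\e(a)}{t}=\e\Big(\frac{T_t(a)-a}{t}\Big).
\]

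The algebraic items (2)--(4) are then immediate from the definition of $L$ and the fact that each $\omega_t$ is a state. Since $\omega_t(\1)=1=\e(\1)$ the difference quotient at $\1$ vanishes identically, giving $\1\in\D_L$ and $L(\1)=0$; since states satisfy $\omega_t(a^*)=\overline{\omega_t(a)}$ and $\e(a^*)=\overline{\e(a)}$, the difference quotient at $a^*$ is the complex conjugate of that at $a$, so $a\in\D_L$ forces $a^*\in\D_L$ with $L(a)=\overline{L(a^*)}$; and if $x\ge0$ lies in $\textup{Ker}(\e)\cap\D_L$ then $\frac{\omega_t(x)-\e(x)}{t}=\frac{\omega_t(x)}{t}\ge0$ by positivity of $\omega_t$, whence $L(x)\ge0$ in the limit.

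For the density statement (1) I would invoke the theory of strongly continuous semigroups. First one upgrades the weak$^*$-convergence $\omega_t\to\e$ to weak continuity of $(T_t)$: for $\phi\in\A^*$ one has $\phi(T_t a)=(\phi\star\omega_t)(a)=(\phi\ot\omega_t)(\Delta a)$, and approximating $\Delta a\in\A\ot\A$ by elementary tensors (the states being norm one) shows $\phi(T_ta)\to\phi(a)$ as $t\downarrow0$, hence, via the semigroup law, weak right-continuity of the orbit maps at every point. \textbf{The main obstacle} is to promote this to strong continuity, i.e.\ to show $(T_t)$ is a $C_0$-semigroup; this is where the genuine analysis lies, and it rests on the standard but delicate principle that a locally bounded, weakly (right-)continuous operator semigroup is automatically strongly continuous, the local bound here being $\|T_t\|\le1$. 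Granting this, let $\mathcal A$ be the infinitesimal generator; by $C_0$-semigroup theory its domain $\textup{Dom}(\mathcal A)$ is dense in $\A$. For $a\in\textup{Dom}(\mathcal A)$ the quotient $\frac{T_t a-a}{t}$ converges in norm to $\mathcal A a$, so applying the bounded functional $\e$ and using the displayed identity shows that the limit defining $L(a)$ exists and equals $\e(\mathcal A a)$. Thus $\textup{Dom}(\mathcal A)\subseteq\D_L$, and density of $\D_L$ follows.

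It remains to treat uniqueness and the regularity claim. Differentiating $T_t a=(\id\ot\omega_t)(\Delta a)$ at $t=0$ gives, for $a$ with $\Delta a$ lying (in a suitable sense) in $\A\ot\D_L$, the formula $\mathcal A a=(\id\ot L)(\Delta a)$; since $\Delta$ is fixed, this expresses the generator in terms of $L$ alone on a domain which one checks to be a core for $\mathcal A$. As a $C_0$-semigroup is determined by its generator, the Markov semigroup $(T_t)$, and hence $\omega_t=\e\circ T_t$, is determined by $L$, giving uniqueness. Finally, if $\D_L=\A$ then the difference quotients $\frac{\omega_t-\e}{t}$ converge pointwise on all of $\A$, so by the uniform boundedness principle they are uniformly bounded and $L$ is a bounded functional. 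By the converse part of Theorem \ref{regular_tw} the family $\exp_\star(tL)$ is then a regular convolution semigroup of states whose generating functional is $L$; by the uniqueness just established it coincides with $(\omega_t)_{t\ge0}$, which is therefore regular.
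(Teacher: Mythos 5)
Your proposal cannot be checked against an in-paper argument, because the paper gives no proof of this theorem at all --- it defers to \cite{lindsay+skalskiMZ} --- so I assess it on its own terms. The core architecture (pass to the Markov semigroup $T_t=(\id\ot\omega_t)\circ\Delta$, use $\|T_t\|\le 1$ plus weak continuity at $0$ and the classical weak-to-strong continuity principle to get a $C_0$-semigroup, then read off the statement from generator theory) is sound. Your treatment of items (2)--(4) is correct and complete; so are the weak-continuity computation, the inclusion $\textup{Dom}(\mathcal{A})\subseteq\D_L$ giving density in (1), and the Banach--Steinhaus argument showing that $\D_L=\A$ forces $L$ to be bounded.

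The genuine gap is the uniqueness step, on which your regularity claim also leans. You assert that $\mathcal{A}a=(\id\ot L)(\Delta a)$ holds for all $a$ with $\Delta a$ ``lying (in a suitable sense) in $\A\ot\D_L$'', and that this domain ``one checks to be a core for $\mathcal{A}$''. Neither half is available in this generality: since $L$ is unbounded, $\id\ot L$ has no canonical meaning on $\A\ot\A$, and under the only unambiguous reading ($\Delta a\in\A\odot\D_L$, the algebraic tensor product) there is no reason for this set to be dense in $\A$ --- a general compact quantum semigroup has no analogue of the dense Hopf $*$-algebra $\PolG$ of a compact quantum group, and $\Delta a$ essentially never lies in an algebraic tensor product --- let alone a core, and no argument is offered. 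The repair stays inside your framework but must be phrased weakly, via slice maps. For $a\in\A$, $\phi\in\A^*$ set $b_\phi:=(\phi\ot\id)(\Delta a)$; then $\phi(T_ta)=\omega_t(b_\phi)$ and $\phi(a)=\e(b_\phi)$. If $a\in\textup{Dom}(\mathcal{A})$, letting $t\to 0^+$ gives $b_\phi\in\D_L$ and $L(b_\phi)=\phi(\mathcal{A}a)$ for every $\phi$. Conversely, if $a,c\in\A$ are such that $b_\phi\in\D_L$ and $L(b_\phi)=\phi(c)$ for every $\phi$, then $t^{-1}(T_ta-a)\to c$ weakly, and the classical fact that the weak infinitesimal generator of a $C_0$-semigroup coincides with its generator yields $a\in\textup{Dom}(\mathcal{A})$ and $\mathcal{A}a=c$. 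Thus $\textup{Dom}(\mathcal{A})$ and the action of $\mathcal{A}$ are characterized purely in terms of $L$, $\Delta$ and $\e$; since a $C_0$-semigroup is determined by its generator and $\omega_t=\e\circ T_t$, uniqueness follows, and your deduction of regularity from the converse part of Theorem \ref{regular_tw} then goes through as written.
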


For arbitrary compact quantum semigroups we do not know the full characterization of (densely defined) functionals $L: \D_L \to \C$ which generate a
convolution semigroup of states. One should expect that, in addition to the
algebraic conditions above, the generating functionals need to satisfy some
analytic conditions (like in the Hille-Yosida
theorem
for operator case; note however that these might be better seen on the level of the associated \emph{Markov semigroup}, which we will introduce in Section \ref{sec-markov}). A satisfactory characterization is known if  we deal with the \emph{full} version of the algebra $C(\QG)$, where $\QG$ is a compact quantum group.
\emph{Full} here means that the $C^*$-algebra $C(\QG)$ is the universal $C^*$-algebra completion of $\PolG$.
In this case the problem becomes purely algebraic and can be
solved with the methods from \cite{schurmann93}. In particular, the domain $\D_L$ always contains the (Hopf) $*$-algebra $\PolG$. In fact starting from Section \ref{sec-schurmann} we will pass to such an algebraic context.

\section{L\'evy Processes on Compact Quantum Groups}
\label{quantum_levy_sec}

A L\'evy process on a group $G$ is a stochastic process indexed by the
nonnegative real numbers whose increments are independent and identically distributed.
Such processes play an essential role in the probabilistic description of physical systems whose phase-space has some natural symmetry property.
In particular, the king of all stochastic processes, Brownian motion in $\R^n$, is a L\'evy process.

The exact definition  can be stated in the following way (note that although in general we treat in our article only the compact case, the locally compact version looks exactly the same).
\begin{definition} \label{Levy_class}
 A L\'evy process on a compact group $G$ is a family $X=\{X_t: t\in \R_+\}$ of random variables on some probability space $(\Omega, \mu)$ with values in $G$ which satisfies the following conditions:
\begin{enumerate}
\item $X_{r,t}=X_{r,s}X_{s,t}$ ($0\leq r\leq s \leq t$), where $X_{s,t}=X_s^{-1}X_t$ denotes the increment of the process from time $s$ to time $t$;
 \item $X_0=X_{t,t}\equiv e$ (almost surely) for all $t\geq 0$;
 \item the increments on the intervals of the same length have identical distributions (the distribution of $X_{s,t}$ depends only on $t-s$);
 \item the increments corresponding to the intervals whose interiors are  disjoint are independent;
 \item the distribution of $X_t$ tends weakly to $\delta_e$ (Dirac measure at point $e$) when $t\to 0^+$, which means that
$$ \int_G f(g){\rm d}_{X_t} (g) \stackrel{t\to 0^+}{\longrightarrow} f(e) \quad \mbox{for all} \;f\in C(G). $$
\end{enumerate}
\end{definition}

The conditions (1)-(2) in the definition above are trivially satisfied, we
note them only to emphasize the important role of the increments of the process
$(X_t)_{t\geq 0}$, and in view of the  generalization of this notion to the quantum
case, to be discussed below. Let us note that the other three conditions can easily be adopted to the
case when $G$ is only a semigroup with unit. Indeed, if we denote by
$\mu_{s,t}$ the distribution of the increment $X_{s,t}$, then condition (3)
says that $\mu_{s,t}=\mu_{0,t-s}$ for all $t\geq s\geq 0$. Conditions (4)
and (5) state that the family $(\mu_t)_{t\geq 0} :=(\mu_{0,t})_{t\geq 0}$ is a
convolution semigroup of measures on $G$. It also follows from them that all finite dimensional distributions of
the process $X$ are determined by the convolution semigroup $(\mu_t)_{t\geq 0}$.
This explains why we are so interested in convolution semigroups. Moreover, the correspondence between L\'evy processes and their convolution semigroups implies that the L\'evy-Khinchin formula classifies all L\'evy processes on $\R$. For more on this subject the reader is referred to the books \cite{applebaum04b} and \cite{bertoin96}.

In order to define quantum L\'evy processes, we need to translate the notion
of random variables into noncommutative language. Recall that a random variable
is a measurable map from some probability space $(\Omega, \mathcal{F},\mu)$ to
a measurable space $(G,\mathcal{X})$. The noncommutative analogue of probability space (the \emph{noncommutative probability space}, or rather the space of functions on it) is a pair $(\B, \Phi)$, where $\B$ is a $*$-algebra with unit and $\Phi$ is a state on $\B$.
Let us note that, since the topological structure is not important at the
moment, we do not assume that $\B$ is a $C^*$-algebra.\footnote{However, it often happens in the noncommutative probability that properties of $\B$ as the ``measurable space'' are considered. In such case, one usually assumes that $\B$ is a \emph{von Neumann algebra} and that $\Phi$ is a \emph{normal} state on $\B$.}
We follow again the idea of ``reversing arrows'', i.e.\ we take
as the quantum analogue of a random variable the unital
$*$-homomorphism induced by it. This means that a quantum random variable over a
noncommutative probability space  $(\B, \Phi)$ with
values in a compact quantum semigroup $\mathbb{S}$ is a unit-preserving
$*$-homomorphism from the algebra $C(\mathbb{S})$ to the noncommutative probability space $(\B, \Phi)$.

In what follows we will again write $\A:=C(\mathbb{S})$ to simplify the notation. In order to define a quantum L\'evy process it will be more convenient to use explicitly the increments. That is why we consider as a stochastic process a double-parameter family $(j_{s,t})_{0\leq s\leq t}$ of unital $*$-homomorphisms $j_{s,t}: \A \to \B$, where $(\B,\Phi)$ is a noncommutative probability space. The aim now is to rephrase the conditions (1)-(5) from Definition \ref{Levy_class}. It turns out that in the $C^*$-algebraic context already the first of them is problematic.

Let us consider the classical situation: let $X,Y:\Omega \to  G$ be
random variables and let $j_X, j_Y:C(G) \to F(\Omega)$ be the homomorphisms
induced by $X$ and $Y$ respectively, according to the idea from Section
\ref{noncomm_math_sec}. By $F(\Omega)$ we denote the family of all functions
on $\Omega$. How to describe then the homomorphism corresponding to the
function $XY:\Omega \to G$ defined by $XY(p)=X(p)Y(p)$ ($p\in \Omega$)? In
general, there is no satisfactory answer to such question. However, if $X$ and $Y$ are
independent, then we expect that $j_{XY}$ will be the ``convolution`` of $j_X$ and $j_Y$, which -- in the quantum case -- can be defined for $*$-homomorphisms as $j_X \star j_Y := (j_X \otimes j_Y)\circ \Delta$. But then the next two problems appear. First, it is not clear how to extend the definition of $j_X \otimes j_Y$ to the $C^*$-tensor product $\A\otimes \A$ when $\B$ is not a $C^*$-algebra. And even if it was a $C^*$-algebra, the convolution $j_X \star j_Y$ would take values in $\B \otimes \B$, not in $\B$. In the  algebraic theory of quantum L\'evy processes as introduced in \cite{accardi+schurmann+waldenfels88} (and developed in \cite{schurmann93} and \cite{franz06}, for example), $\A$ is assumed to be a $*$-bialgebra,
which we will denote by $\mathcal{A}$ here.
Then the comultiplication of this ``algebra of functions on an algebraic quantum group'' $\mathcal{A}$ takes values in the algebraic tensor product $\mathcal{A} \odot\mathcal{A}$, and so $(j_X \otimes j_Y)\circ \Delta$ goes from $\mathcal{A}$ to $\B \odot \B$. Now it is enough to apply the natural multiplication $m_\B: \B \odot \B \to \B$, $m_\B(a\otimes b)=ab$,  to get a well defined mapping
$j_X \star^a j_Y:=m_\B \circ (j_X \otimes j_Y)\circ \Delta :\mathcal{A} \to \B$. In the topological case we treat in this section it is impossible -- for a $C^*$-algebra $\mathsf{C}$ the multiplication $m_\mathsf{C} : \mathsf{C} \odot \mathsf{C} \to \mathsf{C}$ almost never (except in the finite dimensional case) extends continuously to the $C^*$-algebraic tensor product $\mathsf{C} \otimes \mathsf{C}$.

To solve this problem, it remains to refer to the afore-mentioned fact that the complete information about a classical L\'evy process is encoded in the corresponding convolution semigroup. This leads to the following definition, suggested in \cite{lindsay+skalski08}.
\begin{definition} \label{Levy_process_on_A}
 Let $\A=C(\mathbb{S})$ be the algebra of continuous functions on a compact quantum semigroup with a neutral element. A \emph{L\'evy process} (\emph{in the weak sense}) on $\mathbb{S}$ over a noncommutative probability space $(\B, \Phi)$ is a family
$j:=(j_{s,t})_{0\leq s\leq t}$ of unital $*$-homomorphisms $j_{s,t}: \A \to \B$ such that the functionals $\omega_{s,t}:= \Phi \circ j_{s,t}$ (which describe the distributions of the increments from time $s$ to $t$, $0\leq s\leq t$) satisfy:
\begin{enumerate}
 \item $\omega_{r,t}=\omega_{r,s} \star \omega_{s,t}$ for $0\leq r\leq s\leq t$,
 \item $\omega_{t,t}=\e$ for $t\geq 0$,
 \item $\omega_{s,t}=\omega_{0,t-s}$ for $0\leq s\leq t$,
 \item for any $n\in \mathbb{N}$, $a_1,\ldots, a_n \in \A$ and pairwise disjoint intervals \\$[s_1,t_1),\ldots,[s_n,t_n)$
 $$ \Phi(\prod_{i=1}^n j_{s_i,t_i} (a_i)) = \prod_{i=1}^n \omega_{s_i,t_i} (a_i),$$
 \item $\omega_{0,t} (a) \to \e(a)$ when $t\to 0^+$, for all $a\in \A$.
\end{enumerate}
\end{definition}

We say that the process is defined in the weak sense, beacause the conditions stated in the definition describe only the properties of the \emph{distribution} of the random variables $j_{s,t}$. In particular, since the operators of the form $j_{s,t}$ need not  commute, we cannot reconstruct  all finitely dimensional distributions of the process $j$ just from the knowledge of the functionals $\omega_{s,t}$. Let us note that condition (4) reflects the classical independence on the disjoint time-intervals. In fact, what we have in the definition is the so-called \emph{tensor independence} -- it turns out that in the noncommutative world also other notions of independence are possible (\cite{franz06}).

The functionals $\omega_{s,t}=\Phi\circ j_{s,t}$ are called the \emph{marginal distributions} of the process $j$. The family $(\omega_t)_{t\geq 0} := (\omega_{0,t})_{t\geq 0}$ is a convolution semigroup, which we shall call the \emph{convolution semigroup}
of the process $j$. Two L\'evy processes on $\mathbb{S}$ are called \emph{equivalent} if they have the same convolution semigroups. Finally, a process $j$ is called \emph{regular} if its convolution semigroup is regular (see Definition \ref{regular_def}).

\medskip

How to construct and study quantum L\'evy processes? One of the possible tools
is the theory of quantum stochastic integration, founded in mid-eighties of the 20th century by Hudson and Parthasarathy (cf. \cite{hudson+parthasarathy84}), which generalizes the theory of stochastic It\^o integrals. Below we present some
very basic aspects of H-P theory, which will be essential for quantum L\'evy
processes. More details and motivations on this subject can be found in
\cite{parthasarathy92} and \cite{meyer93} (the latter book is written for an
audience with a good background in classical probability).

Let $\hil$ be a fixed Hilbert space (which describes the number of independent \emph{noises}\footnote{For example, if $\hil=\C^n$, then $\mathcal{F}_\hil$ is isomorphic to $L^2(\mathbb{P})$, where $\mathbb{P}$ is the Wiener measure corresponding to $n$-dimensional Brownian motion.}) and let $\F:=\F_{\hil}$ denotes the \emph{symmetric Fock space over $L^2(\R_+,\hil)$}. For our needs (more information may be found in Subsection \ref{subsec-real-Fock-space}), it is enough to know that $\F$ is a Hilbert space equipped with the natural \emph{filtration}: for any $0\leq s< t < \infty$ we have the decomposition
\begin{equation} \label{F_decomp}
 \F \cong \F_{0,s} \otimes \F_{s,t} \otimes \F_{t,\infty}
\end{equation}
and a family $\{\sigma_t : t\geq 0\}$ of endomorphisms of the algebra $B(\F)$ which plays the role of shifts with respect to the decomposition \eqref{F_decomp}:
$$ \sigma_t(B(\F_{0,s})) = B(\F_{t,t+s}).$$
We shall often identify the algebra $B(\F_{0,s})$ with a subalgebra of $B(\F)$ by assuming that operators from $B(\F_{0,s})$ act trivially ``after time $s$'' -- they have the form $T\otimes I_{\F_{s,\infty}}$ with respect to the decomposition \eqref{F_decomp}.

The Fock space $\F$ contains a distinguished vector $\Omega$, called the \emph{vacuum vector} (the terminology comes from the quantum field theory). This vector induces the state $\omega_\Omega$ on $B(\F)$ defined by $\omega_\Omega (T):= \langle \Omega, T\Omega\rangle_\F$, which is called the \emph{vacuum state} and which has particularly nice factorization properties with respect to the decomposition \eqref{F_decomp}.

\begin{definition}
 An \emph{adapted process} on a $C^*$-algebra $\A$ with values in $B(\F)$ is a family $\{j_t:\A \to B(\F_{0,t}): t\geq 0\}$ of unital $*$-homomorphisms.
\end{definition}

The fact that a process $j$ is adapted means intuitively that the operators $j_t(a)$ act non-trivially only ``before time $t$''.

\begin{definition} \label{LP_on_Fock}
Let $\mathbb{S}$ be a compact quantum semigroup with a neutral element and let
$\A:=C(\mathbb{S})$. We call a \emph{L\'evy process}
on $\mathbb{S}$ over the Fock space an adapted process $\{j_t:\A \to B(\F_{0,t}): t\geq 0\}$ for which
$$ j_{s+t}(a) = (j_s \otimes (\sigma_s\circ j_t)) \circ \Delta (a), \quad j_0(a)=\e(a) I_{\F} \quad \mbox{for all} \; a \in \A.$$
\end{definition}

Let us observe that $j_s \otimes (\sigma_s\circ j_t)$ takes values in the
algebra $B(\F_{0,s}) \otimes B(\F_{s,t})$, which -- by the decomposition
\eqref{F_decomp} can be identified with a subalgebra of $B(\F_{0,s+t})$, so the problem
mentioned before Definition \ref{Levy_process_on_A} disapears. This
observation allowed to define L\'evy
processes in the strong sense in \cite{lindsay+skalski08}, using the notion of Arveson's product system of Hilbert spaces, cf. \cite{arveson03}.

It turns out that all regular L\'evy processes
can be realized in the form described in Definition \ref{LP_on_Fock}. An
algebraic version of this result was proved by Sch\"{u}rmann
(cf. \cite{schurmann93}, see also the following section). The analytic version presented below comes from \cite{lindsay+skalski08}.

\begin{theorem} \label{ReconstructionThm - Analytic}
 Every regular L\'evy process on a compact quantum semigroup with a neutral element has an equivalent realization on a Fock space.
\end{theorem}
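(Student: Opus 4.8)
The plan is to reduce the statement to the construction of a single object --- a Sch\"urmann triple --- and then to solve a quantum stochastic differential equation with bounded coefficients on the Fock space. Since the process is regular, its convolution semigroup $(\omega_t)_{t\ge 0}$ is regular, so by Theorem \ref{regular_tw} it is of Poisson type: there is a bounded functional $L\in\A^*$ with $L(\1)=0$, $L|_{{\rm Ker}\,\e}\ge 0$ and $\omega_t=\exp_\star(tL)$. Because two L\'evy processes are equivalent precisely when they share the same convolution semigroup, it suffices to produce \emph{some} L\'evy process on the Fock space in the sense of Definition \ref{LP_on_Fock} whose vacuum convolution semigroup equals $(\exp_\star(tL))_{t\ge0}$; the given process and the Fock-space process are then automatically equivalent.

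First I would build the Sch\"urmann triple $(\pi,\eta,L)$ by a GNS-type construction adapted to conditional positivity. The hermitianity and conditional positivity of $L$ make the sesquilinear form
\[
\la a,b\ra_L := L(a^*b)-\overline{\e(a)}L(b)-L(a^*)\e(b),\qquad a,b\in\A,
\]
positive semidefinite, as it equals $L$ evaluated on the product of the $\e$-centred elements $a-\e(a)\1$ and $b-\e(b)\1$, both lying in ${\rm Ker}\,\e$. Quotienting by the null space and completing yields a Hilbert space $\hil$, a cocycle $\eta:\A\to\hil$ given by $\eta(a)=[a-\e(a)\1]$ with $\eta(\1)=0$, and a unital $*$-representation $\pi:\A\to B(\hil)$ characterised by $\eta(ab)=\pi(a)\eta(b)+\eta(a)\e(b)$. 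The well-definedness of $\pi$ and the cocycle identity follow from conditional positivity together with the Schwarz inequality for $\la\cdot,\cdot\ra_L$, the key identity recorded for later being $\la\eta(a),\eta(b)\ra=L(a^*b)-\overline{\e(a)}L(b)-L(a^*)\e(b)$. Crucially, because $L$ is \emph{bounded}, the form above is bounded, so $\eta$ is a bounded map; and $\pi$, being a $*$-representation of a $C^*$-algebra, is automatically contractive. Thus all the data are bounded.

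Next I would realise the process on $\F=\F_\hil$ by solving the Hudson--Parthasarathy equation driven by this triple (cf. \cite{hudson+parthasarathy84,schurmann93,lindsay+skalski08}). Schematically, one looks for an adapted family $\{j_t\}$ satisfying
\[
\mathrm{d}j_t = j_t\star\big(\eta^\dagger\,\mathrm{d}A_t+(\pi-\e\cdot\id)\,\mathrm{d}\Lambda_t+\eta\,\mathrm{d}A_t^\dagger+L\,\mathrm{d}t\big),\qquad j_0=\e(\cdot)\,I_\F,
\]
where $A^\dagger,\Lambda,A$ are the creation, conservation and annihilation processes on $\F$ and $\star$ denotes pre-composition with $\Delta$. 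Since $\eta$, $\pi-\e\cdot\id$ and $L$ are all bounded, the equation has bounded coefficients, so existence and uniqueness of the solution follow from the standard convergent iteration, and each $j_t$ maps $\A$ into $B(\F_{0,t})$, i.e.\ the family is adapted. That the solution consists of \emph{unital $*$-homomorphisms} and satisfies the increment property $j_{s+t}=(j_s\ot(\sigma_s\circ j_t))\circ\Delta$ of Definition \ref{LP_on_Fock} is read off from the quantum It\^o formula: the cocycle identity for $\eta$ and the coboundary relation for $L$ are exactly the structure equations needed to guarantee that multiplicativity is propagated by the flow and that the flow factorises across the decomposition \eqref{F_decomp}.

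Finally I would compute the vacuum expectation. By the factorisation properties of the vacuum state and the increment property just established, $t\mapsto\omega_\Omega\circ j_t$ is a convolution semigroup of states, and differentiating the It\^o equation at $t=0$ (equivalently, taking vacuum expectations, under which $\mathrm{d}A_t$, $\mathrm{d}A_t^\dagger$ and $\mathrm{d}\Lambda_t$ contribute nothing) identifies its generating functional as $L$. Hence $\omega_\Omega\circ j_t=\exp_\star(tL)=\omega_t$, so $\{j_t\}$ is a L\'evy process on the Fock space with the prescribed convolution semigroup, and the realisation is equivalent to the original process. The main obstacle is the third step: verifying that the solution of the quantum stochastic differential equation is genuinely multiplicative and that its vacuum flow factorises correctly, since this is where the full algebraic force of the Sch\"urmann triple (not merely the positivity used to construct $\hil$) enters, through a careful application of the quantum It\^o product formula.
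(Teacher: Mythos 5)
Your proposal is correct and follows essentially the same route as the paper's own sketch: regularity gives a bounded generating functional via Theorem \ref{regular_tw}, a GNS-type construction on $\mathrm{Ker}\,\e$ yields the Sch\"urmann triple, the triple's boundedness (the paper stresses \emph{complete} boundedness of the matrix map $\varphi$, which is what the Lindsay--Skalski QSDE theory actually requires, but this follows from the structure you describe) gives existence and uniqueness of the solution of the Hudson--Parthasarathy equation, and the quantum It\^o formula plus vacuum factorisation identify the solution as a Fock-space L\'evy process with convolution semigroup $(\exp_\star(tL))_{t\ge 0}$, hence equivalent to the original. Your equation $\mathrm{d}j_t = j_t\star(\eta^\dagger\,\mathrm{d}A_t+(\pi-\e\cdot\id)\,\mathrm{d}\Lambda_t+\eta\,\mathrm{d}A_t^\dagger+L\,\mathrm{d}t)$ is precisely the paper's $\mathrm{d}k_t=k_t\star \mathrm{d}\Lambda_\varphi(t)$ written out in components.
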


\begin{proof} (a sketch of the proof) Let $\A:=C(\mathbb{S})$ and let $j:=(j_{s,t}:\A \to \B)_{0\leq s\leq t}$ be a regular L\'evy process on $\mathbb{S}$. Since the semigroup of this L\'evy process is regular by definition, it has a generator  $L:\A \to \C$ by Theorem \ref{regular_tw}. The properties of $L$ imply that we can define a sesquilinear form (linear with respect to the second and anti-linear with respect to the first argument) on the quotient space ${\rm Ker}\, \e/\mathcal{N}_L$, where $\mathcal{N}_L=\{a\in {\rm Ker}\, \e;L(a^*a)=0\}$, by the formula
$$
\langle [a],[b] \rangle := L\big((a-\e(a)\1)^*(b-\e(b)\1)\big), \quad a,b \in \A,
$$
 and this form will be positive. Via a standard construction (a modification of the Gelfand-Najmark-Segal construction), the form $ \langle \cdot,\cdot \rangle$ leads to the Hilbert space $\mathsf{k}$ with a (unital) representation $\rho:\A \to B(\hil)$ and a $(\rho,\e)$-derivation $\eta:\A \to \hil$:
$$ \eta(ab)=\rho(a)\eta(b)+\eta(a) \e(b), \quad a,b\in \A.$$
The triple $(\rho,\eta,L)$, is the so-called \emph{Sch\"{u}rmann triple} (cf.\ Definition \ref{def-triple} in the following section) of the L\'evy process. It defines a map $\varphi:\A \to B(\C \oplus \hil)$ by the matrix-type formula
$$
\varphi(a)=\left [
\begin{array}{cc} L(a) & |\eta(a)\rangle^\dagger \\ |\eta(a)\rangle & \rho(a)-\e(a)I_{\hil} \end{array}
\right ], \quad a\in \A.
$$

Thanks to the algebraic properties of the triple $(\rho,\eta,L)$ we can prove that $\varphi$ is completely bounded. This, in turn, implies that the quantum stochastic differential equation
\begin{equation} dk_t=k_t \star d\Lambda_\varphi(t); \quad k_0(a)=\e(a)I_\mathcal{F}, \quad a \in \A\end{equation}
has a unique solution $k:=\{k_t^\varphi: t\geq 0\}$, which is an adapted process on $\A$ with values in $B(\mathcal{F})$.

Then we prove that $k$ is in fact a L\'evy process on the Fock space $\mathcal{F}$ (parametrized with one parameter). Putting
$$k_{s,t} := \sigma_s \circ k_{t-s}, \quad 0\leq s \leq t,$$
we get a L\'evy process on $\A$ over the quantum probability space
$(B(\mathcal{F}),\omega_\Omega)$, which is equivalent to the process $j$.
\end{proof}

The sketch we presented above, being far from the full proof, does not explain (in particular) what is a \emph{quantum stochastic differential equation}. In the literature the latter is often referred to by the acronym \emph{QSDE}, in analogy to \emph{SDE}, the stochastic differential equations. The theory of QSDEs which studies the existence and the properties of solutions of quantum stochastic differential equations is explained, for example, in the afore-mentioned books \cite{parthasarathy92,meyer93}. A modern approach to this problem, which is based on the theory of operator spaces (cf. \cite{effros+ruan00})  and which emphasises the natural role of complete boundedness in the sketch of the proof of the previous theorem, can be found in \cite{lindsay05}. Finally, let us note that the construction we described in the second part of the sketch plays a crucial role in the proof of Theorem \ref{regular_tw}.

Theorem \ref{LP_on_Fock} is the starting point for the theory of \emph{quantum stochastic convolution cocycles}, that is families of completely bounded linear operators $\{l_t:\A \to B(\mathcal{F}_{0,t}):t\geq 0\}$ which, for each $a\in \A$, satisfy
$$ l_{s+t}(a)=(l_s \otimes (\sigma\circ l_t))\circ \Delta(a), \quad l_0(a)=\e(a)I_\mathcal{F}.$$
For more information on this subject -- the description of completely positive dilations of quantum stochastic convolution cocycles of quantum L\'evy processes and the proof of an existence of the approximation of quantum L\'evy processes by the quantum random walks -- we refer to \cite{lindsay+skalski08} and the references there.

A short interesting introduction to classical and quantum L\'evy processes can be found in  \cite{applebaum04a}, see also \cite[Lecture 6: Two L\'evy Paths to Quantum Stochastics]{applebaum05}.

\section{Sch\"urmann triples}
\label{sec-schurmann}

In this section we pass to the algebraic framework of quantum L\'evy processes, as introduced by Accardi, Sch\"urmann and von Waldenfels in \cite{accardi+schurmann+waldenfels88} and later developed by Sch\"urmann in \cite{schurmann93}. Here the basic object is a $*$-bialgebra $\Alg$ (i.e.\ a unital $*$-algebra equipped with the coproduct $\Com:\Alg \to\Alg \odot \Alg$ and the counit $\e: \Alg \to \bc$ satisfying the natural axioms). Our key example will be $\Alg=\PolG$ for $\QG$ being a compact quantum group; this in fact allows some further simplifications. Note also that in what follows we will often use Sweedler's notation, writing $\Delta(a)= a_{(1)} \otimes a_{(2)}$ for $a \in \Alg$, and $(id \otimes \Delta)(\Delta(a))= a_{(1)} \otimes a_{(2)}\otimes a_{(3)}$ for $a \in \Alg$.

Note that we can define \emph{states} on $\Alg$ simply as functionals on $\Alg$ which map $1$ to $1$ and take non-negative values on $a^*a$ for $a \in \Alg$ -- and then define convolution semigroup of states exactly as it was done in Section \ref{sec-conv-sg}.

The key result of Sch\"urmann is the establishment of the correspondence between L\'evy processes on $\Alg$ and \emph{generating functionals} on $\Alg$. By the former we mean families of unital $*$-homomorphisms $j_{s,t}:\Alg \to \B$, $0\geq s\geq t$, satisfying the conditions from Definition \ref {Levy_process_on_A}, whereas by the latter we mean linear functionals $L:\Alg \to \bc$ which vanish at $1$ ($L(1)=0$), are hermitian ($L(a^*) = \overline{L(a)}$, $a \in \Alg$) and positive on the kernel of the counit ($L(a^*a) \geq 0$ if $a\in \Alg$, $\e(a) =0$). Of course this definition should be compared to Definition \ref{generating_functional_def}, and an important part of this comparison is the following, algebraic version of Theorem \ref{regular_tw}.

\begin{theorem}
Let $(\omega_t)_{t \geq 0}$ be a convolution semigroup of states on a $*$-bi\-al\-ge\-bra $\Alg$. Then  the formula
\[
L(a) = \lim_{t\to 0^+} \frac{\omega_t(a)-\e(a)}{t}, \;\;\; a \in \Alg,
\]
defines a generating functional on $\Alg$. Conversely, if $L$ is a generating functional then putting
\[\omega_t (a)= \exp_\star (tL) (a):= \sum_{n=0}^\infty \frac{t^n}{n!} L^{\star n}(a),\;\;\; a \in \Alg, t \geq 0,\]
defines a convolution semigroup of states $(\omega_t)_{t \geq 0}$ on $\Alg$.
\end{theorem}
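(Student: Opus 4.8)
The plan is to prove both implications by reducing, wherever possible, to finite-dimensional subcoalgebras, and then to isolate the one genuinely hard point---positivity of $\exp_\star(tL)$---for a separate argument. The workhorse is the \emph{fundamental theorem of coalgebras}: every element $a\in\Alg$ is contained in a finite-dimensional subcoalgebra $C\subseteq\Alg$ (i.e.\ $\Com(C)\subseteq C\odot C$), and since a sum of subcoalgebras is again a subcoalgebra, any finite family of elements lies in a common such $C$. On $C$ the coproduct restricts, so the convolution $\mu\star\nu=(\mu\ot\nu)\circ\Com$ of functionals depends only on their restrictions to $C$; thus $(C^*,\star)$ is a finite-dimensional unital algebra with unit $\e|_C$, and the restriction map $\Alg^*\to C^*$ is a unital algebra homomorphism. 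This transports questions about convergence and the semigroup law into finite-dimensional linear algebra.

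\emph{Forward direction.} Given the convolution semigroup $(\omega_t)_{t\geq0}$, fix $a$ and a finite-dimensional subcoalgebra $C\ni a$. Then $t\mapsto\omega_t|_C$ is a one-parameter semigroup in $(C^*,\star)$ with $\omega_0|_C=\e|_C$, and the pointwise continuity of Definition \ref{conv_semigroup}(3) is, in the finite-dimensional space $C^*$, norm continuity. A norm-continuous one-parameter semigroup in a finite-dimensional algebra is automatically of the form $\exp_\star(tL_C)$ for a unique generator $L_C\in C^*$, with $L_C=\lim_{t\to0^+}(\omega_t|_C-\e|_C)/t$. Hence the limit defining $L(a)$ exists; compatibility of the various $L_C$ on overlaps (again by the common-subcoalgebra remark) shows $L$ is a well-defined linear functional on $\Alg$. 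Its three defining properties are then read off directly from those of states: $L(1)=0$ because $\omega_t(1)=1=\e(1)$; hermitianity $L(a^*)=\overline{L(a)}$ because each $\omega_t$ and $\e$ is hermitian; and conditional positivity because for $a\in{\rm Ker}\,\e$ one has $\e(a^*a)=|\e(a)|^2=0$, so $L(a^*a)=\lim_{t\to0^+}\omega_t(a^*a)/t\geq0$ as a limit of nonnegative quantities.

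\emph{Converse direction.} Given a generating functional $L$ and $a\in\Alg$, choose a finite-dimensional subcoalgebra $C\ni a$; then $L^{\star n}(a)$ depends only on $L|_C$ and equals the $n$-th convolution power computed in $C^*$, so $\exp_\star(tL)(a)$ is merely the value at $a$ of the matrix exponential $\exp_\star(tL|_C)\in C^*$. This gives convergence of the series for every $a$, continuity $\omega_t(a)\to\e(a)$ as $t\to0^+$, and, by the homomorphism property of $\Alg^*\to C^*$ together with coassociativity, the semigroup law $\omega_{s+t}=\omega_s\star\omega_t$ and $\omega_0=\e$; unitality $\omega_t(1)=1$ follows from $\Com(1)=1\ot1$, which forces $L^{\star n}(1)=L(1)^n=0$ for $n\geq1$. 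The remaining, and genuinely hard, point is that each $\omega_t$ is \emph{positive}, i.e.\ $\omega_t(b^*b)\geq0$ for all $b\in\Alg$; this is precisely the noncommutative Sch\"onberg correspondence, and it cannot be obtained by the finite-dimensional reduction, since $b^*b$ need not lie in a subcoalgebra containing $b$ and the argument must couple the coalgebra structure to the algebra structure through the fact that $\Com$ is a $*$-homomorphism. My plan here is to follow Sch\"urmann: conditional positivity of $L$ yields, via a GNS-type construction on ${\rm Ker}\,\e$, a Sch\"urmann triple $(\rho,\eta,L)$ (cf.\ Definition \ref{def-triple}) consisting of a pre-Hilbert space, a unital $*$-representation $\rho$ and a $(\rho,\e)$-cocycle $\eta$ satisfying $L(a^*b)=\langle\eta(a),\eta(b)\rangle+\e(a)L(b)+L(a^*)\e(b)$; one then realises the process generated by this triple on a symmetric Fock space exactly as in the sketch of Theorem \ref{ReconstructionThm - Analytic}, obtaining unital $*$-homomorphisms $j_t$ with $\omega_t=\langle\Omega,j_t(\cdot)\Omega\rangle$. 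Positivity of $\omega_t$ is then automatic, being the composition of a $*$-homomorphism with the vacuum state, and a generator computation identifies this $\omega_t$ with $\exp_\star(tL)$. Establishing this positivity is the crux and the step I expect to absorb most of the work; a purely algebraic alternative is to deduce it from the Schur-product (Hadamard-exponential) positivity of conditionally positive-definite matrices, but either route requires genuinely more than the coalgebra bookkeeping used for the rest.
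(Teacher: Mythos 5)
Your proposal is correct and takes essentially the same route as the paper: the paper likewise relies on the Fundamental Theorem of Coalgebra (your finite-dimensional subcoalgebra reduction) for the existence of the limit and the elementary algebraic parts, and handles the genuinely hard point — positivity of $\exp_\star(tL)$ — exactly as you plan to, via the GNS-type construction of a Sch\"urmann triple and its realization on the symmetric Fock space following Sch\"urmann's reconstruction theorem. The only difference is one of detail, not of method: you spell out the coalgebra bookkeeping that the paper leaves implicit, while both treatments leave the Fock-space positivity argument at the level of a sketch with reference to Sch\"urmann's work.
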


Note that above we talk only about the pointwise convergence, and the existence of the limit in the definition of $L$ is a consequence of the Fundamental Theorem of Coalgebra (see for example Section 4.4 of \cite{schurmann93}).

The hard part of the proof of the one-to-one correspondence between generating functionals and L\'evy processes on a $*$-bialgebra is the construction of a L\'evy process from a generating functional. Sch\"urmann's original reconstruction theorem uses quantum stochastic processes and kernel calculus. The general scheme of the proof follows that of the proof of Theorem \ref{ReconstructionThm - Analytic} (which of course was itself modelled on Sch\"urmann's result). An important part of the proof is related to a formation of the so-called Sch\"urmann triple associated to $L$, which we define next (but which was already mentioned in the proof of Theorem \ref{ReconstructionThm - Analytic}). For a pre-Hilbert space $D$ we define  $\Ldag(D)$  to be the collection of all linear operators mapping $D$ to $D$ which admit adjoints whose restrictions to $D$ leave $D$ invariant. It is easy to see that $\Ldag(D)$ is a unital $*$-algebra.

\begin{definition}\label{def-triple}
The triple $(\rho,\eta, L)$ is a \emph{Sch\"{u}rmann triple} on a $*$-bialgebra $\Alg$ if $\rho:\Alg \to \Ldag(D)$ is a unital $*$-homomorphism,  $\eta: \Alg\to D$ is a linear map which is a $\rho$-$\e$-cocycle, i.e.\
\[ \eta(ab) = \rho (a) \eta(b) + \eta(a) \e(b), \;\;\; a, b \in \Alg,\]
and $L:\Alg \to \bc$ is a hermitian functional which is a coboundary of $\eta$, i.e.\
\[ L(a^*b) = \la \eta(a), \eta(b) \ra + \e(a^*) L(b) + L(a^*) \e(b).\]
\end{definition}

Note that both the concept of a Sch\"urmann triple and a generating functional do not involve the coproduct, so can be considered also for a general unital $^*$-algebra with a character. Sch\"urmann triples are in one-to-one correspondence with generating functionals and therefore with L\'evy processes (modulo the natural notions of equivalence). From a given generating functional $L$ one can construct a Sch\"urmann triple $(\rho,\eta,L)$ by the construction outlined in the proof of Theorem \ref{ReconstructionThm - Analytic}. Conversely, given a Sch\"urmann triple $(\rho,\eta,L)$, $L$ is conditionally positive, since $L(a^*a) = \langle\eta(a),\eta(a)\rangle$ for $a\in {\rm ker}\varepsilon$, and therefore a generating functional. Uniqueness up to unitary equivalence of the Sch\"urmann triple associated to a given generating functional follows as for the  Gelfand-Najmark-Segal construction.

If $\mathcal{A}={\rm Pol}(\mathbb{G})$ is the Hopf $*$-algebra associated to a compact
quantum group $\mathbb{G}$, then $\rho$ takes bounded operators as values. Therefore we can extend the operators to the Hilbert space closure $\hil=\overline{D}$ of $D$, and
replace $D$ by $\hil$. In the following we shall assume that $\eta$ is
surjective (i.e.\ it is defined with values in a pre-Hilbert space); this guarantees that $\rho$ and $\eta$ are unique up to bijection of the underlying pre-Hilbert space which preserves the inner product. When we pass to the completion and consider $\eta$ as taking values in a Hilbert space, then it will in general no longer be surjective. In this case we shall assume that its range is dense, this assures again that  $\rho$ and $\eta$ are unique (up to unitary equivalence).

\medskip
 We now introduce two important classes of L\'evy processes.

\begin{definition}\label{def-poisson}
A L\'{e}vy process is called \emph{Poisson process} if the restriction of its generator $L$ to $K_1 = {\rm ker}\,\e $
coincides with a positive functional,
i.e.\ if there exist a state $\omega:\mathcal{A}\to\mathbb{C}$ and
$\lambda>0$ such that
\[
L(a) = \lambda\big(\omega(a) - \varepsilon(a)\big).
\]
\end{definition}
We often say that $L$ is \emph{of Poisson type}, meaning that it is the generating functional of a Poisson process. It is important to remark that, if $(\rho,\eta,L)$ is a Sch\"{u}rmann triple, then if $L$ is of Poisson type then the associated cocycle $\eta$ is a \label{def_coboundary}
\emph{1-coboundary} (in other terminology \emph{inner}), i.e., there exists a vector $v\not=0$ such that
\[
\eta(a) = \big(\rho(a)-\e(a){\rm id}_\hil\big) v, \quad a\in \mathcal{A}.
\]

The following theorem is a combination of the results of \cite{lindsay+skalski08}. We sketch the proof for the convenience of the reader.

\begin{theorem}\label{thm-unbounded}
Let $\QG$ be a compact quantum group. A generating functional $L:\PolG\to \C$ is of Poisson type if and only if it is bounded with respect to the universal (enveloping) $C^*$--norm on $\PolG$.
\end{theorem}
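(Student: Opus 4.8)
The plan is to prove both implications of the equivalence, using the Schürmann triple $(\rho,\eta,L)$ associated to the generating functional $L$ as the central organizing object.

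First I would handle the direction that is essentially immediate: if $L$ is of Poisson type, then $L$ is bounded with respect to the universal $C^*$-norm. By Definition \ref{def-poisson}, there exist a state $\omega$ on $\PolG$ and $\lambda>0$ with $L(a)=\lambda(\omega(a)-\e(a))$ for all $a\in\PolG$. Both $\omega$ and $\e$ are states, hence contractive for the universal norm (a state on $\PolG$ is by definition positive and unital, so it extends to a state on the universal $C^*$-completion and is norm-one there). Thus $L$ is a difference of two norm-$\lambda$ functionals and is bounded, with $\|L\|\le 2\lambda$ in the universal norm. This direction requires only that states extend continuously to the universal completion, which holds because the universal norm dominates the seminorm coming from any $*$-representation.

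The harder direction is the converse: if $L$ is bounded for the universal norm, then $L$ is of Poisson type. Here I would use the Schürmann triple. Since $L$ is bounded, I would first argue that the cocycle $\eta$ is bounded: from the coboundary identity $L(a^*b)=\la\eta(a),\eta(b)\ra+\e(a^*)L(b)+L(a^*)\e(b)$, applied with $a=b$ and $a\in\ker\e$, we get $\|\eta(a)\|^2=L(a^*a)$, so boundedness of $L$ forces $\eta$ to be bounded on $\ker\e$ (and hence on all of $\PolG$, since $\eta(\1)=0$). The key step is then to show that a bounded cocycle $\eta:\PolG\to\hil$, with respect to a unital $*$-representation $\rho$ that extends to the universal $C^*$-algebra, must be a $1$-coboundary, i.e.\ $\eta(a)=(\rho(a)-\e(a)\id_\hil)v$ for some vector $v$. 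Once $\eta$ is inner, substituting back into the coboundary formula for $L$ yields $L(a)=\la v,(\rho(a)-\e(a))v\ra$ plus terms that combine into the Poisson form $\lambda(\omega(a)-\e(a))$ with $\omega(a)=\la v,\rho(a)v\ra/\|v\|^2$ and $\lambda=\|v\|^2$; I would verify $\omega$ is a state and read off Poisson type from Definition \ref{def-poisson}.

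The main obstacle I expect is establishing that the bounded cocycle is inner. The natural strategy is cohomological: a bounded $1$-cocycle on the algebra of coefficient functions of a compact quantum group, valued in a bounded $*$-representation, represents a class in the first Hochschild (or Hopf-algebra) cohomology, and one wants a vanishing result guaranteeing this class is a coboundary. For compact quantum groups the relevant input is that one may average against the Haar state: invariance and the complete reducibility furnished by the quantum Peter-Weyl theorem (every representation in $\RepG$ decomposes into irreducibles, and $\PolG$ decomposes accordingly) let one produce the implementing vector $v$ explicitly, typically as a suitable Haar-integral of $\eta$ applied to a resolution of the identity, or by invoking that the reduced and universal completions coincide on the relevant block. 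The delicate point is controlling the analytic convergence of such an averaging when passing to the Hilbert space completion $\hil=\overline{D}$ and ensuring $v$ genuinely lies in $\hil$; boundedness of $\eta$ is exactly what makes this limit well defined, which is why the universal-norm boundedness hypothesis is the crux of the whole equivalence.
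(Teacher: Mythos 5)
Your forward implication is fine (it is exactly the paper's, resting on the fact that states on the CQG algebra $\PolG$ extend to states on $C^u(\QG)$), and your observation that $\|\eta(a)\|^2=L(a^*a)\le\|L\|\,\|a\|^2$ for $a\in\ker\e$ yields boundedness of the cocycle is correct --- indeed it is more elementary than the paper's appeal to Sakai's automatic-continuity theorem. The genuine gap is the step you yourself identify as the crux: that a \emph{bounded} cocycle must be a $1$-coboundary. The mechanisms you propose do not work. Haar-state averaging produces implementing vectors for cocycles defined on group elements (where one integrates $g\mapsto \eta(g)$ over a compact group); here $\eta$ is a derivation on the $C^*$-algebra $C^u(\QG)$ with values in the bimodule $\hil$ (left action $\rho$, right action the character $\e$), and there is nothing to integrate against the Haar state, which moreover need not be faithful on $C^u(\QG)$. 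Likewise, quantum Peter--Weyl decomposes \emph{corepresentations} of $\QG$, not $*$-representations $\rho$ of $C^u(\QG)$, so it provides no complete reducibility of $\rho$; and your alternative, that ``the reduced and universal completions coincide on the relevant block,'' is precisely coamenability, which fails for $S_n^+$ when $n\ge 5$ --- the main case of interest in this paper.

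What is actually needed --- and this is the heart of the paper's proof --- is an upgrade from boundedness to \emph{complete} boundedness. Whether every bounded derivation of a $C^*$-algebra into a dual bimodule of the form $B(\mathsf{K})$ is inner is the open derivation problem (settled under extra hypotheses such as nuclearity, which $C^u(S_n^+)$ lacks for $n\ge 5$), so an argument that stops at boundedness of $\eta$ cannot be completed. The paper exploits the special feature that the right action is given by the character $\e$: putting $\mathsf{J}=\ker\e\subset C^u(\QG)$ with a contractive approximate unit $(e_i)$, the cocycle identity gives $\eta(ae_i)=\rho(a)\eta(e_i)$ for $a\in\mathsf{J}$ (since $\e(e_i)=0$), whence $\eta(a)=\lim_i\rho(a)\eta(e_i)$; at each matrix level $\eta^{(n)}$ then factors through the contraction $\rho^{(n)}$ composed with a fixed column of norm at most $M$, so $\eta|_{\mathsf{J}}$, and hence $\eta$ itself, is completely bounded. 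Only then can one invoke Christensen's theorem that completely bounded derivations are inner, which produces the vector $v$. Your final step recovering $L(a)=\la v,\rho(a)v-\e(a)v\ra$ from the inner cocycle is essentially right, but note that it requires the density of $\textup{Lin}\{a^*b:a,b\in\ker\e\}$ in $\ker\e$, since the coboundary identity a priori determines $L$ only on such products.
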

\begin{proof}
The forward implication is easy, as the so-called CQG property of the Hopf $*$-algebra $\PolG$ (see \cite{DiK})) implies that states on $\PolG$ admit continuous extensions to the states on the universal completion (which we will denote $C^u(\QG)$), as can be deduced from example from Lemma 8.7 in \cite{lindsay+skalski08}.

Assume then that $L$ admits a bounded extension to a functional on $C^u(\QG)$. Then the same GNS-type construction as in the algebraic case leads to a Sch\"urmann triple
$(\rho, \eta, L)$, with all the maps defined on $C^u(\QG)$ (note that $\e$ extends to a character on $C^u(\QG)$). In particular $\eta:\alg \to \hil$ is a $\rho$-$\e$-derivation. By a standard `matrix-corner trick' we can view $\eta$ as a corner of a derivation with respect to a direct sum representation $\rho\oplus \e$; then an application of the theorem of Sakai (\cite{Sakai}, see also \cite{Ringrose}) shows that $\eta$ is bounded (say its norm is equal to $M$). Let now $\mathsf{J}\subset C^u(\QG)$ denote the kernel of $\e$ treated as a character on $C^u(\QG)$ and let $(e_{i})_{i \in \mathcal{I}}$ be a contractive approximate unit of $\mathsf{J}$. Consider the projection $P:C^u(\QG) \to \mathsf{J}$ given by $P(a) = a - \e(a) 1$, $a \in C^u(\QG)$, and let $\tilde{\eta}= \eta|_{\mathsf{J}}$. Finally let $n \in \N$ and let $(a_{j,k})_{j,k=1}^n \in M_n(\mathsf{J})$. We have then
\begin{align*} \tilde{\eta}^{(n)} \left((a_{j,k})_{j,k=1}^n\right) &= (\tilde{\eta}(a_{j,k}))_{j,k=1}^n = \lim_{i \in I} (\tilde{\eta}(a_{j,k} e_i))_{j,k=1}^n
\\&=  \lim_{i \in I} (\pi(a_{j,k})\tilde{\eta} (e_i))_{j,k=1}^n = \lim_{i \in I} (\pi(a_{j,k}))_{j,k=1}^n \textup{diag}( \eta(e_i)) \\&=  \lim_{i \in I} \pi^{(n)}\left((a_{j,k})_{j,k=1}^n\right)\textup{diag}( \eta(e_i)),
\end{align*}
where the upper indices $^{(n)}$ denote respective matrix liftings. As $\pi^{(n)}$ is a $*$-homomorphism, hence contraction, we obtain that the norm of $\tilde{\eta}^{(n)}: M_n (C^u(\QG)) \to M_n (\hil) \approx B(\bc^n;\hil^{\oplus n})$ is not greater than $M$; in other words $\tilde{\eta}$ is \emph{completely} bounded. So is $P$, and thus further $\eta = \tilde{\eta} \circ P$. The main result of \cite{Der} implies that $\eta$ is a 1-coboundary, i.e.\ there exists $v \in \hil$ such that
$\eta(a) = \rho(a)v - \e(a) v, \;\; a \in C^u(\QG)$. Using the fact that 
$\{a^*b:a, b \in ker\,  \epsilon\}$ is dense in $ker\, \epsilon$ one can show that 
$L = \langle v, \rho(a)v - \epsilon(a) v \rangle$, so $L$ is of Poisson type.
\end{proof}

Note that the above proof is not using the comultiplication structure on $\PolG$ (nor on $C^u(\QG)$) at all, it could be cast completely in the context of a $C^*$-algebra with a character, as was done in the Appendix of \cite{lindsay+skalski08}.

\medskip

\begin{definition}\label{def-gaussian}
A L\'{e}vy process is called \emph{Gaussian process} if the related generating functional $L$ vanishes on all triple products of elements from $\ker \e$.
\end{definition}
The definition can be rephrased in terms of other elements of the Sch\"{u}\-rmann triple. With the notation,
$$K_m:={\rm Lin} \, \{a_1\cdot \ldots \cdot a_m; a_j\in \ker \e\},$$
one can easily show (see \cite{schurmann93}) that the following conditions are equivalent:
\begin{enumerate}
\item $L|_{K_3}=0$,
\item $\eta|_{K_2}=0$,
\item $\rho(a)=\e(a)1$ for $a\in \alg$.
\end{enumerate}
A special case of Gaussian process is a \emph{drift}, for which $L|_{K_2}=0$ or, equivalently, $\eta=0$.

The generalization of the notion of Gaussian process to processes on bialgebras has been given in \cite{schurmann93}. It is crucial if one looks for an analogue of the L\'{e}vy-Khinchin formula in the quantum group framework. Indeed, the classical version of formula can be regarded as a decomposition result, which reveals that every L\'{e}vy process breaks up into one component related to Brownian motion (with continuous paths) and the remaining component, which contains the 'jumps'.

In our framework, an analogous idea leads to the question whether any generating functional on a given compact quantum group admits the extraction of the maximal Gaussian part ---  such functionals are said to admit a \emph{L\'{e}vy-Khinchin decomposition}. The positive answer to this decomposition problem in the case of $SU_q(2)$ was provided in \cite{schurmann+skeide98}. Recently, the question has again received some attention \cite{FKLS,DFKS,FGT}, it turned that such a decomposition is not always possible.

\medskip

We will now show that Gaussian L\'evy processes with tracial marginal distributions are commutative, see Proposition \ref{prop-gauss-trace} below. Actually, we will see later that the quantum permutation groups do not have any Gaussian L\'evy processes anyway, see Proposition \ref{prop-no-Gauss}.

Let us start with a lemma that characterizes L\'evy processes whose marginal
distributions are tracial.
\begin{lemma}\label{lem-trace}
Let $(\omega_t)_{t\ge 0}$ be a convolution semigroup of states on a
$*$-bialgebra $\mathcal{A}$ with Sch\"urmann triple $(\rho,\eta,L)$. Then the
following are equivalent.

\begin{itemize}
\item[(i)]
$\omega_t$ is tracial for all $t\ge 0$.
\item[(ii)]
$L$ is tracial.
\item[(iii)]
There exists an anti-unitary operator $J:D\to D$ on $D=\overline{\eta(\mathcal{A})}$ such that
\[
\eta(a^*) = J\eta(a)
\]
for all $a\in\mathcal{A}$.
\end{itemize}
\end{lemma}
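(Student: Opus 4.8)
The plan is to route both equivalences through a single reformulation of the traciality of $L$ in terms of the cocycle $\eta$. Starting from the coboundary relation $L(a^*b) = \la \eta(a), \eta(b)\ra + \e(a^*)L(b) + L(a^*)\e(b)$ and replacing $a$ by $a^*$, I obtain
\[
L(ab) = \la \eta(a^*), \eta(b)\ra + \e(a)L(b) + L(a)\e(b), \qquad a,b\in\mathcal{A},
\]
and the analogous expression for $L(ba)$. Since the counit terms are symmetric under $a \leftrightarrow b$, subtracting gives $L(ab) - L(ba) = \la \eta(a^*), \eta(b)\ra - \la \eta(b^*), \eta(a)\ra$. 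Hence $L$ is tracial if and only if
\[
\la \eta(a^*), \eta(b)\ra = \la \eta(b^*), \eta(a)\ra \quad \text{for all } a,b\in\mathcal{A}.
\]
This identity, which I will call $(\ast)$, is the hinge of the whole argument.

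For the equivalence (ii) $\Leftrightarrow$ (iii) I would work entirely with $(\ast)$. Assuming $(\ast)$, define $J$ on the dense subspace $\eta(\mathcal{A})\subset D$ as the anti-linear extension of $J\eta(a) := \eta(a^*)$. Putting $b=a^*$ in $(\ast)$ yields $\|\eta(a^*)\| = \|\eta(a)\|$, which both makes $J$ well defined (if $\eta(c)=0$ then $\eta(c^*)=0$) and shows it is isometric; replacing $b$ by $b^*$ in $(\ast)$ gives $\la \eta(a^*),\eta(b^*)\ra = \la \eta(b),\eta(a)\ra = \overline{\la\eta(a),\eta(b)\ra}$, which is precisely the anti-unitarity relation $\la J\xi, J\zeta\ra = \overline{\la \xi,\zeta\ra}$. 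As $\mathcal{A}=\mathcal{A}^*$, the range of $J$ is again $\eta(\mathcal{A})$, dense in $D$, so $J$ extends to an anti-unitary operator on $D$ satisfying $\eta(a^*)=J\eta(a)$, giving (iii). Conversely, given such $J$, applying $\eta(a^*)=J\eta(a)$ twice shows $J^2=\id$ on $\eta(\mathcal{A})$, hence on $D$; for an anti-unitary involution one has $\la J\xi,\zeta\ra = \la J\zeta,\xi\ra$, and evaluating at $\xi=\eta(a)$, $\zeta=\eta(b)$ recovers $(\ast)$, hence (ii).

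For the equivalence (i) $\Leftrightarrow$ (ii), the implication (i) $\Rightarrow$ (ii) is immediate by differentiating $L(a)=\lim_{t\to 0^+}\big(\omega_t(a)-\e(a)\big)/t$ at the pairs $ab$ and $ba$, using that the character $\e$ is automatically tracial. For the converse I would show that convolution preserves traciality: if $\phi,\psi$ are tracial functionals then $\phi\ot\psi$ is tracial on $\mathcal{A}\odot\mathcal{A}$ (a one-line check on simple tensors), and since $\Com$ is a homomorphism, $(\phi\star\psi)(ab) = (\phi\ot\psi)\big(\Com(a)\Com(b)\big) = (\phi\ot\psi)\big(\Com(b)\Com(a)\big) = (\phi\star\psi)(ba)$. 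Since $\e = L^{\star 0}$ and $L$ are tracial, an induction gives that every convolution power $L^{\star n}$ is tracial, and therefore so is $\omega_t = \exp_\star(tL) = \sum_{n=0}^\infty \frac{t^n}{n!}L^{\star n}$ for every $t\ge 0$.

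The main obstacle is to isolate the identity $(\ast)$ and to verify that it makes $J\eta(a):=\eta(a^*)$ a well-defined anti-linear isometry; once $(\ast)$ is available, everything else is routine bookkeeping. A small point deserving attention is that the relation $\eta(a^*)=J\eta(a)$ forces $J$ to be an involution, which is exactly what the converse implication (iii) $\Rightarrow$ (ii) needs.
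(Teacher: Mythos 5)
Your proof is correct and takes essentially the same route as the paper: the same convolution-power/differentiation argument for (i)$\Leftrightarrow$(ii), and the same GNS-type construction of the anti-unitary $J$ from the coboundary relation for (ii)$\Leftrightarrow$(iii). The only difference is organizational: you isolate the identity $(\ast)$, namely $\la \eta(a^*),\eta(b)\ra = \la \eta(b^*),\eta(a)\ra$, as a single hinge for both directions of (ii)$\Leftrightarrow$(iii), whereas the paper obtains the same relations implicitly inside two separate computations (norm equality $\|\eta(a)\|=\|\eta(a^*)\|$ for the construction of $J$, and a direct expansion of $L(ab)$ for the converse).
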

\begin{proof}
(i)$\Leftrightarrow$(ii)

 Let $L$ be a tracial generating functional.

Let us first recall that the convolution of two
tracial functionals is tracial,
\[
(f\star g)(ab) = f(a_{(1)}b_{(1)})g(a_{(2)}b_{(2)}) =
f(b_{(1)}a_{(1)})g(b_{(2)}a_{(2)})=(f\star g)(ba),
\]
for $f,g$ two tracial linear functionals on $\mathcal{A}$,
$a,b\in\mathcal{A}$.

The counit is also tracial, since it is a homomorphism with values in the
commutative algebra $\C$. Therefore
\[
\omega_t(ab) = \sum_{n=0}^\infty \frac{t^n}{n!} L^{\star n}(ab) =
\sum_{n=0}^\infty \frac{t^n}{n!} L^{\star n}(ba) = \omega_t(ba)
\]
for all $a,b\in\mathcal{A}$ and all $t\ge 0$.

For the converse, suppose that $\omega_t$ is tracial for all $t\ge 0$. Then
\[
L(ab) = \left.\frac{{\rm d}}{{\rm d}t}\right|_{t=0} \omega_t(ab)
=\left.\frac{{\rm d}}{{\rm d}t}\right|_{t=0} \omega_t(ba) = L(ba)
\]
for all $a,b\in\mathcal{A}$

(ii)$\Leftrightarrow$(iii)

 Let $L$ be tracial. Then we have
\begin{eqnarray*}
\langle \eta(a),\eta(a)\rangle &=& - \e(a^*)L(a)+ L(a^*a) - \e(a)L(a^*) \\
&=&  -
\e(a)L(a^*)+ L(aa^*) - \e(a^*)L(a) = \langle \eta(a^*),\eta(a^*)\rangle
\end{eqnarray*}
and therefore the formula  $J\eta(a)=\eta(a^*)$, defines correctly a conjugate linear map $J:\eta(\mathcal{A})\to
\eta(\mathcal{B})$ which extends to an anti-unitary operator $J:D\to D$.

Conversely, if there exists an anti-unitary operator $J:D\to D$ with
$J\eta(a)=\eta(a^*)$, then
\begin{eqnarray*}
L(ab) &=& \e(a)L(b) + \langle\eta(a^*),\eta(b)\rangle + L(a)\e(b) \\
&=& \e(b)L(a) +
\langle J\eta(b),J \eta(a^*)\rangle + L(b)\e(a) \\
&=& e(b)L(a) +
\langle \eta(b^*),\eta(a)\rangle + L(b)\e(a) = L(ba)
\end{eqnarray*}
for all $a,b\in\mathcal{A}$.
\end{proof}

\begin{proposition}\label{prop-gauss-trace}
Let $(j_t)_{t\ge 0}$ be a Gaussian L\'evy process on a $*$-bialgebra
$\mathcal{A}$ whose marginal
distributions are tracial. Then its marginal distributions vanish on the
commutator ideal and the process is therefore equivalent to a commutative
L\'evy process on $\mathcal{A}$.
\end{proposition}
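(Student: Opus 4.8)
The plan is to push everything onto the generating functional $L$ of the process and then lift it to the whole convolution semigroup. Write $\mathcal{I}\subseteq\mathcal{A}$ for the $*$-ideal generated by the commutators $[c,d]=cd-dc$ (it is automatically $*$-closed since $[c,d]^*=[d^*,c^*]$). As $\e$ is a homomorphism into the commutative algebra $\C$ we have $\e(\mathcal{I})=0$, and $\mathcal{I}$ is moreover a coideal: writing the coproduct in Sweedler notation one computes
\[
\Delta([c,d]) = [c_{(1)},d_{(1)}]\ot c_{(2)}d_{(2)} + d_{(1)}c_{(1)}\ot [c_{(2)},d_{(2)}] \in \mathcal{I}\ot\mathcal{A}+\mathcal{A}\ot\mathcal{I}.
\]
Hence $\mathcal{A}/\mathcal{I}$ is a commutative $*$-bialgebra, and the two assertions of the proposition become: (a) each $\omega_t$ vanishes on $\mathcal{I}$, and (b) the convolution semigroup then factors through $\mathcal{A}/\mathcal{I}$, which is what yields equivalence to a commutative process.

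For (a) I would first prove $L|_{\mathcal{I}}=0$, and this is where both hypotheses enter. Being Gaussian forces $\rho(a)=\e(a)1$ (condition (3) in the list after Definition \ref{def-gaussian}), so the cocycle law degenerates to $\eta(cd)=\e(c)\eta(d)+\eta(c)\e(d)$; this is symmetric in $c,d$, so $\eta([c,d])=0$ and in fact $\eta$ kills all of $\mathcal{I}$. Being tracial makes $L$ tracial by Lemma \ref{lem-trace}, so $L([c,d])=0$. Substituting $\eta([c,d])=0$, $L([c,d])=0$ and $\e([c,d])=0$ into the coboundary identity of Definition \ref{def-triple} (with $[c,d]$ in the right-hand argument) gives $L(z[c,d])=0$ for every $z\in\mathcal{A}$; traciality of $L$ rewrites a generator $x[c,d]y$ of $\mathcal{I}$ as $(yx)[c,d]$, so $L(x[c,d]y)=0$, and $L|_{\mathcal{I}}=0$ follows by linearity.

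I would then propagate from $L$ to each $\omega_t$ using the coideal structure. Iterating the inclusion above, the iterated coproduct satisfies $\Delta^{(n-1)}(\mathcal{I})\subseteq\sum_{i=1}^{n}\mathcal{A}^{\ot(i-1)}\ot\mathcal{I}\ot\mathcal{A}^{\ot(n-i)}$; applying the multilinear functional $L^{\ot n}$ and using $L|_{\mathcal{I}}=0$ in the single leg that lies in $\mathcal{I}$ shows $L^{\star n}|_{\mathcal{I}}=0$ for every $n\ge1$, while $L^{\star 0}=\e$ already annihilates $\mathcal{I}$. By the algebraic Sch\"onberg correspondence (the theorem stated just before Definition \ref{def-triple}) the semigroup is $\omega_t=\exp_\star(tL)=\sum_{n\ge0}\frac{t^n}{n!}L^{\star n}$, whence $\omega_t(\mathcal{I})=0$ for all $t\ge0$; as $\omega_{s,t}=\omega_{t-s}$, all marginal distributions vanish on the commutator ideal.

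For (b) each $\omega_t$ descends to a state $\bar\omega_t$ on the commutative bialgebra $\mathcal{A}/\mathcal{I}$, and $(\bar\omega_t)_{t\ge0}$ is again a convolution semigroup; realizing it by a L\'evy process $\bar\jmath$ over $\mathcal{A}/\mathcal{I}$ (Sch\"urmann's reconstruction) and composing with the quotient map $q\colon\mathcal{A}\to\mathcal{A}/\mathcal{I}$ produces a L\'evy process $j'_{s,t}=\bar\jmath_{s,t}\circ q$ on $\mathcal{A}$ with the same convolution semigroup as $j$ and with commutative range (a $*$-homomorphic image of $\mathcal{A}/\mathcal{I}$), hence equivalent to $j$. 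I expect the genuine obstacle to be the first step of (a): a tracial functional by itself need \emph{not} vanish on the commutator ideal (a trace on a matrix algebra does not), so the point is that the Gaussian degeneration of the cocycle and the traciality of $L$ must be combined through the coboundary identity, with the commutator placed in the correct argument; once $L|_{\mathcal{I}}=0$ is established, the coideal propagation to $\omega_t$ and the factorization in (b) are routine.
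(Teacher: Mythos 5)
Your proof is correct and follows essentially the same route as the paper's: Gaussianity degenerates the cocycle law so that $\eta$ annihilates commutators, traciality of the marginals gives traciality of $L$ (Lemma \ref{lem-trace}), and the two combine through the coboundary identity to force $L$ to vanish on the commutator ideal $\mathcal{I}$, after which everything passes to the commutative quotient $\mathcal{A}/\mathcal{I}$. Your write-up is in fact somewhat more complete at the two points the paper merely asserts: you prove the propagation from $L|_{\mathcal{I}}=0$ to $\omega_t|_{\mathcal{I}}=0$ using the correctly stated coideal inclusion $\Delta(\mathcal{I})\subseteq \mathcal{I}\ot\mathcal{A}+\mathcal{A}\ot\mathcal{I}$ (the paper's parenthetical ``$\Delta(\mathcal{I})\subset\mathcal{I}\ot\mathcal{I}$'' is a misstatement), and you spell out the final reconstruction-and-quotient step giving the equivalence.
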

\begin{proof}
Let $\mathcal{I}$ be the $*$-ideal generated by the commutators of elements of
$\mathcal{A}$, i.e.\
\[
\mathcal{I} = {\rm Lin}\{a(bc-cb)d:a,b,c,d\in\mathcal{A}\}.
\]
The set $\mathcal{I}$ is also a coideal (i.e.\ $\Delta(\mathcal{I}) \subset \mathcal{I} \otimes \mathcal{I}$) and the quotient $\mathcal{A}/\mathcal{I}$ is a
commutative $*$-bialgebra. If the generating functional $L$ of $(j_t)_{t\ge 0}$
vanishes on $\mathcal{I}$, then the marginal distributions also vanish on
$\mathcal{I}$ and $(j_t)_{t\ge 0}$ is equivalent to a L\'evy
process on commutative $*$-bialgebra $\mathcal{A}/\mathcal{I}$.

Let $L$ be tracial and Gaussian, with cocycle $\eta:\mathcal{A}\to D$. By Gaussianity, we have
\[
\eta(ab)= \e(a)\eta(b) + \eta(a)\e(b) = \eta(ba)
\]
for all $a,b\in\mathcal{A}$,
i.e.\ $\eta$ is also tracial. Let $a,b,c\in\mathcal{A}$, then we have
\begin{eqnarray*}
L(abc) &=& \e(a) L(bc) + \langle\eta(a^*),\eta(bc)\rangle +
L(a)\e(bc) \\
&=&\e(a) L(cb) + \langle\eta(a^*),\eta(cb)\rangle +
L(a)\e(cb) = L(acb)
\end{eqnarray*}
Combined with the traciality of $L$, it is now easy to see that the value of
the generating functional on a product of any number of elements of
$\mathcal{A}$ does not depend on the order of the factors, and therefore $L$
vanishes on the commutator ideal.
\end{proof}

In the following sections we shall study the L\'{e}vy processes on the free permutation group $S_n^+$. It will be useful to know when a Sch\"{u}rmann triple passes from a quantum group of a particular type to its quantum subgroup; this will be the use of the following lemma.

\begin{lemma}
\label{quotient}
Let $\mathcal{A}$ be an algebra generated by a collection of elements, $a_1,\ldots,a_n$, let $\e$ be a character on $\mathcal{A}$, and let $(\rho,\eta,L)$ be a Sch\"{u}rmann triple on $\mathcal{A}$. Let $\B$ be the quotient of $\mathcal{A}$ by the two-sided ideal generated by the selfadjoint relations $r_1(a_1,\ldots,a_n)=0$, $\ldots$, $r_k(a_1,\ldots,a_n)=0$.

If $\rho$, $\e$ and $\eta$ vanish on $r_1,\ldots,r_k$, then $\rho$ is a representation of $\B$ and $\eta$ is a $\rho$-$\e$-cocycle on $\B$. If, moreover, $L$ vanishes on $r_1,\ldots,r_k$, then $(\rho, \eta, L)$ is a Sch\"{u}rmann triple on $\B$.
\end{lemma}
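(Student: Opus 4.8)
The statement asks us to show that a Schürmann triple $(\rho,\eta,L)$ on a free algebra $\mathcal{A}$ descends to a quotient $\B = \mathcal{A}/\mathcal{J}$, where $\mathcal{J}$ is the two-sided ideal generated by selfadjoint relations $r_1,\ldots,r_k$, provided the relevant maps vanish on the generators $r_1,\ldots,r_k$ of $\mathcal{J}$. My plan is to check, map by map, that the vanishing on the \emph{generators} $r_1,\ldots,r_k$ propagates automatically to vanishing on the \emph{whole ideal} $\mathcal{J}$, using only the cocycle/homomorphism identities already recorded in Definition \ref{def-triple}.

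\textbf{Step 1: the representation $\rho$.}
Since $\rho:\mathcal{A}\to\Ldag(D)$ is a unital $*$-homomorphism, the set $\ker\rho$ is automatically a two-sided $*$-ideal. The hypothesis gives $\rho(r_j)=0$ for each $j$, and since each $r_j$ is selfadjoint this places all generators of $\mathcal{J}$ inside the $*$-ideal $\ker\rho$; hence $\mathcal{J}\subseteq\ker\rho$ and $\rho$ factors through $\B$. This step is essentially formal.

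\textbf{Step 2: the cocycle $\eta$.}
This is the step I expect to be the crux. Here $\eta$ is only a $\rho$-$\e$-cocycle, not a homomorphism, so $\ker\eta$ need \emph{not} be an ideal and the argument is not purely formal. The plan is to show directly that $\eta$ annihilates a general element $a\,r_j\,b$ of $\mathcal{J}$. Applying the cocycle identity $\eta(xy)=\rho(x)\eta(y)+\eta(x)\e(y)$ twice, I would expand
\[
\eta(a\,r_j\,b)=\rho(a)\rho(r_j)\eta(b)+\rho(a)\eta(r_j)\e(b)+\eta(a)\e(r_j)\e(b).
\]
By hypothesis $\rho(r_j)=0$, $\eta(r_j)=0$, and $\e(r_j)=0$, so every term vanishes. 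Since $\mathcal{J}$ is the linear span of such products, $\eta|_{\mathcal{J}}=0$, and together with Step 1 (which guarantees $\rho$ descends, so the descended $\eta$ is a genuine $\rho$-$\e$-cocycle on $\B$) this gives the cocycle claim. The selfadjointness of the $r_j$ guarantees that it suffices to treat generators of this form.

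\textbf{Step 3: the functional $L$.}
Assuming in addition $L(r_j)=0$, I would similarly verify that $L$ annihilates $\mathcal{J}$. Writing the coboundary identity $L(x^*y)=\la\eta(x),\eta(y)\ra+\e(x^*)L(y)+L(x^*)\e(y)$ and using that $\eta$ and $\e$ already kill the relevant pieces, together with the traciality-free bookkeeping of expanding $L(a\,r_j\,b)$ via the derivation-type relations, all contributions reduce to terms containing $\rho(r_j)$, $\eta(r_j)$, $\e(r_j)$, or $L(r_j)$, each of which vanishes. Hence $L$ descends to $\B$, and because $\rho,\eta,L$ individually satisfy the defining identities of a Schürmann triple on $\mathcal{A}$ and all factor through $\B$, the induced triple on $\B$ satisfies Definition \ref{def-triple}. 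The only genuine subtlety, as noted, is Step 2, where one must exploit the cocycle relation rather than any ideal property of $\eta$; the other two steps are bookkeeping.
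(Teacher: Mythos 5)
Your proposal is correct and takes essentially the same approach as the paper: both propagate the vanishing from the generators $r_j$ to the whole ideal $\mathcal{J}$ via the multiplicativity of $\rho$ and $\e$, the cocycle identity for $\eta$, and the coboundary identity for $L$ (with selfadjointness of the $r_j$ taking care of $\eta(r_j^*)$ in the $L$-step). The only cosmetic difference is that you expand $\eta(a\,r_j\,b)$ and $L(a\,r_j\,b)$ in a single computation, whereas the paper shows that one-sided products $ar$ and $ra$ inherit the vanishing and iterates.
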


\begin{proof}
Let us denote by $\mathcal{J}$ the two-sided ideal generated by the relations $r_1,\ldots,r_k$. Then $\B=\mathcal{A}/\mathcal{J}$. To show that all elements of the Sch\"{u}rmann triple on $\B$ are well-defined, it is enough to check that $\rho$, $\eta$ and $L$ vanish on $\mathcal{J}$. For that, take $a\in \mathcal{A}$ and $r\in \mathcal{J}$. By the assumption, $\rho(r)=0$, $\e(r)=0$, $\eta(r)=0$, $L(r)=0$, and thus immediately, $\rho(ar)=\rho(a)\rho(r)=0=\rho(ra)$, $\e(ar)=\e(a)\e(a)=0=\e(ra)$. Moreover, from the cocycle property, we see that $\eta(ar)=\rho(a)\eta(r)+\eta(a)\e(r)=0$ and similarily, $\eta(ra)=0$. Finally, from the relation $L(ab)=\langle \eta(a^*),\eta(b)\rangle$ for $a,b\in \ker \e$, we get
$$L(ar)=L([a-\e(a)1]r)+\e(a)L(r)= \langle \eta([a-\e(a)1]^*),\eta(r)\rangle=0$$
and in the same way we show that $L(ra)=0$.
\end{proof}

\section{The Markov semigroup of a L\'evy process}
\label{sec-markov}

In this short section we discuss Markov semigroups on the algebras related to compact quantum groups and their basic symmetry properties.

Functionals on a Hopf algebra act on it by convolution, i.e.\ for $f\in \mathcal{A}'$, we define $T_f:\mathcal{A}\to\mathcal{A}$ by $T_f(a) = ({\rm id}_\mathcal{A}\otimes f)\circ\Delta$ for $a\in\mathcal{A}$, which we shall also write as $T_f(a) = f\star a$. The map $f\mapsto T_f$ is homomorphism, i.e., we have
\[
T_f\circ T_g = T_{f\star g}
\]
for $f,g\in\mathcal{A}'$, as one checks by the calculation
\[
T_f\circ T_g (a) = \sum a_{(1)} f(a_{(2)}) g(a_{(3)}) = T_{f\star g}(a)
\]
using Sweedler's notation.

Further if $\mathbb{G}$ is a compact quantum group and $\omega$ is a state on ${\rm Pol}(\mathbb{G})$, then $T_\omega=({\rm id}\otimes \omega)\circ\Delta:{\rm Pol}(\mathbb{G})\to {\rm Pol}(\mathbb{G})$ preserves the unit, i.e., $T_\omega(1)=1$, and preserves positivity, since
\[
T_\omega(a^*a) = \sum a^*_{(1)i} a_{(1)j} \omega(a^*_{(2)i} a_{(2)j})
\]
for $a\in{\rm Pol}(\mathbb{G})$ with $\Delta(a) = \sum a_{(1)i} \otimes a_{(2)i}$. In fact, $T_\omega$ is even completely positive, i.e., its extension $T^{(n)}={\id}_{M_n}\otimes T_\omega:M_n\big({\rm Pol}(\mathbb{G})\big)\to M_n\big({\rm Pol}(\mathbb{G})\big)$ also preserves positivity for all $n\in\mathbb{N}$. It turns out that in fact these convolution operators extend also to completely positive maps between both types of completed $C^*$-algebras mentioned earlier, $C^r(\mathbb{G})$ and $C^u(\mathbb{G})$ (see for example \cite{CFK14}).

We can therefore associate to a convolution semigroup of states $(\omega_t)_{t\ge 0}$ a composition semigroup of completely positive unital maps $(T_t)_{t\ge0}$ by setting $T_t=T_{\omega_t}$. $(T_t)_{t\ge 0}$ is called the \emph{Markov semigroup} of $(\omega_t)_{t\ge 0}$ or of the associated L\'evy process $(j_t)_{t\ge 0}$. The terminology `Markov semigroup' arrives from noncommutative probability, where it usually denotes a semigroup of unital (or contractive) completely positive maps; its source lies of course in the classical theory of stochastic processes.

We call a Markov semigroup $(T_t)_{t\ge0}$ on a compact quantum group of Kac type \emph{symmetric}, if for all $t\ge 0$ $T_t$ is hermitian w.r.t.\ the inner product induced by the Haar state $h$, i.e.,
\[
h\big(T_t(a)^* b\big) = h(a^*T_t(b)\big)
\]
for $a,b\in {\rm Pol}(\mathbb{G})$, $t\ge 0$. For compact quantum groups whose Haar state is not a trace one may also consider a second, weaker, symmetry property called \emph{KMS-symmetry}, cf. \cite{CFK14}, but we shall not need it here since the permutation quantum groups we will study are of Kac type.

We have the following equivalent characterizations of symmetry of a Markov semigroup.

\begin{theorem}\cite[Theorem 4.11]{CFK14}\label{thm-sym}
Let $(j_t)_{t\ge 0}$ be a L\'evy process on a Kac-type compact quantum group $\mathbb{G}$ with Markov semigroup $(T_t)_{t\ge 0}:{\rm Pol}(\mathbb{G})\to{\rm Pol}(\mathbb{G})$ and generating functional $L:{\rm Pol}(\mathbb{G})\to \mathbb{C}$. Then the following conditions are equivalent.
\begin{description}
\item[(a)]
$(T_t)_{t\ge 0}$ is symmetric;
\item[(b)]
$T_L:{\rm Pol}(\mathbb{G})\to{\rm Pol}(\mathbb{G})$ is symmetric, i.e.
\[
h\big(T_L(a)^* b\big) = h(a^*T_L(b)\big)
\]
for $a,b\in {\rm Pol}(\mathbb{G})$;
\item[(c)]
$L$ is invariant under the antipode, i.e., $L\circ S=L$.
\end{description}
\end{theorem}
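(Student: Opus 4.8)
The plan is to establish the chain of equivalences in two moves: first (a)$\Leftrightarrow$(b) by relating the semigroup $(T_t)_{t\ge 0}$ to its generator $T_L$ through exponentiation, and then (b)$\Leftrightarrow$(c) by computing the Hilbert-space adjoint of the convolution operator $T_L$ with respect to the Haar inner product $\la a,b\ra := h(a^*b)$ on $\PolG$. This inner product is genuinely positive definite, since $h$ is faithful on $\PolG$, so ``symmetric'' means $\la T_L a,b\ra = \la a, T_L b\ra$ in the honest algebraic sense, and I need not worry about domains or essential self-adjointness.

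For (a)$\Leftrightarrow$(b) I would use the algebraic Sch\"onberg correspondence from the previous section, $\omega_t = \exp_\star(tL)$ on $\PolG$, together with the fact (recorded in Section \ref{sec-markov}) that $f\mapsto T_f$ is a homomorphism from the convolution algebra of functionals on $\PolG$ into the composition algebra of operators. Hence $T_t = T_{\omega_t} = \exp(tT_L) = \sum_{n\ge 0}\frac{t^n}{n!}(T_L)^n$. Since by the Fundamental Theorem of Coalgebras every $a\in\PolG$ lies in a finite-dimensional subcoalgebra which, because $T_L(c)=\sum c_{(1)}L(c_{(2)})$, is invariant under $T_L$, this series is elementwise an ordinary matrix exponential; in particular $t\mapsto T_t$ is smooth with $\frac{d}{dt}\big|_{t=0}T_t = T_L$. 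Differentiating the symmetry identity for $T_t$ at $t=0$ gives (a)$\Rightarrow$(b); conversely, if $T_L$ is symmetric then so is each power $(T_L)^n$, hence each partial sum, and hence every $T_t$, giving (b)$\Rightarrow$(a).

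The core is (b)$\Leftrightarrow$(c). I would first note that $f\mapsto T_f$ is injective, since $\e\circ T_f = f$ (apply the counit to $T_f(a)=\sum a_{(1)}f(a_{(2)})$ and use the counit axiom). It therefore suffices to identify $T_L^*$ as a convolution operator. Writing $T_L(a)=\sum a_{(1)}L(a_{(2)})$, using $\Delta(a^*)=\sum a_{(1)}^*\ot a_{(2)}^*$ and hermitianity of $L$ (so $\overline{L(x)}=L(x^*)$), and substituting $c=a^*$, the desired identity $\la T_L a,b\ra = \la a, T_{L\circ S}\,b\ra$ reduces exactly to applying $L$ to the strong invariance identity for the Haar state,
\[
\sum h(c_{(1)}b)\,c_{(2)} = \sum h(c\,b_{(1)})\,S(b_{(2)}), \qquad b,c\in\PolG,
\]
a standard property of $h$ (see \cite{woronowicz98}). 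This yields $T_L^*=T_{L\circ S}$, whereupon injectivity gives: $T_L$ is symmetric $\iff T_L = T_{L\circ S} \iff L = L\circ S$, which is precisely (c). The Kac-type hypothesis enters here to guarantee that $h$ is tracial and that $S^2=\id$, $S(a^*)=S(a)^*$ and $h\circ S=h$, so that the relevant inner product is the symmetric one and the strong invariance identity takes the clean form above.

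The step I expect to be the main obstacle is the adjoint computation in (b)$\Leftrightarrow$(c): coordinating the involution with the two Sweedler legs and invoking strong invariance in exactly the right form, so that the adjoint comes out as convolution by $L\circ S$ rather than by some modularly twisted functional. Once that identity and the injectivity of $f\mapsto T_f$ are in place the equivalence is immediate, and (a)$\Leftrightarrow$(b) is then routine generator/semigroup bookkeeping.
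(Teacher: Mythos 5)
The first thing to note is that the paper contains no proof of this statement at all: it is imported, with citation, from \cite[Theorem 4.11]{CFK14}, so there is no in-paper argument to compare yours against. Judged on its own terms, your proof is correct and complete. The equivalence (a)$\Leftrightarrow$(b) works exactly as you describe: each $a\in{\rm Pol}(\mathbb{G})$ lies in a finite-dimensional subcoalgebra, which is $T_L$-invariant, so $T_t=T_{\omega_t}=\exp(tT_L)$ holds elementwise as a genuine matrix exponential (using $T_f\circ T_g=T_{f\star g}$ and $\omega_t=\exp_\star(tL)$, both recorded in the paper), and differentiating at $t=0$, respectively summing the symmetric powers $T_L^n$, transports the symmetry in both directions. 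For (b)$\Leftrightarrow$(c), your two ingredients are sound: $(T_La)^*=T_L(a^*)$ (hermitianity of $L$ together with $\Delta$ being a $*$-homomorphism), and the Kac-type strong invariance identity
\[
\sum h(c_{(1)}b)\,c_{(2)}=\sum h(c\,b_{(1)})\,S(b_{(2)}),
\]
which yields $h\big((T_La)^*b\big)=h\big(a^*T_{L\circ S}(b)\big)$, i.e.\ $T_L^*=T_{L\circ S}$; combined with non-degeneracy of $(a,b)\mapsto h(a^*b)$ (faithfulness of $h$ on ${\rm Pol}(\mathbb{G})$) and injectivity of $f\mapsto T_f$ (via $\varepsilon\circ T_f=f$), this is precisely (b)$\Leftrightarrow$(c). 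The one step that deserves explicit justification is the displayed identity itself: it is the standard strong right invariance $\sum h(c\,b_{(1)})\,b_{(2)}=\sum S(c_{(2)})\,h(c_{(1)}b)$ with $S$ applied to both sides and $S^2={\rm id}$ invoked --- this is exactly where the Kac hypothesis enters, as you correctly flag --- and it can also be checked directly on matrix coefficients using the Kac-case orthogonality relations $h\big(u^{(\alpha)}_{ij}(u^{(\alpha)}_{kl})^*\big)=\delta_{ik}\delta_{jl}/n_\alpha$. Your route (the adjoint of a convolution operator is convolution by the antipode-composed functional) is in fact essentially the argument used in \cite{CFK14} itself, so nothing is lost by the paper's omission.
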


The antipode on a Kac-type compact quantum group is involutive. It furthermore preserves positivity and the counit. Therefore we can produce a symmetric functional starting from an arbitrary generating functional $L$ by setting
\[
L_{\rm sym} = \frac{1}{2}(L+L\circ S).
\]

\medskip
The \emph{adjoint action} of a Hopf algebra is the map $\ad :\Alg\to \Alg\otimes \Alg$ defined by the formula
$$\ad(a) = a_{(1)}S(a_{(3)}) \otimes a_{(2)}$$
for $a\in \Alg$, see, e.g., \cite[Section 1.3.4]{klimyk+schmudgen97}. We say that a linear functional $\phi\in \Alg'$ is \emph{\ad-invariant} if
\begin{equation}
(\id \otimes \phi)\circ \ad = \phi 1_\Alg.
\end{equation}
If $\Alg=\Pol(\mathbb{G})$ for a compact quantum group $\mathbb{G}$, this happens if and only if $L$ commutes (with respect to the convolution product) with all elements of  the space of linear functionals on $\Alg$ of the form $h_a: b \mapsto h(ab)$ for $a\in \Alg$.

It is easy to see that the counit $\e$ and the Haar state $h$ are always \ad-invariant, and if the quantum group in question is cocommutative then all the  functionals in $\Pol(\mathbb{G})'$ are \ad-invariant.

The ad-invariant functionals are a natural generalization of central measures on classical Lie groups. Recall that a measure $\mu$ on a topological group $G$ is called \emph{central}, if it commutes with all other measures (w.r.t.\ to the convolution). This is the case if
$$ \int_G f(gxg^{-1})d\mu(x) = \int_G f(x) d\mu(x)$$
for all $g\in G$ and $f\in C(G)$. Such measures play an important role in harmonic analysis and the study of L\'evy processes.

For arbitrary quantum groups we can produce $\ad$-invariant functionals using the Haar state. More precisely, let us denote by $\ad_h\in \mathcal{L}(\Pol(\mathbb{G}))$ the linear map given by
$$\ad_h=(h\otimes{\rm id})\circ \ad.$$
Then $\phi_{\ad} :=\phi\circ \ad_h$ is $\ad$-invariant for all $\phi\in\Pol(\mathbb{G})'$ and, in fact, $\phi\in\Pol(\mathbb{G})'$ is $\ad$-invariant if and only if $\phi= \phi\circ\ad_h$ (see \cite[Propositions 6.6 and 6.7]{CFK14}).

Given a set of irreducible unitary (pairwise inequivalent) representations $(u_{jk}^{(s)})_{s\in \mathcal \Irr(\QG)}$, an \ad-invariant functional $\phi$ must be of the form $ \phi(u_{jk}^{(s)})=c_s\delta_{jk}$ for $s\in \Irr(\QG)$. Hence $\phi$ is uniquely determined by its value on the characters $\chi_s := \sum_{j=1}^{n_s} u^{(s)}_{jj}$, $s\in \Irr(\QG)$, which suggest the relation to the \emph{algebra of central functions} on $\mathbb{G}$
\begin{equation} \label{eq_central_functions}
{\Pol}_0 (\mathbb G):= {\rm Lin}\left\{ \chi_s = \sum_{j=1}^{n_s} u^{(s)}_{jj}:
  s\in\Irr(\QG)\right\}.
\end{equation}

\medskip
In general, the mapping ${\rm ad}_h^*:\phi \mapsto \phi_{\rm ad}$ preserves neither hermitianity nor positivity, so it will not preserve the generating functionals of L\'{e}vy processes. However, this problem disappears if the quantum group $\mathbb G$ is of Kac type. In fact, we can then reduce the problem of classifying ad-invariant
generating functionals to the classification of
generating functionals on the subalgebra of central functions.
\begin{theorem}[see \cite{CFK14}, Corollary 6.11] \label{thm-ad-inv}
If $\mathbb{G}$ is of Kac type, then
\begin{enumerate}
\item $\ad_h$ is a conditional expectation onto ${\Pol}_0 (\mathbb G)$;
\item the mapping $$\ad_h^*: L \mapsto L\circ \ad_h$$
defines a bijection between generating functionals on ${\Pol}_0 (\mathbb G)$ and
{\rm ad}-invariant generating functionals on ${\Pol} (\mathbb G)$.
\end{enumerate}
\end{theorem}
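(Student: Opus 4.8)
The engine of the whole argument is the explicit formula
$$\ad_h(u^{(s)}_{jk}) = \frac{\delta_{jk}}{n_s}\,\chi_s, \qquad s\in\IrrG,\ 1\le j,k\le n_s,$$
which I would establish first. The plan is to expand $\ad(u^{(s)}_{jk}) = \sum_{m,p} u^{(s)}_{jm} S(u^{(s)}_{pk})\ot u^{(s)}_{mp}$ from $\Com(u^{(s)}_{jk})=\sum_m u^{(s)}_{jm}\ot u^{(s)}_{mk}$, use that in the Kac case $S(u^{(s)}_{pk}) = (u^{(s)}_{kp})^*$, and apply the Schur orthogonality relations for the (tracial) Haar state, $h\big(u^{(s)}_{jm}(u^{(s)}_{kp})^*\big) = \tfrac{1}{n_s}\delta_{jk}\delta_{mp}$. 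Three consequences are then immediate and will be used throughout: the range of $\ad_h$ lies in $\Pol_0(\QG)$; $\ad_h(\chi_s)=\chi_s$, so $\ad_h$ fixes $\Pol_0(\QG)$ pointwise and is idempotent onto it; and $\e\circ\ad_h=\e$, since $\e(\chi_s)=n_s$.

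For part (1) I would identify $\ad_h$ with a conditional expectation coming from the Haar state. Using that the characters $\{\chi_s\}$ are orthonormal for $\la a,b\ra=h(a^*b)$, the $L^2(h)$-orthogonal projection $P$ onto $\overline{\Pol_0(\QG)}$ satisfies $P(u^{(s)}_{jk}) = \sum_{t}h(\chi_t^* u^{(s)}_{jk})\,\chi_t=\tfrac{\delta_{jk}}{n_s}\chi_s$, so $P=\ad_h$ on $\PolG$. Since $\QG$ is of Kac type the Haar state is a faithful trace and $\Pol_0(\QG)$ is a unital $*$-subalgebra (closed under multiplication by the fusion rules and under $*$ by passing to conjugate representations), so the trace-preserving orthogonal projection onto its closure is the unique $h$-preserving conditional expectation; by Tomiyama's theorem it is completely positive and a $\Pol_0(\QG)$-bimodule map. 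As it agrees with $\ad_h$ on $\PolG$ and maps $\PolG$ into $\Pol_0(\QG)$, part (1) follows.

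For part (2) the set-theoretic bijection is essentially formal, and I would dispatch it first. Writing $R$ for restriction to $\Pol_0(\QG)$, idempotency gives $\ad_h|_{\Pol_0(\QG)}=\id$, hence $R\circ\ad_h^*=\id$ on functionals on $\Pol_0(\QG)$; conversely, for $\ad$-invariant $\phi$ one has $\phi=\phi\circ\ad_h=R(\phi)\circ\ad_h=\ad_h^*(R(\phi))$ by the characterisation of $\ad$-invariance recalled before the theorem, so $\ad_h^*\circ R=\id$ on $\ad$-invariant functionals. Thus $\ad_h^*$ and $R$ are mutually inverse, and it remains only to match the two classes of \emph{generating} functionals. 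One inclusion is trivial: if $\tilde L$ is an $\ad$-invariant generating functional on $\PolG$ then $R(\tilde L)$ inherits $R(\tilde L)(1)=0$, hermitianity, and conditional positivity (the last because $\ker\e\cap\Pol_0(\QG)\subseteq\ker\e$). For the reverse, $\ad_h^*(L)$ is $\ad$-invariant by the cited fact $\phi\mapsto\phi\circ\ad_h$, satisfies $\ad_h^*(L)(1)=L(1)=0$, and is hermitian because $\ad_h$, being a conditional expectation, is $*$-preserving.

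The one genuinely non-formal point — and the step I expect to be the main obstacle — is the conditional positivity of $\ad_h^*(L)=L\circ\ad_h$ on all of $\PolG$, i.e. $L(\ad_h(a^*a))\ge 0$ for $a\in\ker\e$. The Kadison–Schwarz inequality for the unital completely positive map $\ad_h$ gives $\ad_h(a^*a)\ge \ad_h(a)^*\ad_h(a)$, and since $b:=\ad_h(a)\in\Pol_0(\QG)\cap\ker\e$ the term $L(b^*b)$ is non-negative by conditional positivity of $L$ on $\Pol_0(\QG)$; the trouble is the remainder $c:=\ad_h(a^*a)-b^*b$, which is only $C^*$-positive in $\ker\e\cap\Pol_0(\QG)$ and need not be a finite algebraic sum of squares of elements of $\ker\e\cap\Pol_0(\QG)$ once $\Pol_0(\QG)$ is noncommutative. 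The clean way I would get around this is to diagonalise convolution on $\ad$-invariant functionals: setting $\hat\phi(s)=\phi(\chi_s)/n_s$ one checks $\widehat{\phi\star\psi}(s)=\hat\phi(s)\hat\psi(s)$, so $\exp_\star(tL\circ\ad_h)$ is the $\ad$-invariant functional with coefficients $\exp\!\big(t\,L(\chi_s)/n_s\big)$, and its positivity for all $t\ge0$ is precisely the Schönberg-type condition encoding that $L$ is a generating functional on the central algebra $\Pol_0(\QG)$. Making this equivalence rigorous — bridging the algebraic conditional positivity of $L$ on $\Pol_0(\QG)$ with the analytic positivity of the central states $\exp_\star(tL\circ\ad_h)$ on $\PolG$ — is where the Kac-type hypothesis and the passage to the $C^*$- (or von Neumann-) completion, on which $\ad_h$ is an honest conditional expectation, will do the real work.
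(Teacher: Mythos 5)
First, a point of reference: the paper does not prove Theorem \ref{thm-ad-inv} at all — it imports it wholesale from \cite{CFK14} (Corollary 6.11) — so there is no in-paper argument to compare yours against; I can only judge the proposal on its own terms. Your preliminary computation $\ad_h(u^{(s)}_{jk})=\tfrac{\delta_{jk}}{n_s}\chi_s$ agrees with the formula the paper later quotes from \cite{CFK14}, and your proof of part (1) is sound: Schur orthogonality in the Kac (tracial) case, identification of $\ad_h$ with the $h$-preserving $L^2$-projection onto the closure of ${\Pol}_0(\mathbb{G})$, and the standard theory of trace-preserving conditional expectations together do the job. In part (2), the mutual inverses ($\ad_h^*$ and restriction, via $\phi=\phi\circ\ad_h$ for $\ad$-invariant $\phi$) and the easy direction (restricting an $\ad$-invariant generating functional to ${\Pol}_0(\mathbb{G})$) are also correct.

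The gap is exactly where you locate it, and your proposed repair does not close it. Conditional positivity of $L\circ\ad_h$ demands $L(\ad_h(a^*a))\ge 0$ for all $a\in\ker\e\subset\Pol(\mathbb{G})$; in the Kac case one can show, using traciality of $h$, that $\ad_h(a^*a)=\sum_k z_k^*z_k$ for finitely many $z_k\in\ker\e$ of the \emph{big} algebra, but these $z_k$ are not central, and a functional positive on squares from $\ker\e\cap{\Pol}_0(\mathbb{G})$ need not be positive on such elements. (This obstruction is not tied to noncommutativity of ${\Pol}_0(\mathbb{G})$, as your parenthesis suggests: it already occurs for commutative central algebras with two or more generators, by Motzkin-type failures of sums-of-squares representations; only in singly generated cases such as $S_n^+$ or $O_n^+$ does one-variable moment theory — the content of \cite[Proposition 10.1]{CFK14} quoted in Section \ref{sec-sym-ad-inv} — bridge the two positivity notions.) Your ``clean way'' is then circular: the diagonalisation $\widehat{\phi\star\psi}(s)=\hat\phi(s)\hat\psi(s)$ is correct, but the claim that positivity on $\Pol(\mathbb{G})$ of the $\ad$-invariant functionals $\exp_\star(t\,L\circ\ad_h)$ for all $t\ge 0$ ``is precisely the Schönberg-type condition encoding that $L$ is a generating functional on ${\Pol}_0(\mathbb{G})$'' is exactly the hard implication of the theorem, not an equivalence you may invoke. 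Nor is there an intrinsic Schönberg correspondence on ${\Pol}_0(\mathbb{G})$ to appeal to: ${\Pol}_0(\mathbb{G})$ is not a sub-bialgebra (the induced map $\chi_s\mapsto\tfrac{1}{n_s}\chi_s\ot\chi_s$ is not an algebra homomorphism), and the usual proof that convolution exponentials of conditionally positive functionals are states requires the coproduct to be a $*$-homomorphism. So the step you defer to ``the real work'' is the entire content of the hard direction, and nothing in the sketch supplies it; it is precisely the input for which the paper relies on \cite{CFK14}.
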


Finally we note that if $(j_t)_{t\geq 0}$ is a L\'evy process on $\mathbb{G}$, and $(\omega_t)_{t\geq0}$ its associated convolution semigroup of states, then the Markov semigroup $(T_{\omega_t})_{t\geq0}$ will be called the \emph{Markov semigroup of the L\'evy process} $(j_t)_{t\geq 0}$.


\section{Quantum Permutation Groups -- Definition and Basic Properties}
\label{sec-q-perm}

An important family of quantum groups arises through the \emph{liberation} procedure, which can be thought of as a procedure based on replacing algebras of commuting coordinates on algebraic groups by the respective `free' coordinates. More precisely, for a fixed subgroup $G$ of the matrix unitary group $U_n$,  the algebra $C(G)$ can be described as a universal $C^*$-algebra generated by some set of relations $R$ satisfied by the coordinate functions $u_{jk}: U_n \ni [a_{jk}]_{j,k=1}^n \to a_{jk} \in \C$. These functions obviously commute. But using the same relations, one can define the universal algebra generated by abstract elements $u_{jk}$ subject to $R$, without assuming the commutativity of $u_{jk}$'s. This gives a noncommutative algebra, interpreted as the algebra of functions on the \emph{free version of the classical group} $G$ and denoted $C(G^+)$.
This idea, which should be attributed to Brown \cite{brown}, see\cite{banica+bichon+collins07}, has been successfully used to define the free version of the unitary group as well as the orthogonal group, \cite{wang95}.

The free analogue of the permutation group is due to S.\ Wang, \cite{wang98}. In this section we recall the definition and the main information about $S_n^+$. We refer the reader to surveys  \cite{banica+bichon+collins07}, \cite{Hadamard} and \cite{Metabief} for more information on this group.

\begin{definition} \label{def_sn+}
Let $C(S_n^+)$ denote the universal $C^*$-algebra generated by the family $\{p_{ij};i,j=1,\ldots,n\}$ of orthogonal projections satisfying the following condition:
\[ \sum_{j=1}^n p_{ij} = \sum_{j=1}^n p_{ji} =1, \;\;\; i=1,\ldots,n.\]
Then the formula
\[ \Com(p_{ij})= \sum_{k=1}^n p_{ik} \otimes p_{kj},\;\;\;i,j=1,\ldots,n,\]
extends to a coassociative unital $*$-homomorphism from $C(S_n^+)$ to $C(S_n^+)\otimes C(S_n^+)$ and the pair $(C(S_n^+), \Delta)$ is the algebra of continuous functions on a compact quantum group, called the \emph{free  permutation group} of $n$ elements, and denoted $A_s(n)$ or $C(S_n^+)$.
\end{definition}

The algebra $\pol(S_n^+)$ is generated - as a $*$-algebra - by $p_{jk}$'s and it is endowed with the standard Hopf $*$-algebra structure: $\e(p_{jk})=\delta_{jk}$, $S(p_{jk})=p_{kj}$.

For $n=1,2,3$, we have $C(S_n^+)=C(S_n)$, the free permutation group algebra is commutative. For $n\geq 4$, $C(S_n^+)$ is noncommutative and infinite dimensional, hence different from $C(S_n)$ (see the two-blocks examples, Section \ref{ssec_two_blocks}). In fact, it is known that $S_4^+$ may be viewed as a deformation of the classical $SO(3)$ group.

It is immediate to see that $S^2=\id$ on $\pol(S_n^+)$, hence the free permutation group is of Kac type.

We call a matrix $P=[P_{jk}]_{j,k=1}^n\in M_n(B(\hil))$ \emph{magic unitary} on $\hil$ if it is a unitary matrix whose entries are orthogonal projections. This is the same as to say that every $P_{jk}$ is a projection and all rows and columns form partitions of the unity. Representation of the algebra  $C(S_n^+)$ are in one-to-one correspondence with magic unitaries on Hilbert spaces, with the relation $\rho:C(S_n^+) \to B(\hil)$, $\rho(p_{jk})=P_{jk}$.

It has been shown in \cite{{Ban99}} that the fusion rules for $S_n^+$ are the same as for $SO(3)$. In particular, $\mathrm{Irr}_{S_n^+}=\mathbb{N}$.

\begin{theorem}[Fusion rules for $S_n^+$, \cite{{Ban99}}]
There exists a family of pairwise inequivalent irreducible representations
$(v^{(s)})_{s\in \N}$ of $S_n^+$ such that for all $s \in \N$
$$ v^{(0)} = 1_{C(S_n^+)}, \quad  [p_{jk}]_{j,k} = 1 \oplus v^{(1)}, \quad \overline{v^{(s)}}\equiv v^{(s)}$$
and for all $r, s \in \N$
$$ v^{(r)}\otimes v^{(s)} = \bigoplus_{l=0}^{2\min\{r,s\}} v^{(r+s-l)}.$$
Furthermore, every irreducible representation of $S_n^+$ is equivalent to
$v^{(s)}$ for some $s \in \N$.
The dimensions $d_s$, of the corepresentations $v^{(s)}$ are given recursively by
$$ d_0 = 1, \quad d_1 = n - 1 \quad \mbox{and}\quad  d_1d_s = d_{s+1} + d_s + d_{s-1}, \quad s\in \N.$$
The irreducible \emph{characters} $\chi_s := \sum_{j=1}^{d_n} (v^{(s)})_{jj}$
satisfy the recursive relations
\begin{equation}\label{eq-rec-rel}
\chi_1\chi_s= \chi_{s+1} + \chi_s + \chi_{s-1},\quad s\in \N.
\end{equation}
\end{theorem}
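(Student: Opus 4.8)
The plan is to pin down the whole representation category via Woronowicz's Tannaka--Krein duality (\cite{woronowicz88}) and then read the fusion rules off the intertwiner spaces of the fundamental representation $u=[p_{jk}]_{j,k=1}^n$. First I would dispose of the two facts that require no computation. The vector $\xi=\sum_{j=1}^n e_j$ is fixed by $u$ --- this is precisely the relation $\sum_j p_{ij}=1$ --- so $u\cong 1\oplus v^{(1)}$ with $\dim v^{(1)}=n-1$; and since every entry $p_{jk}$ is self-adjoint, $u$ equals its conjugate $\bar u$, whence $v^{(1)}$, and inductively every summand $v^{(s)}$ manufactured from it, is self-conjugate, giving $\overline{v^{(s)}}\cong v^{(s)}$.

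The heart of the proof is one combinatorial computation: for $n\ge 4$ one has $\dim\tu{Fix}(u^{\ot k})=C_k$, the $k$-th Catalan number. The fixed vectors of $u^{\ot k}$ are spanned by the vectors $\xi_p$ attached to non-crossing partitions $p$ of $\{1,\dots,k\}$ (the magic-unitary relations generate exactly the non-crossing partition calculus), so the real content is the \emph{linear independence} of the family $(\xi_p)_{p\in NC(k)}$. I expect this to be the main obstacle, and it is exactly where $n\ge 4$ enters: independence is equivalent to non-degeneracy of the Gram matrix $\big(\langle\xi_p,\xi_q\rangle\big)=\big(n^{|p\vee q|}\big)$, equivalently to semisimplicity of the Temperley--Lieb-type algebra with loop parameter $\sqrt n$, which holds precisely when $\sqrt n\ge 2$. (For $n\le 3$ extra relations appear, the count collapses, and indeed $C(S_n^+)=C(S_n)$.) Granting this, Woronowicz orthogonality converts the count into a moment formula: writing $\chi=\chi(u)=\sum_j p_{jj}$, we obtain $h(\chi^k)=\dim\tu{Fix}(u^{\ot k})=C_k$ for all $k$.

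Since the Catalan numbers are the moments of the free Poisson (Marchenko--Pastur) law on $[0,4]$, the self-adjoint $\chi$ has, under the Haar state, the distribution of $y^2$ for a standard semicircular element $y$ on $[-2,2]$. I would then define the candidate characters by the asserted recursion, $\chi_0=1$, $\chi_1=\chi-1$, $\chi_{s+1}=\chi_1\chi_s-\chi_s-\chi_{s-1}$; as polynomials in $\chi=y^2$ these are exactly the even Chebyshev polynomials, $\chi_s=U_{2s}(y/2)$. Orthonormality of the $U_m$ against the semicircle yields, once and for all,
\[
h(\chi_r\chi_s)=\int_{-2}^{2}U_{2r}(y/2)\,U_{2s}(y/2)\,d\mu_{\mathrm{sc}}(y)=\delta_{rs}.
\]
These relations drive a short induction showing each $\chi_s$ is a genuine irreducible character: assuming $v^{(0)},\dots,v^{(s)}$ are honest pairwise inequivalent irreducibles, the product $\chi_1\chi_s$ is the character of the genuine representation $v^{(1)}\ot v^{(s)}$, of squared norm $h((\chi_1\chi_s)^2)=3$, and it contains $v^{(s)}$ and $v^{(s-1)}$ each with multiplicity one; hence the remainder $\chi_{s+1}=\chi_1\chi_s-\chi_s-\chi_{s-1}$ is a genuine character of squared norm $1$ orthogonal to all earlier irreducibles, so it is a single new irreducible $v^{(s+1)}$. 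Because the powers $\{\chi^k\}$, hence the $\{\chi_s\}$, span the characters of all $u^{\ot k}$, every irreducible occurs among them, so the $v^{(s)}$ exhaust $\Irr_{S_n^+}$.

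Finally the fusion rules and both recursions fall out of the Chebyshev product formula $U_{2r}U_{2s}=\sum_{l=0}^{2\min\{r,s\}}U_{2(r+s-l)}$: since each $\chi_{r+s-l}$ is now known to be an irreducible character, the character identity together with uniqueness of the decomposition into irreducibles gives $v^{(r)}\ot v^{(s)}=\bigoplus_{l=0}^{2\min\{r,s\}}v^{(r+s-l)}$ with no cancellation. Specialising to $r=1$ recovers \eqref{eq-rec-rel}, namely $\chi_1\chi_s=\chi_{s+1}+\chi_s+\chi_{s-1}$, and taking dimensions there (with $d_0=1$ and $d_1=n-1$) gives $d_1d_s=d_{s+1}+d_s+d_{s-1}$.
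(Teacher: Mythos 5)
The paper offers no proof of this theorem --- it is quoted directly from Banica \cite{Ban99} --- and your proposal is a correct reconstruction of essentially that argument: the magic-unitary relations make the fixed spaces of $u^{\otimes k}$ spanned by non-crossing partition vectors, the Gram-matrix/Temperley--Lieb positivity at loop parameter $\sqrt{n}\ge 2$ gives their linear independence precisely for $n\ge 4$, and the resulting Catalan moments $h(\chi^k)=C_k$ together with the Chebyshev recursion and orthonormality drive the induction producing the irreducibles, their exhaustion, and the fusion rules. I see no gap to flag.
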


The above result can be used to show that the free permutation group is co-amenable if and only if $n \leq 4$.

\begin{definition}
Let $\QG$, $\QH$ be compact quantum groups. We say that $\QH$ is a quantum subgroup of $\QG$ if there exists a surjective unital $*$-homomorphism $\pi:C(\QG) \to C(\QH)$ intertwining the respective coproducts:
\[ (\pi \ot \pi)  \circ \Com_{\QG} = \Com_{\QH} \circ \pi.\]
Further we say that $\QH$ is a \emph{quantum permutation group} (or a quantum group of permutations) if it is a quantum subgroup of $S_n^+$ for some $n \in \N$.
\end{definition}

A word of warning is in place: it is neither true that every quantum permutation group is a finite quantum group, nor that every finite quantum group is a quantum permutation group (the latter result is much more difficult, see \cite{BBN}).

\subsection{Quantum symmetry groups}
\label{sec-q-symm}
In this short section we present the concept of \emph{quantum symmetry groups}, of which the free permutation groups are special examples.

Classical groups first appeared in mathematics as collections of symmetries of a given object (a finite set, a figure on the plane, a manifold, a space of solutions of an equation) and only later were axiomatized as sets equipped with particular operations. On the other hand, the description of compact quantum groups presented in the earlier sections is clearly algebraic. Hence it is natural to ask whether we can speak about compact quantum groups as `quantum symmetry groups' of some object. It turns out that the answer to this question is positive and in fact the relevant concept attracted a lot of activity within the last few years. In this section we will mention some aspects of it and refer to the lecture notes \cite{Metabief} for a more thorough treatment.

Naively speaking the symmetry group of a given object $X$ is the family of `all possible transformations' of $X$ preserving some internal structure of $X$. In the modern mathematical language, we can formalize this by saying that the symmetry group of $X$, $\tu{Sym}(X)$, is the {\em universal} object in the category of all groups acting on $X$: we require that for any group $G$ acting on $X$ there exists a unique homomorphism from $G$ to $\tu{Sym}(X)$ intertwining the respective actions. Recall that the action of a group $G$ on a set $X$ can be described as a map
$T:G \times X \to X$ satisfying certain natural conditions, for example $T(g, T(h,x))= T(gh,x)$ for all $g,h\in G$, $x \in X$. Of course, if we work with topological spaces, the map $T$ is assumed to be continuous.

In the quantum case we `invert the arrows' (as usual). This leads to the following definition of an action of a compact quantum group on a unital $C^*$-algebra, introduced in the papers \cite{Pod} and \cite{wang99}.

\begin{definition}
Let $\QG$ be a compact quantum group and let $\blg$ be a unital $C^*$-algebra. A map
\[ \alpha:\blg \to C(\QG) \ot \blg\]
is called a (left, continuous) \emph{action} of $\QG$ on $\blg$ if $\alpha$ is a unital $*$-homomorphism,
\[ (\Com \ot \id_{\blg}) \circ \alpha = (\id_{C(\QG)} \ot \alpha) \circ
\alpha\] and additionally $\alpha(\blg) (C(\QG) \ot 1_{\blg})$ is dense in $C(\QG)\ot \blg$ (\emph{Podle\'s/continuity condition}).
\end{definition}

Unless stated otherwise when we say that $\alpha:\blg \to C(\QG)\ot \blg$ is an action we mean that all the above are satisfied.
Note that each compact quantum group acts on its algebra of continuous functions via the coproduct.

If $G$ is a compact group, $X$ is a compact space and $G\curvearrowright X$ is a continuous action, then in the
quantum picture we define
\[\alpha:C(X) \to C(G) \otimes C(X)\approx C(G \times X)\]
via
\[ \alpha(f) (g,x) = f(gx), \;\;\; g \in G, x \in X, f \in C(X).\]
Conversely given an action of $\alpha:C(X) \to C(X) \ot C(G)$ we define for each $g \in G$ first
\[ \alpha_g = (\tu{ev}_g \ot \id_{C(X)}) \circ \alpha\]
and then note that the `action relation' implies that  $\alpha_g \circ \alpha_h = \alpha_{g h}$ for $g,h \in G$.
The condition `$\alpha(\blg) (C(G) \ot 1_{\blg})$ is dense in $C(G)\ot \blg$' is a kind of a nondegeneracy  or continuity property for
the action. In the case above, it guarantees that each $\alpha_g$ is a surjection, excluding for example $\alpha = \rho(\cdot)
1_{C(G)} \ot 1_{\blg}$ if $\mathsf{B}\not=\mathbb{C}$, where $\rho$ is a character on $\blg$.

Now, we want to define quantum symmetry groups as universal objects in the relevant categories. Consider then the category $\mathfrak{C} (\blg):=\{(\QG, \alpha)\}$ whose objects are compact quantum groups acting on a given unital $C^*$-algebra $\blg$.
A morphism in the category  $\mathfrak{C} (\blg)$:
\[ \gamma:(\QG_1, \alpha_1) \to (\QG_2, \alpha_2)\]
is a unital $*$-homomorphism $\gamma:C(\QG_2) \to C(\QG_1)$ such that
\[ (\gamma \ot \gamma) \circ \Com_{C(\QG_2)} = \Com_{C(\QG_1)} \circ \gamma, \;\; \alpha_1 = (\gamma \ot \id_{\blg})\circ \alpha_2.\]
(Note that once again we invert the arrows).

\begin{definition}
We say that the category  $\mathfrak{C} (\blg)$ admits a final object, if there is $(\QG_u, \alpha_u)$ in
$\mathfrak{C} (\blg)$ such that for all $(\QG, \alpha)$ in $\mathfrak{C} (\blg)$ there exists a unique morphism $\gamma:(\QG,
\alpha) \to (\QG_u, \alpha_u)$. If such a final object exists, it is unique, and we call it the \emph{quantum symmetry group} of $\blg$.
\end{definition}

In fact the final object in this category exists in very few cases. An important case for which the existence of the quantum symmetry group can be established is related to free permutation groups, see Definition \ref{def_sn+}. Before we formulate the relevant result we note that to deal with the non-existence one often considers the category of compact quantum group actions on $\blg$ preserving some more structure. A typical example comes from the requirement that the action preserves a given state on $\blg$.
\begin{definition}
Let $\QG$ be a compact quantum group, $\blg$ a unital $C^*$-algebra and  $\omega\in \blg^*$ be a state. We say that  the action $\alpha$ of $\QG$ on $\blg$ preserves $\omega$ if for all $b\in \blg$
\[ (\id_{\blg} \ot \omega) (\alpha(b)) = \omega(b)1.\]
The category of all compact quantum group actions acting on $\blg$ and preserving $\omega$ (with the morphisms defined as above) will be denoted $\mathfrak{C} (\blg, \omega)$.
\end{definition}
It can be checked that if both $C^*$-algebras are commutative, i.e.\ $\alg=C(G)$ and $\blg =C(X)$, the condition above corresponds to the fact that the action of $G$ on $X$ preserves the probability measure on $X$ corresponding to the state $\omega$.

The following theorem is (a slightly improved) version of the result of S.Wang from \cite{wang98}.

\begin{theorem} \label{perm}
Let $n\in \mathbb{N}$. Quantum group $S_n^+$ is the  final object in the category $\mathfrak{C} (\mathbb{C}^n)$.
\end{theorem}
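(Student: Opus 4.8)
The plan is to set up the standard dictionary between actions of compact quantum groups on $\mathbb{C}^n$ and magic unitaries, and then invoke the universal property of $C(S_n^+)$. Fix the minimal projections $e_1,\dots,e_n$ of $\mathbb{C}^n$ (so $e_ie_j=\delta_{ij}e_i$, $e_i^*=e_i$, $\sum_i e_i=1$). Given any object $(\QG,\alpha)$ of $\mathfrak{C}(\mathbb{C}^n)$, I would write $\alpha(e_i)=\sum_{j=1}^n u_{ij}\ot e_j$ with $u_{ij}\in C(\QG)$, which merely records $\alpha$ as a matrix $u=[u_{ij}]$. The first step is to read off relations among the $u_{ij}$ from the fact that $\alpha$ is a unital $*$-homomorphism: applying $\alpha$ to $e_i^*=e_i=e_i^2$, to $e_ie_{i'}=\delta_{ii'}e_i$, and to $\sum_i e_i=1$ yields respectively that each $u_{ij}$ is a self-adjoint idempotent (hence an orthogonal projection), that $u_{ij}u_{i'j}=0$ for $i\neq i'$, and that $\sum_i u_{ij}=1$ for every $j$; in other words the \emph{columns} of $u$ are partitions of unity. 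The second, purely formal, step is to feed $\alpha$ into the coassociativity identity $(\Com\ot\id)\circ\alpha=(\id\ot\alpha)\circ\alpha$ and compare the coefficients of each $e_m$ on both sides; this gives exactly $\Com(u_{ij})=\sum_k u_{ik}\ot u_{kj}$, the defining comultiplication relation of $C(S_n^+)$.

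The key remaining point, and the one I expect to be the main obstacle, is to show that $u$ is a \emph{full} magic unitary, i.e.\ that the rows are also partitions of unity: $\sum_j u_{ij}=1$ for each $i$ (row-orthogonality $u_{ij}u_{ij'}=0$ then comes for free, since a finite sum of projections equal to the projection $1$ is automatically a sum of mutually orthogonal projections). This is equivalent to the assertion that $\alpha$ automatically preserves the uniform trace $\tau$ on $\mathbb{C}^n$, and it is precisely the content of the ``slight improvement'' over Wang's formulation, which built trace-invariance into the category. Here the $*$-homomorphism property alone does not suffice, and I would use the Podle\'s density condition together with the compact quantum group structure. Concretely, since the $u_{ij}$ are coefficients of a finite-dimensional corepresentation they lie in $\PolG$, and the Podle\'s condition forces the counit relation $(\e\ot\id)\circ\alpha=\id$, whence $\e(u_{ij})=\delta_{ij}$; this makes the matrix $u$ invertible over $\PolG$ (its inverse being provided by the antipode). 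Combining this invertibility with the column relations already established and with the faithfulness and positivity of the Haar state one deduces $\sum_j u_{ij}=1$. I would isolate this as a separate lemma on the automatic invariance of $\tau$; it is the step that genuinely needs the full theory rather than bare algebra, and is treated in \cite{wang98,Metabief}.

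Once $u$ is known to be a magic unitary, the rest is soft. By the universal property of $C(S_n^+)$ (every magic unitary in a $C^*$-algebra induces a representation), there is a unique unital $*$-homomorphism $\pi:C(S_n^+)\to C(\QG)$ with $\pi(p_{ij})=u_{ij}$. The comultiplication relation from the first step shows $(\pi\ot\pi)\circ\Com_{S_n^+}=\Com_{\QG}\circ\pi$, and since $\alpha(e_i)=\sum_j u_{ij}\ot e_j=(\pi\ot\id)\big(\sum_j p_{ij}\ot e_j\big)$ we obtain $\alpha=(\pi\ot\id)\circ\alpha_u$, where $\alpha_u(e_i)=\sum_j p_{ij}\ot e_j$ denotes the canonical coaction of $S_n^+$. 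Thus $\pi$ is a morphism $(\QG,\alpha)\to(S_n^+,\alpha_u)$ in $\mathfrak{C}(\mathbb{C}^n)$. For uniqueness I would note that any morphism $\gamma$ must satisfy $\alpha=(\gamma\ot\id)\circ\alpha_u$, which forces $\gamma(p_{ij})=u_{ij}$; as the $p_{ij}$ generate $C(S_n^+)$, necessarily $\gamma=\pi$.

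Finally, to make the statement meaningful I would verify that $(S_n^+,\alpha_u)$ is itself an object of $\mathfrak{C}(\mathbb{C}^n)$: the magic unitary relations for $p=[p_{ij}]$ show that $\alpha_u$ is a unital $*$-homomorphism (column-orthogonality and column sums give multiplicativity and unitality, while row-orthogonality and self-adjointness make each $\alpha_u(e_i)$ a projection), coassociativity of $\alpha_u$ is immediate from $\Com(p_{ij})=\sum_k p_{ik}\ot p_{kj}$, and the Podle\'s density condition follows from both the row and the column sums being $1$. Together with the universal morphism and its uniqueness established above, this exhibits $(S_n^+,\alpha_u)$ as the final object of $\mathfrak{C}(\mathbb{C}^n)$.
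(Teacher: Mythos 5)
Your proposal is correct and is essentially the paper's own treatment: the paper gives no direct argument, but reduces the theorem to Wang's result for the trace-preserving category $\mathfrak{C}(\mathbb{C}^n,\omega)$ (whose content is precisely the dictionary/universal-property argument you spell out) plus the automatic invariance of the counting measure, i.e.\ the row relations $\sum_j u_{ij}=1$, which it outsources to Lemma 3.4 of \cite{BSS} (see \cite{Metabief}) --- exactly the step you isolate as a separate lemma and likewise defer to the literature. One caution: the mechanism you sketch for that lemma points in the wrong direction --- Haar-state averaging and faithfulness only produce relations of the form $\sum_k \mu_k u_{jk}=\mu_j 1$ (weighted sums along a \emph{row}), which do not mesh with the column-orthogonality $u_{ij}u_{i'j}=0$ that you actually have, whereas the known proofs use unitarizability of the comodule $\mathbb{C}^n$, namely a positive invertible scalar matrix $Q$ with $u^*Qu=Q$, whose diagonal yields $\sum_k q_{kk}u_{ki}=q_{ii}1$ and hence $(q_{jj}-q_{ii})u_{ji}=0$ after multiplying on both sides by $u_{ji}$ --- and note also that \cite{wang98} contains only the trace-preserving case, so this lemma should be credited to \cite{BSS} or \cite{Metabief} rather than to Wang.
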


In fact Wang proves in \cite{wang98} only that $S_n^+$ is the  final object in the category $\mathfrak{C} (\mathbb{C}^n, \omega)$, wheer $\omega$ is the `counting measure' on $\bc^n$. The generalisation above follows from Lemma 3.4 in \cite{BSS} (see \cite{Metabief} for details).

\medskip

The concept of a free permutation group (i.e.\ the quantum symmetry group of $n$ points) can be extended to the quantum symmetry group of a finite graph, as it was done in \cite{Juliengraph} and in \cite{BanJFA}.

Given a compact quantum group $\QG$ one can always construct its `classical version'. The procedure is described in the following lemma, which is very easy to prove.

\begin{lemma}
Let $\QG$ be a compact quantum group. Let $\mathfrak{I}$ be the commutator ideal of $C(\QG)$ (the closed ideal generated by all elements of the form $ab-ba$ for $a,b \in C(\QG)$). The quotient $C^*$-algebra $C(\QG)/\mathfrak{I}$ possesses a natural structure of the (commutative!) algebra of continuous functions on a compact quantum  group. In other words, there exists a compact group $G$ such that  $C(G) \approx C(\QG)/\mathfrak{I}$ (the isomorphism preserves the respective coproducts). We call $G$ the \emph{classical version} of $\QG$ and denote it by $\QG_{\textup{clas}}$.
\end{lemma}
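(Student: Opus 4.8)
The plan is to transport the Woronowicz axioms of Definition \ref{def_cqg} through the quotient map and then to read off the group from commutativity, using the Gelfand--Najmark correspondence together with the cancellation lemma stated just before Definition \ref{def_cqg}. Write $q:C(\QG)\to C(\QG)/\mathfrak{I}$ for the quotient $*$-homomorphism and set $\B:=C(\QG)/\mathfrak{I}$. First I would build the comultiplication on $\B$. Consider the $*$-homomorphism $\Phi:=(q\otimes q)\circ\Delta:C(\QG)\to\B\otimes\B$. Its target is commutative, being a minimal tensor product of two commutative $C^*$-algebras, so $\Phi$ annihilates every commutator $ab-ba$; as $\ker\Phi$ is a closed two-sided ideal containing all commutators it contains $\mathfrak{I}$. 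Hence $\Phi$ factors through $q$ as $\Phi=\bar\Delta\circ q$ for a unique unital $*$-homomorphism $\bar\Delta:\B\to\B\otimes\B$, which is the candidate comultiplication.

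Next I would verify coassociativity and the quantum cancellation laws. For coassociativity, checking on elementary tensors and using $\bar\Delta\circ q=(q\otimes q)\circ\Delta$ gives $(\bar\Delta\otimes\id)\circ(q\otimes q)=(q\otimes q\otimes q)\circ(\Delta\otimes\id)$ and $(\id\otimes\bar\Delta)\circ(q\otimes q)=(q\otimes q\otimes q)\circ(\id\otimes\Delta)$; composing these with $\Delta$ and invoking coassociativity of $\Delta$ shows that $(\bar\Delta\otimes\id)\circ\bar\Delta$ and $(\id\otimes\bar\Delta)\circ\bar\Delta$ agree after precomposition with the surjection $q$, hence agree. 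For the cancellation laws I would use that $q\otimes q:C(\QG)\otimes C(\QG)\to\B\otimes\B$ is again a surjective $*$-homomorphism (its range is closed and contains the dense algebraic tensor product $\B\odot\B$, since $q$ is onto). From the identity $(q\otimes q)\big(\Delta(a)(1\otimes b)\big)=\bar\Delta(q(a))(1\otimes q(b))$ together with surjectivity of $q$, applying the continuous surjection $q\otimes q$ to the relation $\overline{\rm Lin}\{\Delta(a)(1\otimes b):a,b\in C(\QG)\}=C(\QG)\otimes C(\QG)$ of Definition \ref{def_cqg} yields $\overline{\rm Lin}\{\bar\Delta(x)(1\otimes y):x,y\in\B\}=\B\otimes\B$, and symmetrically with $(y\otimes 1)$. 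Thus $(\B,\bar\Delta)$ satisfies Definition \ref{def_cqg} and is a compact quantum group.

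Finally I would exploit commutativity of $\B$. By Gelfand--Najmark, $\B$ is isometrically $*$-isomorphic to $C(G)$ for a compact space $G$, and under this identification $\bar\Delta$ becomes a comultiplication of the form \eqref{comm_comultipliaction}, so it arises from an associative continuous multiplication on $G$ (coassociativity of $\bar\Delta$ giving associativity). Translated into the commutative picture as indicated before Definition \ref{def_cqg}, the two quantum cancellation laws become precisely the left and right cancellation laws for $G$; the cancellation lemma stated there then shows $G$ is a compact group, and the isomorphism $C(G)\approx\B$ intertwines the coproducts by construction. The only genuinely non-routine point I expect is this cancellation step: confirming that the condition ``closed linear span equals the whole tensor product'' is preserved by $q\otimes q$ (which rests on surjectivity of the minimal tensor product of surjective $*$-homomorphisms) and that the two density conditions really do encode the classical cancellation law. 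Everything else is a formal descent along the surjection $q$.
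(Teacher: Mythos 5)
Your proof is correct and is precisely the intended argument: the paper states this lemma without proof (calling it ``very easy to prove''), and your write-up supplies it using exactly the tools the paper sets up — descent of $\Delta$ along the quotient map $q$ (possible because the target $\bigl(C(\QG)/\mathfrak{I}\bigr)\otimes\bigl(C(\QG)/\mathfrak{I}\bigr)$ is commutative), surjectivity of $q\otimes q$ to transport coassociativity and the cancellation laws of Definition \ref{def_cqg}, and then Gelfand--Najmark duality plus the cancellation lemma to read off the compact group. The one step you flag as non-routine is indeed the only place where anything needs checking, and it is easy: if $s_1t=s_2t$ with $s_1\neq s_2$, then every function $\Delta(a)(1\otimes b)\colon (s,u)\mapsto a(su)b(u)$ takes equal values at $(s_1,t)$ and $(s_2,t)$, so the closed linear span of such functions could not be all of $C(G\times G)$; hence the quantum cancellation laws force classical right (and, symmetrically, left) cancellation, and the lemma preceding Definition \ref{def_cqg} shows $G$ is a compact group.
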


The procedure of passing to the classical version often leads to a loss of  information -- for example $(SU_q(2))_{\textup{clas}} = \mathbb{T}$. For quantum symmetry groups, however, the situation is very satisfactory. The classical version of a quantum symmetry group of a given finite graph is the classical symmetry group of the same graph. In particular, we have $(S_n^+)_{\textup{clas}} = S_n$.

These observations lead to a natural question: given a compact group $G$, what is the `largest' (in some suitable sense) quantum group $\QG$ such that $G \approx\QG_{\textup{clas}}$? Some answers can be found in the paper \cite{TeoRoland}, where so-called liberated quantum groups (see the previous subsection) are considered.

The quantum symmetry groups discussed above all correspond to quantum groups acting on `finite quantum spaces', so all are quantum permutation groups. In recent years Goswami and his collaborators developed a theory of \emph{quantum isometry groups}, which is a quantum counterpart of the theory of isometry groups of compact Riemannian manifolds. In the quantum setting, in accordance with general principles of noncommutative geometry, the role of the `differential manifold structure' is played by a so-called `spectral triple'. Goswami's original construction from \cite{gos} can be in particular applied to the group $C^*$-algebra of a discrete, finitely generated group $\Gamma$ (see \cite{bhs}). Recall that the group algebra $C^*_r(\Gamma)$ can be viewed as the algebra of continuous functions on the quantum group $\hat{\Gamma}$, the `dual' of $\Gamma$. Hence the quantum isometry group of $C^*(\Gamma)$ can be interpreted as the quantum isometry group of $\hat{\Gamma}$.

An interesting phenomenon was observed in \cite{bsk}: it turns out that the classical version of the quantum isometry group of the `dual' of $\mathbb{F}^n$, the free group on $n$-generators is the isometry group of the $n$-dimensional torus $\mathbb{T}^n$. This can be understood in the following way: the `classical' (i.e.\ commutative) version of the group $\mathbb{F}^n$ is the free abelian group $\mathbb{Z}^n$, and of course $\widehat{\mathbb{Z}^n}=\mathbb{T}^n$. This means that this time both the notion of the isometric group action and the space on which the group acts become quantized!

\section{Classification problem for L\'{e}vy processes on $S_n^+$}
\label{sec-class}
We explained in previous sections that L\'{e}vy processes on compact quantum
groups can be studied through several of the corresponding objects: the
semigroup of states, the Markov semigroup, the generating functional or the Sch\"{u}rmann triple. The latter offers the most convenient approach to classify all L\'{e}vy processes on a fixed quantum group, when the representation theory of the underlying $C^*$-algebra is known (see the result for $SU_q(2)$ in \cite{schurmann+skeide98}), as then the problem can be reduced to certain cohomological computations, as will be seen in what follows. If the $C^*$-algebra of the quantum group in question is not type I, as is the case for $S_n^+$ for $n\geq 5$, the description of all its representations becomes very difficult.  In this section we shall describe the classification of L\'{e}vy processes on $S_n^+$ for a given, fixed representation of ${\rm Pol}(S_n^+) $. The main result is the necessary and sufficient condition for a tuple of vectors in the representation space to define a cocycle and a generating functional on $S_n^+$.

\medskip
For that we first state some general observation about Sch\"{u}rmann triples on $S_n^+$.
Let us recall (see Section \ref{sec-q-perm}) that the generators of $S_n^+$ satisfy the relations of magic unitary:
\begin{eqnarray}
 & p_{ij}=p_{ij}^*=p_{ij}^2, & i,j=1, \ldots, n, \label{sn-projections}\\
 & p_{ij}p_{ik}=0= p_{ji}p_{ki}, & j\neq k,\, i,j,k=1, \ldots, n, \label{sn-orthogonal}\\
 & \sum_{j=1}^n p_{ij} = \sum_{j=1}^n p_{ji} =1,& i=1,\ldots,n. \label{sn-columns}
\end{eqnarray}

Let $(j_t)_{t\geq 0}$ be a L\'{e}vy process on the free permutation group $S_n^+$ and let $(\rho, \eta, L)$ be the related Sch\"{u}rmann triple. We consider $\rho$ as a fixed representation of $\Pol(S_n^+)$ (equivalently, $C^u(S_n^+)$), denote by $P_{ij}=\rho(p_{ij})$, the image of $p_{ij}$ under the representation $\rho$, and write $\xi_{ij}:=\eta(p_{ij})$ for $i,j=1,\ldots,n$. As mentioned in Section \ref{sec-q-perm}, $P=[P_{ij}]_{i,j=1}^n$ is a magic unitary.

We start with two simple but useful observations.

\begin{lemma} \label{lem-diagonal}
Once the representation $\rho$ is fixed, the cocycle $\eta$ is uniquely determined by its values on $p_{ii}$, $i=1,\ldots,n$.
\end{lemma}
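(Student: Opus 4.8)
The plan is to exploit two facts: that $\eta$, being a $\rho$-$\e$-cocycle, is completely determined on the whole algebra by its values on the algebra generators $p_{ij}$, and that the magic-unitary relations then express the off-diagonal values $\eta(p_{ij})$ (with $i\neq j$) in terms of the diagonal ones $\eta(p_{ii})$.

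First I would record the elementary consequences of the cocycle identity $\eta(ab) = \rho(a)\eta(b) + \eta(a)\e(b)$. Applying it to $1 = 1\cdot 1$ gives $\eta(1) = 2\eta(1)$, hence $\eta(1) = 0$. More importantly, since each $p_{ij}$ is self-adjoint, $\Pol(S_n^+)$ is generated as a \emph{plain} algebra by $\{p_{ij}\}$, so every element is a linear combination of monomials in the $p_{ij}$. The cocycle identity, used inductively, writes $\eta$ of any such monomial as a sum of terms of the form $\rho(w')\,\eta(p_{ij})\,\e(w'')$ with $w',w''$ subwords; since $\rho$ and $\e$ are already fixed (both are homomorphisms, hence determined on the generators), this shows that $\eta$ on all of $\Pol(S_n^+)$ is determined once the finite list $\{\eta(p_{ij})\}_{i,j}$ is known. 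Thus it suffices to reduce the off-diagonal $\xi_{ij} := \eta(p_{ij})$ ($i\neq j$) to the diagonal $\xi_{ii}$.

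The key step uses the orthogonality relation \eqref{sn-orthogonal}. Fix $i\neq j$. Then $p_{ij}p_{ii} = 0$ (two distinct projections in row $i$), and applying the cocycle identity together with $\e(p_{ii}) = 1$ gives
\[ 0 = \eta(p_{ij}p_{ii}) = \rho(p_{ij})\eta(p_{ii}) + \eta(p_{ij})\e(p_{ii}) = P_{ij}\xi_{ii} + \xi_{ij}, \]
so that $\xi_{ij} = -P_{ij}\xi_{ii}$. Hence every off-diagonal value of $\eta$ is determined by the fixed data $\rho$ together with the diagonal values $\xi_{ii}$, which, combined with the previous paragraph, yields the claim.

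The only point requiring care — and the place where a naive attempt stumbles — is the choice of which factor is placed in which slot of the cocycle identity. If one instead expands $\eta(p_{ii}p_{ij})$, the counit factor $\e(p_{ij}) = 0$ kills the term containing $\xi_{ij}$ and one obtains merely the constraint $P_{ii}\xi_{ij} = 0$, which does not isolate $\xi_{ij}$. Putting the off-diagonal generator $p_{ij}$ on the \emph{left} (so that the counit evaluates to $1$ on the diagonal factor $p_{ii}$ and multiplies $\xi_{ij}$) is exactly what makes $\xi_{ij}$ appear with coefficient $1$. One could equally use the column relation $p_{ij}p_{jj} = 0$ to obtain $\xi_{ij} = -P_{ij}\xi_{jj}$; the consistency of the two resulting expressions is automatic and is not needed for the lemma.
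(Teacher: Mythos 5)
Your proof is correct and follows essentially the same route as the paper: first observe that the cocycle identity reduces $\eta$ on all of $\Pol(S_n^+)$ to its values on the generators, then use the row relation $p_{ij}p_{ii}=0$ (the paper's relation $\eta(p_{ij}p_{ik})=P_{ij}\xi_{ik}+\delta_{ik}\xi_{ij}$ specialized to $k=i$) to get $\xi_{ij}=-P_{ij}\xi_{ii}$. Your added remark about the correct ordering of the factors is a fair point of care, but it is the same argument.
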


\begin{proof}
By the cocycle property, the value of $\eta$ on any polynomial in $p_{ij}$ ($i,j=1,\ldots,n$) can be expressed as a sum of elements of the form \linebreak $P_{i_1,j_1} \ldots P_{i_k,j_k} \xi_{i_{k+1},j_{k+1}}$, so is determined by the vectors $\xi_{ij}$ ($i,j=1,\ldots,n$). Moreover, the first relation in \eqref{sn-orthogonal} implies that if $j\neq k$ then
$$0= \eta(p_{ij}p_{ik})= P_{ij} \xi_{ik}+\delta_{ik} \xi_{ij}$$
If we choose $i=k$ then we see that $\xi_{ij}=-P_{ij} \xi_{ii}$.
\end{proof}

\begin{lemma} \label{lem-L_values}
A generating functional $L$ on $S_n^+$ in a Sch\"urmann triple $(\rho, \eta, L)$ is uniquely determined by the representation $\rho$ and the cocycle
$\eta$.

On the generators of $S_n^+$ the generating functional is given by
$L(p_{ij})= (-1)^{\delta_{ij}}\|\xi_{ij}\|^2$ for $i,j=1,\ldots,n$.
\end{lemma}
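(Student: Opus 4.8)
The plan is to exploit two ingredients already available: the coboundary relation built into the definition of a Sch\"urmann triple,
\[
L(a^*b) = \la \eta(a),\eta(b)\ra + \e(a^*)L(b) + L(a^*)\e(b), \qquad a,b\in\Alg,
\]
and Lemma \ref{lem-diagonal}, which says that once $\rho$ is fixed the cocycle $\eta$ is completely determined by the vectors $\xi_{ii}$, so that $\eta(w)$ is a known vector for every word $w$ in the generators. Everything then hinges on the single fact that the generators $p_{ij}$ are \emph{projections}, i.e.\ $p_{ij}=p_{ij}^*=p_{ij}^*p_{ij}$.

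First I would read off $L$ on the generators. Applying the coboundary relation with $a=b=p_{ij}$ and using $p_{ij}=p_{ij}^*p_{ij}$ together with $\la\eta(p_{ij}),\eta(p_{ij})\ra=\|\xi_{ij}\|^2$ gives
\[
L(p_{ij}) = \|\xi_{ij}\|^2 + \e(p_{ij})L(p_{ij}) + L(p_{ij})\e(p_{ij}).
\]
For $i\neq j$ one has $\e(p_{ij})=0$, so the last two terms vanish and $L(p_{ij})=\|\xi_{ij}\|^2$. For $i=j$ one has $\e(p_{ii})=1$, so the identity reads $L(p_{ii}) = \|\xi_{ii}\|^2 + 2L(p_{ii})$, forcing $L(p_{ii})=-\|\xi_{ii}\|^2$. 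These two cases combine into $L(p_{ij})=(-1)^{\delta_{ij}}\|\xi_{ij}\|^2$, which is the second assertion.

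For uniqueness I would argue by induction on the length of a word in the generators, since such words together with $1$ span $\Alg$. The base cases are $L(1)=0$ and the formula on the generators just obtained, both expressed purely through $\rho$ and $\eta$. For a word $w=p_{i_1j_1}w'$ of length $k\ge 2$, I would use $p_{i_1j_1}^*=p_{i_1j_1}$ to write $w=p_{i_1j_1}^*w'$ and apply the coboundary relation:
\[
L(w) = \la \xi_{i_1j_1},\eta(w')\ra + \e(p_{i_1j_1})L(w') + L(p_{i_1j_1})\e(w').
\]
On the right-hand side $\eta(w')$ is determined by $\rho$ and the $\xi_{ii}$ via Lemma \ref{lem-diagonal}, $\e$ is the counit, $L(p_{i_1j_1})$ is already known, and $L(w')$ is determined by the inductive hypothesis since $w'$ has length $k-1$. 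Hence $L(w)$ is determined by $\rho$ and $\eta$, completing the induction and showing that at most one generating functional completes a given pair $(\rho,\eta)$ to a Sch\"urmann triple.

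The whole content of the argument sits in the projection relation $p_{ij}=p_{ij}^*p_{ij}$: it is what turns the coboundary identity, which a priori only relates $L$ on a product $a^*b$ to $L$ on $a$ and $b$, into an outright evaluation of $L$ on each generator, and more generally into the recursion above. I do not anticipate a real obstacle; the only point needing care is that $\eta$ on longer words be under control in the inductive step, which is exactly what Lemma \ref{lem-diagonal} guarantees. A slightly more conceptual route to uniqueness would be to observe that the difference $M=L_1-L_2$ of two completions satisfies $M(a^*b)=\e(a^*)M(b)+M(a^*)\e(b)$ (the inner-product terms cancel), hence vanishes on $\mathrm{Lin}\{xy:x,y\in\ker\e\}$; since each generator lies, modulo a scalar multiple of $1$, in this space (again by the projection relations, as $p_{ij}=p_{ij}^*p_{ij}$ for $i\neq j$ and $p_{ii}-1=-(p_{ii}-1)^*(p_{ii}-1)$), one concludes $M=0$.
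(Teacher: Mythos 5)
Your proof is correct and takes essentially the same approach as the paper: the paper likewise applies the coboundary identity with $a=b=p_{ij}$ together with $p_{ij}^2=p_{ij}$ to obtain $L(p_{ij})=(-1)^{\delta_{ij}}\|\xi_{ij}\|^2$, and it settles uniqueness by observing that the coboundary relation fixes $L$ on $1$ and on all products of two or more generators, which your induction on word length merely spells out in detail. Your closing alternative via the difference functional $M=L_1-L_2$ is a harmless repackaging of the same observation.
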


\begin{proof}
The relation $L(ab)=\langle \eta(a^*),\eta(b) \rangle +\e(a)L(b)+L(a)\e(b)$ induces the values on $1$ and on all products of at least two generators. Hence the only freedom we would have is to assign the values of $L$ on the individual generators.  We can then however apply the formula above to $a=b=p_{ij}$, which gives
$$L(p_{ij})=L(p_{ij}^2)
=\|\xi_{ij}\|^2+\delta_{ij} \cdot 2L(p_{ij}).$$
We conclude that
$$
L(p_{ij})=  \left \{ \begin{array}{ll}
\|\xi_{ij}\|^2, & i\neq j, \\ -\|\xi_{ii}\|^2, & i=j.
\end{array} \right .
$$
\end{proof}

We now show a general result for the existence and classification of
Sch\"{u}rmann triples on ${\rm Pol}(S_n^+)$ for a given representation $\rho$.

\begin{proposition} \label{prop_relations}
Let $\rho$ be a representation of $S_n^+$ on a Hilbert space $\hil$ and let the $n$-tuple of vectors $\xi_1, \xi_2,\ldots, \xi_n\in \hil$ satisfy the relations
\begin{equation} \label{conditions_on_xi}
\rho(p_{ii})\xi_i=0 \quad \mbox{and} \quad \rho(p_{ij})\xi_i=\rho(p_{ij})\xi_j \; \mbox{ for any } \; i,j=1,\ldots,n.
\end{equation}
Then we have the following properties.
\begin{enumerate}
\item
The mapping $\eta: \{p_{ij}:i,j=1,\ldots, n\}  \to \hil$ defined by
$$\eta(p_{ii}):=\xi_i, \;\;\; \eta(1):=0$$
extends uniquely to a cocycle on $S_n^+$ (which we denote by the same symbol).
\item
There exists a unique functional $L:\pol (S_n^+) \to \C$ such that $(\rho, \eta,L)$ is a Sch\"{u}rmann triple.
\item
If $\xi_1=\cdots=\xi_n$, then the unique cocycle with
$\eta(p_{ii}):=\xi_i$, $i=1,\ldots,n$ is a 1-coboundary.
\end{enumerate}
\end{proposition}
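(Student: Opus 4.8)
The plan is to build the cocycle and the functional first on a free algebra, where nothing has to be checked, and then push them down to $\Pol(S_n^+)$ through Lemma \ref{quotient}. Writing $P_{ij}=\rho(p_{ij})$, I would start on the free unital $*$-algebra $\mathcal{F}$ on self-adjoint generators $q_{ij}$, equipped with the character $\e_0(q_{ij})=\delta_{ij}$, the $*$-representation $\rho_0(q_{ij})=P_{ij}$, and the cocycle $\eta_0$ prescribed on generators by
\[
\eta_0(q_{ii})=\xi_i, \qquad \eta_0(q_{ij})=-P_{ij}\xi_i \quad (i\neq j).
\]
On a free algebra any such assignment extends uniquely to a $\rho_0$-$\e_0$-cocycle; equivalently, the block map $a\mapsto\begin{pmatrix}\rho_0(a)&\eta_0(a)\\ 0&\e_0(a)\end{pmatrix}$ is the unique algebra homomorphism with the prescribed generator values. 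The off-diagonal choice is not free: the computation in Lemma \ref{lem-diagonal} shows that any cocycle on $S_n^+$ with $\eta(p_{ii})=\xi_i$ must satisfy $\eta(p_{ij})=-P_{ij}\xi_i$, which simultaneously gives the uniqueness part of (1).

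The heart of the argument is to verify the hypotheses of Lemma \ref{quotient}, namely that $\eta_0$ annihilates each magic-unitary relation \eqref{sn-projections}--\eqref{sn-columns} (the conditions on $\rho_0,\e_0$ being automatic since $P=[P_{ij}]$ is a magic unitary and $\e$ is a character on $S_n^+$). I would go through the relations one at a time via $\eta_0(xy)=\rho_0(x)\eta_0(y)+\eta_0(x)\e_0(y)$. For the idempotent and row-orthogonality relations the cancellation comes from $P_{ij}^2=P_{ij}$, from row-orthogonality of $P$, and from the first hypothesis $P_{ii}\xi_i=0$. The delicate relations are the column-orthogonality relations $p_{ji}p_{ki}=0$ and the two sum relations: after reducing to terms of the form $P_{ji}\xi_j$, the second hypothesis $P_{ij}\xi_i=P_{ij}\xi_j$ is exactly what lets one replace $\xi_j$ by $\xi_i$ and then invoke column-orthogonality or the column sum $\sum_j P_{ji}=1$, whereas the row relations and the row sum use $P_{ii}\xi_i=0$ and $\sum_j P_{ij}=1$. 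I expect this relation-by-relation bookkeeping (organised by which of the indices $i,j,k$ coincide) to be the main obstacle, and the place where both conditions in \eqref{conditions_on_xi} are genuinely used.

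For (2), uniqueness of $L$ is immediate from Lemma \ref{lem-L_values}. For existence I would realise $L_0$ on $\mathcal{F}$ as the $(3,1)$-corner of the homomorphism $a\mapsto\begin{pmatrix}\e_0(a)&0&0\\ \eta_0(a)&\rho_0(a)&0\\ L_0(a)&\la\eta_0(a^*)|&\e_0(a)\end{pmatrix}$, obtained by prescribing $L_0(q_{ij})=(-1)^{\delta_{ij}}\|\eta_0(q_{ij})\|^2$ on generators; since $\rho_0$ is a $*$-representation and the generators are self-adjoint, a direct check shows the block map is an algebra homomorphism, so $L_0$ satisfies the coboundary identity and is hermitian. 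It then remains to apply the final clause of Lemma \ref{quotient}, i.e.\ that $L_0$ vanishes on the relations. The projection and orthogonality relations reduce, through $L_0(x^*y)=\la\eta_0(x),\eta_0(y)\ra+\e(x)L_0(y)+L_0(x)\e(y)$, to the earlier orthogonality cancellations together with $\la P_{ij}\xi,P_{ij}\xi\ra=\la\xi,P_{ij}\xi\ra$. For the two sum relations the identity to verify is $\|\xi_i\|^2=\sum_{j\neq i}\|P_{ij}\xi_i\|^2=\sum_{j\neq i}\|P_{ji}\xi_j\|^2$, which I would deduce from $P_{ii}\xi_i=0$, the row/column sums, mutual orthogonality of the vectors $P_{ij}\xi_i$ (resp.\ $P_{ji}\xi_i$), and Pythagoras, once more replacing $P_{ji}\xi_j$ by $P_{ji}\xi_i$ for the column sum.

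Finally, (3) is straightforward once uniqueness is in hand. Assuming $\xi_1=\cdots=\xi_n=\xi$, I would exhibit the implementing vector $v=-\xi$ and check on generators that $(\rho(p_{ij})-\e(p_{ij})\id_\hil)v$ equals $\xi$ when $i=j$ (using $P_{ii}\xi=0$) and $-P_{ij}\xi$ when $i\neq j$, so that it agrees with $\eta$ on every $p_{ij}$. Since $a\mapsto(\rho(a)-\e(a)\id_\hil)v$ is itself a $\rho$-$\e$-cocycle and cocycles are determined by their values on the generators, it coincides with $\eta$; hence $\eta$ is a $1$-coboundary, with $v=0$ recovering the trivial cocycle in the degenerate case $\xi=0$.
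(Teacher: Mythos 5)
Your proposal is correct and follows essentially the same route as the paper's own proof: the off-diagonal values of the cocycle are forced as in Lemma~\ref{lem-diagonal}, the extension is made on the free algebra and pushed down through Lemma~\ref{quotient} by the same relation-by-relation verification (with the two hypotheses of \eqref{conditions_on_xi} entering exactly where you say they do), and part (3) is settled by exhibiting the vector $v=-\xi$ and comparing on generators. The only difference is presentational: you justify the free-algebra extensions of $\eta$ and $L$ via the block-triangular homomorphism trick, whereas the paper simply extends by the cocycle and coboundary identities and asserts hermitianity and conditional positivity.
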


\begin{proof}
We write again $P_{ij}:=\rho(p_{ij})$ for $i,j=1,\ldots,n$.
(1) Following Lemma \ref{lem-diagonal} we define $\eta(1)=0$,
\begin{equation*}
\xi_{ij}= \eta(p_{ij})= \left \{ \begin{array}{ll}
\xi_i &\; \mbox{if}\;i=j,\\
-P_{ij} \xi_{i} &\; \mbox{if}\; i\neq j, \end{array} \right.
\end{equation*}
and extend $\eta$ to a linear operator satisfying the cocycle property on
the algebra $\mathcal{A}$ generated by $p_{11},p_{12},\ldots,p_{nn}$.
By Lemma \ref{quotient}, in order to prove that $\eta$ is a cocycle on $S_n^+$,
we need to check that it vanishes on all the relations \eqref{sn-projections}-\eqref{sn-columns}.

We havec(for $i,j,k=1,\ldots,n$, $j \neq k$)
\begin{eqnarray*}
 \eta( p_{ij}-p_{ij}^2)&=& \xi_{ij} - P_{ij}\xi_{ij} - \xi_{ij}\delta_{ij} =
\left \{ \begin{array}{ll}
- P_{ii}\xi_{i}=0 &\; \mbox{if}\;i=j\\
(P_{ij}-I)P_{ij} \xi_{i}=0 &\; \mbox{if}\; i\neq j \end{array} \right.,   \\
 \eta( p_{ij}p_{ik})&=& P_{ij}\xi_{ik} + \xi_{ij}\delta_{ik} \\
&=& \left \{ \begin{array}{ll}
P_{ij}\xi_{i}+ \xi_{ij}=0 &\; \mbox{if}\;i=k,j\neq k\\
-P_{ij} P_{ik}\xi_{i}=-\rho(p_{ij}p_{ik})\xi_{i}=0 &\; \mbox{if}\; i\neq k,j\neq k \end{array} \right.,   \\
 \eta(p_{ji}p_{ki}) &=& P_{ji}\xi_{ki} + \xi_{ji}\delta_{ki} \\
&=& \left \{ \begin{array}{ll}
P_{ji}\xi_{i}+ \xi_{ji} \stackrel{(\star)}{=} P_{ji}\xi_j + \xi_{ji}=0 &\; \mbox{if}\;i=k,j\neq k\\
-P_{ji} P_{ki}\xi_{k}=-\rho(p_{ji}p_{ki})\xi_{k}=0 &\; \mbox{if}\; i\neq k,j\neq k \end{array} \right.,   \\
 \eta(\sum_{l=1}^n p_{il} - 1) &=& \sum_{l=1}^n \xi_{il} = \xi_i - \sum_{l\neq i}  P_{il} \xi_i =\rho(1-\sum_{l\neq i} p_{il})\xi_i = P_{ii}\xi_i \stackrel{(\star)}{=} 0, \\
\eta(\sum_{l=1}^n p_{li} - 1) &=& \sum_{l=1}^n \xi_{li}  = \xi_i - \sum_{l\neq i}  P_{li} \xi_l \stackrel{(\star)}{=} (I-\sum_{l\neq i} P_{li})\xi_i = P_{ii}\xi_i \stackrel{(\star)}{=} 0.
\end{eqnarray*}
Above, we marked by $(\star)$ places where we use the assumptions $P_{ii}\xi_i=0$ and $P_{ij}\xi_i=P_{ij}\xi_j$.

(2) To show that the generating functional exists, let us define $L(1)=0$ and
$L(p_{ij})= (-1)^{\delta_{ij}}\|\xi_{ij}\|^2$
for $i,j=1,\ldots,n$. More explicitely,
\begin{equation}
L(p_{ii})= -\|\xi_{i}\|^2, \quad L(p_{ij})= \|\xi_{ij}\|^2 = \langle\xi_{i}, P_{ij}\xi_{i} \rangle \quad \mbox{for} \; i\neq j.
\end{equation}
Then $L$ extends uniquely by the coboundary condition and hence is conditionally positive and hermitian. It remains to check, again, that $L$ vanishes on all the relations \eqref{sn-projections}-\eqref{sn-columns}. This is the case since we have
\begin{eqnarray*}
L( p_{ij}-p_{ij}^2)&=& L( p_{ij}) - \langle\xi_{ij},\xi_{ij}\rangle
-2\delta_{ij} L( p_{ij}) = 0,
\end{eqnarray*}
and, for $j\not=k$,
\begin{eqnarray*}
L( p_{ij}p_{ik})&=& \langle\xi_{ij},\xi_{ik}\rangle + \delta_{ij} L( p_{ik})+\delta_{ik} L( p_{ij})
\\ &=&
\left \{ \begin{array}{ll}
-\langle\xi_{i},P_{ik}\xi_{i}\rangle + \|\xi_{ik}\|^2=0 &\; \mbox{if}\;i=j, i\neq k\\
\langle P_{ij}\xi_{ii},P_{ik}\xi_{ii}\rangle=0 &\; \mbox{if}\; i\neq j, i\neq k \end{array} \right.,   \\
\end{eqnarray*}
On the other hand, using the assumption on $\xi_i$'s, we get
\begin{align*}
L (p_{ji}p_{ki}) &= \langle\xi_{ji},\xi_{ki}\rangle + \delta_{ji} L( p_{ki})+\delta_{ki} L( p_{ji})
\\&= \langle P_{ji}\xi_{jj},P_{ki}\xi_{kk}\rangle + \delta_{ji} L( p_{ki})+\delta_{ki} L( p_{ji})\\
&=
\langle P_{ji}\xi_{ii},P_{ki}\xi_{ii}\rangle + \delta_{ji} L( p_{ki})+\delta_{ki} L( p_{ji}),
\end{align*}
which vanishes too. Finally, we check that
\begin{align*}
L(\sum_{j=1}^n p_{ij} - 1) &= \sum_{j\neq i} \langle\xi_{i}, P_{ij}\xi_{i} \rangle  -\langle\xi_{i}, \xi_{i} \rangle
= -\langle\xi_{i}, (I -\sum_{j\neq i} P_{ij}) \xi_{i} \rangle  \\&= -\langle\xi_{i}, P_{ii} \xi_{i} \rangle =0,
\end{align*}
\begin{align*}
L(\sum_{j=1}^n p_{ji} - 1) &= \sum_{j\neq i} \langle P_{ji}\xi_{j}, P_{ji}\xi_{j} \rangle  -\langle\xi_{i}, \xi_{i} \rangle
\\&=\sum_{j\neq i} \langle P_{ji}\xi_{i}, P_{ji} \xi_{i} \rangle  -\langle\xi_{i}, \xi_{i} \rangle =0.
\end{align*}
We showed that $L$ vanishes on \eqref{sn-projections}-\eqref{sn-columns}, hence $L$ is a generating functional.

(3)
Set $v=-\xi_1$. Since $P_{ii}v=-P_{ii}\xi_{i}=0$, we see that
$$\eta(p_{ii})=\xi_i = \big( P_{ii}-I \big) v=\big( \rho(p_{ii})-\e(p_{ii})I \big) v, \quad i=1,\ldots, n.$$
Then for any off-diagonal element $p_{ij}$ (with $i\neq j$) we have
$$\eta(p_{ij}) = -P_{ij}\eta(p_{ii}) = -P_{ij} \big( P_{ii}-I \big) v = P_{ij} v = \big( \rho(p_{ij}) -\e(p_{ij})I \big) v,$$
because $ p_{ij} p_{ii}=0$ and $\e(p_{ij})=0$.
This proves that $\eta$ is the coboundary of $v$.
\end{proof}

\begin{remark}\label{allasabove}
Note that once a representation $\rho$ is fixed, all the cocycles $\eta$ and generating functionals $L$ such that $(\rho,\eta,L)$ is a Sch\"urmann triple are of the form described in the above proposition.
\end{remark}

\begin{remark}\label{rem-diag-equal}
The converse of Property 3 is not true, i.e.\ there exist coboundaries for which
$\xi_i=\eta(p_{ii})$ is not independent of $i$. On the other hand, if for a given representation $\rho$ we have
$P_{11}=\cdots=P_{nn}$, then all coboundaries are of the form appearing in Property 3 above. This observation is trivial, but will be useful in the
next section.
\end{remark}

Let us recall that two L\'{e}vy processes are equivalent if they have the same joint moments. On the other hand, we shall say that two tuples \linebreak $(\rho; \xi_1, \xi_2,\ldots, \xi_n)$ and $(\rho'; \xi_1', \xi_2',\ldots, \xi_n')$, with $\rho$ and $\rho'$ being representations of $\pol (S_n^+)$ on $\hil$ and $\hil'$, respectively, and $\xi_i\in \hil$, $\xi_i'\in \hil'$ for $i=1,\ldots,n$, are \emph{equivalent} if there exists a unitary $V:\hil\to \hil'$ for which $V\rho(a)=\rho'(a)V$ and $\xi_i'=V\xi_i$ for $i=1,\ldots,n$.

Below, we summarize the results of this section in a classification theorem.
\begin{corollary} \label{cor_class}
There is a one-to-one correspondence between L\'{e}vy processes on $S_n^+$ (up to equivalence) and
the equivalence classes of tuples \linebreak $(\rho; \xi_1, \xi_2,\ldots, \xi_n)$ (up to equivalence), consisting of
a representation $\rho$ of $S_n^+$ on some Hilbert space $\hil$ and vectors $\xi_1, \xi_2,\ldots, \xi_n\in \hil$
satisfying the relations $\rho(p_{ii})\xi_i=0$ and $\rho(p_{ij})\xi_i=\rho(p_{ij})\xi_j$ for any $i,j=1,\ldots,n$.
\end{corollary}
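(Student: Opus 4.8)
The plan is to realise the stated correspondence as the composition of two bijections. The first, supplied by Sch\"urmann's theory recalled in Section~\ref{sec-schurmann}, identifies equivalence classes of L\'evy processes on $\Pol(S_n^+)$ with Sch\"urmann triples $(\rho,\eta,L)$ taken up to unitary equivalence; here two triples $(\rho,\eta,L)$ and $(\rho',\eta',L')$ are unitarily equivalent when there is a unitary $V$ with $V\rho(a)=\rho'(a)V$ and $V\eta(a)=\eta'(a)$ for all $a\in\Pol(S_n^+)$ (the functionals $L,L'$ then automatically coincide, being coboundaries of $\eta,\eta'$). Thus it suffices to set up, compatibly with these equivalences, a bijection between Sch\"urmann triples and tuples $(\rho;\xi_1,\dots,\xi_n)$ obeying \eqref{conditions_on_xi}.

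The forward map is exactly the content of Proposition~\ref{prop_relations}: from a tuple satisfying \eqref{conditions_on_xi} one builds a Sch\"urmann triple $(\rho,\eta,L)$ with $\eta(p_{ii})=\xi_i$. For the reverse map I would start from a triple $(\rho,\eta,L)$ and set $\xi_i:=\eta(p_{ii})$; the point to verify is that these vectors satisfy \eqref{conditions_on_xi}. Applying the cocycle identity to $p_{ii}^2=p_{ii}$ and using $\e(p_{ii})=1$ gives $\rho(p_{ii})\xi_i=0$. For the second relation I would apply the cocycle identity to the two orthogonality relations from \eqref{sn-orthogonal}: from $p_{ij}p_{ii}=0$ (a row relation) one gets $\rho(p_{ij})\xi_i+\eta(p_{ij})=0$, while from $p_{ij}p_{jj}=0$ (a column relation) one gets $\rho(p_{ij})\xi_j+\eta(p_{ij})=0$; comparing the two yields $\rho(p_{ij})\xi_i=\rho(p_{ij})\xi_j$.

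That the two maps are mutually inverse follows from the earlier lemmas. Going tuple~$\to$~triple~$\to$~tuple recovers the $\xi_i$ by construction. Going triple~$\to$~tuple~$\to$~triple recovers the original $(\rho,\eta,L)$: the rebuilt cocycle agrees with $\eta$ because both are $\rho$-$\e$-cocycles with the same values on the $p_{ii}$, hence equal by Lemma~\ref{lem-diagonal}, and then $L$ is forced by Lemma~\ref{lem-L_values}; Remark~\ref{allasabove} guarantees that no triples are missed. Finally I would check that the equivalences match: a unitary $V$ intertwining two triples satisfies $V\xi_i=V\eta(p_{ii})=\eta'(p_{ii})=\xi_i'$, so it is an equivalence of tuples; conversely, if $V\rho(a)=\rho'(a)V$ and $V\xi_i=\xi_i'$, then, since each cocycle is determined by its diagonal values through its representation (Lemma~\ref{lem-diagonal}), one deduces $V\eta(a)=\eta'(a)$ for all $a$, and $L=L'$ by Lemma~\ref{lem-L_values}, so $V$ is an equivalence of triples.

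The genuinely non-routine step is the necessity direction in the second paragraph, namely deducing $\rho(p_{ij})\xi_i=\rho(p_{ij})\xi_j$ for an \emph{arbitrary} Sch\"urmann triple; the key observation is that the same off-diagonal vector $\eta(p_{ij})$ can be computed from a row relation and from a column relation of the magic unitary, and equating the two presentations gives the identity. Everything else is bookkeeping built on Proposition~\ref{prop_relations} and Lemmas~\ref{lem-diagonal} and~\ref{lem-L_values}; the only other point requiring care is the clean matching of \emph{equivalence of L\'evy processes} with \emph{equivalence of tuples} through the intermediate notion of unitary equivalence of Sch\"urmann triples.
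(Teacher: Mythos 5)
Your proof is correct and follows essentially the same route as the paper: there the corollary is presented as a summary, obtained by composing Sch\"urmann's correspondence between L\'evy processes and (minimal) Sch\"urmann triples with Proposition~\ref{prop_relations}, Remark~\ref{allasabove} and Lemmas~\ref{lem-diagonal} and~\ref{lem-L_values}, which is exactly the assembly you carry out. In fact you go slightly beyond the text: your row/column computation (deriving $\eta(p_{ij})=-\rho(p_{ij})\xi_i=-\rho(p_{ij})\xi_j$ from $p_{ij}p_{ii}=0$ and $p_{ij}p_{jj}=0$) supplies a proof of the necessity assertion of Remark~\ref{allasabove}, which the paper states without argument.
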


The first step to answer the question about the L\'evy-Khinchin decomposition into Gaussian and non-Gaussian part (cf.\ the discussion below Definition \ref{def-gaussian}) usually starts with taking a closer look on Gaussian processes on the quantum group in question. In case of $S_n^+$, it is easy to see that no Gaussian process exists and hence the decomposition result holds trivially.

\medskip

Note that Lemma \ref{lem-L_values} states that there is no drift part in L\'evy processes on $S_n^+$.

\begin{proposition}\label{prop-no-Gauss}
There are no Gaussian processes on $S_n^+$ (and on other quantum permutation groups).
\end{proposition}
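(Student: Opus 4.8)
The plan is to reduce Gaussianity to the triviality of the representation $\rho$ and then exploit the rigidity of the magic unitary relations. Recall from the equivalences stated just after Definition \ref{def-gaussian} that a L\'evy process is Gaussian exactly when its Sch\"urmann triple $(\rho,\eta,L)$ satisfies any (hence all) of those three conditions; the one I would use is condition (3), namely $\rho(a)=\e(a)1$ for all $a\in\Pol(S_n^+)$. In particular this forces $\rho(p_{ii})=\e(p_{ii})1_\hil=1_\hil$ for every $i$, since $\e(p_{ii})=\delta_{ii}=1$.

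Next I would invoke Remark \ref{allasabove} (equivalently Proposition \ref{prop_relations} together with Lemma \ref{lem-diagonal}), which says that, once $\rho$ is fixed, the vectors $\xi_i:=\eta(p_{ii})$ of any Sch\"urmann triple on $S_n^+$ necessarily satisfy the relation $\rho(p_{ii})\xi_i=0$. Substituting $\rho(p_{ii})=1_\hil$ gives $\xi_i=0$ for all $i=1,\ldots,n$. By Lemma \ref{lem-diagonal} the cocycle is completely determined by the $\xi_i$, so $\eta\equiv 0$; then Lemma \ref{lem-L_values} yields $L(p_{ij})=(-1)^{\delta_{ij}}\|\xi_{ij}\|^2=0$ on the generators and, since $L$ is in turn determined by $\rho$ and $\eta$, $L\equiv 0$ on all of $\Pol(S_n^+)$. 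Thus the only Gaussian process on $S_n^+$ is the trivial one $\omega_t=\e$, which (as already noted for the drift) means $S_n^+$ carries no nontrivial Gaussian part. If one prefers a self-contained route avoiding Proposition \ref{prop_relations}, the same conclusion follows directly: with $\rho(p_{ii})=1_\hil$, the orthogonality relation $p_{ii}p_{ij}=0$ for $i\neq j$ gives $0=\eta(p_{ii}p_{ij})=\rho(p_{ii})\eta(p_{ij})=\eta(p_{ij})$, and then the column relation $\sum_j p_{ij}=1$ forces $\eta(p_{ii})=\sum_j\eta(p_{ij})-\sum_{j\neq i}\eta(p_{ij})=0$.

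Finally I would extend this to an arbitrary quantum permutation group $\QH$, i.e.\ a quantum subgroup of some $S_n^+$ with surjection $\pi:C(S_n^+)\to C(\QH)$. The elements $q_{ij}:=\pi(p_{ij})$ form a magic unitary generating $\Pol(\QH)$, so they satisfy exactly the relations \eqref{sn-projections}--\eqref{sn-columns}; in particular $\e(q_{ii})=1$ and $\sum_j q_{ij}=1$. Hence the argument above applies verbatim with $q_{ij}$ in place of $p_{ij}$: triviality of $\rho$ forces $\eta(q_{ii})=0$, whence $\eta=0$ and $L=0$.

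The computation is immediate, so there is no serious technical obstacle; the one point deserving care is logical rather than analytic. The genuinely restrictive hypothesis is not Gaussianity in isolation but its combination with the specific relation $\sum_j p_{ij}=1_\A$, which is what annihilates the cocycle. Accordingly, the statement ``no Gaussian processes'' must be read as ``the only Gaussian process is the trivial deterministic one'': on a generic quantum group triviality of $\rho$ alone (as happens for group $C^*$-algebras) still permits nonzero Gaussian cocycles, and it is precisely the rigidity of the magic unitary that rules them out here.
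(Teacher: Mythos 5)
Your proof is correct and takes essentially the same route as the paper: both arguments use Gaussianity together with the projection relation $p_{ii}^2=p_{ii}$ to force $\eta(p_{ii})=0$ (the paper computes $\eta(p_{jj})=\eta(p_{jj}^2)=2\eta(p_{jj})$ directly, while you route the same fact through $\rho(p_{ii})=1_\hil$ and the constraint $\rho(p_{ii})\xi_i=0$ from Proposition \ref{prop_relations}), then invoke Lemma \ref{lem-diagonal} to get $\eta=0$ and Lemma \ref{lem-L_values} to conclude $L=0$. Your explicit handling of general quantum permutation groups via the surjection $\pi:C(S_n^+)\to C(\QH)$ is a welcome addition, since the paper's proof only treats $S_n^+$ and leaves the parenthetical claim implicit.
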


\begin{proof}
Let $\eta: {\rm Pol}(S_n^+) \to \hil$ be a Gaussian cocycle. Then $\eta(p_{jj})=\eta(p_{jj}^2) = 2\eta(p_{jj})$, so $\eta(p_{jj})=0$ for any $j=1,\ldots,n$. According to Lemma \ref{lem-diagonal}, this implies that $\eta=0$, so $L$ is a drift.
But, by Lemma \ref{lem-L_values}, a L\'{e}vy process on $S_n^+$ has no drift part, so the generating functional must vanish too.
\end{proof}

\section{Cohomology groups for some representations on $S_n^+$}
\label{sec-cohomology}

Two main examples of classical L\'{e}vy processes are Brownian motion
(Gaussian process) and Poisson process. Their quantum analogues (in the
bialgebra framework) have been introduced in \cite{schurmann93}. We saw that
quantum permutation groups admit no Gaussian L\'evy processes, cf.\ Proposition
\ref{prop-no-Gauss}, so we can focus here on compound Poisson processes and
some of their limits.

\medskip
For a fixed representation $\rho$ of the algebra of continuous functions on
the compact quantum group $\QG$ on a Hilbert space $\hil$, we denote by $Z_1(\rho)$ the set of all $\rho$-$\varepsilon$-cocycles on $\hil$ (see Definition \ref{def-triple}) and by $B_1(\rho)$ the set of all cocycles in $Z_1(\rho)$ which are 1-coboundaries  (cf.\ below Definition \ref{def-poisson}). Then the \emph{cohomology group} $\HA_1(\rho)$ is defined by
$$\HA_1(\rho):=Z_1(\rho)/B_1(\rho).$$

\medskip
It would be interesting to understand the cohomology groups related to all representations of ${\rm Pol}(S_n^+)$. However, since the quantum permutation group tend to be quite ``wild'' (not coamenable, hence with the associated $C^*$-algebras not of type I), a complete characterization of the irreducible representations of $C^u(S_n^+)$
is out of reach. That is why we are going to describe some interesting (classes of) examples, which show that
quantum permutation group has a much richer structure than its classical counterpart.

\subsection{Permutation matrices}

An $n \times n$ matrix $A$ is called a \emph{permutation matrix} if it has exactly one entry 1 in each row and each column and 0s elsewhere. Such matrices are in one-to-one correspondence with permutations of $n$ elements: given a permutation $\sigma$, the related matrix $A_\sigma$ has all entries 0 except that in row i, the entry $\sigma(i)$ equals 1. Conversely, if $A$ is a permutation matrix, then the corresponding permutation is $\sigma(i)=j$ when $A_{ij}=1$.

One checks directly that each permutation matrix $A_{\sigma}$ satisfies the
relations of magic square and thus it defines a one-dimensional representation of $S_n^+$ on a Hilbert space $\mathbb{C}$ by the formula $\rho_\sigma(p_{ij})=(A_{\sigma})_{ij}$. Now we want to study the cocycles related to a matrix $A_\sigma$ and describe their cohomology group.

Using Proposition \ref{prop_relations}, it is straightforward to determine all
cocycles of the representation $\rho_\sigma$ associated to a permutation $\sigma$.

\begin{proposition}\label{prop-coh-perm}
The cohomology group of a representation $\rho_\sigma$ associated to a
permutation $\sigma\not={\rm id}$ of $S_n^+$ is
\[
\HA_1(\rho_\sigma)\cong \mathbb{C}^{{\rm cyc}(\sigma)-{\rm fix}(\sigma)-1},
\]
where ${\rm cyc}(\pi)$ and ${\rm fix}(\pi)$ denote the number of cycles and the
number of fixed points of $\sigma$, respectively.

For $\sigma={\rm id}$ we have $\rho_{\rm id}=\varepsilon$ and
$B_1(\varepsilon)\cong Z_1(\varepsilon)\cong \HA_1(\varepsilon)\cong\{0\}$.
\end{proposition}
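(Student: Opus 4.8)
The plan is to reduce everything to the linear-algebra description of Sch\"urmann triples furnished by Proposition \ref{prop_relations} together with Remark \ref{allasabove}. Since $\rho_\sigma$ acts on the one-dimensional space $\hil = \C$, a cocycle $\eta \in Z_1(\rho_\sigma)$ is, by Lemma \ref{lem-diagonal}, completely and freely encoded by the scalars $\xi_i := \eta(p_{ii}) \in \C$, subject only to the constraints \eqref{conditions_on_xi}. Thus I would first identify $Z_1(\rho_\sigma)$ with the set of tuples $(\xi_1,\ldots,\xi_n) \in \C^n$ satisfying $\rho_\sigma(p_{ii})\xi_i = 0$ and $\rho_\sigma(p_{ij})\xi_i = \rho_\sigma(p_{ij})\xi_j$ for all $i,j$. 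This correspondence is manifestly linear, so $Z_1(\rho_\sigma)$ becomes a concrete linear subspace of $\C^n$ whose dimension I can read off.

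The next step is to unwind these constraints using $\rho_\sigma(p_{ij}) = (A_\sigma)_{ij} = \delta_{j,\sigma(i)}$. The diagonal condition $\rho_\sigma(p_{ii})\xi_i = 0$ becomes $\xi_i = 0$ precisely at the fixed points $i = \sigma(i)$ and is vacuous otherwise; the off-diagonal condition is non-trivial only when $\sigma(i) = j$, where it reads $\xi_i = \xi_{\sigma(i)}$. Hence $Z_1(\rho_\sigma)$ consists exactly of those $(\xi_i)$ that are constant along each cycle of $\sigma$ and that vanish on every cycle of length one. A tuple of this form is determined by one free scalar per non-trivial cycle, so $\dim_{\C} Z_1(\rho_\sigma) = {\rm cyc}(\sigma) - {\rm fix}(\sigma)$, the number of cycles of length at least two.

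Finally I would compute the coboundaries. By the definition of a $1$-coboundary (below Definition \ref{def-poisson}), $\eta \in B_1(\rho_\sigma)$ means $\eta(a) = (\rho_\sigma(a) - \e(a)I)v$ for some $v \in \C$, which on the diagonal gives $\xi_i = ((A_\sigma)_{ii} - 1)v$, i.e.\ $\xi_i = -v$ at non-fixed points and $\xi_i = 0$ at fixed points. When $\sigma \neq {\rm id}$ there is at least one non-fixed point, so this family is genuinely one-dimensional and $B_1(\rho_\sigma) \cong \C$; it is visibly contained in $Z_1(\rho_\sigma)$, since a value that is constant on the non-fixed indices is in particular constant on each non-trivial cycle. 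Passing to the quotient yields $\HA_1(\rho_\sigma) \cong \C^{{\rm cyc}(\sigma) - {\rm fix}(\sigma) - 1}$. For $\sigma = {\rm id}$ one has $A_{\rm id} = I$, so $\rho_{\rm id} = \e$ and every index is a fixed point; the constraints force $\xi_i = 0$ for all $i$, whence $Z_1(\e) = B_1(\e) = \{0\}$ and the cohomology vanishes. I do not expect a serious obstacle: the only points requiring care are checking that the correspondence of Remark \ref{allasabove} is linear, so that dimension counting is legitimate, and keeping track of the inclusion $B_1(\rho_\sigma) \subseteq Z_1(\rho_\sigma)$, which becomes an equality exactly when $\sigma$ has a single non-trivial cycle, consistently with the formula returning the trivial group in that case.
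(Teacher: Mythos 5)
Your proof is correct and follows essentially the same route as the paper's: both identify $Z_1(\rho_\sigma)$ with tuples $(\xi_i)$ via Lemma \ref{lem-diagonal} and Proposition \ref{prop_relations}, unwind the constraints \eqref{conditions_on_xi} to get vanishing at fixed points and constancy along cycles (hence dimension ${\rm cyc}(\sigma)-{\rm fix}(\sigma)$), and quotient by the one-dimensional space of coboundaries. The only cosmetic differences are that you spell out explicitly the step the paper dismisses as ``easy to see'' ($B_1(\rho_\sigma)\cong\C$), and you handle $\sigma={\rm id}$ by direct computation rather than by invoking Proposition \ref{prop-no-Gauss}.
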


\begin{proof}
If $\sigma={\rm id}$ is the identity permutation, then
$\rho_\sigma=\varepsilon$ is equal to the counit and we have
$B_1(\varepsilon)\cong\{0\}$. By Proposition \ref{prop-no-Gauss}, there are no
Gaussian cocycles, so we also have $Z_1(\varepsilon)\cong
\HA_1(\varepsilon)\cong\{0\}$.

Assume now $\sigma\not={\rm id}$. It is then easy to see that  $B_1(\rho_\sigma)\cong\mathbb{C}$.

As shown in Lemma \ref{lem-diagonal}, a $\rho$-$\e$-cocycle $\eta$ on $S_n^+$ is uniquely determined by the diagonal values $\xi_1,\xi_2,\ldots, \xi_n\in \mathbb{C}$ of the matrix $\eta(p)$ (that is, $\xi_j=\eta(p_{jj})$). Moreover, by Proposition \ref{prop_relations}, these vectors should satisfy the relations
\begin{equation} \tag{\ref{conditions_on_xi}}
 \rho(p_{ii})\xi_i=0, \quad \mbox{and}\quad \rho(p_{ij})\xi_i=\rho(p_{ij})\xi_j \; (i\neq j).
\end{equation}

If $i$ is a fixed point of the permutation, then $\rho_\sigma(p_{ii})=1$ and the
first relation implies $\xi_i=0$.

If $(i_1,\ldots,i_k)$ is a cycle of $\sigma$  of length $k\ge 2$, then we have $\rho_\sigma(p_{i_\ell i_{\ell+1}})=1$ for $\ell=1,\ldots,k-1$, and the second relation implies $\xi_{i_1}=\cdots = \xi_{i_k}$, i.e., $\xi_i$ is constant along indices that belong to a cycle.

Therefore we have
\[
B_1(\rho_\sigma)\cong \mathbb{C}, \quad Z_1(\rho_\sigma)\cong \mathbb{C}^{{\rm
    cyc}(\sigma)-{\rm fix}(\sigma)},\quad \HA_1(\rho_\sigma)\cong \mathbb{C}^{{\rm
    cyc}(\sigma)-{\rm fix}(\sigma)-1}.
\]
\end{proof}

\subsection{Fourier-Hadamard matrices}
An $n \times n$ matrix $H$ is called a \emph{complex Hadamard matrix} if all its entries are of modulus 1 and if its rows are pairwise orthogonal. It follows from the definition that the columns of $H$ are pairwise orthogonal as well and that $HH^* = n \; I$.

For an Hadamard matrix $H\in M_n(\C)$ with rows $h_1, \ldots, h_n$, the rank one projections
$$P_{jk} = {\rm Proj}(h_j/h_k)$$
form a magic unitary matrix $P\in M_n(A)$ with $A = M_n(\C)$, \cite{Hadamard}, where above we use the notation $h_j/h_k=[\frac{h_{jm}}{h_{km}}]_{m=1}^n$. This implies that we can define the homomorphism
$\rho_H: S_n^+ \ni u_{jk} \mapsto P_{jk} \in M_n(A)$.
Our aim now is to study the cohomology group of the representations related to Hadamard matrices.

\medskip
We first focus on a special case of complex Hadamard matrices, the \emph{Fourier matrices}. These are matrices given by
$$    (F_n)_{lm}:= {\rm e}^{2\pi i\frac{(l - 1)(m - 1)}{n}} {\quad \rm for \quad} l,m=1,2,\dots,n. $$

For a fixed $n\in \mathbb{N}$, the representation associated to the Fourier matrix $\rho_{F_n}(p_{jk})= P_{jk}$ can be described as follows. We observe first that for any $j=1,\ldots, n$, the diagonal element $P_{jj}= {\rm Proj}(h_j/h_j)$ is a projection on the vector $[1,\ldots,1]=h_1$. Similarily, the element $P_{j1}$ is the projection on the vector $h_j$, which we denote for convenience by $P_{j}$. Thus the family $P_1, \ldots, P_{n}$ is a partition of identity and induces a Hilbert space decomposition
$$\C^n=P_1H\oplus P_2H\oplus \ldots \oplus P_n H=\C h_1 \oplus \C h_2 \oplus  \ldots \oplus \C h_n.$$

Next, we note that
$$\frac{h_{jm}}{h_{km}} = {\rm e}^{2\pi i\frac{(j - 1)(m - 1)}{n}}{\rm e}^{ -2\pi i\frac{(k - 1)(m - 1)}{n}} = {\rm e}^{2\pi i\frac{(j - k)(m - 1)}{n}}=h_{(j-k+1)_{{\rm mod}\, n}}, $$
which means that
\begin{equation} \label{obs_pjk_mod_n}
P_{jk}=P_{(j-k+1)_{{\rm mod}\, n}}.
\end{equation}
These observations allow us to write down explicitely the matrix $\rho_{F_n}(p)=[\rho_{F_n}(p_{jk})]_{j,k=1}^n$:
$$\rho_{F_n}(p)=\left [ \begin{array}{cccccc}
P_1 & P_{n} & P_{n-1} & \cdots & P_3 & P_2 \\
P_2 & P_1     & P_n     & \cdots & P_4 & P_3 \\
P_3 & P_2     & P_1 & \cdots & P_5 & P_4 \\
\ldots & \ldots & \ldots & \ldots & \ldots & \ldots \\
P_{n} & P_{n-1} & P_{n-2} & \cdots & P_2 & P_1\\
\end{array} \right].$$

This means that $\rho_{F_n}$ is a direct sum over representations coming from
permutations. Indeed, denote by $s_k$, $0\le k\le n-1$ the cyclic shift on
$\{1,\ldots,n\}$ that acts as
\[
s_k(i) = \left\{\begin{array}{lcl}
i+k & \mbox{ if} & i+k\le n, \\
i+k-n & \mbox{ if} & i+k> n.
\end{array}\right.
\]
Then we have
\begin{equation}\label{eq-FH-decomp}
\rho_{F_n}(p) = \sum_{k=0}^{n-1} A_{s_k}P_{k+1}
\end{equation}
i.e., the magic unitary associated to $F_n$ is a direct sum of permutation matrices.

\begin{example} \label{exampleF4}
The cohomology group of $\rho_{F_n}$ can be non-trivial. To see that, let us have a look on the following example. If $n=4$, the representation $\rho_{F_4}$ given by
$$\rho_{F_4}(p)= \left [ \begin{array}{cccc}
P_1 & P_4 & P_3 & P_2 \\
P_2 & P_1 & P_4 & P_3 \\
P_3 & P_2 & P_1 & P_4 \\
P_4 & P_3 & P_2 & P_1\\
\end{array} \right],$$
where $P_k$ is the projection on the vector $h_{k} = [e^{\frac{i\pi (k-1)(j-1)}{2}}]_{j=1}^4$. Now, let us define the vectors $\xi_1=\xi_3=0$ and $\xi_2=\xi_4=h_3\not=0$. We can check directly that such vectors satisfy the relations from Proposition \ref{prop_relations} and thus define a cocycle $\eta: S_4^+ \to \C^4$ by the values $\eta(p_{ii})=\xi_i$, $i=1,\ldots, 4$. On the other hand, if $\eta$ was a coboundary, this would mean that there exists a vector $v\in \C^4$ such that $0=(P_1-I)v$ and $h_3=(P_1-I)v$, which leads to a contradiction. In fact, we can chose $\xi_2=\xi_4$ to be an arbitrary (non-zero) element from the space generated by $h_3$, which shows that $H_1(\rho(F_4))=\C h_3$.
\end{example}

We shall now generalize this observation.

\begin{proposition}
Let $F=F_n$ be the Fourier matrix of size $n$.
\begin{enumerate}
\item[(i)] For any $n\in \mathbb{N}$
$$H_1(\rho_F)= \bigoplus_{k=1}^{n-1} \mathbb{C}^{{\rm gcd}(n,k)-1},$$
where $gcd(n,k)$ denotes the greatest common divisor of $n$ and $k$. 
\item[(ii)] If $n$ is a prime number, then $H_1(\rho_F)=\{0\}$ and so any cocycle corresponding to the representation $\rho_F$ 
is a coboundary.
\end{enumerate}
\end{proposition}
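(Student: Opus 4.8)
The plan is to exploit the decomposition \eqref{eq-FH-decomp} of $\rho_{F_n}$, which exhibits it as an orthogonal direct sum of one-dimensional representations each attached to a cyclic shift, and then to reduce the whole computation to Proposition \ref{prop-coh-perm}. First I would record the structural fact underlying \eqref{eq-FH-decomp}: the orthogonal decomposition $\C^n=\bigoplus_{k=0}^{n-1}\C h_{k+1}$ is into $\rho_{F_n}$-invariant subspaces. Indeed, by \eqref{obs_pjk_mod_n} each generator $\rho_{F_n}(p_{jl})$ is the rank-one projection onto some $h_m$, so it either fixes or annihilates each line $\C h_{k+1}$; since $\Pol(S_n^+)$ is generated by the $p_{jl}$, every $\rho_{F_n}(a)$ preserves each $\C h_{k+1}$ and is therefore block diagonal with respect to this decomposition. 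Writing $\rho^{(k)}$ for the restriction of $\rho_{F_n}$ to $\C h_{k+1}$, formula \eqref{eq-FH-decomp} identifies $\rho^{(k)}$ with the representation $\rho_{s_k}$ attached to the cyclic shift $s_k\colon i\mapsto i+k \pmod n$.

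The key step is to verify that first cohomology is additive along this decomposition, i.e.\ $\HA_1(\rho_{F_n})\cong\bigoplus_{k=0}^{n-1}\HA_1(\rho^{(k)})$. I would argue directly: given a $\rho_{F_n}$-$\e$-cocycle $\eta$, set $\eta_k=P_{k+1}\circ\eta$. Because each projection $P_{k+1}$ commutes with every $\rho_{F_n}(a)$ (block diagonality), applying $P_{k+1}$ to the cocycle identity $\eta(ab)=\rho_{F_n}(a)\eta(b)+\eta(a)\e(b)$ shows that each $\eta_k$ is a $\rho^{(k)}$-$\e$-cocycle, while $\eta=\sum_k\eta_k$; hence $Z_1(\rho_{F_n})=\bigoplus_k Z_1(\rho^{(k)})$. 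The same projection argument applied to a coboundary vector, split as $v=\sum_k P_{k+1}v$, gives $B_1(\rho_{F_n})=\bigoplus_k B_1(\rho^{(k)})$, and the claimed splitting of $\HA_1$ follows by taking quotients. I expect this additivity --- and in particular the bookkeeping that the coboundaries split, not merely the cocycles --- to be the point requiring the most care, although it is routine once block diagonality is established.

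It then remains to evaluate each summand via Proposition \ref{prop-coh-perm}. The shift $s_k$ acts on $\{1,\ldots,n\}\cong\mathbb{Z}/n\mathbb{Z}$ by translation, so all its orbits have length $n/\gcd(n,k)$ and hence $\mathrm{cyc}(s_k)=\gcd(n,k)$; for $1\le k\le n-1$ it has no fixed points, so $\mathrm{fix}(s_k)=0$, whereas $s_0=\mathrm{id}$. Proposition \ref{prop-coh-perm} then gives $\HA_1(\rho^{(0)})=\{0\}$ and $\HA_1(\rho^{(k)})\cong\C^{\gcd(n,k)-1}$ for $1\le k\le n-1$. Summing yields
\[
\HA_1(\rho_{F_n})\cong\bigoplus_{k=1}^{n-1}\C^{\gcd(n,k)-1},
\]
which is (i). For (ii), if $n$ is prime then $\gcd(n,k)=1$ for every $1\le k\le n-1$, so each exponent $\gcd(n,k)-1$ vanishes and $\HA_1(\rho_{F_n})=\{0\}$; since $\HA_1=Z_1/B_1$, this says precisely that every cocycle of $\rho_{F_n}$ is a coboundary.
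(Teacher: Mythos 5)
Your proof is correct and takes essentially the same route as the paper: both use the decomposition \eqref{eq-FH-decomp} of $\rho_{F_n}$ into one-dimensional blocks attached to cyclic shifts, split $\HA_1$ as the corresponding direct sum, and evaluate each summand by Proposition \ref{prop-coh-perm} using $\mathrm{cyc}(s_k)=\gcd(n,k)$ and $\mathrm{fix}(s_k)=0$ for $1\le k\le n-1$. The only difference is that you supply the projection argument showing that $Z_1$ and $B_1$ both split along the blocks, a step the paper asserts without proof.
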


\begin{proof}
From the decomposition \eqref{eq-FH-decomp} we get
\[
H_1(\rho_F)\cong \bigoplus_{k=0}^{n-1}H_1(\rho_{s_k}).
\]
For $k=0$ we have $s_0={\rm id}$, and
\[
H_1(\rho_F)=\{0\}
\]
by Proposition \ref{prop-coh-perm}.

Since a shift $s_k$, with $n>k\ge 1$, on $\{1,\ldots,n\}$ decomposes into ${\rm
  gcd}(n,k)$ disjoint cycles and has no fixed points, we get
\[
H_1(\rho_{s_k})\cong \mathbb{C}^{{\rm gcd}(n,k)-1},
\]
by Proposition \ref{prop-coh-perm}.

Part (ii) follows from the previous one when we observe that for prime $n$ all $gcd(n,k)=1$ and thus ${\rm dim}\, H_1(\rho_F)=0$.
\end{proof}

\medskip
In general, the description of all $n$-dimensional complex Hadamard matrices is not known for dimensions higher than $n=5$ (see \cite{hadamard_matrices_catalogue} for the current state of art). To deal with the classification problem, it suffices to treat only inequivalent matrices. Two complex Hadamard matrices $H_1$ and $H_2$ are called \emph{equivalent} if there exist diagonal unitary matrices $D_1$, $D_2$ and permutation matrices $P_1$, $P_2$ such that $H_1=D_1P_1H_2P_2D_2$. Then any complex Hadamard matrix is equivalent to a \emph{dephased} Hadamard matrix, i.e.\ an Hadamard matrix in  which all elements in the first row and first column are equal to unity.

We do not need to distinguish between equivalent Hadamard matrices.

\begin{proposition}\label{prop-equiv-H}
If two complex Hadamard matrices $H_1$ and $H_2$ are equivalent, then there exists a Hopf $*$-algebra automorphism $\theta:{\rm Pol}(S_n^+)\to{\rm Pol}(S_n^+)$ such that representations $\rho_{H_1}$ and $\rho_{H_2}\circ \theta$ are unitarily equivalent.
\end{proposition}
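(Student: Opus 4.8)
The plan is to decompose the equivalence $H_1 = D_1 P_1 H_2 P_2 D_2$ into its four elementary factors — two diagonal unitaries and two permutation matrices — and to track how each one acts on the rank-one projections $P_{jk} = {\rm Proj}(h_j/h_k)$ that build the magic unitary, hence on the representation $\rho_H$. The structural feature I would exploit is that $\rho_H$ only sees the coordinatewise ratios $h_j/h_k$ of the rows of $H$, and that these ratios are largely insensitive to rescalings.

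First I would isolate the candidate automorphism. For $\sigma\in S_n$ define $\theta_\sigma:\Pol(S_n^+)\to\Pol(S_n^+)$ on generators by $\theta_\sigma(p_{jk})=p_{\sigma(j),\sigma(k)}$. I would check that $\theta_\sigma$ preserves the magic-unitary relations \eqref{sn-projections}--\eqref{sn-columns} (so it is a well-defined $*$-algebra endomorphism, with inverse $\theta_{\sigma^{-1}}$), and that it is a Hopf $*$-algebra map: compatibility with the counit and the antipode $S(p_{jk})=p_{kj}$ is immediate, while the coproduct identity
\[
(\theta_\sigma \ot \theta_\sigma)(\Com(p_{jk})) = \sum_l p_{\sigma(j),\sigma(l)} \ot p_{\sigma(l),\sigma(k)} = \Com(p_{\sigma(j),\sigma(k)})
\]
follows after reindexing the sum by $l'=\sigma(l)$. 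It is precisely this reindexing that forces \emph{both} indices to be permuted by the \emph{same} $\sigma$; this is why row permutations, which turn out to permute both indices simultaneously, land inside the automorphism group, whereas column permutations will not.

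Next comes the main computation. Writing $P_1$ as the row permutation $\pi$ and $P_2$ as the column permutation $\tau$, and $D_1={\rm diag}(d_j)$, $D_2={\rm diag}(e_m)$, one obtains the entrywise formula $(H_1)_{jm} = d_j\, e_m\, (H_2)_{\pi(j),\tau(m)}$. Forming the ratio vector, the column phases $e_m$ cancel outright, the row phases contribute only the global scalar $d_j/d_k$ (which a projection ignores), and what remains is
\[
h^{(1)}_j/h^{(1)}_k = \tfrac{d_j}{d_k}\, U_\tau\big(h^{(2)}_{\pi(j)}/h^{(2)}_{\pi(k)}\big),
\]
where $U_\tau$ is the coordinate-permutation unitary $(U_\tau v)_m = v_{\tau(m)}$. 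Taking projections and using ${\rm Proj}(U_\tau w) = U_\tau\, {\rm Proj}(w)\, U_\tau^*$ yields
\[
\rho_{H_1}(p_{jk}) = U_\tau\, \rho_{H_2}(p_{\pi(j),\pi(k)})\, U_\tau^* = U_\tau\, \big(\rho_{H_2}\circ \theta_\pi\big)(p_{jk})\, U_\tau^*,
\]
so $\theta:=\theta_\pi$ works and $U_\tau$ implements the unitary equivalence between $\rho_{H_1}$ and $\rho_{H_2}\circ\theta$.

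The routine parts are the verification that $\theta_\sigma$ is a Hopf $*$-automorphism and the bookkeeping of the entrywise formula; I expect the only genuine obstacle to be conceptual rather than technical, namely recognising the asymmetric roles of the two sides: the diagonal factors act trivially on $\rho_H$, a row permutation is absorbed into an honest Hopf algebra automorphism (permuting both indices), and a column permutation is absorbed into a unitary conjugation of the representation. Once this dictionary is in place the statement follows; the one point needing care is fixing consistent conventions for how $P_1$ and $P_2$ permute (rows versus columns, $\pi$ versus $\pi^{-1}$) so that the resulting $\theta_\pi$ and $U_\tau$ are correctly identified.
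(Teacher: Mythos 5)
Your proposal is correct and follows essentially the same route as the paper: there, too, the diagonal unitary factors are shown to leave $\rho_H$ unchanged, a row (left) permutation is absorbed into the Hopf $*$-algebra automorphism $\theta_\pi(p_{jk})=p_{\pi(j)\pi(k)}$, and a column (right) permutation becomes conjugation by the coordinate-permutation unitary. The only difference is organizational: the paper proves these facts in two separate lemmas (one for permutation matrices, one for diagonal matrices) and then composes them, whereas you carry everything out in a single entrywise computation.
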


We split the proof into two lemmas. Let us start with the multiplication by permutation matrices.

\begin{lemma}
Let $H$ be a $n\times n$ complex Hadamard matrix and $P=(\delta_{j,\pi(k)})_{1\le j,k\le n}$ the permutation matrix associated to a permutation $\pi\in S_n$.
\begin{description}
\item[(a)]
Multiplying $H$ from the right by $P$ corresponds to a permutation of the canonical basis vectors in $\mathbb{C}^n$, i.e.
\[
\rho_{HP} = P^{-1}\rho_H P.
\]
\item[(b)]
Multiplying $H$ from the left by $P$ correspond to a renumbering of the generators of ${\rm Pol}(S_n^+)$. More precisely, there exists a unique Hopf $*$-algebra automorphism $\theta_\pi:{\rm Pol}(S_n^+)\to{\rm Pol}(S_n^+)$ such that
\[
\theta(p_{jk})=p_{\pi(j)\pi(k)}
\]
for $1\le j,k\le n$, and furthermore we have
\[
\rho_{PH} = \rho\circ\theta_\pi.
\]
\end{description}
\end{lemma}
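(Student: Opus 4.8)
The plan is to reduce both statements to the way in which multiplication by $P$ acts on the rows $h_1,\dots,h_n$ of $H$, and then to push this through two elementary facts: that for a unitary $U$ on $\C^n$ one has $\mathrm{Proj}(Uv)=U\,\mathrm{Proj}(v)\,U^*$ for every nonzero $v$, and that the entrywise quotient $h_j/h_k$ transforms naturally under a permutation of coordinates. Since in each part both sides of the asserted identity are $*$-homomorphisms out of $\pol(S_n^+)$, it suffices to verify the identity on the generators $p_{jk}$, i.e.\ on the magic-unitary entries $P_{jk}=\mathrm{Proj}(h_j/h_k)$, and then extend by multiplicativity.

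For (a), I would first observe that right multiplication by $P$ permutes the \emph{columns} of $H$: writing $h'_j$ for the rows of $HP$, a direct computation of the matrix product gives $(h'_j)_m=(h_j)_{\pi^{-1}(m)}$, which says exactly that $h'_j/h'_k=P^{-1}(h_j/h_k)$, where $P^{-1}=P^*$ is the coordinate-permutation operator. Applying the covariance fact with $U=P^{-1}$ then yields
\[
\rho_{HP}(p_{jk})=\mathrm{Proj}\big(P^{-1}(h_j/h_k)\big)=P^{-1}\,\mathrm{Proj}(h_j/h_k)\,P=P^{-1}\rho_H(p_{jk})\,P .
\]
As conjugation by $P^{-1}$ is a $*$-automorphism of $M_n(\C)$, this equality on generators extends to all of $\pol(S_n^+)$, giving $\rho_{HP}=P^{-1}\rho_H P$.

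For (b), the point is instead that left multiplication by $P$ permutes the \emph{rows} of $H$: the rows of $PH$ are a $\pi$-relabelling of $h_1,\dots,h_n$, so that $\rho_{PH}(p_{jk})=\mathrm{Proj}(h_{\pi(j)}/h_{\pi(k)})=\rho_H(p_{\pi(j)\pi(k)})$. This is precisely $\rho_H$ precomposed with the map relabelling indices by $\pi$. I would therefore define $\theta_\pi$ on generators by $\theta_\pi(p_{jk})=p_{\pi(j)\pi(k)}$ and check that it extends to a Hopf $*$-algebra automorphism: it sends projections to projections, preserves the orthogonality relations $p_{ij}p_{ik}=0$ and the row/column-sum relations (re-indexing the sums, since $\pi$ is a bijection), is invertible with inverse $\theta_{\pi^{-1}}$, and intertwines the coproduct, since $\Com(p_{\pi(j)\pi(k)})=\sum_l p_{\pi(j)l}\ot p_{l\pi(k)}=(\theta_\pi\ot\theta_\pi)\Com(p_{jk})$ after the substitution $l=\pi(l')$. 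Uniqueness is automatic because the $p_{jk}$ generate the algebra. The identity $\rho_{PH}=\rho_H\circ\theta_\pi$ then holds on generators and hence everywhere.

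The only delicate point—and the one I would flag as the main obstacle—is the bookkeeping of the direction of $\pi$ versus $\pi^{-1}$. One must fix the permutation-matrix convention once (whether $P$ acts as $e_k\mapsto e_{\pi(k)}$ or as its inverse) and propagate it consistently through both parts, so that left multiplication produces rows $h_{\pi(j)}$ and the automorphism in (b) comes out as $\theta_\pi$ rather than $\theta_{\pi^{-1}}$, while (a) simultaneously reads $P^{-1}\rho_H P$. Once this convention is pinned down, everything reduces to a routine matrix computation combined with the unitary-covariance of the projection map.
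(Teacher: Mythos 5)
Your proof is correct and follows essentially the same route as the paper's: right multiplication by $P$ permutes columns, giving conjugation of the projections $\mathrm{Proj}(h_j/h_k)$ by the coordinate-permutation unitary, while left multiplication permutes rows, giving $\rho_{PH}(p_{jk})=\rho_H(p_{\pi(j)\pi(k)})=\rho_H\circ\theta_\pi(p_{jk})$, with $\theta_\pi$ existing because the relabelled generators satisfy the same magic-unitary relations and being unique since the $p_{jk}$ generate. You actually supply details the paper dismisses as "straightforward" (the coproduct intertwining and invertibility of $\theta_\pi$, and the explicit covariance identity $\mathrm{Proj}(Uv)=U\,\mathrm{Proj}(v)\,U^*$), and the $\pi$-versus-$\pi^{-1}$ bookkeeping you flag is indeed the only point of divergence, a convention choice that affects neither proof in substance.
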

\begin{proof}
\begin{description}
\item[(a)]
If we multiply $H$ from the right by a permutation $P$, we permute the columns of $H$, which corresponds to a permutation of the canonical basis vectors in the space where the projections \linebreak $P^H_{jk} = {\rm Proj}(h_j/h_k)$ act. If $H=(h_{jk})_{1\le j,k\le n}$, then 
$$HP=(h_{j\pi^{-1}(k)})_{1\le j,k\le n},$$ 
and therefore we get
\[
P^{HP}_{jk} = {\rm Proj}\left( \left(\frac{h_{j\pi^{-1}(m)}}{h_{k\pi^{-1}(m)}}\right)_{m=1}^n\right) = P^{-1} P^H_{jk} P.
\]
\item[(b)]
It is straightforward to check that the entries of the matrix \linebreak $(p_{\pi(j)\pi(k)})_{1\le j,k\le n}$ satisfy the same relations as those of $(p_{jk})_{1\le j,k\le n}$ and this guarantees the existence of $\theta_\pi$. Uniqueness is clear because the coefficients $(p_{jk})_{1\le j,k\le n}$ generate ${\rm Pol}(S_n^+)$.

We have $PH=(h_{\pi(j),k})_{1\le j,k\le n}$ and therefore
\[
\rho_{PH}(p_{jk}) = P^{PH}_{jk} = {\rm Proj}(h_{\pi(j)}/h_{\pi(k)})= P^H_{\pi(j),\pi(k)} = \rho_H(p_{\pi(j),\pi(k)})
\]
for $1\le j,k\le n$.
\end{description}
\end{proof}

In the following lemma we consider the multiplication by diagonal matrices.

\begin{lemma}
Let $H$ be a $n\times n$ complex Hadamard matrix and $D=(c_j\delta_{jk})_{1\le j,k\le n}$ a diagonal matrix with coefficients of modulus one, i.e., $|c_j|=1$ for $j=1,\ldots,n$.

Multiplying $H$ from the right or the left by $D$ does not change the associated representation $\rho_H$, i.e., $\rho_H=\rho_{HD}=\rho_{DH}$.
\end{lemma}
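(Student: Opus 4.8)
The plan is to verify directly that the rank-one projections defining $\rho_{HD}$ and $\rho_{DH}$ coincide, entry by entry, with those defining $\rho_H$. Recall that $\rho_H(p_{jk}) = {\rm Proj}(h_j/h_k)$, where $h_1,\ldots,h_n$ are the rows of $H$ and $h_j/h_k = [h_{jm}/h_{km}]_{m=1}^n$ is their entrywise quotient (well defined, since every entry of a complex Hadamard matrix has modulus one and hence is nonzero). Since $\rho_{HD}$, $\rho_{DH}$ and $\rho_H$ are all determined by their values on the generators $p_{jk}$, it suffices to compare the entries of the associated magic unitaries.

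First I would treat the right multiplication. Writing the diagonal entries of $D$ as $c_1,\ldots,c_n$, the $(j,m)$ entry of $HD$ is $h_{jm} c_m$, so the $j$-th row of $HD$ is $(h_{jm}c_m)_{m=1}^n$. Forming the entrywise quotient of rows $j$ and $k$ gives
\[
\left(\frac{h_{jm}c_m}{h_{km}c_m}\right)_{m=1}^n = \left(\frac{h_{jm}}{h_{km}}\right)_{m=1}^n = h_j/h_k,
\]
since the factor $c_m$ occurs in both numerator and denominator at the same index $m$ and cancels. Hence ${\rm Proj}$ of this vector equals $P^H_{jk}$, that is $\rho_{HD}(p_{jk}) = \rho_H(p_{jk})$ for all $j,k$.

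Next I would treat the left multiplication. Here the $j$-th row of $DH$ is $c_j h_j$, so the entrywise quotient of rows $j$ and $k$ equals
\[
\left(\frac{c_j h_{jm}}{c_k h_{km}}\right)_{m=1}^n = \frac{c_j}{c_k}\, (h_j/h_k),
\]
a scalar multiple of $h_j/h_k$ by the number $c_j/c_k$, which is nonzero (indeed of modulus one). Since the orthogonal projection onto a nonzero vector depends only on the ray it spans, and is therefore unchanged under rescaling by a nonzero scalar, we obtain ${\rm Proj}\big((c_j/c_k)(h_j/h_k)\big) = {\rm Proj}(h_j/h_k) = P^H_{jk}$, giving $\rho_{DH}(p_{jk}) = \rho_H(p_{jk})$.

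There is no genuine obstacle in this argument; the only point deserving a moment's attention is the contrast between the two cases. For right multiplication the phases cancel entrywise, whereas for left multiplication they survive as a single global phase $c_j/c_k$, and what rescues us there is precisely the fact that a rank-one projection is determined by the ray of its defining vector rather than by the vector itself. Combining the two computations yields $\rho_H = \rho_{HD} = \rho_{DH}$.
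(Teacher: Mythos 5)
Your proof is correct and follows essentially the same route as the paper's: for right multiplication the diagonal phases cancel entrywise in the quotient vectors, and for left multiplication the quotient vector only acquires a global phase $c_j/c_k$, which leaves the rank-one projection (hence the representation) unchanged. No gaps.
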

\begin{proof}
\begin{description}
\item[1. Case] Multiplication from the right.

If $H=(h_{jk})_{1\le j,k\le n}$, then $HD=(h_{jk}c_k)_{1\le j,k\le n}$ and therefore
\[
\rho_{HD}(p_{jk}) = P^{HD}_{jk} = {\rm Proj}\left( \left(\frac{h_{jm}c_m}{h_{km}c_m}\right)_{m=1}^n\right) = P^H_{jk}=\rho_H(p_{jk})
\]
for $1\le j,k\le n$.
\item[2. Case] Multiplication from the left.

If $H=(h_{jk})_{1\le j,k\le n}$, then $DH=(c_jh_{jk})_{1\le j,k\le n}$ and the vectors onto which we will project become
\[
\left(\frac{c_j h_{jm}}{c_k h_{km}}\right)_{m=1}^n, \qquad 1\le j,k\le n.
\]
Since these vectors differ from the vectors
\[
\left(\frac{h_{jm}}{h_{km}}\right)_{m=1}^n, \qquad 1\le j,k\le n.
\]
only by a phase, they generate the same one-dimensional subspaces and give the same projections, i.e., $\rho_{DH}(p_{jk}) = P^{DH}_{jk} = P^H_{jk} = \rho_H(p_{jk})$ for $1\le j,k\le n$.
\end{description}
\end{proof}

\begin{proof} (of Proposition \ref{prop-equiv-H})
The proposition now follows from the two lemmas above.
\end{proof}

The representation $\rho$ of $S_n^+$ related to the dephased Hadamard matrix has diagonal entries equal to the projection on $h_1=(1,\ldots, 1)$. Then the $\rho$-$\e$-cocycle is a coboundary iff the vectors $\eta(p_{jj})$ are all equal (see Remark \ref{rem-diag-equal}).

As an example of what happens for non-Fourier matrices, we can have a look at the case $n=4$. Every $4\times 4$ complex Hadamard matrix is equivalent to a matrix of the form
$$ F_{4}(\phi):=
\begin{bmatrix}
1 & 1 & 1 & 1 \\ 1 & ie^{i\phi} & -1 & -ie^{i\phi} \\
1 & -1 & 1 &-1 \\ 1 & -ie^{i\phi}& -1 & i e^{i\phi}
\end{bmatrix} {\quad \rm with \quad } \phi\in [0,\pi) .$$
For $\phi=0$ we recover the Fourier matrix $F_4$.

\begin{proposition}
Let $\rho_\phi$ denotes the representation of $S_n^+$ associated to the Hadamard matrix $F_{4}(\phi)$. The cohomology group of $\rho_\phi$, $H_1(\rho_\phi)$, is isomorphic to $ \C $ if $\phi\in [0,\frac{\pi}{2})\cup(\frac{\pi/2},\pi)$ and to $\C^3$ if $\phi=\frac{\pi}{2}$.
\end{proposition}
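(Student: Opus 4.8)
The plan is to apply the explicit classification of cocycles from Proposition \ref{prop_relations} and Corollary \ref{cor_class} together with Remark \ref{rem-diag-equal}, reducing everything to a finite linear-algebra problem whose only subtlety is a single degeneration at $\phi=\pi/2$. First I would record that $F_4(\phi)$ is dephased, so every diagonal projection equals $P_{jj}={\rm Proj}(h_1)$, the rank-one projection onto $h_1=(1,1,1,1)$; hence the condition $\rho_\phi(p_{jj})\xi_j=0$ from \eqref{conditions_on_xi} just reads $\xi_j\perp h_1$, i.e.\ $\xi_j\in W:=\{h_1\}^\perp={\rm Lin}\{h_2,h_3,h_4\}$. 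Since all diagonal projections coincide, Remark \ref{rem-diag-equal} shows that $B_1(\rho_\phi)$ consists exactly of the tuples with $\xi_1=\cdots=\xi_4$, so $B_1(\rho_\phi)\cong W\cong\C^3$. It therefore remains only to compute $\dim Z_1(\rho_\phi)$, after which $H_1(\rho_\phi)\cong\C^{\dim Z_1(\rho_\phi)-3}$.

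Next I would compute the vectors $h_j/h_k$ governing the off-diagonal projections $P_{jk}={\rm Proj}(h_j/h_k)$. A direct entrywise computation shows that each $h_j/h_k$ with $j\neq k$ is one of the five vectors $h_2,h_3,h_4,\overline{h_2},\overline{h_4}$, and that the six unordered pairs split into three types: $\xi_1-\xi_2$ and $\xi_3-\xi_4$ are forced orthogonal to $\{h_2,\overline{h_2}\}$, the differences $\xi_1-\xi_4$ and $\xi_2-\xi_3$ orthogonal to $\{h_4,\overline{h_4}\}$, and $\xi_1-\xi_3$, $\xi_2-\xi_4$ orthogonal to $h_3$. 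As the rows of a Hadamard matrix are mutually orthogonal for every $\phi$, the set $\{h_2,h_3,h_4\}$ is an orthogonal basis of $W$ (each of squared norm $4$), so writing $\xi_i=x_ih_2+y_ih_3+z_ih_4$ turns the conditions involving $h_2,h_3,h_4$ directly into $x_1=x_2$, $x_3=x_4$, $z_1=z_4$, $z_2=z_3$, $y_1=y_3$, $y_2=y_4$.

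The crucial step is to expand $\overline{h_2}$ and $\overline{h_4}$ in this basis. Setting $c=e^{-2i\phi}$ one finds
\[
\overline{h_2}=\tfrac{1-c}{2}\,h_2+\tfrac{1+c}{2}\,h_4,\qquad
\overline{h_4}=\tfrac{1+c}{2}\,h_2+\tfrac{1-c}{2}\,h_4,
\]
so that, after using the equations of the previous paragraph, the remaining conditions $\langle\overline{h_2},\xi_i-\xi_j\rangle=0$ and $\langle\overline{h_4},\xi_i-\xi_j\rangle=0$ reduce to the single scalar factor $1+\overline{c}=1+e^{2i\phi}$ multiplying $z_1-z_2$, $z_3-z_4$, $x_1-x_4$ and $x_2-x_3$ respectively. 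The point is that $1+e^{2i\phi}$ vanishes precisely when $\phi=\pi/2$ (for $\phi\in[0,\pi)$), which is exactly where the matrix becomes real; this is the source of the jump in the cohomology.

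Finally I would split into cases. If $\phi\neq\pi/2$ the factor $1+e^{2i\phi}$ is invertible, so these extra equations give $z_1=z_2$, $z_3=z_4$, $x_1=x_4$, $x_2=x_3$, which combined with the first group make all $x_i$ equal and all $z_i$ equal; together with $y_1=y_3$, $y_2=y_4$ this leaves the four free parameters $x$, $z$, $y_1$, $y_2$, whence $\dim Z_1(\rho_\phi)=4$ and $H_1(\rho_\phi)\cong\C$. If $\phi=\pi/2$ the extra equations are vacuous and only $x_1=x_2$, $x_3=x_4$, $z_1=z_4$, $z_2=z_3$, $y_1=y_3$, $y_2=y_4$ remain, giving six free parameters and $\dim Z_1(\rho_\phi)=6$, hence $H_1(\rho_\phi)\cong\C^3$. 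The main obstacle is the bookkeeping of the third step: one must organise the twelve off-diagonal orthogonality conditions correctly and recognise that the whole distinction between the generic and the exceptional value of $\phi$ is carried by the single scalar $1+e^{2i\phi}$. A useful safeguard is to check consistency against Example \ref{exampleF4}, where at $\phi=0$ the tuple $\xi_1=\xi_3=0$, $\xi_2=\xi_4=h_3$ (i.e.\ $y_1\neq y_2$ with $x=z=0$) represents the generator of $H_1(\rho_{F_4})\cong\C$.
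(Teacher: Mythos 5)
Your proposal is correct and takes essentially the same route as the paper's own proof: it reduces the problem to tuples $(\xi_1,\ldots,\xi_4)$ via Proposition \ref{prop_relations} and Remark \ref{rem-diag-equal}, expands everything in the orthogonal basis given by the rows of $F_4(\phi)$, and locates the degeneration in a single scalar factor, your $1+e^{2i\phi}$ being (twice the conjugate of) the paper's $\alpha(\phi)=\frac12(1+e^{-2i\phi})$, with your vectors $\overline{h_2},\overline{h_4}$ playing the role of the paper's $k_2,k_1$. The resulting dimension counts ($\dim Z_1=4$ resp.\ $6$, $\dim B_1=3$) and conclusions agree exactly with the paper.
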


\begin{proof}
Let $k_1=(1, ie^{-i\phi}, -1, -ie^{-i\phi})$ and $k_2 = (1, -ie^{-i\phi}, -1, ie^{-i\phi})$. It is easy to see that $k_1= \alpha h_2 + \beta h_4$ and $k_2= \beta h_2 + \alpha h_4$ with $\alpha = \alpha(\phi)=\frac12(1+e^{-2i\phi})$ and $\beta= \beta(\phi)=\frac12(1-e^{-2i\phi})$. For $P_j$ denoting the projection onto $\C h_j$ ($j=1,2,3,4$) and $Q_j$ being the projection onto $\C k_j$ ($j=1,2$), we recover the magic unitary
$$\rho_\phi = \left [ \begin{array}{cccc}
P_1 & Q_2 & P_3 & Q_1 \\
P_2 & P_1 & P_4 & P_3 \\
P_3 & Q_1 & P_1 & Q_2 \\
P_4 & P_3 & P_2 & P_1
\end{array} \right],$$
where $P_iP_j=0$ for $i\neq j$.

Let now $\xi_1,\ldots,\xi_4\in \C^4$ define a cocycle on $S_n^+$ for the representation $\rho_{\phi}$.  The conditions \eqref{conditions_on_xi} imply that
$P_1\xi_j=0$ for any $j$,
\begin{eqnarray*}
 P_2 \xi_1=P_2\xi_2, & P_2 \xi_3=P_2\xi_4, & P_3\xi_1=P_3\xi_3, \\
  P_3\xi_2=P_3\xi_4, & P_4\xi_2=P_4\xi_3,  & P_4\xi_1=P_4\xi_4,
\end{eqnarray*}
and
$$ Q_1\xi_1=Q_1\xi_4, \quad Q_1\xi_2=Q_1\xi_3, \quad Q_2 \xi_1=Q_2\xi_2, \quad Q_2 \xi_3=Q_2\xi_4.$$
It is convenient to express the coefficients of $\xi_i$'s in the basis $\{h_1,h_2,h_3,h_4\}$. Write then $\xi_i =\sum_{k=1}^4 x_{ik}h_k$ for $i=1,2,3,4$. We get then
$$\begin{array}{c|cccc}
x_{ik}& h_1 & h_2 & h_3 & h_4 \\ \hline
\xi_1 & 0   & a   &  c  &  f \\
\xi_2 & 0   & a   &  d  &  g \\
\xi_3 & 0   & b   &  c  &  g \\
\xi_4 & 0   & b   &  d  &  f
\end{array}$$
Moreover, from the relations involving the projections $Q_1$ and $Q_2$, we deduce that
\begin{eqnarray*}
 0 &=& Q_1(\xi_1-\xi_4)= \langle \xi_1- \xi_4,k_1 \rangle
\\ &=& \alpha \langle \xi_1-\xi_4, h_2 \rangle + \beta \langle \xi_1-\xi_4, h_4  \rangle
= \alpha (a-b) + \beta (f-f) = \alpha (a-b), \\
 0 &=& Q_1(\xi_2-\xi_3) = \langle \xi_2-\xi_3, \alpha h_2+ \beta h_4  \rangle
= \alpha (a-b),\\
 0 &=& Q_2 (\xi_1-\xi_2) = \langle \xi_1-\xi_2,\beta h_2 + \alpha h_4  \rangle
= \alpha (f-g), \\
 0 &=& Q_2 (\xi_3-\xi_4) = \langle \xi_3-\xi_4,\beta h_2 + \alpha h_4  \rangle
 = \alpha (g-f).
\end{eqnarray*}

Thus we fix uniquely a cocycle on $S_n^+$ by choosing the constants $a,b,c,$ $d,f,g$ which satisfy the following two conditions:
(a) if $\beta(\phi) \neq 0$ then $a=b$, and
(b) if $\alpha(\phi) \neq 0$ then $a=b$ and $f=g$.
Such a cocyle is a coboundary iff $\xi_1=\xi_2=\xi_3 =\xi_4$.

Consider first the case $\phi\neq 0$ and $\phi\neq \frac{\pi}{2}$, then $\alpha\neq 0$ and $\beta \neq 0$, hence we have $a=b$ and $f=g$. This leaves 4 degrees of freedom to define an arbitrary cocyle related to $\rho_\phi$ (the choice of $a,c,d,f$). The coboundaries will form 3-dimensional subspace with the condition $c=d$ and $a$, $f$ arbitrary. Therefore $H_1(\rho_\phi) = \C $. Similarly, for $\phi=0$ (and $\alpha=1$, $\beta=0$) we choose $a,c,d,f$ for an arbitrary cocycle, while only $c\neq d$ guarantees that the cocycle is not a coboundary. Hence also here $H_1(\rho_0) = \C $.

On the other hand, if we choose $\phi =\frac{\pi}{2}$, then $\alpha=0$ and so the relations involving the projections $Q_1$ and $Q_2$ are satisfied for all constants $a,b,\ldots,g$. So the dimension of $Z_1(\rho_\phi)$ equals 6, while $\eta\in B_1(\rho_\phi)$ whenever $a=b$, $c=d$, $f=g$. So  $H_1(\rho_\frac{\pi}{2}) = \C^3 $.

This can also be seen from the fact that for $\phi=\frac{\pi}{2}$ we have $Q_1=P_4$ and $Q_2=P_2$, so the representation $\rho_\phi$ decomposes into a direct sum of four irreducible representations. The first is the trivial representation (= counit), and it has no non-trivial cocycles. The other three representations come from permutations which consist of two disjoint transpositions, so each of them has a one-dimensional first cohomology group.
\end{proof}

\subsection{Two-blocks example}
\label{ssec_two_blocks}

In this subsection we consider the two-block representation $\rho(P,Q)$ of $\mathbb{G}=S_4^+$. It is defined by
\begin{eqnarray} \label{pq-rep}
\rho(P, Q)(p)&=&\left [ \begin{array}{cccc}
P & 1-P & 0 & 0 \\
1-P & P & 0 & 0 \\
0 & 0 & Q & 1-Q \\
0 & 0 & 1-Q & Q
\end{array}\right ],
\end{eqnarray}
where $P$ and $Q$ are two projections on a Hilbert space $\hil$.

According to Lemma \ref{lem-diagonal}
and Proposition \ref{prop_relations},
the related cocycle in the Sch\"{u}rmann triple is given by
\begin{eqnarray} \label{pq-cocycle}
\eta(p) = [\eta(p_{ij})]_{i,j=1}^n &=&\left [ \begin{array}{cccc}
\xi & -\xi & 0 & 0 \\
-\xi & \xi & 0 & 0 \\
0 & 0 & \zeta & -\zeta \\
0 & 0 & -\zeta & \zeta
\end{array}\right ],
\end{eqnarray}
where $\xi$ and $\eta$ are arbitrary two vectors in the representation space $\hil$ satisfying $P\xi=0$ and $Q\zeta=0$.

\medskip
We first study the cohomology group for a special case of projections $P$ and $Q$ in a so-called  \emph{general position}. This means that the Hilbert space $\hil$ in which acts $P$ and $Q$ decomposes into $\hil_1\oplus \hil_2$ and that there exist two commuting positive contractions $c,s \in B(\hil)$ such that each of them has trivial kernel, $c^2+s^2=I$ and (with respect to the corresponding block-matrix decomposition)
\begin{equation}  \label{gen_pos_decomp}
P=\begin{bmatrix} 1 & 0 \\0 & 0 \end{bmatrix}, \;\;\;
Q = \begin{bmatrix} c^2 & cs \\cs & s^2 \end{bmatrix}
\end{equation}
(see \cite{takesaki}).

\begin{proposition} \label{prop_gen_pos}
Let $P$ and $Q$ be two projections in some finite-dimensional Hilbert space
$\hil$ which are in a general position. Suppose that $\xi, \zeta\in \hil$ are
such that $P{\xi}=0,$ $Q\zeta=0$. Then there exists a vector $\eta\in \hil$ such
that
\begin{equation}
P^{\perp}\eta= \xi, \;\;\; Q^{\perp}\eta= \zeta.
\end{equation}
\end{proposition}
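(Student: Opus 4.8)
The plan is to trivialise the problem by passing to the block form \eqref{gen_pos_decomp} of the general position and treating everything as finite-dimensional linear algebra. Identify $\hil_1\cong\hil_2$, so that $c$ and $s$ may be regarded as commuting positive operators on this common summand; since $\hil$ is finite-dimensional and $\ker c=\ker s=\{0\}$, both $c$ and $s$ are invertible. Using $c^2+s^2=I$ I first record the block forms $P^\perp=\left[\begin{smallmatrix} 0 & 0 \\ 0 & 1\end{smallmatrix}\right]$ and $Q^\perp=\left[\begin{smallmatrix} s^2 & -cs \\ -cs & c^2\end{smallmatrix}\right]$. The hypotheses say precisely that $\xi\in\ker P={\rm range}\,P^\perp$ and $\zeta\in\ker Q={\rm range}\,Q^\perp$, so the prescribed data already live in the subspaces where the two complementary projections take their values.

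The cleanest route is then to study the single linear map $T:\hil\to\ker P\oplus\ker Q$ given by $T\eta=(P^\perp\eta,Q^\perp\eta)$; the assertion to be proved is exactly that $(\xi,\zeta)$ lies in the image of $T$. I would first show $T$ is injective: $T\eta=0$ means $\eta\in{\rm range}\,P\cap{\rm range}\,Q$, and reading this off the block forms shows the intersection is $\{0\}$ --- membership in ${\rm range}\,P$ forces $\eta=(\eta_1,0)$, while ${\rm range}\,Q=\{(cu,su):u\in\hil_1\}$ then forces $su=0$, hence $u=0$ and $\eta=0$ by invertibility of $s$. A dimension count closes the argument: $\dim\ker P=\dim\hil_1$, and since $Q$ is a projection of rank $\dim\hil_1$ we also have $\dim\ker Q=\dim\hil_1$, so the codomain $\ker P\oplus\ker Q$ has dimension $\dim\hil$, equal to that of the domain. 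An injective linear map between spaces of the same finite dimension is onto, and therefore $(\xi,\zeta)\in{\rm range}\,T$, which is the claim.

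An equally good alternative, which avoids the dimension count, is to construct $\eta$ by hand. Forcing $P^\perp\eta=\xi$ pins the second block of $\eta$ to that of $\xi$; writing $\xi=(0,\xi_2)$, $\zeta=(\zeta_1,\zeta_2)$ and $w=s\eta_1-c\xi_2$, the requirement $Q^\perp\eta=\zeta$ becomes the pair $sw=\zeta_1$ and $-cw=\zeta_2$. Invertibility of $s$ solves the first for $w=s^{-1}\zeta_1$ (and hence for $\eta_1$), and the two equations are consistent precisely because $Q\zeta=0$ gives $c\zeta_1+s\zeta_2=0$ and $c,s$ commute, whence $-cw=-cs^{-1}\zeta_1=-s^{-1}c\zeta_1=\zeta_2$. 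I expect the only real obstacle to be bookkeeping rather than mathematics: one must keep the identification $\hil_1\cong\hil_2$ and the summand on which $c,s$ act straight, and confirm ${\rm rank}\,Q=\dim\hil_1$ so that the dimension count is legitimate. No analytic issue arises, since finite-dimensionality is exactly what upgrades the trivial-kernel hypothesis to the invertibility of $c$ and $s$ on which both arguments rely.
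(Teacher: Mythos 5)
Your proposal is correct, and your primary argument takes a genuinely different route from the paper's. The paper proves Proposition \ref{prop_gen_pos} by explicit construction: writing $\xi=\binom{0}{w}$, $\zeta=\binom{u}{v}$ in the block decomposition \eqref{gen_pos_decomp}, it reduces the system $P^{\perp}\eta=\xi$, $Q^{\perp}\eta=\zeta$ to the single equation $s(x-u)=c(w-v)$ for the first block $x$ of $\eta$, and solves it using ${\rm Im}\, s=\hil$ (trivial kernel plus finite dimensionality). Your second, ``by hand'' alternative is essentially this same construction, streamlined by invoking $s^{-1}$ directly, so there the two proofs coincide in spirit. Your main argument, however --- injectivity of $T\eta=(P^{\perp}\eta,Q^{\perp}\eta)$, which amounts to ${\rm Im}\,P\cap{\rm Im}\,Q=\{0\}$, combined with ${\rm rank}\,Q=\dim\hil_1$ and rank--nullity --- is an abstract dimension count that avoids all computation and is arguably cleaner. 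What the paper's longer, explicit route buys becomes clear right after Proposition \ref{prop_PQ_cohomology}: the reduction to $s(x-u)=c(w-v)$ identifies the precise obstruction in infinite dimensions, namely whether $c(w-v)$ lies in ${\rm Im}\,s$, and this is exactly what is used to describe the coboundary space $\mathsf{k}_0$ and to construct the example where $\mathsf{k}_0$ is not closed; your dimension count is intrinsically finite-dimensional and yields no such information. One small point to tidy in your write-up: justify ${\rm rank}\,Q=\dim\hil_1$ explicitly, e.g.\ via ${\rm Im}\,Q=\{(cu,su):u\in\hil_1\}$ with $u\mapsto(cu,su)$ injective because $\ker c=\{0\}$, or via ${\rm rank}\,Q={\rm tr}\,Q={\rm tr}(c^2+s^2)=\dim\hil_1$; this is routine but needed for the count to be legitimate.
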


\begin{proof}
If $P$ and $Q$ are in general position then we can write them in the form \eqref{gen_pos_decomp}, where the matrix notation is with respect to the Hilbert space decomposition
\begin{equation*} \label{h5_decomposition}
\hil=P \hil\oplus P^\perp \hil
\end{equation*}

Reformulate first the problem in terms of vectors in $\hil$. Suppose that
$$ \xi = \binom{\tilde{w}}{w} , \;\;\; \zeta = \binom{u}{v}\;\;\; $$
for some $\tilde{w},w,u,v \in \hil$. Then $P{\xi}=0,$ $Q\zeta=0$ rewrites as
\begin{equation} \tilde{w}=0, \;\; c^2 u + csv=0, \;\; cs u + s^2 v =0.\label{uv}\end{equation}
We want to show that there exist vectors $x,x'\in \hil$ such that
\begin{equation*}
P^{\perp} \binom{x}{x'} = \binom{0}{w} , \;\;\; Q^{\perp} \binom{x}{x'} = \binom{u}{v}.
\end{equation*}
It is easy to see that we must have $x'=w$. The remaining condition on $x$ can be written as follows:
\[ (1-c^2)x - cs w = u, \;\;\; - cs x + (1-s^2)w = v,\]
or equivalently, putting $y=sx$,
\begin{equation} sy = u+ csw, \;\; cy = c^2 w - v.\label{scy}\end{equation}
Acting on the last two equations respectively by $s$ and $c$, and adding the resulting vectors we see that we must have
\begin{equation} y = su+ cs^2 w + c^3w - cv = su + cw - cv. \label{candy}\end{equation}
We can check that due to \eqref{uv} the vector $y$ defined by \eqref{candy} satisfies \eqref{scy}. Indeed,
\[ s(su+cw - cv) = s^2 u +scw - csv = s^2u +c^2u +scw = u+csw, \]
\[ c(su +cw - cv) = csu +c^2w -c^2v = -s^2v +c^2w -c^2v = csw -v\]
From \eqref{candy} we deduce that $x$ must satisfy the equation
\begin{equation*} \label{eq_for_x}
s(x-u)= c(w - v).
\end{equation*}

Note that both $s\hil$ and $c\hil$ are dense in $\hil$, e.g.
$$ \overline{{\rm Im } \ s } = ({\rm Ker} \ s^*)^\perp =({\rm Ker} \ s)^\perp = \{0\}^\perp=\hil,$$
and so due to the assumption that $\hil$ is finite dimensional, we get ${\rm Im } \ s = \hil = {\rm Im } \ c$. This means that for $w-v$ there exists a vector $\tilde{x}\in \hil$ such that $s\tilde{x}=c(w-v)$. To recover $x$ we put $x=\tilde{x}+u$.
\end{proof}

\begin{corollary} \label{cor_gen_pos}
If $P$ and $Q$ are projections in general position on a finite dimensional Hilbert space, then $Z_1(\rho(P, Q))=B_1(\rho(P, Q))$ and $\HA_1(\rho(P, Q))=\{0\}$.
\end{corollary}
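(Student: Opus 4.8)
The plan is to show that every cocycle in $Z_1(\rho(P,Q))$ is a $1$-coboundary; since coboundaries are automatically cocycles, this yields $Z_1(\rho(P,Q))=B_1(\rho(P,Q))$ and hence $\HA_1(\rho(P,Q))=Z_1(\rho(P,Q))/B_1(\rho(P,Q))=\{0\}$. By formula \eqref{pq-cocycle} (equivalently, by Lemma \ref{lem-diagonal} combined with Proposition \ref{prop_relations}), an arbitrary element $\eta\in Z_1(\rho(P,Q))$ is determined by a pair of vectors $\xi,\zeta\in\hil$ subject to $P\xi=0$ and $Q\zeta=0$, with diagonal values $\eta(p_{11})=\eta(p_{22})=\xi$ and $\eta(p_{33})=\eta(p_{44})=\zeta$.

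First I would translate the coboundary condition into a single-vector problem. Recall that $\eta$ is the $1$-coboundary of a vector $v\in\hil$ exactly when $\eta(a)=\big(\rho(a)-\e(a)\id_\hil\big)v$ for all $a$, and that by Lemma \ref{lem-diagonal} it is enough to match the values on the diagonal generators. Since $\e(p_{ii})=1$ and $\rho(p_{11})=\rho(p_{22})=P$, $\rho(p_{33})=\rho(p_{44})=Q$, the coboundary condition becomes $(P-I)v=\xi$ and $(Q-I)v=\zeta$, that is, $P^{\perp}v=-\xi$ and $Q^{\perp}v=-\zeta$.

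At this stage the entire analytic content is supplied by Proposition \ref{prop_gen_pos}. Applying it to the vectors $\xi,\zeta$ (which satisfy $P\xi=0$, $Q\zeta=0$) produces a vector $w\in\hil$ with $P^{\perp}w=\xi$ and $Q^{\perp}w=\zeta$. Setting $v:=-w$ then gives $P^{\perp}v=-\xi$ and $Q^{\perp}v=-\zeta$, which are precisely the relations needed above. Consequently the given $\eta$ agrees, on the diagonal generators, with the coboundary $a\mapsto\big(\rho(a)-\e(a)\id_\hil\big)v$, and by the uniqueness statement of Lemma \ref{lem-diagonal} the two cocycles coincide. Thus $\eta\in B_1(\rho(P,Q))$, and the corollary follows.

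I do not expect a genuine obstacle in the corollary itself: all the work — the solvability of the system $P^{\perp}v=\xi$, $Q^{\perp}v=\zeta$ — has already been isolated in Proposition \ref{prop_gen_pos}, whose proof relies on the general-position normal form \eqref{gen_pos_decomp} and on finite-dimensionality (to ensure $\operatorname{Im}s=\operatorname{Im}c=\hil$). The only point requiring care will be the sign bookkeeping, namely that the coboundary relation is $P^{\perp}v=-\xi$ rather than $P^{\perp}v=\xi$, which is handled simply by passing from $w$ to $v=-w$.
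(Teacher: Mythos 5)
Your proof is correct and follows essentially the same route as the paper: both reduce the coboundary problem to solving $P^{\perp}v=-\xi$, $Q^{\perp}v=-\zeta$ and then invoke Proposition \ref{prop_gen_pos}, which carries all the analytic content. The only (harmless) cosmetic difference is that where the paper verifies $\eta(p_{ij})=(P_{ij}-\delta_{ij}I)v$ directly for all $i,j$, you appeal to the uniqueness statement of Lemma \ref{lem-diagonal} to conclude agreement from the diagonal values alone, and you make the sign adjustment $v=-w$ explicit where the paper leaves it implicit.
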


\begin{proof}
Let $\eta$ be a $\rho(P, Q)$-$\e$-cocycle determined by the vectors $\xi=\eta(p_{11})$ and $\zeta=\eta(p_{33})$, i.e.\ defined by \eqref{pq-cocycle}. By Proposition \ref{prop_gen_pos}, there exist a vector $v\in \hil$ such that
$$
\eta(p_{11})=\xi=(P-I)v, \;\;\; \eta(p_{33})= \zeta=(Q-I)v \;\;\; .
$$
We check directly that then $\eta(p_{ij}) = (P_{ij}-\delta_{ij}I) v$ for any $i,j=1,2,3,4$, which means that $\eta$ is a coboundary.
\end{proof}

Now we shall study the general case of cohomology group of $\rho(P, Q)$. For arbitrary $P$ and $Q$ let us define the following projections:
$$ R_1=P\wedge Q, \; R_2=P\wedge Q^\perp, \; R_3=P^\perp\wedge Q, \; R_4=P^\perp\wedge Q^\perp$$
and denote the respective images by $\hil_1$, $\hil_2$, $\hil_3$ and $\hil_4$. Moreover, we set $\hil_5 = \hil \ominus \bigoplus_{i=1}^4 \hil_i$.

\begin{proposition} \label{prop_PQ_cohomology}
Let $\rho(P,Q)$ be the representation \eqref{pq-rep} of $S_4^+$. If the subspace $\hil_5$ is finite-dimensional, then  we have
$$\HA_1(\rho(P,Q)) = \hil_4=(P^\perp \wedge Q^\perp)(\hil).$$
\end{proposition}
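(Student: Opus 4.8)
The plan is to reduce the whole computation to a solvability question for a pair of linear equations, and then to decouple that question along the canonical five-fold decomposition of $\hil$ attached to the pair $(P,Q)$. First I would record, via Lemma \ref{lem-diagonal} and Proposition \ref{prop_relations} (this is already made explicit in \eqref{pq-cocycle}), that a cocycle for $\rho(P,Q)$ is precisely a pair $(\xi,\zeta)$ with $\xi=\eta(p_{11})\in\ker P$ and $\zeta=\eta(p_{33})\in\ker Q$, so that $Z_1(\rho(P,Q))\cong\ker P\oplus\ker Q$ as vector spaces. Writing the coboundary condition $\eta(p_{ij})=(\rho(p_{ij})-\e(p_{ij})I)v$ on the two diagonal generators gives $\xi=(P-I)v$ and $\zeta=(Q-I)v$; hence $(\xi,\zeta)\in B_1(\rho(P,Q))$ if and only if there is some $w\in\hil$ (namely $w=-v$) solving
\[
P^\perp w=\xi,\qquad Q^\perp w=\zeta .
\]
Everything thus comes down to deciding for which $(\xi,\zeta)\in\ker P\oplus\ker Q$ this system is solvable.

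Next I would use that the subspaces $\hil_1,\dots,\hil_5$ are all reducing for both $P$ and $Q$ (this is Halmos' two-subspaces theorem, which also yields the general-position normal form \eqref{gen_pos_decomp} on $\hil_5$). Consequently $P^\perp$ and $Q^\perp$ respect the orthogonal decomposition $\hil=\bigoplus_{i=1}^5\hil_i$, and the displayed system splits into five independent subsystems, one on each $\hil_i$, which I would treat in turn. On $\hil_1$ both operators vanish and the kernel constraints force $\xi_1=\zeta_1=0$, so there is nothing to solve. On $\hil_2$ (where $P=I$, $Q=0$) the constraint forces $\xi_2=0$ while $w_2=\zeta_2$ is always solvable, and symmetrically on $\hil_3$. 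On $\hil_5$ the projections are in general position, so Proposition \ref{prop_gen_pos}---and this is exactly where the hypothesis that $\hil_5$ is finite-dimensional enters---provides a solution $w_5$ for every admissible $(\xi_5,\zeta_5)$. The only genuine obstruction sits on $\hil_4$: there $P=Q=0$, so $P^\perp=Q^\perp=I$ and the subsystem reads $w_4=\xi_4$ together with $w_4=\zeta_4$, which is solvable if and only if $\xi_4=\zeta_4$.

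Collecting these observations I would conclude that
\[
B_1(\rho(P,Q))=\{(\xi,\zeta)\in Z_1(\rho(P,Q)) : R_4\xi=R_4\zeta\},
\]
and then identify the quotient through the linear map $\Psi:Z_1(\rho(P,Q))\to\hil_4$, $\Psi(\xi,\zeta)=R_4(\xi-\zeta)$. This map is surjective, since for $h\in\hil_4\subseteq\ker P$ the pair $(h,0)$ lies in $Z_1(\rho(P,Q))$ and maps to $h$, and by the description of $B_1$ above its kernel is exactly $B_1(\rho(P,Q))$. Hence $\Psi$ descends to an isomorphism $\HA_1(\rho(P,Q))=Z_1/B_1\cong\hil_4=(P^\perp\wedge Q^\perp)(\hil)$, as claimed; as a consistency check, taking $\hil=\hil_5$ recovers Corollary \ref{cor_gen_pos}. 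I expect the main obstacle to be purely structural rather than computational: one must verify carefully that the invariant decomposition reduces both projections simultaneously and that the general-position block genuinely falls under Proposition \ref{prop_gen_pos}; once that input is secured, the remaining steps are elementary linear bookkeeping.
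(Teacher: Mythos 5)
Your proof is correct and follows essentially the same route as the paper's: the same five-fold decomposition $\hil=\bigoplus_{i=1}^5\hil_i$, the same elementary analysis on $\hil_1,\hil_2,\hil_3$, the same appeal to the general-position result (Proposition \ref{prop_gen_pos}, where the finite-dimensionality of $\hil_5$ enters) on the fifth block, and the same identification of the obstruction with $\hil_4$. Your reformulation of the coboundary condition as the linear system $P^\perp w=\xi$, $Q^\perp w=\zeta$ and the explicit quotient map $\Psi(\xi,\zeta)=R_4(\xi-\zeta)$ simply package the paper's argument more systematically.
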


\begin{proof}
Let us decompose both vectors appearing in the form of $\eta$ as $\xi=\xi_1+\ldots+ \xi_5$ and $\zeta=\zeta_1+\ldots+ \zeta_5$, where $\xi_i, \zeta_i \in \hil_i$ for $i=1,\ldots,5$. We shall study each part of this decomposition separately and show first that the parts coresponding to $i=1,2,3,5$ do not influence the cohomology group.

For that note that since $P\xi=0$ and $Q\zeta=0$ we must have $\xi_1=\xi_2=0$ and $\zeta_1=\zeta_3=0$. So there is no problem with $\hil_1$. If $\zeta_2\neq0$, then putting $v_2=-\zeta_2\in (P\wedge Q^\perp)(\hil)$ we have $\zeta_2=(Q-I)v_2$ and $\xi_2=0=(P-I)v_2$. This means that cocycles on $\hil_2$ are always coboundaries. Similarily, we check that for $v_3=-\xi_3 \in (P^\perp\wedge Q)(\hil)$ we have $\xi_3=(P-I)v_3$ and $\zeta_3=0=(Q-I)v_3$.

As for the last part, we observe that the projections $P$ and $Q$ on $\hil_5$ are in the general position and, by Corollary \ref{cor_gen_pos}, we deduce that $\eta$ is a coboundary on $\hil_5$.

This way we showed that $\HA_1(\rho(P,Q)) \subset \hil_4 = (P^\perp \wedge Q^\perp)(\hil)$.
To end the proof we need to check that for any non-zero vector $v_4\in \hil_4$ there exists a cocycle which is not a coboundary. So let us take $v_4\in \hil_4$, $v_4\neq 0$ and set $\xi_4=v_4$ and $\zeta_4=0$. If we assume that $v_4=\xi_4=(P-I)\tilde{v}$ and $0=\zeta_4=(Q-I)\tilde{v}$ for some $\tilde{v}\in \hil$, then, by repeating the reasoning as above, we see that we can always take $\tilde{v}\in \hil_4$, from which we deduce that $\tilde{v}=(Q-I)\tilde{v}=0$. But this contradicts to the assumption that $v_4=(P-I)\tilde{v}\neq 0$.

\end{proof}

Both Propositions \ref{prop_gen_pos} and \ref{prop_PQ_cohomology} are  no longer true if we omit the assumption that $\hil$ (resp. $\hil_5$) is finite dimensional. In general, the group of all cocycles can be identified with a certain closed subspace, say $\mathsf{k}$, of $\hil \oplus \hil$ (in fact it is always just $\ker P \oplus \ker Q$), viewed as the additive group, and the 1-coboundaries are given by a subspace of $\mathsf{k}$, say $\mathsf{k}_0$. One can compute, see the proof of Propositions \ref{prop_gen_pos}, that precisely
\begin{eqnarray*}
\mathsf{k}&=&  \hil \oplus \{(u,v)\in \hil \oplus \hil: c^2u+csv=0, csu +s^2v=0\} \\
\mathsf{k}_0 &=& \{(w,u,v)\in \mathsf{k}: cw-cv \in {\rm Im}\ (s)\}.
\end{eqnarray*}
Now the quotient would be the orthogonal complement of $\mathsf{k}_0$ in $\mathsf{k}$, \emph{if} $\mathsf{k}_0$ to be closed (we provide an explicit example below). This need not be the case if we have ${\rm dim}\ \hil=\infty$ and the general position situation. This means that we cannot hope for such a nice explicit  description (we will only get a description of $\HA^1(P,Q)$ as an `abstract' quotient).

\begin{example}[Two projections for which the space $\mathsf{k}_0$ not closed]
Let $\mathsf{h} = \mathsf{H} \oplus \mathsf{H}$, where $\mathsf{H}$ is infinite dimensional with the orthonormal basis $\{e_n\}_{n\in \mathbb{N}}$, and let
\begin{equation*}
P=\begin{bmatrix} 1 & 0 \\0 & 0 \end{bmatrix}, \;\;\;
Q = \begin{bmatrix} c^2 & cs \\cs & s^2 \end{bmatrix},
\end{equation*}
where
$$ se_n = \sqrt{1-\frac1{n}} \ e_n, \quad ce_n = \sqrt{\frac1{n}} \ e_n.$$

Then
$$
u_N = -\sum_{n=0}^N n^{-\frac34} (n-1)^{-\frac12} e_n, \quad
v_N = \sum_{n=0}^N n^{-\frac34} e_n, \quad
w_N = \frac12\sum_{n=0}^N n^{-\frac34} e_n$$
are such that $(w_N,u_N,v_N)\in \mathsf{k}_0$ for all $N \in \N$, but the limit of the  sequence $(w_N,u_N,v_N)_{N \in \N}$ does not belong to $\mathsf{k}_0$.

\end{example}

\section{Symmetric and ad-invariant generating functionals on $S_n^+$}
\label{sec-sym-ad-inv}

A L\'{e}vy process is symmetric if and only if it's generating
functional is invariant under the antipode, i.e.\ $L=L\circ S$, cf.\ Theorem \ref{thm-sym}. In this section we shall first explore conditions for the symmetry of L\'{e}vy processes on $S_n^+$, related to the representations studied in previous sections. Then we shall describe all ad-invariant generating functionals.

It is a matter of direct calculation to check the following equivalent condition for the
generating functional on $S_n^+$ to be symmetric. By a 'word' of length $n$ we will understand a product of $n$ generators (that is an element of the form
$w=p_{i_1,j_1}p_{i_2,j_2}\ldots p_{i_n,j_n}$).

\begin{lemma} \label{lem_handy}
A generating functional $L$ on $S_n^+$ is symmetric if and only if the
following two conditions are satisfied:
\begin{enumerate}
 \item $L(p_{ij})=L(p_{ji})$ for any $i,j=1,\ldots,n$;
 \item for any $w = p_{i_1j_1}\ldots p_{i_rj_r}$ with $(i_{k_1},j_{k_1})\neq(i_{k_2},j_{k_2})$ for
$k_1\neq k_2$,  and any generator $p=p_{ij}$, we have
 \begin{equation} \label{eq_eta_gns}
 \langle \eta (p),\eta(w) \rangle = \langle \eta\big((Sw)^*\big) , \eta (Sp)\rangle .
 \end{equation}
\end{enumerate}
\end{lemma}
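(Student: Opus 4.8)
The plan is to invoke Theorem~\ref{thm-sym}, which reduces symmetry of the Markov semigroup to the single identity $L = L\circ S$. Since $\Pol(S_n^+)$ is linearly spanned by words $w = p_{i_1j_1}\cdots p_{i_rj_r}$, it suffices to compare $L(w)$ with $L(Sw)$, where $S(p_{i_1j_1}\cdots p_{i_rj_r}) = p_{j_ri_r}\cdots p_{j_1i_1}$ and $\e(p_{ij}) = \delta_{ij}$. A useful preliminary observation is that the map $\theta\colon p_{ij}\mapsto p_{ji}$ extends to a $*$-automorphism of $\Pol(S_n^+)$ (it preserves the magic-unitary relations, merely interchanging the row and column conditions), so that for every word $w$ one has $(Sw)^* = \theta(w)=:w^\dagger$, the word obtained by transposing each letter while \emph{keeping} the order. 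It is precisely the interplay between $S$ (an anti-homomorphism) and $\theta$ (a homomorphism) that produces the order-preserving word $w^\dagger$ appearing in condition~(2).

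The engine of the argument is the coboundary relation of the Sch\"urmann triple, which I rewrite (using $p^*=p$ for the self-adjoint generators) as $L(ab) = \langle\eta(a^*),\eta(b)\rangle + \e(a)L(b) + L(a)\e(b)$. Peeling the leading generator off $w = p\,w'$ and, symmetrically, off $Sw = (Sw')(Sp)$, and substituting $(Sw')^* = w'^\dagger$, $\e(Sw')=\e(w')$ and $\e(Sp)=\e(p)$, yields the master formula
\[
L(w) - L(Sw) = \big(\langle\eta(p),\eta(w')\rangle - \langle\eta(w'^\dagger),\eta(Sp)\rangle\big) + \e(p)\big(L(w')-L(Sw')\big) + \e(w')\big(L(p)-L(Sp)\big).
\]
Everything then comes down to controlling the three brackets separately. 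For the forward implication I assume $L=L\circ S$: taking $w$ a single generator gives condition~(1) at once, since $L(p_{ij}) = L(Sp_{ij}) = L(p_{ji})$; and applying the master formula to $p\,w$ with $w$ a word of pairwise distinct index pairs and $p$ an arbitrary generator, the left-hand side vanishes while the last two brackets vanish because $L=L\circ S$ together with condition~(1) force $L(w)=L(Sw)$ and $L(p)=L(Sp)$. What remains is exactly $\langle\eta(p),\eta(w)\rangle = \langle\eta(w^\dagger),\eta(Sp)\rangle$, i.e.\ condition~(2).

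For the converse I assume (1) and (2) and prove $L(w)=L(Sw)$ by induction on the length of $w$. The length-$0$ and length-$1$ cases are trivial, respectively condition~(1). In the inductive step, writing $w=p\,w'$, the first bracket of the master formula vanishes by condition~(2) applied to $w'$ and $p$, the middle bracket vanishes by the inductive hypothesis on $w'$, and the last bracket vanishes by condition~(1); hence $L(w)=L(Sw)$. The main obstacle, and the place where the advertised ``direct calculation'' really lives, is the \emph{scope} of condition~(2): it is asserted only for words with pairwise distinct index pairs, whereas the suffix $w'$ produced by the peeling can in general still carry a repeated pair. The delicate point is therefore to justify that verifying the sesquilinear identity $\langle\eta(p),\eta(v)\rangle = \langle\eta(v^\dagger),\eta(Sp)\rangle$ on reduced words $v$ suffices to obtain it on all words; here one must exploit the magic-unitary relations $p_{ij}^2=p_{ij}$ and $p_{ij}p_{ik}=p_{ji}p_{ki}=0$ ($j\neq k$) to rewrite the relevant products, and keep careful track of the conjugation coming from the antilinearity of the first slot of $\langle\cdot,\cdot\rangle$ (so that $H(v):=\langle\eta(p),\eta(v)\rangle$ and $\overline{G(v)}:=\langle\eta(v^\dagger),\eta(Sp)\rangle$ are matched). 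Once this reduction is secured, the induction above closes and the two conditions are seen to be exactly equivalent to $L=L\circ S$, i.e.\ to symmetry.
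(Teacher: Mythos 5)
Your proposal follows essentially the same route as the paper's proof: reduce to words by linearity, peel off the leading generator with the coboundary identity of the Sch\"urmann triple, and run the resulting three-term ``master formula'' in both directions. Indeed, the paper's displayed identity
\[
L(p_{ij}w)-L\circ S(p_{ij}w)=\big[L(w)-L(Sw)\big]\delta_{ij}
\]
is exactly your master formula after conditions (1) and (2) annihilate the other two brackets, and your observation $(Sw)^*=w^\dagger$ is used there implicitly. Your forward implication and the skeleton of your induction are correct.

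The one point you postpone --- the scope of condition (2) --- deserves to be settled, and its resolution is not the one you predict. The magic-unitary relations do \emph{not} allow one to rewrite a word with repeated non-adjacent letters into words with pairwise distinct letters inside the induction: for example $p_{11}p_{33}p_{11}$ admits no such reduction, since $p_{11}$ and $p_{33}$ need not commute in $\Pol(S_n^+)$. What the paper actually does is quietly change the class of words: its proof first passes to \emph{concatenated} words, i.e.\ words to which the rules $p_{ij}=p_{ij}^2=p_{ij}^*$ and the orthogonality relations have already been applied, so that only \emph{consecutive} index pairs are required to be distinct (the proof explicitly states the condition for $k_1=k_2+1$, not $k_1\neq k_2$), and it invokes \eqref{eq_eta_gns} for that class. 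This class is stable under your peeling operation (any suffix of a concatenated word is concatenated), so the induction closes with no further rewriting; the relations are needed only for the initial reduction of an arbitrary word to a concatenated one or to $0$. Under the literal pairwise-distinct reading of condition (2) the gap you flag is genuine, but it is a discrepancy between the paper's statement and its own proof rather than a defect specific to your argument; note that the paper's later applications (the Fourier-matrix and two-block computations) verify \eqref{eq_eta_gns} for words with repeated non-adjacent letters, consistent with the concatenated-word reading being the intended one.
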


\begin{proof}

By linearity, it is enough to check when $L(w)=L\circ S(w)$ for all words $w$.
We shall moreover restrict to \emph {concatenated} words, i.e.\ to the words
to which the rules $p_{ij}=p_{ij}^2=p_{ij}^*$ have already been applied. This
means that in the concatenated word there are neither higher powers of generators, nor of
their adjoints. That is why we can restrict the reasoning to words of the form
$w=p_{i_1,j_1}p_{i_2,j_2}\ldots p_{i_n,j_n}$ with
$(i_{k_1},j_{k_1})\neq(i_{k_2},j_{k_2})$ for $k_1=k_2+1$.

In $S_n^+$, due to the equalities $p_{ij}=p_{ij}^*$ and $\e(Sw)=\e(w)$, for any word $w$ we have
 \begin{eqnarray*}
L(p_{ij} w) &=& \langle \eta(p_{ij}), \eta (w) \rangle + L(p_{ij})\e(w) +
  \e(p_{ij}) L( w)\\
L\circ S (p_{ij} w) &=& L\big( (Sw) (Sp_{ij} )\big)
\\ &=&
  \langle \eta \big( (Sw)^*\big), \eta (Sp_{ij}) \rangle + L(Sw)\e(p_{ji}) +
\e(w) L(Sp_{ij}).
\end{eqnarray*}

If $L(p_{ij})=L(p_{ji})$ holds, then $L=L\circ S$ for words of length $n=1$. This is because
$ L\circ S(p_{ij}) =L(p_{ji}^*) = L(p_{ji}) = L(p_{ij})$. For words of arbitrary length $n\geq 2$
we use \eqref{eq_eta_gns} to deduce by induction that
$$L(p_{ij} w) - L\circ S (p_{ij} w) =  \big[ L( w)- L(Sw)\big]\delta_{ij}=0.$$

The converse implication follows directly.
\end{proof}

\begin{remark}
 The lemma above can be seen as a version of the fact that a generating
fuctional $L$ is invariant under $S$ on $({\rm ker}\, \e)^2$ if and only if the
related cocyle $\eta$ is {\it real}, i.e.
$$ \langle \eta(a), \eta (b) \rangle =
  \langle \eta(S(b^*)), \eta (S(a)^*) \rangle,\quad a,,b\in Pol (\mathbb{G}),$$
see Theorem 4.6 in \cite{kyed11} (attributed to R. Vergnioux),
also Remark 6.20 in \cite{daws+fuma+skalski+white}.
\end{remark}

\subsection{Permutation matrix}
It is easy to see that there are no symmetric processes related to the
maximal cycle representation
\begin{equation}
\rho_\sigma (p) = A_\sigma I_\hil =\left[ \begin{array}{cccccc}
0 & I_\hil & 0 & \ldots & 0 & 0 \\
0 & 0 & I_\hil & \ldots & 0 & 0 \\
\vdots & \vdots & \vdots & \ddots & \vdots & \vdots \\
0 & 0 & 0 & \ldots & I_\hil & 0 \\
0 & 0 & 0 & \ldots & 0 & I_\hil \\
I_\hil & 0 & 0 & \ldots & 0 & 0 \\
\end{array}\right]
\end{equation}
Indeed, using results from Section \ref{sec-class} (Lemma \ref{lem-diagonal}, Lemma \ref{lem-L_values}
and Proposition \ref{prop_relations}), we see that the cocycle and the generating functional related to $A_\sigma$ are given by
\begin{eqnarray} \label{perm_symetry}
\eta(p)&=&
\left [ \begin{array}{ccccc}
\xi & -\xi & \ldots & 0 & 0 \\
0 & \xi & \ldots & 0 & 0 \\
 \vdots & \vdots & \ddots & \vdots & \vdots \\
0 & 0 & \ldots & \xi & -\xi  \\
-\xi & 0 &\ldots & 0 & \xi
\end{array}\right ], \\
L(p) &=&\left [ \begin{array}{ccccc}
-\|\xi\|^2 & \|\xi\|^2 & \ldots & 0 & 0 \\
0 & -\|\xi\|^2 & \ldots & 0 & 0 \\
 \vdots & \vdots & \ddots & \vdots & \vdots \\
0 & 0 & \ldots & -\|\xi\|^2 & \|\xi\|^2  \\
\|\xi\|^2 & 0 &\ldots & 0 & -\|\xi\|^2
\end{array}\right ]
\end{eqnarray}
for some $\xi\in \hil$.
Now it is evident that, if $L$ was symmetric, then $\|\xi\|^2 =
L(Su_{j,j+1})=L(u_{j+1,j})=0$.

\subsection{Fourier matrix}
Let us consider the representation of $S_n^+$ related to the Fourier matrix
$F_n$:
$$\rho(F_n)(p)=\left [ \begin{array}{cccccc}
P_1 & P_{n} & P_{n-1} & \cdots & P_3 & P_2 \\
P_2 & P_1     & P_n     & \cdots & P_4 & P_3 \\
P_3 & P_2     & P_1 & \cdots & P_5 & P_4 \\
\ldots & \ldots & \ldots & \ldots & \ldots & \ldots \\
P_{n} & P_{n-1} & P_{n-2} & \cdots & P_2 & P_1\\
\end{array} \right],$$
where $P_{j}$ is the projection on the vector $h_j=({\rm e}^{2\pi i(k - 1)(j - 1) / n})_{k=1,2,\dots,n}$. Recall that $P_1, \ldots, P_{n}$ is a decomposition of identity and we have here $P_{jk}=P_{(j-k+1)_{{\rm mod}\, n}}$, cf.\ \eqref{obs_pjk_mod_n}. Moreover, note that if $P_{ij}=P_m$ with $m=m(i,j)=_n i-j+1$, then
$$ \rho\circ S(p_{ij}) = P_{ji} = P_{(j-i+1)_{{\rm mod}\ n}} = P_{(2-m)_{{\rm mod}\ n}}.$$
For the sake of clearity, we shall omit the notation ``mod $n$'' in the subscript of $P$. Any time a projection appears with the index $n$ outside of $\{1,\ldots,n\}$ it should be understood as taken modulo $n$.

Let $L$ be a generating functional on $S_n^+$ with the related representation $\rho(F_n)$ and the cocycle $\eta$. The cocycle is uniquely determined by $\xi_j=\eta(p_{jj})$, $j=1,\ldots,n$, and we have $\eta(p_{ij})=-P_{ij}\xi_i=-P_{ij}\xi_j$.

\begin{proposition}
A generating functional $L$ on $S_n^+$ in a Sch\"urmann triple $(\rho(F_n), \eta,L)$ is symmetric if and only if the vectors associated to the cocycle $\eta$ via the formulas above satisfy the equalities
\begin{equation} \label{eq_gns_2}
 \langle \xi_i, P_{m}\xi_k \rangle =\langle P_{2-m}\xi_k , \xi_i \rangle
\end{equation}
for any $i,m,k=1,2,\ldots,n$.
\end{proposition}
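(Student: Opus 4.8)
The plan is to reduce the whole statement to the symmetry criterion of Lemma \ref{lem_handy}, which characterises symmetry of $L$ by two conditions: (1) $L(p_{ij})=L(p_{ji})$ for all $i,j$, and (2) the identity $\langle \eta(p),\eta(w)\rangle = \langle \eta((Sw)^*),\eta(Sp)\rangle$ for every generator $p$ and every concatenated word $w=p_{i_1j_1}\cdots p_{i_rj_r}$ with distinct index pairs. Throughout I would use the translations recorded above: for $m=(i-j+1)\bmod n$ one has $\rho(p_{ij})=P_m$, $\rho(S(p_{ij}))=P_{2-m}$, and $\eta(p_{ij})=-P_m\xi_i=-P_m\xi_j$, the last equality coming from the cocycle relation $P_{ij}\xi_i=P_{ij}\xi_j$ of Proposition \ref{prop_relations}. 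Since $P_m$ is the rank-one projection onto $\mathbb{C}h_m$ and $P_aP_b=\delta_{ab}P_a$, each $\eta(p_{ij})$ lies in $\mathbb{C}h_m$; this orthogonality is the main computational lever.

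For the forward direction I would simply specialise condition (2) to the diagonal generator $p=p_{ii}$ and the single-letter word $w=p_{kl}$ with $k\neq l$. Writing $m=(k-l+1)\bmod n$, the left-hand side is $\langle \xi_i,-P_m\xi_k\rangle$, while $\eta((Sp_{kl})^*)=\eta(p_{lk})=-P_{2-m}\xi_k$ (replacing $\xi_l$ by $\xi_k$ via the cocycle relation) makes the right-hand side $\langle -P_{2-m}\xi_k,\xi_i\rangle$; cancelling signs gives exactly \eqref{eq_gns_2}. As $l$ ranges over $\{1,\dots,n\}\setminus\{k\}$ the index $m$ runs through all residues different from $1$, and the case $m=1$ holds trivially since $P_1\xi_i=P_{ii}\xi_i=0$ forces both sides of \eqref{eq_gns_2} to vanish.

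In the reverse direction, condition (1) is immediate for $i\neq j$: from $L(p_{ij})=\|\eta(p_{ij})\|^2=\langle \xi_i,P_m\xi_j\rangle$ and $L(p_{ji})=\langle \xi_j,P_{2-m}\xi_i\rangle$, the equality $L(p_{ij})=L(p_{ji})$ is precisely \eqref{eq_gns_2} applied to the triple $(i,m,j)$ together with self-adjointness of $P_{2-m}$ (the diagonal case $i=j$ is trivial). The real content is condition (2) for words of arbitrary length. Here I would expand $\eta(w)$ by the cocycle rule into the telescoping sum $\sum_{\ell}\rho(p_{i_1j_1}\cdots p_{i_{\ell-1}j_{\ell-1}})\,\eta(p_{i_\ell j_\ell})\,\e(p_{i_{\ell+1}j_{\ell+1}}\cdots p_{i_rj_r})$ and pair it against $\eta(p)\in\mathbb{C}h_m$, so that $\langle \eta(p),\eta(w)\rangle=\langle \eta(p),P_m\eta(w)\rangle$. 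Each prefix operator is a product of the rank-one projections $P_{m(i_s,j_s)}$, which by $P_aP_b=\delta_{ab}P_a$ vanishes unless all of them coincide; this collapses the sum to a handful of surviving terms, each an inner product $\langle \xi_a,P_\bullet\xi_b\rangle$. Doing the same expansion on $\langle \eta((Sw)^*),\eta(Sp)\rangle$ and matching surviving terms, the identity reduces termwise to instances of \eqref{eq_gns_2}.

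I expect this last step to be the main obstacle: bookkeeping the surviving terms of the two telescoping expansions and checking that they correspond under the involution $m\leftrightarrow 2-m$ requires care, especially tracking the effect of $S$ (which reverses the word and sends each $p_{ab}$ to $p_{ba}$) together with the freedom to write $\eta(p_{ab})$ as either $-P\xi_a$ or $-P\xi_b$. An alternative and perhaps cleaner route for this step is to observe that \eqref{eq_gns_2} is exactly the statement that the cocycle $\eta$ is \emph{real} on generators in the sense of the remark following Lemma \ref{lem_handy}, and then invoke that characterisation (together with condition (1)) to conclude $L=L\circ S$; this trades the explicit word bookkeeping for the reduction of reality from generators to all of ${\rm Pol}(S_n^+)$, which rests on the same projection-orthogonality mechanism.
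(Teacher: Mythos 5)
Your skeleton is the same as the paper's: everything is funnelled through Lemma \ref{lem_handy}, the forward implication comes from testing \eqref{eq_eta_gns} on one-letter words, and the converse requires verifying conditions (1) and (2) of that lemma. Your forward direction (taking $p=p_{ii}$ and $w=p_{kl}$, noting that $m$ then sweeps all residues $\neq 1$ and that the case $m=1$ is trivial because $P_1\xi_k=0$) is correct, and so is your derivation of condition (1) from \eqref{eq_gns_2} at the triple $(i,m,j)$ plus self-adjointness of $P_{2-m}$; up to this point you match the paper, which does the forward step with a general $p_{ij}$ but the same one-letter-word idea.

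The genuine gap is the converse for words of length at least two, which is where the paper's proof spends almost all of its effort. Your collapse mechanism rests on the assertion that $\eta(p)\in\mathbb{C}h_m$, but this is false exactly for the diagonal generators: $\eta(p_{ii})=\xi_i$ satisfies $P_1\xi_i=0$ and is in general spread over all the lines $\mathbb{C}h_2,\dots,\mathbb{C}h_n$, so when $p$ (or a letter of $w$) is diagonal you cannot insert a single $P_m$ and annihilate most of the telescoping sum. These unprojected vectors $\xi_k$ contributed by diagonal letters are precisely the terms that survive and have to be compared via \eqref{eq_gns_2}; the paper handles them through the run-length decomposition of the word ($m_1=\cdots=m_r=m\neq m_{r+1}$, so that $\rho$ applied to the prefix equals $P_m$ and $P_mP_{m_{r+1}}=0$) followed by an induction on the word length, and this is real work, not routine bookkeeping. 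Your fallback --- reading \eqref{eq_gns_2} as reality of the cocycle and invoking the criterion of \cite{kyed11} --- does not close the gap either, because that criterion demands $\langle\eta(a),\eta(b)\rangle=\langle\eta(S(b^*)),\eta(S(a)^*)\rangle$ for \emph{all} $a,b\in{\rm Pol}(S_n^+)$, and promoting the relation from generators to arbitrary elements is exactly the word-by-word verification you set aside. So: right strategy, correct forward half and correct condition (1), but the substantive half of the equivalence is missing.
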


\begin{proof}
Assume first that $L$ is symmetric.
Then, by Lemma \ref{lem_handy}, $\eta$ satisfies \eqref{eq_eta_gns}, which applied to $w=p_{kl}$ yields
\begin{eqnarray*}
 \langle \eta (p_{ij}),\eta(p_{kl}) \rangle &=& \langle \eta\big((Sp_{kl})^*\big) , \eta (Sp_{ij})\rangle
\\
 \langle P_{ij}\xi_i, P_{kl}\xi_k \rangle &=& \langle P_{lk}\xi_k , P_{ji}\xi_i \rangle
\\
 \langle \xi_i, P_{m(i,j)}P_{m(k,l)}\xi_k \rangle &=& \langle P_{m(j,i)}P_{m(l,k)}\xi_k , \xi_i \rangle.
\end{eqnarray*}
Note that $m(i,j)=m(k,l)$ iff $m(j,i)=m(l,k)$, and if these indices are different the product of two projections vanishes.
If $m:=m(i,j)=m(k,l)$, then we get $ \langle \xi_i, P_{m}\xi_k \rangle =\langle P_{2-m}\xi_k , \xi_i \rangle. $
But for any fixed $i,j,k$, we can choose $l$ such that $m(i,j)=m(k,l)$, so the condition \eqref{eq_gns_2} holds for any choice of $i,k,m=1,\ldots,n$.

Now let us now assume \eqref{eq_gns_2} holds. Then, for any $i,j$ and $m=i-j+1$, we have
\begin{eqnarray*}
\langle\xi_i, P_{m}\xi_i \rangle = \langle \xi_i, P_{2-m}\xi_i \rangle
&\Rightarrow & \langle \xi_i, P_{ij}\xi_i \rangle = \langle \xi_i, P_{ji}\xi_i \rangle \\
\quad \Rightarrow \quad  \|P_{ij}\xi_i\|^2 = \|P_{ji}\xi_i\|^2
& \Rightarrow & \|\eta(p_{ij})\|^2 = \|\eta(p_{ji})\|^2 .
\end{eqnarray*}
By Lemma \ref{lem-L_values}, the last equation is equivalent to $L(p_{ij})=L(p_{ji})$. Hence, the condition (1) from Lemma \ref{lem_handy} holds. To see that (2) is true as well, let us consider an arbitrary word $w$ of length $s$. Then
$$\rho(w) = P_{m_1} P_{m_2}\ldots P_{m_s} = P_{m}^r P_{m_{r+1}}\cdots P_{m_s} = P_{m} P_{m_{r+1}}\cdots P_{m_s},$$
where $m_p=i_p-j_p+1$ (for $p=1,2,\ldots,s$) and $m_1=m_2=\ldots=m_r \neq m_{r+1}$. This means that the first $r$ (and no more) generators are transformed by $\rho$ into the same projection $P_m$, and $P_mP_{m_{r+1}}=0$. Hence
\begin{eqnarray*}
\eta(w) &=& \eta (p_{i_1j_1}\ldots p_{i_rj_r}\cdot p_{i_{r+1}j_{r+1}}\ldots p_{i_sj_s})
\\ &=&  P_m\eta( p_{i_{r+1}j_{r+1}}\ldots p_{i_sj_s}) + \eta(p_{i_1j_1}\ldots p_{i_rj_r})\e( p_{i_{r+1}j_{r+1}}\ldots p_{i_sj_s})
\\ &=&  P_m\eta( p_{i_{r+1}j_{r+1}}) + \eta(p_{i_1j_1}\ldots p_{i_rj_r})\e( p_{i_{r+1}j_{r+1}}\ldots p_{i_sj_s}),
\end{eqnarray*}
and
\begin{eqnarray*}
\eta\big((Sw)^*\big) &=& \eta (p_{j_1i_1}\ldots p_{j_ri_r}\cdot p_{j_{r+1}i_{r+1}}\ldots p_{j_si_s})
\\ &=&  P_{2-m}\eta( p_{j_{r+1}i_{r+1}}) + \eta\big(S(p_{i_1j_1}\ldots p_{i_rj_r})^*\big)\e( p_{i_{r+1}j_{r+1}}\ldots p_{i_sj_s}),
\end{eqnarray*}

Therefore
\begin{eqnarray*}
\lefteqn{ \langle \eta (p_{i_0j_0}),\eta(w) \rangle}\\
& = &
 \langle \eta (p_{i_0j_0}), P_m\eta( p_{i_{r+1}j_{r+1}})
 +   \eta(p_{i_1j_1}\ldots p_{i_rj_r})\e( p_{i_{r+1}j_{r+1}}\ldots p_{i_sj_s}) \rangle
\\
&=&  \langle P_{m_0} \xi_{i_0}, P_m P_{m_{r+1}}\xi_{i_{r+1}}\rangle
 \\
& &  + \  \e( p_{i_{r+1}j_{r+1}}\ldots p_{i_sj_s}) \langle \eta (p_{i_0j_0}),\eta(p_{i_1j_1}\ldots p_{i_rj_r})\rangle ,
\end{eqnarray*}
while
\begin{eqnarray*}
\lefteqn{ \langle \eta\big((Sw)^*\big) , \eta (Sp_{ij})\rangle}\\
&=& \langle P_{2-m}\eta( p_{j_{r+1}i_{r+1}}) , \eta (p_{j_0i_0})\rangle
\\
&  & + \ \e( (p_{i_{r+1}j_{r+1}}\ldots p_{i_sj_s})^*)\langle \eta\big(S(p_{i_1j_1}\ldots p_{i_rj_r})^*\big) , \eta (Sp_{i_0j_0})\rangle
\\
& \stackrel{{\rm ind}}{=} & \langle P_{2-m}\eta( p_{j_{r+1}i_{r+1}}) , \eta (p_{j_0i_0})\rangle
\\
& &  + \ \e( p_{i_{r+1}j_{r+1}}\ldots p_{i_sj_s})\langle  \eta (p_{i_0j_0}), \eta(p_{i_1j_1}\ldots p_{i_rj_r})\rangle
\\
& = & \langle P_{m_0}P_{2-m}P_{m_{r+1}}\xi_{i_{r+1}} , \xi_{i_0}\rangle
\\
& &  + \  \e( p_{i_{r+1}j_{r+1}}\ldots p_{i_sj_s})\langle  \eta (p_{i_0j_0}), \eta(p_{i_1j_1}\ldots p_{i_rj_r})\rangle
\end{eqnarray*}

To show that \eqref{eq_eta_gns} holds it remain to show that
$$ \langle P_{m_0}P_{2-m}P_{m_{r+1}}\xi_{i_{r+1}} , \xi_{i_0}\rangle  = \langle \xi_{i_0}, P_{m_0} P_m P_{m_{r+1}}\xi_{i_{r+1}}\rangle. $$
But this follows from \eqref{eq_gns_2} provided $m_0=m=m_{r+1}$. Otherwise, both terms vanish. We conclude by Lemma \ref{lem_handy} that $L$ is symmetric.
\end{proof}

\subsection{Two-blocks example}

Let $L$ be a generating functional on $S_4^+$ related to the representation
\begin{eqnarray*}
\rho(P,Q)(p)&=&\left [ \begin{array}{cccc}
P & 1-P & 0 & 0 \\
1-P & P & 0 & 0 \\
0 & 0 & Q & 1-Q \\
0 & 0 & 1-Q & Q
\end{array}\right ],
\end{eqnarray*}
where $P$ and $Q$ are two projections on a Hilbert space $\hil$, see Eq. \eqref{pq-rep}.
Then, recall Eq. \eqref{pq-cocycle}, the related cocycle is given by
\begin{eqnarray*}
\eta(p) &=&\left [ \begin{array}{cccc}
\xi & -\xi & 0 & 0 \\
-\xi & \xi & 0 & 0 \\
0 & 0 & \zeta & -\zeta \\
0 & 0 & -\zeta & \zeta
\end{array}\right ],
\end{eqnarray*}
where $\xi$ and $\eta$ are arbitrary two vectors in the representation space $\hil$ satisfying $P\xi=0$ and $Q\zeta=0$.
Moreover, by Lemma \ref{lem-L_values},
\begin{eqnarray*} 
L(p)&=&\left [ \begin{array}{cccc}
-\|\xi\|^2 & \|\xi\|^2 & 0 & 0 \\
\|\xi\|^2 & -\|\xi\|^2 & 0 & 0 \\
0 & 0 & -\|\zeta\|^2 & \|\zeta\|^2 \\
0 & 0 & \|\zeta\|^2 & -\|\zeta\|^2
\end{array}\right ],
\end{eqnarray*}

The result below answers the question for which $\xi$ and $\zeta$ the
generating functional $L$ is symmetric.
\begin{theorem} \label{thm_charact}
Let $P,Q \in B(\hil)$ be projections and $L$ be a generating functional in a Sch\"urmann triple $(\rho(P,Q),\eta,L)$.  Then $L$ is symmetric if and only if
\begin{equation} \label{eq_first}
 \big\langle \zeta, (PQ)^k \xi \big\rangle = \big\langle (PQ)^k
\xi, \zeta
\big\rangle \quad \mbox{for any}\quad  k \geq 0,
\end{equation}
where vectors $\xi, \zeta \in \hil$ are associated to $\eta$ as in \eqref{pq-cocycle}.
\end{theorem}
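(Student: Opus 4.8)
The plan is to use the characterization of symmetry from Lemma \ref{lem_handy}, which reduces the problem to checking two conditions: (1) $L(p_{ij})=L(p_{ji})$ for all generators, and (2) the cocycle identity $\langle \eta(p),\eta(w)\rangle = \langle \eta((Sw)^*),\eta(Sp)\rangle$ for all suitable words $w$ and generators $p$. The first condition is immediate here: by Lemma \ref{lem-L_values} we have $L(p_{ij})=\pm\|\eta(p_{ij})\|^2$, and from the explicit form of $\eta(p)$ the off-diagonal entries satisfy $\|\eta(p_{12})\|=\|\eta(p_{21})\|=\|\xi\|$ and $\|\eta(p_{34})\|=\|\eta(p_{43})\|=\|\zeta\|$, while the block structure forces $L$ to be symmetric on all diagonal and vanishing entries trivially. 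So condition (1) always holds, and the real content lies in condition (2).

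Next I would exploit the special block structure of the representation. Because $\rho(P,Q)(p)$ is block-diagonal with a $\{1,2\}$-block governed by $P$ and a $\{3,4\}$-block governed by $Q$, the cocycle $\eta$ takes values $\pm\xi$ on the first block (with $\xi\in\ker P$) and $\pm\zeta$ on the second (with $\zeta\in\ker Q$). A word $w=p_{i_1j_1}\cdots p_{i_rj_r}$ with distinct index pairs evaluates under $\rho$ to a product of projections $P_{i_kj_k}$; by the off-diagonal cocycle formula $\eta(p_{ij})=-P_{ij}\xi_i$ together with $\xi_{ij}=-\rho(p_{ij})\xi_{ii}$, the value $\eta(w)$ reduces to alternating products of $P^\perp=1-P$ and $Q^\perp=1-Q$ applied to $\xi$ or $\zeta$. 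The key observation is that the action of $S$ (sending $p_{ij}\mapsto p_{ji}$) transposes the roles of $P$ and $Q$ in these alternating products, so that verifying \eqref{eq_eta_gns} becomes a statement comparing $\langle\zeta,(\text{alternating }P,Q\text{ product})\xi\rangle$ against its conjugate. I expect that after tracking the combinatorics, every such alternating product, evaluated against $\xi$ and $\zeta$, collapses — using $P\xi=0$, $Q\zeta=0$, and the identities $P^\perp\xi=\xi$, $Q^\perp\zeta=\zeta$ — to a scalar of the form $\langle\zeta,(PQ)^k\xi\rangle$ for some $k\ge 0$.

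The main step, then, is to show that the whole family of identities required by Lemma \ref{lem_handy} condition (2) is equivalent to the single family $\langle\zeta,(PQ)^k\xi\rangle=\overline{\langle\zeta,(PQ)^k\xi\rangle}=\langle(PQ)^k\xi,\zeta\rangle$ for all $k\ge 0$. I would proceed by an induction on word length (paralleling the inductive argument in Lemma \ref{lem_handy}), carefully reducing a generic word to its ``reduced'' alternating form and computing both $\eta(w)$ and $\eta((Sw)^*)$ explicitly. The forward direction (symmetry $\Rightarrow$ \eqref{eq_first}) is obtained by specializing \eqref{eq_eta_gns} to words $w$ whose $\rho$-image is exactly $(QP)^k$ or $(PQ)^k$, isolating the desired inner products. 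For the converse, I would substitute \eqref{eq_first} into the reduced expressions for $\langle\eta(p),\eta(w)\rangle$ and $\langle\eta((Sw)^*),\eta(Sp)\rangle$ and verify they agree term by term.

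\textbf{The main obstacle} I anticipate is the bookkeeping in expressing $\eta(w)$ for an arbitrary reduced word in terms of alternating $P,Q$ products: one must carefully handle the ``boundary'' projections and the switching between the two blocks, and confirm that cross-block products (which annihilate because the $\{1,2\}$ and $\{3,4\}$ blocks are orthogonal unless linked through a shared vector) really do reduce everything to the single scalar family $\langle\zeta,(PQ)^k\xi\rangle$. Verifying that no other independent symmetry constraints survive — i.e.\ that \eqref{eq_first} is not merely necessary but genuinely sufficient — is where the argument must be watertight, and I would lean on the inductive structure of Lemma \ref{lem_handy} to control words of all lengths simultaneously.
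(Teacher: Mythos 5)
Your overall strategy coincides with the paper's: both reduce symmetry to the word-level condition of Lemma \ref{lem_handy}, observe that condition (1) is automatic for this block representation, reduce all words to alternating products of the two block projections applied to $\xi$ and $\zeta$, prove necessity by specializing to words whose image is an alternating $(PQ)$-string and sufficiency by substituting \eqref{eq_first} back into the explicit expressions. The paper organizes this through an intermediate step you skip but implicitly need: since $\rho(P,Q)$ is a \emph{symmetric} representation ($\rho(p_{ij})=\rho(p_{ji})$ for all $i,j$), one has $p_{ij}\sim p_{ji}$, hence $\eta(Sp_{ij})=\eta(p_{ij})$ and $\eta\big((Sw)^*\big)=\eta(w)$ for every word $w$; so \eqref{eq_eta_gns} collapses to the realness condition $\langle \eta(p_{ij}),\eta(w)\rangle = \langle \eta(w),\eta(p_{ij})\rangle$ (this is Lemma \ref{lem_gns_handy}). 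It then uses the equivalences $p_{11}\sim 1-p_{12}$, $p_{11}\sim p_{22}$, $p_{33}\sim 1-p_{34}$, $p_{ij}\sim 0$ when $\rho(p_{ij})=0$, to restrict to words $p_{33}^m(p_{11}p_{33})^kp_{11}^n$, and in the necessity step compares the identities for $k$ and $k-1$ to isolate a single term of \eqref{eq_first}.

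The one genuine flaw in your sketch is the stated mechanism: ``the action of $S$ (sending $p_{ij}\mapsto p_{ji}$) transposes the roles of $P$ and $Q$ in these alternating products.'' This is false. The antipode maps each block into itself ($p_{12}\mapsto p_{21}$, both represented by $1-P$; $p_{34}\mapsto p_{43}$, both represented by $1-Q$), so $S$ acts trivially on the level of $\rho$, $\eta$, $L$; the apparent interchange $(PQ)^k\leftrightarrow(QP)^k$ comes only from taking adjoints when you flip the two slots of the inner product, $\langle (PQ)^k\xi,\zeta\rangle = \langle \xi,(QP)^k\zeta\rangle$, not from $S$. If you executed your stated rule literally, the verification would fail: for $w=p_{11}p_{33}$ and $p=p_{33}$ the correct computation gives $\eta\big((Sw)^*\big)=\eta(w)=P\zeta+\xi$ and \eqref{eq_eta_gns} reduces (since $\langle\zeta,P\zeta\rangle$ is real) exactly to $\langle\zeta,\xi\rangle=\langle\xi,\zeta\rangle$, the $k=0$ case of \eqref{eq_first}; whereas swapping $P$ and $Q$ would replace $P\zeta+\xi$ by $Q\zeta+\xi=\xi$ and force in addition $\langle\zeta,P\zeta\rangle=\langle\xi,\zeta\rangle-\langle\zeta,\xi\rangle$, i.e.\ a real number equal to a purely imaginary one, producing the spurious constraint $P\zeta=0$. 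Your end target (comparing each $\langle\zeta,(PQ)^k\xi\rangle$ with its conjugate) is the right one, so the plan is repaired simply by replacing the $S$-transposition claim with the symmetric-representation argument above; with that fix your proof becomes the paper's proof.
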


Before proving this result, we need some preparations.
\begin{remark}
Let the representation $\rho$ be \emph{symmetric}, i.e.
$\rho(p_{ij})=\rho(p_{ji})$ for all $i,j=1,\ldots, n$.
Then it follows from the relations $p_{ik}p_{ij}=0$ and
 $p_{ij}^2=p_{ij}$ that $\eta(p_{ij})=\eta(p_{ji})$ and $L(p_{ij})=L(p_{ji})$.
If we use the notation $p_{ij} \sim p_{kl}$ for elements on which
$\rho$, $\eta$, $L$ and $\e$ take equal values, this means that $\rho$ being
symmetric implies $p_{ij} \sim p_{ji}$.
\end{remark}

We start with showing a version of Lemma \ref{lem_handy} for symmetric representations of $S_n^+$.
\begin{lemma} \label{lem_gns_handy}
Let $L$ be the generating functional on $S_n^+$ associated to a Schurmann triple
$(\rho, \eta, L)$ for a symmetric representation $\rho$. Then $L$ is symmetric if and only if
\begin{equation} \label{GNS_eta}
 \langle \eta(p_{ij}), \eta (w) \rangle = \langle \eta (w), \eta(p_{ij})
\rangle
\end{equation}
for $i,j=1,\ldots,n$, $n\in \mathbb{N}$ $w=p_{i_1,j_1}p_{i_2,j_2}
\ldots p_{i_n,j_n}$ and $(i_{k_1},j_{k_1})\neq(i_{k_2},j_{k_2})$ for
$k_1\neq k_2$.
\end{lemma}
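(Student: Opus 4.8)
The plan is to deduce this directly from the general criterion of Lemma \ref{lem_handy}, which characterizes symmetry of $L$ by two conditions: (1) $L(p_{ij})=L(p_{ji})$ for all $i,j$, and (2) $\langle \eta(p_{ij}),\eta(w)\rangle = \langle \eta((Sw)^*),\eta(Sp_{ij})\rangle$ for every reduced word $w=p_{i_1j_1}\cdots p_{i_rj_r}$ with pairwise distinct letters. The whole thrust of the argument is that when $\rho$ is symmetric, both the antipode $S$ and the involution act trivially on the Sch\"urmann data at the relevant level, so that (1) becomes automatic and (2) simplifies to exactly (\ref{GNS_eta}).

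First I would dispose of condition (1). As recorded in the Remark preceding the statement, symmetry of $\rho$ (that is, $\rho(p_{ij})=\rho(p_{ji})$) already forces $\eta(p_{ij})=\eta(p_{ji})$ and $L(p_{ij})=L(p_{ji})$; hence (1) holds with no extra hypothesis, and it remains only to show that, for symmetric $\rho$, condition (2) is equivalent to (\ref{GNS_eta}).

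The key computation concerns the right-hand side of (2). Since $S$ is an anti-homomorphism with $S(p_{ij})=p_{ji}$ and each generator is self-adjoint, for $w=p_{i_1j_1}\cdots p_{i_rj_r}$ one gets $Sw=p_{j_ri_r}\cdots p_{j_1i_1}$ and therefore $(Sw)^*=p_{j_1i_1}p_{j_2i_2}\cdots p_{j_ri_r}$. I would then introduce the transpose map $\theta\colon\Pol(S_n^+)\to\Pol(S_n^+)$, the $*$-homomorphism determined by $\theta(p_{ij})=p_{ji}$; it is well defined because the magic-unitary relations (\ref{sn-projections})--(\ref{sn-columns}) are invariant under transposing the index matrix. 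With this notation $(Sw)^*=\theta(w)$ for every word $w$. The point is that $\theta$ leaves the whole triple invariant when $\rho$ is symmetric: $\rho\circ\theta=\rho$ and $\e\circ\theta=\e$, since in each case both sides are homomorphisms agreeing on the generators. Consequently $\eta\circ\theta$ is again a $\rho$-$\e$-cocycle, and it agrees with $\eta$ on the diagonal generators because $\theta(p_{ii})=p_{ii}$; by the uniqueness in Lemma \ref{lem-diagonal} this yields $\eta\circ\theta=\eta$. In particular $\eta((Sw)^*)=\eta(\theta(w))=\eta(w)$, and $\eta(Sp_{ij})=\eta(p_{ji})=\eta(p_{ij})$.

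Substituting these two identities into (2), its right-hand side $\langle \eta((Sw)^*),\eta(Sp_{ij})\rangle$ becomes $\langle \eta(w),\eta(p_{ij})\rangle$, so (2) reads precisely $\langle \eta(p_{ij}),\eta(w)\rangle=\langle \eta(w),\eta(p_{ij})\rangle$, which is (\ref{GNS_eta}). Combined with the automatic validity of (1), Lemma \ref{lem_handy} then gives the asserted equivalence in both directions. I expect the only genuinely delicate step to be the identity $\eta\circ\theta=\eta$, i.e.\ propagating invariance of the cocycle from generators to arbitrary words; this is exactly where the cocycle identity together with the determination of $\eta$ by its diagonal values (Lemma \ref{lem-diagonal}) is used, while everything else is routine bookkeeping with $S$ and the involution.
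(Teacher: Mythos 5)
Your proof is correct, and it follows the paper's overall route: both arguments reduce the lemma to Lemma \ref{lem_handy}, note that condition (1) there is automatic for a symmetric representation (by the Remark preceding the lemma), and then show that for symmetric $\rho$ the right-hand side of \eqref{eq_eta_gns} coincides with that of \eqref{GNS_eta}, i.e.\ that $\eta\big((Sw)^*\big)=\eta(w)$ and $\eta(Sp_{ij})=\eta(p_{ij})$. Where you differ is in the mechanism used to establish this invariance. The paper proves it by direct computation: it writes out the cocycle expansion
$\eta(p_{i_1j_1}\cdots p_{i_rj_r})=\sum_{k=1}^r\rho(p_{i_1j_1})\cdots\rho(p_{i_{k-1}j_{k-1}})\,\eta(p_{i_kj_k})\,\e(p_{i_{k+1}j_{k+1}})\cdots\e(p_{i_rj_r})$,
observes that the value of $\eta$ on a word depends only on the values of $\rho$, $\eta$, $\e$ on the individual letters, and replaces each letter $p_{i_kj_k}$ by the $\sim$-equivalent letter $p_{j_ki_k}$. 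You instead encode the transposition of indices as a $*$-automorphism $\theta$ of $\Pol(S_n^+)$ (well defined because the magic-unitary relations \eqref{sn-projections}--\eqref{sn-columns} are transpose-invariant), check $\rho\circ\theta=\rho$ and $\e\circ\theta=\e$, deduce that $\eta\circ\theta$ is a $\rho$-$\e$-cocycle agreeing with $\eta$ on the diagonal generators, and conclude $\eta\circ\theta=\eta$ from the uniqueness in Lemma \ref{lem-diagonal}. Both mechanisms rest on the same two ingredients (the cocycle identity and the determination of $\eta$ by its diagonal values); your version is more structural, showing in one stroke that the whole Sch\"urmann data is invariant under $\theta$, at the modest cost of verifying that $\theta$ exists, whereas the paper's letter-by-letter substitution is a shorter hands-on computation.
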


\begin{proof}
We have already remarked that if $\rho$ is symmetric then $L(p_{ij})=L(p_{ji})$.
Moreover, it follows from the formula
\begin{eqnarray*}
\lefteqn{\eta(p_{i_1,j_1}\ldots p_{i_n,j_n}) }\\ &=&
\sum_{k=1}^n \rho(p_{i_1,j_1})\rho(p_{i_2,j_2})\ldots
\rho(p_{i_{k-1},j_{k-1}}) \eta(p_{i_k,j_k})\e(p_{i_{k+1},j_{k+1}}) \ldots
\e(p_{i_n,j_n}),
\end{eqnarray*}
that the value of $\eta$ on a fixed word depends only on the
values of $\eta$, $\rho$ and $\e$ on the individual  components $p_{i_k,j_k}$. Consequently,
it does not change when $p_{ij}$ is replaced by $p_{kl}$ such that $p_{kl}\sim
p_{ij}$. Thus $\eta(w) = \eta(p_{i_1,j_1}p_{i_2,j_2}\ldots p_{i_n,j_n})
 = \eta\big(p_{j_1,i_1}p_{j_2,i_2}\ldots p_{j_n,i_n}\big)=\eta\big( (Sw)^*\big)$,
and \eqref{GNS_eta} is equivalent to \eqref{eq_eta_gns}.

\end{proof}

\begin{proof}[Proof of Theorem \ref{thm_charact}]
Using Lemma \ref{lem_gns_handy}, we shall first extract the necessary
conditions for $L$ to be symmetric and next show that this condition is
enough to
ensure \eqref{GNS_eta} for every word $w$.

\medskip \noindent {\bf Step 1.} Assume that $L=L\circ S$ and consider the word
$w_1=(p_{11}p_{33})^kp_{11}$. Then
$$ \eta(w_1)= \sum_{l=0}^{k} (PQ)^l \xi + \sum_{l=0}^{k-1} (PQ)^l
P\zeta$$
and the condition \eqref{GNS_eta} translates into
$$ \langle \eta(p_{ij}), \sum_{l=0}^{k} (PQ)^l \xi + \sum_{l=0}^{k-1} (PQ)^l
P\zeta\rangle = \langle \sum_{l=0}^{k} (PQ)^l \xi + \sum_{l=0}^{k-1} (PQ)^l
P\zeta, \eta(p_{{ij}})\rangle.$$

For $i=j=1$ this is always satisfied (since $P\xi=0$), whereas for
$i=j=3$ this gives
$$ \sum_{l=0}^{k} \left( \langle \zeta, (PQ)^l \xi \rangle -
\langle (PQ)^l \xi, \zeta\rangle \right)
= \sum_{l=0}^{k-1} \left( \langle \zeta, (PQ)^l P\zeta \rangle -
\langle (PQ)^l P\zeta, \zeta\rangle \right) =0.$$
Comparing the given equation for $k$ and $k-1$ we conclude that
\begin{equation*}
\langle \zeta, (PQ)^k \xi \rangle = \langle (PQ)^k \xi, \zeta\rangle, \quad
k\geq 0,
\end{equation*}
which is just  \eqref{eq_first}. Note that, in particular, $\langle \xi, \zeta \rangle = \langle \zeta, \xi
\rangle$.

\medskip \noindent {\bf Step 2.} We are going to show that the condition
\eqref{eq_first} is also sufficient for a generator $L$ to
be symmetric. Assume that \eqref{eq_first} is satisfied. We are going to show that it implies \eqref{GNS_eta}:
$$\langle \eta(p_{ij}), \eta (w) \rangle = \langle \eta (w), \eta(p_{ij})
\rangle.$$
Note that it is enough to check the latter relation for
$\eta(p_{ij})\in \{\xi, \zeta\}$.

\medskip Observe that $p_{11} \sim 1-p_{12}$. For example
$L(p_{11})=L(1-p_{12})$ since $L(1)=0$ and $L(p_{11})=-\|\xi\|^2 =-L(p_{12})$.
Similarly,
$$p_{33} \sim 1-p_{34}$$
and
$$p_{11} \sim p_{22}, \quad p_{12} \sim p_{21}, \quad p_{33} \sim p_{44},
\quad p_{34} \sim p_{43}.$$
Moreover, if $\rho(p_{ij})=0$ then $p_{ij} \sim 0.$

As mentioned in the proof of Lemma \eqref{lem_gns_handy}, the value of $\eta $
on equivalent elements is equal: $p_{ij}\sim p_{kl}$ $\Rightarrow$ $\eta(p_{ij})=
\eta(p_{kl})$. Thus it is enough to consider the condition \eqref{GNS_eta} for
these words which are build up from $p_{11}$ and $p_{33}$ (all the other
elements can be replaced by a linear combination of these two or by 0).
Moreover, after such a replacement and the concatenation what we get is a
linear combination of words of the form $$ w=p_{33}^m (p_{11}p_{33})^k
p_{11}^n$$ for some $k\in \mathbb{N}$ and $m,n\in \{0,1\}$.

\medskip 
In fact we have already checked in Step 1 that \eqref{GNS_eta} holds for
$w_1=(p_{11}p_{33})^kp_{11}$. For the word $w_2=p_{33}(p_{11}p_{33})^k $  we
have
$$ \eta(w_2)= \sum_{l=0}^{k} (QP)^l \zeta + \sum_{l=0}^{k-1} (QP)^l Q\xi$$
and thus
\begin{eqnarray*}
 \lefteqn{ \langle \eta(p_{ij}), \eta (w_2) \rangle - \langle \eta (w_2),
\eta(p_{ij}) \rangle }
\\ &=& \sum_{l=0}^{k} \left[ \langle \eta(p_{ij}), (QP)^l \zeta \rangle -
\langle (QP)^l \zeta,\eta(p_{ij}) \rangle \right]
\\ & & +\
\sum_{l=0}^{k-1} \left[\langle \eta(p_{ij}), (QP)^l Q\xi \rangle
-\langle (QP)^l Q\xi,\eta(p_{ij}) \rangle \right].
\end{eqnarray*}
If $\eta(p_{ij})=\xi$, then the first summand becomes zero because of
\eqref{eq_first}, and so does the second. So $\langle \xi, \eta (w_2) \rangle = \langle
\eta (w_2), \xi \rangle$. If $\eta(p_{ij})=\zeta$, then $\langle \zeta, \eta
(w_2) \rangle = \langle \eta (w_2), \zeta \rangle$, because $Q\zeta=0$.

Now let us consider $w_3=(p_{11}p_{33})^k$.
Then
$$ \eta(w_3)= \sum_{l=0}^{k-1} (PQ)^l (\xi + P\zeta)$$
and Eq. \eqref{eq_first} is equivalent to
\begin{eqnarray*}
 \lefteqn{
 \langle \eta(p_{ij}), \eta (w_3) \rangle - \langle \eta (w_3),
\eta(p_{ij}) \rangle}\\
&=&
\sum_{l=0}^{k-1} \left[ \big\langle \eta(p_{ij}), (PQ)^l \xi\big\rangle
-\big\langle (PQ)^l \xi, \eta(p_{ij})\big\rangle \right] \\
& & - \
\sum_{l=0}^{k-1} \left[ \big\langle (PQ)^l P\zeta, \eta(p_{ij})\big\rangle
- \big\langle \eta(p_{ij}), (PQ)^l P\zeta \big\rangle \right] .
\end{eqnarray*}
When $\eta(p_{ij})=\xi$, then, as $P\xi=0$,  the first term becomes $\langle
\xi,\xi\rangle -\langle \xi,\xi\rangle =0$ and each factor in the second
sum is $\langle (PQ)^l P\zeta, \xi\big\rangle = \langle (QP)^l\zeta,
P\xi\big\rangle=0$. When $\eta(p_{ij})=\zeta$, then the first sum is equal to 0
because of \eqref{eq_first} and the second sum is equal to 0, as
$Q\zeta=0$. So $\langle \eta(p_{ij}), \eta (w_3) \rangle = \langle \eta (w_3),
\eta(p_{ij}) \rangle $.

Finally, by similar calculations we show that $\langle
\eta(p_{ij}), \eta (w_4) \rangle=$ \linebreak$ \langle \eta (w_4), \eta(p_{ij}) \rangle $
for $w_4=(p_{33}p_{11})^k$. This finishes the proof.
\end{proof}

Theorem \ref{thm_charact} translates the symmetry of the generating functional related to the two-block representation \eqref{pq-rep} into a scalar-product condition on the vectors $\xi$ and $\zeta$ in the Hilbert space. It would be interesting to find a geometric interpretation of the condition \eqref{eq_first}.

We finish this subsection by some remarks which follow easily from Theorem \ref{thm_charact}.

\begin{remark}
If $PQ=QP$, then the related generating functional is symmetric provided
$$ \langle \xi, \zeta \rangle = \langle \zeta, \xi \rangle.$$
\end{remark}

\begin{remark}
The generating functional $L_{\xi,\zeta}$ is always symmetric,
independently on the choice of projections $P$ and $Q$, in each of the following
two cases:
(1) if $\xi=\zeta$
or
(2) if either $\xi$ or $\zeta$ is 0.
\end{remark}

\begin{remark}
There exists an generator which is a coboundary, but is not symmetric.
To see that, consider $\hil=\C^3$, $P$ the orthogonal projection onto the span of
$e_1=(1,0,0)$ and $e_2=(0,1,0)$, and $Q$ the orthogonal projection onto the
span of $f=\frac1{\sqrt3}(1,1,1)$. Let us consider the vector $v=(1,0,i)\in \mathsf{h}$ and the cocycle given by the conditions $\xi=\eta(p_{11})=(I-P)v=(0,0,i)$ and
$\zeta=\eta(p_{22})=(I-Q)v=(\frac23-\frac13i,
-\frac13-\frac13i,\frac13-\frac23i)$. Then obviously $\langle \xi,\zeta\rangle$
is complex and thus the related generating functional cannot be
symmetric.
\end{remark}

\subsection{Description of ad-invariant functionals}

In \cite{CFK14} the notion of ad-invariant generating functionals has been studied and the classification of all such objects has been given in the case of the free orthogonal quantum group $O_n^+$. In this section we establish a  similar classification result for $S_n^+$.

For the quantum permutation group $S_n^+$ and $n\ge 4$, the algebra of central functions ${\rm Pol}_0(S_n^+)$, see Section \ref{sec-markov}, is isomorphic to the $*$-algebra of polynomials on the interval $[0,n]$, cf.\ \cite[Proposition 4.8]{brannan13},
via the mapping $\chi_s \mapsto ([0,n]\ni x\mapsto U_{2s}(\sqrt{x}))$, where $(U_s)_{s\in\N}$ is the
family of Chebyshev polynomials of the second kind, defined inductively by $U_0 = 1$, $U_1 (x)= x$
and $U_{s+1}(x) =xU_s(x) - U_{s-1}(x)$ for $s\ge 1$. Furthermore, the restriction
of the counit to this subalgebra is the evaluation of a function in the right boundary point $x=n$.

In \cite[Proposition 10.1]{CFK14} we showed that linear functionals on the algebra of polynomials on a closed interval that are conditionally positive w.r.t.\ evaluation in one of the boundary points can be expressed as a sum of the derivative evaluated in the boundary point and an integral over the interval against a positive measure. In our situation this means that for $n\ge 4$ any conditionally positive functional on ${\rm Pol}(S_n^+)_0$ is of the form
\[
Lf = -a f'(n) + \int_0^n \frac{f(x)-f(n)}{n-x} {\rm d}\nu
\]
with a uniquely determined real number $a\ge 0$ and a uniquely determined finite positive measure $\nu$ supported on $[0,N)$, if we identify ${\rm Pol}_0(S_n^+)$ with $C([0,n])$ in the way described above.

The conditional expectation ${\rm ad}_h$ defined in Theorem \ref{thm-ad-inv} acts on the coefficients of the irreducible corepresentations of ${\rm Pol}(S_n^+)$ as
\[
{\rm ad}_h (u_{jk}^{(s)}) = \frac{\delta_{jk}}{n_s} \chi_s
\]
for $s \in\mathcal{I}$, $1\le j,k\le n_s$, see \cite[Formula (6.2)]{CFK14}. The dimension of the representation $u^{(s)}$ is given by $n_s = \varepsilon(\chi_s)=U_{2s}(\sqrt{n})$. This allows to describe all {\rm ad}-invariant generating functionals on ${\rm Pol}(S_n^+)$.

\begin{theorem}\label{thm-hunt-sn+}
\begin{description}
\item[(a)]
The ad-invariant generating functionals on ${\rm Pol}(S_n^+)$ are of the form
\[
\hat{L}=L\circ {\rm ad}_h
\]
with $L$ defined on ${\rm Pol}(S_n^+)_0\cong {\rm Pol}([0,n])$ by
\[
Lf = -a f'(n) + \int_{0}^n \frac{f(x) - f(n)}{n-x} {\rm d}\nu(x)
\]
where $a>0$ is a real number and $\nu$ is a finite measure on $[0,n]$. Furthermore, $a$ and $\nu$ are uniquely determined by $L$.
\item[(b)]
On the coefficients of the irreducible corepresentations of ${\rm Pol}(S_n^+)$ we get
\[
\hat{L}(u^{(s)}_{jk}) = \frac{\delta_{jk}}{U_{2s}(\sqrt{n})} \left(-a \frac{U'_{2s}(\sqrt{n})}{2\sqrt{n}} + \int_0^n \frac{U_{2s}(\sqrt{x})-U_{2s}(\sqrt{n})}{n-x}{\rm d}\nu(x)\right)
\]
for $s\in\mathcal{I}$, $1\le j,k\le n_s$, where $U_s$ denotes the $s$-th Chebyshev polynomial of the second kind.
\end{description}
\end{theorem}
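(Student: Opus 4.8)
The plan is to assemble the statement directly from the three ingredients already laid out just before the theorem. The backbone is Theorem~\ref{thm-ad-inv}, which tells us that for a Kac-type quantum group the map $\mathrm{ad}_h^*:L\mapsto L\circ\mathrm{ad}_h$ is a bijection between generating functionals on ${\rm Pol}_0(S_n^+)$ and ad-invariant generating functionals on ${\rm Pol}(S_n^+)$. Since $S_n^+$ is of Kac type (noted right after Definition~\ref{def_sn+}), this reduces part (a) entirely to classifying the generating functionals on the central subalgebra ${\rm Pol}_0(S_n^+)$.

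For that classification I would invoke the identification ${\rm Pol}_0(S_n^+)\cong{\rm Pol}([0,n])$ via $\chi_s\mapsto U_{2s}(\sqrt{\,\cdot\,})$, under which the counit becomes evaluation at the right endpoint $x=n$. A \emph{generating functional} on a commutative $*$-algebra with a character is exactly a hermitian, conditionally positive functional vanishing at $1$ (the coproduct plays no role here, as remarked after Definition~\ref{def-triple}). Thus the task is to describe all functionals on ${\rm Pol}([0,n])$ that are conditionally positive with respect to evaluation at the boundary point $x=n$, and this is precisely the content of \cite[Proposition 10.1]{CFK14}, quoted in the paragraph above the theorem: every such functional has the form
\[
Lf = -a\,f'(n) + \int_0^n \frac{f(x)-f(n)}{n-x}\,{\rm d}\nu(x)
\]
with a unique $a\ge 0$ and a unique finite positive measure $\nu$. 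Combining this with the bijection $\mathrm{ad}_h^*$ yields the formula $\hat L = L\circ\mathrm{ad}_h$ in part~(a), together with the asserted uniqueness of $a$ and $\nu$ (uniqueness is inherited from the injectivity of $\mathrm{ad}_h^*$ on generating functionals composed with the uniqueness clause of the cited proposition). I would take a moment to confirm the positivity/hermitianity conditions transport correctly, which they do because $\mathrm{ad}_h$ is a conditional expectation (hence positive and unital) by part~(1) of Theorem~\ref{thm-ad-inv}.

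For part~(b) the plan is a direct computation. Using the formula $\mathrm{ad}_h(u^{(s)}_{jk}) = \frac{\delta_{jk}}{n_s}\chi_s$ with $n_s = \varepsilon(\chi_s) = U_{2s}(\sqrt{n})$, we substitute the polynomial $f(x)=U_{2s}(\sqrt{x})$ into the expression for $L$ and evaluate. The only mildly delicate point is the derivative term: by the chain rule, $\frac{d}{dx}U_{2s}(\sqrt{x})\big|_{x=n} = \frac{U'_{2s}(\sqrt{n})}{2\sqrt{n}}$, which produces the factor in the first summand. Assembling
\[
\hat L(u^{(s)}_{jk}) = \frac{\delta_{jk}}{U_{2s}(\sqrt n)}\,L\!\left(U_{2s}(\sqrt{\,\cdot\,})\right)
\]
and plugging in the integral representation gives exactly the displayed formula.

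The genuinely load-bearing external input is \cite[Proposition 10.1]{CFK14}, so there is no substantial obstacle within the scope of this excerpt; the main things requiring care are bookkeeping items. First, one must check that the cited proposition applies verbatim — in particular that conditional positivity with respect to the endpoint $x=n$ is the correct notion matching "generating functional on ${\rm Pol}_0(S_n^+)$" after the Chebyshev identification, and that the boundary point selected by $\varepsilon$ is indeed $x=n$ (this follows since $\varepsilon(\chi_s)=U_{2s}(\sqrt n)$ is the value at $x=n$). Second, one should verify that composition with the conditional expectation $\mathrm{ad}_h$ preserves the defining properties of a generating functional (vanishing at $1$, hermitianity, conditional positivity on $\ker\varepsilon$), which is guaranteed by Theorem~\ref{thm-ad-inv}. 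I would regard the derivative computation in part~(b) as the only place where a sign or chain-rule slip could creep in, and so would present that step explicitly rather than asserting it.
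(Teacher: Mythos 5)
Your proposal is correct and follows essentially the same route as the paper: the paper gives no separate proof, since the theorem is exactly the assembly of Theorem~\ref{thm-ad-inv} (the bijection $L\mapsto L\circ\mathrm{ad}_h$ for Kac-type quantum groups), the Chebyshev identification ${\rm Pol}_0(S_n^+)\cong{\rm Pol}([0,n])$ with counit as evaluation at $x=n$, the classification from \cite[Proposition 10.1]{CFK14}, and the formula $\mathrm{ad}_h(u^{(s)}_{jk})=\frac{\delta_{jk}}{n_s}\chi_s$ with $n_s=U_{2s}(\sqrt{n})$ — precisely the ingredients you combine. Your explicit chain-rule computation for part~(b) matches the displayed formula, so there is nothing missing.
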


\section{L\'evy processes on $S_n^+$ coming from classical permutations} \label{classicalstoch}

In this section we will construct examples of noncommutative L\'evy processes on the quantum permutation group $S_n^+$. This examples have some similarity with L\'evy processes in the classical permutation group, furthermore they can be constructed explicitly from the conservation, creation, and annihilation operators on the symmetric Fock space. It will turn out that the L\'evy processes on $S_n^+$ described in this Section are exactly those arising from the representation $\rho_\sigma$ of ${\rm Pol}(S_n^+)$ associated to a classical permutation $\sigma\in S_n$ and its cocycles, which we classified in Proposition \ref{prop-coh-perm}.

\subsection{L\'evy processes on finite groups}\label{subsec-fin-groups}
Let us first translate a few well-known results about L\'evy processes on finite groups into the language of L\'evy processes on $*$-bialgebras (see for example \cite{liggett10} for a good description of the classical theory).

A stationary Markov chain $(X_t)_{t\ge 0}$ in continuous time on a finite group $G$ is uniquely determined by its initial distribution $\mu_0=\mathcal{L}(X_t)$ (i.e., $\mu_0(B)=P(X_0\in B)$ for $B\subseteq G$) and its Q-matrix, defined by
\[
Q_{g,h} = \lim_{t\searrow 0} \frac{1}{t} P(X_{s+t}=h|X_s=g)
\]
for $g,h\in G$. Since necessarily $\sum_{h\in G}  P(X_{s+t}=h|X_s=g)=1$, we have $Q_{g,g} = - \sum_{h\in G; h\not= g} Q_{g,h}$.

If $(X_t)_{t\ge 0}$ is a L\'evy process, then we have $\mu_0=\delta_e$ and the infinitesimal transition probabilities are translation invariant, i.e., we have $Q_{h,k}=Q_{gh,gk}$ for all $g,h,k\in G$, and $Q$ is uniquely determined by the row of the identity element $e$. The formula $\nu(B) = \sum_{g\in B} Q_{e,g}$  (for $B \subset G$) defines a measure $\nu$ on the group $G$, to which we can associate a functional $L:C(G)\to\mathbb{C}$. It is easy to see that $L$ is a generating functional, i.e., it satisfies the properties stated in Definition \ref{generating_functional_def}. Since $G$ is finite, all functions on $G$ are continuous and are coefficients of a finite-dimensional representation, so that we have $\mathbb{C}^G=C(G)={\rm Pol}(G)$, and this is naturally a Hopf$^*$-algebra with the coproduct induced from the group operation. The counit on $C(G)$ is the evaluation at the identity, i.e., $\varepsilon(f)=f(e)$ for $f\in C(G)$.

If $(\Omega,\mathcal{F},P)$ denotes the classical probability space over which the L\'evy process $(X_t)_{t\ge 0}$ is defined, then we can define a L\'evy process on $C(G)$ in the sense of Definition \ref{Levy_process_on_A} by
\[
j_{st}: C(G)\to L^\infty(\Omega), \qquad j_{st}(f) = f(X_s^{-1}X_t)
\]
for $f\in C(G)$, $0\le s\le t$. The functional $L$ is then the generating functional of the L\'evy process $(j_{st})_{0\le s\le t}$. Indeed, we have
\begin{eqnarray*}
\left.\frac{{\rm d}}{{\rm d}t}\right|_{t=0}\omega_t(f) &=& \left.\frac{{\rm d}}{{\rm d}t}\right|_{t=0}E(j_{0,t}(f)) =
\left.\frac{{\rm d}}{{\rm d}t}\right|_{t=0}E\big(f(X_0^{-1}X_{t})\big) \\
&=& \left.\frac{{\rm d}}{{\rm d}t}\right|_{t=0} \sum_{g,h} f(g^{-1}h) P(X_0=g) P(X_t=h|X_0=g) \\
&=&\left.\frac{{\rm d}}{{\rm d}t}\right|_{t=0} \sum_{g,h} f(h) P(X_t=h|X_0=e) \\
&=& \sum_{h\in G} f(h)  \underbrace{\left.\frac{{\rm d}}{{\rm d}t}\right|_{t=0}P(X_t=h|X_0=e)}_{= \nu(\{h\})}  = \int_G f(h){\rm d}\nu(h).
\end{eqnarray*}

Let us compute the Sch\"urmann triple for $L$. Let
\[
S= \{g\in G; \nu(g)>0, g \neq e\}
\]
be the support of $\nu$ (without the identity $e$), then we take
\[
D=L^2(S,\nu|_{S})
\]
for the pre-Hilbert space. The representation $\rho:C(G)\to \mathcal{L}(D)$ sends a function $f\in C(G)$ to the operator that multiplies pointwise by $f$, i.e.,
\[
\big(\rho(f)\psi)(g) = f(g)\psi(g)
\]
for $\psi\in D$, $g\in G$. The cocycle $\eta:C(G)\to D$ is given by
\[
\eta(f) = f|_S - f(e)\mathbf{1}_S.
\]
Indeed, with this definition we have
\begin{eqnarray*}
\langle \eta(f_1),\eta(f_2)\rangle &=& \sum_{g\in S} \overline{\big(f_1(g)-f_1(e)\big)} \big(f_2(g)-f_2(e)\big)\nu(\{g\}) \\
&=& \sum_{g\in G} \overline{\big(f_1(g)-f_1(e)\big)} \big(f_2(g)-f_2(e)\big)\nu(\{g\}) \\
&=&
\sum_{g\in G} \overline{f_1(g)} f_2(g)\nu(\{g\}) - \overline{f_1(e)} \sum_{g\in G} f_2(g)\nu(\{g\}) \\
&& - \sum_{g\in G} \overline{f_1(g)}\nu(\{g\}) f_2(e) + \overline{f_1(e)}f_2(e) \sum_{g\in G} \nu(\{g\}) \\
&=& L(f_1^* f_2) - \overline{\varepsilon(f_1)} L(f_2) - \overline{L(f_1)} \varepsilon(f_2),
\end{eqnarray*}
since $\sum_{g\in G}\nu(\{g\}) = \nu(G) =0$.

We leave the rest of the verification that $(\rho,\eta,L)$ is indeed a surjective Sch\"urmann triple for $L$ as an exercise for the motivated reader.

\subsection{Examples of L\'evy processes on classical permutation groups}
\label{subsec-class-exam}

Let us now take $G=S_n$ and consider the case where $\nu=\alpha(\delta_\sigma-\delta_e)$ with $\alpha>0$, i.e., the ``support'' of $\nu$ (without the identity $e$) consists only of one point $\sigma\in S_n$. In this case there is only one possible jump of the associated process from a permutation $\tau$, namely to $\tau\circ\sigma$, and therefore the process will stay in the subgroup $\langle \sigma\rangle$ generated by $\sigma$.  Then the L\'evy process $(X_t)_{t\le 0}$ can easily be described. It starts at $X_0=e$ and stays there for a random time that is exponentially distributed with parameter $\alpha$. When the first jump occurs, the process moves necessarily to $\sigma$. The next jump occurs again after an exponentially distributed random time, then the value becomes $\sigma^2$, and so on. We can write $X_t = \sigma^{N_t}$, where $(N_t)_{t\ge 0}$ is a Poisson process with intensity $\alpha$.

Let $\sigma=c_1,\cdots, c_r$ be the decomposition of $\sigma$ into a product of disjoint cycles, and denote by $\ell_1,\cdots,\ell_r$ their lengths. Then we have $c_j^{\ell_j}=e$. Since disjoint cycles commute, we get
\[
\sigma^k = c_1^k \cdots c_r^k = c_1^{k_1} \cdots c_r^{k_r}
\]
if $k\equiv_{\textup{mod}\ell_1} k_1$, $\ldots$, $k\equiv_{\textup{mod}\ell_r} k_r$. Therefore we can identify the elements of $\langle \sigma \rangle = \{\sigma^k;k\in \mathbb{Z}\}$ with elements in a product of cyclic groups of orders $\ell_1,\ldots,\ell_r$, via the map
\[
\langle \sigma\rangle \ni \sigma^k =  c_1^{k_1} \cdots c_r^{k_1} \mapsto (k_1,\ldots,k_r)\in \mathbb{Z}_{\ell_1}\times \cdots \times \mathbb{Z}_{\ell_r}.
\]
In general, the image of $\langle \sigma \rangle$ under this map might be a strict subgroup of $\mathbb{Z}_{\ell_1}\times \cdots \times \mathbb{Z}_{\ell_r}$, but if $\ell_1,\cdots,\ell_r$ are mutually prime, then this map is a bijection.

Let us now assume for simplicity that $\sigma$ consists of only one cycle. We can assume that this cycle is $\sigma=(123\cdots n1)$ (if not, we can renumber the generators, similarly as was done in Proposition \ref{prop-equiv-H}). Then we have $\sigma^k(j) \equiv_{\textup{mod}n} j+k$.

Denote by $\mathbf{1}_{i\to j}\in C(S_n)$ the function
\[
\mathbf{1}_{i\to j}(\pi) = \left\{\begin{array}{cl} 1 & \mbox{ if }\pi(i)=j, \\ 0 & \mbox{ else},\end{array}\right.
\]
then we have
\[
j_{st}(\mathbf{1}_{i\to j}) = \mathbf{1}_{\{N_t-N_s\equiv_{\textup{mod}n} j-i\}}.
\]
This defines the L\'evy process $(j_{st})_{0\le s\le t}$ in $C(S_n)$ that is associated to the $S_n$-valued L\'evy process $(X_t)_{t\ge 0}$, since the functions $\mathbf{1}_{i\to j}$ with $1\le i,j\le n$ generate $C(G)$ as an algebra.

\subsection{Realization of Poisson processes on the symmetric Fock space}
\label{subsec-real-Fock-space}

We will briefly recall some facts about the realisation of Poisson processes by operators on the symmetric Fock space, which we will use in the next subsection to construct examples of noncommutative L\'evy processes on the quantum permutation group. See, e.g., \cite{meyer93,parthasarathy92} for a more detailed exposition. We will also use some formulas from \cite{quasi-inv} for our computations. For a Hilbert space $\mathsf{h}$ we write
\[
\Gamma_\hil=\Gamma\big(L^2(\mathbb{R}_+;\hil)\big) = \mathbb{C}\Omega\oplus \bigoplus_{n=1}^\infty L^2(\mathbb{R}_+;\hil)^{\circ n}
\]
for the symmetric Hilbert space over $L^2(\mathbb{R}_+;\hil)\cong L^2(\mathbb{R}_+)\otimes \hil$.

Note that the direct sum decomposition
\[
L^2(\mathbb{R}_+;\hil)\cong L^2([0,s[;\hil)\oplus L^2([s,t[;\hil)\oplus L^2([t,+\infty[;\hil)
\]
yields the following tensor product decomposition for the symmetric Fock space:
\[
\Gamma_{\mathbb{R}_+}(\hil)=\Gamma\big(L^2([0,s[;\hil)\big)\otimes\underbrace{\Gamma\big(L^2([s,t[;\hil)\big)}_{=:\Gamma_{[s,t[}(\hil)}\otimes\Gamma\big(L^2([t,+\infty[;\hil)\big)
\]
The functional $\Phi:B(\Gamma_{\mathbb{R}_+}(\hil))\to\mathbb{C}$, $\Phi(X)=\langle\Omega,X\Omega\rangle$ is called the \emph{vacuum expectation}, we can also apply it to unbounded operators on $\Gamma_{\mathbb{R}_+}(\hil)$, if their domain contains the vector $\Omega$.

For $0\le s\le t$, $K\in B(\hil)$, $v\in \hil$, we denote by $\Lambda_{st}(K)$, $A_{st}(v)$, $A^*_{st}(v)$ the conservation (or differential second quantization), annihilation, and creation operators of $\mathbf{1}_{[s,t[}\otimes K\in B\big(L^2(\mathbb{R}_+;\hil)\big)$ and $\mathbf{1}_{[s,t[}\otimes v\in L^2(\mathbb{R}_+;\hil)$. Denote by $I_\hil$ and $I_\Gamma$ the identity operator on $\hil$ and $\Gamma_{\mathbb{R}_+}(\hil)$, respectively, and let $v\in \hil$. One can show that the operator
\[
N_{st}^v = \Lambda_{st}(I_\hil) + A^*_{st}(v) + A_{st}(v) + \|v\|^2 I_\Gamma
\]
is essentially self-adjoint and distributed (with respect to $\Phi$) like the increment of a Poisson process with intensity $\|v\|^2$, i.e., we have
\[
\Phi\big(\exp( i u N_{st}^v)\big) = \exp\big(-(t-s)\|v\|^2 (e^{iu}-1)\big)
\]
for its characteristic function.

If we further introduce the Weyl operators $W_{st}(u)=\exp\big(A^*_{st}(u)-A_{st}(u)\big)$ for $u\in \hil$, then we can show
\begin{equation}\label{eq-weyl-conj}
W_{st}(u)^*N^v_{st}W_{st}(u) = N^{v+u}_{st}
\end{equation}
as in \cite[Equation (4.1)]{quasi-inv}.

The conservation operator $\Lambda_{st}(I_\hil)$ of the identity operator $I_\hil$, called also the \emph{number operator}, has spectrum equal to $\mathbb{N}_0=\{0,1,\cdots\}$. Its eigenspaces are the finite symmetric tensor products, i.e., the eigenspace for an eigenvalue $n\in\mathbb{N}_0$ of $\Lambda_{st}(I_\hil)$ is given by
\[
E_n\big(\Lambda_{st}(I_\hil)\big) = \Gamma_{[0,s[}(\hil) \otimes L^2(\mathbb{R}_+;\hil)^{\circ n} \otimes \Gamma_{[t,+\infty[}(\hil).
\]

Since $N^0_{st}=\Lambda_{st}(I_\hil)$, we can use Equation \eqref{eq-weyl-conj} to determine the eigenspaces of all $N^v_{st}$. They are given by
\begin{equation}\label{eq-eigensp}
E^v_{st}(n) := E_n\big(\Lambda_{st}(I_\hil)\big) = \Gamma_{[0,s[}(\hil) \otimes W_{st}(v)^*\big( L^2([s,t[;\hil)^{\circ n}\big) \otimes \Gamma_{[t,+\infty[}(\hil)
\end{equation}
We will denote the projections onto these spaces by $P_{st}^v(n)$. For fixed $v$ we get commutative families of projections, as we can obtain $P_{st}^v(n)$ from $N_t^v-N_s^v$ via Borel functional calculus, namely as $P_{st}^v(n)=\mathbf{1}_{\{n\}}(N_t^v-N_s^v)$, and because $(N_t^v)_{t\ge 0}$ is a commuting family of operators (remember, it is a realization of a Poisson process with intensity $\|v\|^2$).

Let us collect here a few useful formulas about the Weyl operators. The exponential vectors $\mathcal{E}(f)$ for $f\in L^2(\mathbb{R}_+;\hil)$ are defined by
\[
\mathcal{E}(f) = \sum_{n=0}^\infty \frac{f^{\circ n}}{\sqrt{n!}},
\]
their inner products are given by $\langle \mathcal{E}(f_1),\mathcal{E}(f_2)\rangle = \exp(\langle f_1,f_2\rangle)$. Weyl operators act on exponential vectors as
\[
W_{st}(u)\mathcal{E}(f) = \exp\big(-\langle u\mathbf{1}_{[s,t[},f\rangle - \frac{t-s}{2}\|u\|\big)\mathcal{E}(f+u\mathbf{1}_{[s,t[})
\]
for $u\in \hil$, $f\in L^2(\mathbb{R}_+;\hil)$, see, e.g., \cite[Equation (20.2)]{parthasarathy92}. The product of two Weyl operators is given by
\[
W_{st}(u) W_{st}(v) = e^{-i(t-s){\rm Im}(\langle u,v\rangle)} W(u+v),
\]
cf.\ \cite[Proposition 20.1]{parthasarathy92}.

\subsection{Examples of L\'evy processes on quantum permutation groups}

We will now put all the ingredients from the previous subsection together to define examples of noncommutative L\'evy processes on the quantum permutation group $S_n^+$.

The classical permutation group is a quantum subgroup of the quantum permutation group, the morphism $\pi:C(S_n^+) \to C(S_n)$ is determined by its action
\[
\pi(p_{ij}) = \mathbf{1}_{i\to j}
\]
on the generators. Any L\'evy process $(j_{st})_{0\le s\le t}$ on $C(S_n)$ gives a L\'evy process $(\tilde{\jmath}_{st})_{0\le s\le t}$ on $C(S_n^+)$ simply by setting $\tilde{\jmath}_{st} = j_{st}\circ \pi$. If we choose a Hilbert space $\hil$ and a unit vector $v\in \hil$, then we can realise the L\'evy process associated to $X_t=\sigma^{N_t}$ with $\sigma=(12\cdots n1)$ also as
\[
\tilde{\jmath}_{st} (p_{ij}) = \sum_{k\in\mathbb{N},~k\equiv_{\textup{mod}n}j-i} P_{st}^v(k).
\]
It is clear from the remarks at the end of the last subsection that this is still a commutative L\'evy process, i.e., the images $\big(j_{st}(C(S_n^+))\big)_{0\le s\le t}$ generate a commutative algebra.

But if we do this construction with a permutation $\sigma$ that is not a cycle, then we can actually choose different realizations of the Poisson process for different cycles. For simplicity we consider the case of two cycles $\sigma = (12\cdots \ell1)((\ell+1)\cdots n(\ell+1))$, with $1<\ell<n$. Then, for any Hilbert space $\hil$ and two vector $v,w\in \hil$, the following defines a L\'evy process on $C(S_n^+)$:
\begin{equation}\label{eq-def-example}
j_{st} (p_{ij}) = \left\{\begin{array}{cl}
\displaystyle \sum_{k\in\mathbb{N},~k\equiv_{\textup{mod}\ell}j-i} P_{st}^v(k) & \mbox{ if } 1\le i,j\le \ell, \\
\displaystyle \sum_{k\in\mathbb{N},~k\equiv_{\textup{mod}n-\ell}j-i} P_{st}^w(k) & \mbox{ if } \ell+1 \le i,j\le n, \\
0 & \mbox{ else}\end{array}\right.
\end{equation}
Let us describe the Sch\"urmann triple of this L\'evy process.

For $1\le i,j\le \ell$, we have
\begin{eqnarray*}
\Phi\big(j_{st}(p_{ij})\big) &=& \langle \Omega, j_{st}(p_{ij}) \Omega\rangle = \left\langle \Omega,\sum_{k\equiv_{\textup{mod}\ell}j-i} P_{st}^v(k) \Omega\right\rangle \\
&=& \left\langle W_{st}(v)\Omega, \sum_{k\equiv_{\textup{mod}\ell}j-i} P_{st}^0(k)W_{st}(v)\Omega\right\rangle \\
&=& e^{-(t-s)\|v\|^2} \langle\mathcal{E}(v\mathbf{1}_{[s,t[}), \sum_{k\equiv_{\textup{mod}\ell}j-i} P_{st}^0(k) \mathcal{E}(v\mathbf{1}_{[s,t[})\rangle \\
&=& e^{-(t-s)\|v\|^2} \sum_{k\equiv_{\textup{mod}\ell}j-i} \frac{(t-s)\|v\|^2}{k!}.
\end{eqnarray*}
This is the probability with which the number of jumps between time $s$ and time $t$ of Poisson process with intensity $\|v\|^2$ is equal to $j-i$ modulo $\ell$. I.e., we have shown that
\[
\Phi\big(j_{st}(p_{ij})\big) = E(\mathbf{1}_{\{N_t-N_s\equiv_{\textup{mod}\ell} j-i\}}),
\]
for $1\le i,j\le \ell$, if $(N_t)_{t\ge 0}$ is a classical Poisson process with intensity $\|v\|^2$. In particular
\[
L(p_{ij}) = \left\{\begin{array}{cl}
-\|v\|^2 & \mbox{ if } j=i, \\
\|v\|^2 & \mbox{ if } j\equiv_{\textup{mod}\ell}i+1, \\
0 & \mbox{ else }.
\end{array}\right.
\]
Similar, we find
\[
\Phi\big(j_{st}(p_{ij})\big) = E(\mathbf{1}_{\{N'_t-N'_s\equiv_{\textup{mod}n-\ell} j-i\}}),
\]
with $(N'_t)_{t\ge 0}$ a classical Poisson process with intensity $\|w\|^2$ and
\[
L(p_{ij}) = \left\{\begin{array}{cl}
-\|w\|^2 & \mbox{ if } j=i, \\
\|w\|^2 & \mbox{ if } j\equiv_{\textup{mod}\ell}i+1, \\
0 & \mbox{ else },
\end{array}\right.
\]
in the case $\ell+1\le i,j\le n$, and
\[
\Phi\big(j_{st}(p_{ij})\big) = 0, \qquad L(p_{ij}) = 0
\]
of $1\le i \le \ell < j \le n$ or $1\le j \le \ell < i \le n$.

It is now natural to expect that the restrictions to the subalgebras generated by $(p_{ij})_{1\le i,j\le \ell}$ and $(p_{ij})_{\ell+1\le i,j\le n}$ of the Sch\"urmann triple for this process are of the form of the Sch\"urmann triple of the L\'evy processes discussed in Subsections \ref{subsec-fin-groups} and \ref{subsec-class-exam}.

To be able to guess the Sch\"urmann triple for our process, we now want to describe $\Phi(j_{st}(p_{ij}p_{i'j'}))$ for small $t-s$ in the case $1\le i,j\le \ell < i',j'\le n$. We get
\begin{gather*}
\Phi(j_{st}(p_{ij}p_{i'j'})) = \langle j_{st}(p_{ij})\Omega, j_{st}(p_{i'j'})\Omega\rangle = \\
\left\langle W_{st}(v)^* \sum_{k\equiv_{\textup{mod}\ell}j-i} P_{st}^0(k) \mathcal{E}(v\mathbf{1}_{[s,t[}), W_{st}(w)^* \sum_{k'\equiv_{\textup{mod}n-\ell}j'-i'} P_{st}^0(k') \mathcal{E}(w\mathbf{1}_{[s,t[})\right\rangle \\
\end{gather*}
Assuming furthermore $i=j$ and $i'=j'$, and neglecting terms in higher order tensor products (since they will lead higher powers of $t-s$), we get
\begin{eqnarray*}
\Phi(j_{st}(p_{ij}p_{i'j'})) &\approx& e^{-(t-s)(\|v\|^2+\|w\|)^2/2} \langle \underbrace{W_{st}(v)^*}_{=W_{st}(-v)}\Omega, \underbrace{W_{st}(w)^*}_{=W_{st}(-w)} \Omega\rangle \\
&=& e^{-(t-s)(\|v\|^2+\|w\|)^2}\langle \mathcal{E}(-v\mathbf{1}_{[s,t[}), \mathcal{E}(-w\mathbf{1}_{[s,t[})\rangle \\
&=& e^{-(t-s)(\|v\|^2+\|w\|)^2 }\exp \big((t-s)\langle v,w\rangle\big),
\end{eqnarray*}
from which we can deduce
\[
L(p_{ii}p_{i'i'}) = \langle v,w\rangle- \|v\|^2 - \|w\|^2.
\]
This implies
\begin{eqnarray*}
\langle \eta(p_{ii}),\eta(p_{i'i'})\rangle &=& - \varepsilon(p_{ii}) L(p_{i'i'}) + L(p_{ii}p_{i'i'}) - L(p_{ii})\varepsilon(p_{i'i'}) \\
&=& \langle v,w\rangle.
\end{eqnarray*}
This allows us to claim the form of a Sch\"urmann triple for the L\'evy process defined in Equation \eqref{eq-def-example}: the representation is given by $\rho=\rho_\sigma I_{\hil}$ and the cocycle is the unique cocycle with $\eta(p_{ii})=v$ for $1\le i\le \ell$ and $\eta(p_{ii})=w$ for $\ell+1\le i\le n$. The verification of this claim is again an exercise for the motivated reader.

This construction can be generalised to an arbitrary permutation $\sigma$. We choose a Hilbert space $\hil$, take again $\rho=\rho_\sigma  I_{\hil}$, and choose one vector $v_c$ for each cycle $c$ of $\sigma$ which is not a fixed point. We set $\xi_i=v_c$ if $i$ belongs to the cycle $c$, and $\xi_i=0$ if $i$ is a fixed point of $\sigma$, then these vectors define a unique Sch\"urmann triple by Corollary \ref{cor_class}. The freedom we have for choosing these vectors corresponds exactly to our computation of the cohomology of the representation of $C(S_n^+)$ associated to a classical permutation, see Proposition \ref{prop-coh-perm}. If these vectors are not all identical, then they define a cocycle that is not a coboundary, and so they define an unbounded generating functional, by Theorem \ref{thm-unbounded}. The associated L\'evy process can be constructed using Weyl operators and noncommutating realisations of the Poisson process on the symmetric Fock space over $L^2(\mathbb{R}_+,\hil)$ in a fashion analogous to Equation \eqref{eq-def-example}.

\subsection*{Acknowledgement}
We thank Irene Sabadini, Fabio Cipriani and Fabrizio Colombo for their
hospitatility and for organizing the wonderful conference on ``Noncommutative Analysis, Operator Theory and Applications.'' We also thank IMPAN  for hosting a visit of UF and AK to Warsaw, which started this collaboration.



\begin{thebibliography}{99}

\bibitem[ASW88]{accardi+schurmann+waldenfels88}
L.\ Accardi, M.\ Sch\"{u}rmann and W.\ von Waldenfels,
\newblock Quantum independent increment processes on superalgebras.
\newblock \emph{Math. Z.} 198 (1988), no. 4, 451--477.

\bibitem[App04a]{applebaum04a}
D.\ Applebaum,
\newblock L\'{e}vy processes -- from probability to finance and quantum groups.
\newblock \emph{Notices Amer. Math. Soc.} 51 (2004), no. 11, 1336--1347.

\bibitem[App04b]{applebaum04b}
D.\ Applebaum,
\newblock \emph{L\'{e}vy processes and stochastic calculus.}
\newblock L\'{e}vy Cambridge Studies in Advanced Mathematics, 93. Cambridge University Press, Cambridge, 2004.

\bibitem[App05]{applebaum05}
D.\ Applebaum,
\newblock Lectures on classical L\'evy process in Euclidean spaces and groups.
\newblock In {\em Quantum Independent Increment Processes I: From
Classical Probability to Quantum Stochastic Calculus}
\newblock Lecture Notes in Math.\ Vol.~1865, pp.~1--98, 2005.

\bibitem[App14]{applebaum14}
D.\ Applebaum,
\newblock {\em Probability on Compact Lie Groups}.
\newblock Stochastic Modelling and Probability Theory Vol.~70,
Springer-Verlag, Berlin, 2014.

\bibitem[Arv03]{arveson03}
W.\ Arveson,
\newblock \emph{Noncommutative dynamics and E-semigroups}.
\newblock Springer Monographs in Mathematics. Springer-Verlag, New York, 2003.

\bibitem[Ban99]{Ban99}
T.\ Banica,
\newblock Symmetries of a generic coaction.
\newblock {\em Math. Ann.} 314(4):763--780, 1999.

\bibitem[Ban05]{BanJFA}
T.\ Banica,
\newblock Quantum automorphism groups of homogeneous graphs.
\newblock \emph{J.\ Funct.\ Anal.} \textbf{224}  (2005),  no.\ 2, 243--280.

\bibitem[Ban12]{Hadamard}
T.\ Banica,
Quantum permutations, Hadamard matrices, and the search for matrix models.
\emph{Banach Center Publ.} \textbf{98} (2012), 11--42.

\bibitem[BBC07]{banica+bichon+collins07}
T.\ Banica, J.\ Bichon and B.\ Collins,
\newblock Quantum permutation groups: a survey.
\newblock \emph{Banach Center Publ.} \textbf{78} (2007), 13--34.

\bibitem[BSk11]{bsk}
T.\ Banica and A.\ Skalski,
\newblock Two-parameter families of quantum symmetry groups.
\newblock {\em J. Funct. Anal.} {\bf 260} (2011), 3252--3282.

\bibitem[BSp09]{TeoRoland}
T.\ Banica and R.\ Speicher,
\newblock Liberation of orthogonal Lie groups.
\newblock \emph{Adv.\ Math.}  \textbf{222}  (2009),  no.\,4, 1461--1501.

\bibitem[BBN12]{BBN}
T.\ Banica, J.\ Bichon and S.\ Natale,
Finite quantum groups and quantum permutation groups.
\emph{Adv.\,Math.} \textbf{229} (2012), 3320--3338.

\bibitem[Ber96]{bertoin96}
J.\ Bertoin.
\newblock {\em L\'evy processes}.
\newblock Cambridge Tracts in Mathematics, vol. 121, Cambridge University Press, Cambridge, 1996.

\bibitem[BhS10]{bhs}
J.\ Bhowmick and A.\ Skalski,
\newblock Quantum isometry groups of noncommutative manifolds associated to group ${\rm C}^*$-algebras.
\newblock {\em J. Geom. Phys.} {\bf 60} (2010), 1474--1489.

\bibitem[BhSS15]{BSS}
J.\ Bhowmick, A.\ Skalski and P.\ So\l tan,
Quantum group of automorphisms of a finite quantum group.
\emph{J.\,Algebra.} \textbf{423} (2015), 514–-537.

\bibitem[Bic03]{Juliengraph}
J.\ Bichon,
\newblock Quantum automorphism groups of finite graphs.
\newblock \emph{Proc.\ Amer.\ Math.\ Soc.} \textbf{131} (2003), 665--673.

\bibitem[Bra13]{brannan13}
M.\ Brannan,
\newblock Reduced operator algebras of trace-preserving quantum automorphism groups.
\newblock {\em Documenta Mathematica}, 18:1349--1402, 2013.

\bibitem[Bro81]{brown}
L.\ Brown,
\newblock Ext of certain free product $C^*$-algebras.
\newblock {\em J. Operator Theory} 6 (1981), 135--141.

\bibitem[BTZ]{hadamard_matrices_catalogue}
W.\ Bruzda, W.\ Tadej and K.\ \.Zyczkowski,
\newblock	Complex Hadamard Matrices Catalogue,
\newblock \verb+http://chaos.if.uj.edu.pl/~karol/hadamard/index.php?h=chm_problems+

\bibitem[CST08]{cecherini-silberstein+al08}
T.\ Ceccherini-Silberstein, F.\ Scarabotti and F.\ Tolli,
\newblock {\em {Harmonic analysis on finite groups. Representation theory,
  Gelfand pairs and Markov chains.}}
\newblock {Cambridge Studies in Advanced Mathematics 108. Cambridge
  University Press}, 2008.
	
\bibitem[Chr82]{Der}
E.\ Christensen,
Extensions of derivations II.
\emph{Math.\,Scand.}
\textbf{50} (1982), 111--122.

\bibitem[CFK14]{CFK14}
F.\ Cipriani, U.\ Franz and A.\ Kula.
\newblock Symmetries of L\'{e}vy processes on compact
quantum groups, their Markov semigroups and potential theory.
\newblock {\em Journal of Functional Analysis}, 266(1):2789--2844, 2014.

\bibitem[Con94]{connes94}
A.\ Connes.
\newblock {\em Noncommutative geometry}.
\newblock Academic Press Inc., San Diego, CA, 1994.

\bibitem[DFKS]{DFKS}
B.\ Das, U.\ Franz, A.\ Kula and A.\ Skalski,
\newblock {One-to-one correspondence between generating functionals and cocycles on quantum groups in presence of symmetry}.
\newblock {\it Math. Z.} 281 (2015), no.3-4, 949-965.

\bibitem[DFSW]{daws+fuma+skalski+white}
 M.\ Daws, P.\ Fima, A.\ Skalski and S.\ White.
 \newblock  The Haagerup property for locally compact quantum groups.
 \newblock {\it Journal f\"ur die reine und angewandte Mathematik}, 711 (2016), 189-230.

\bibitem[Dia88]{diaconis88}
P.\ Diaconis.
\newblock {\em {Group representations in probability and statistics.}}
\newblock {IMS Lecture Notes-Monograph Series, 11. Hayward, CA: Institute of
  Mathematical Statistics}, 1988. 

\bibitem[DiK94]{DiK}
M.\ Dijkhuizen and T.\ Koornwinder,
CQG algebras -- a direct algebraic approach to compact quantum groups.
\emph{Lett.\,Math.\,Phys.}
\textbf{32} (1994) no.\,4, 315--330.


\bibitem[ES92]{Enock}
M.\ Enock and J.-M.\ Schwartz,
{\em Kac Algebras and Duality of Locally Compact Groups}. Springer, 1992.

\bibitem[ER00]{effros+ruan00}
E.\,G.\ Effros and Zhong-Jin Ruan,
\newblock {\em Operator spaces.}
\newblock London Mathematical Society Monographs. New Series, 23. The Clarendon Press, Oxford University Press, New York, 2000.

\bibitem[Fra06]{franz06}
U.\ Franz,
\newblock {L\'evy processes on quantum groups and dual groups.}
\newblock In: {\em Quantum independent increment processes. II}, 161--257, {\em Lecture Notes in Math.} 1866, Springer, Berlin, 2006.

\bibitem[FGT]{FGT}
U.\ Franz, M.\ Gerhold and A.\ Thom,
\newblock On the L\'evy-Khinchin decomposition of generating functionals,
\newblock Preprint {\tt arXiv:1510.03292}, 2015. To appear in Communications on Stochastic Analysis.

\bibitem[FKLS]{FKLS}
U.\ Franz, A.\ Kula, J.M.\ Lindsay  and M.\ Skeide,
\newblock Hunt's formula for $SU_q(n)$ and $U_q(n)$.
\newblock In preparation.

\bibitem[FP04]{quasi-inv}
U.\ Franz and N.\ Privault,
\newblock Quasi-invariance formulas for components of quantum L\'evy processes.
\newblock Infin.\ Dimens.\ Anal.\ Quantum Probab.\ Relat.\ Top.~7 (2004), no.~1, 131–145.

\bibitem[Gos09]{gos}
D.\ Goswami,
\newblock Quantum group of isometries in classical and noncommutative geometry.
\newblock {\em Comm. Math. Phys.} {\bf 285} (2009), 141--160.

\bibitem[Gre63]{grenander63}
U.\ Grenander,
\newblock {\em Probabilities on algebraic structures}.
\newblock John Wiley \& Sons, Inc., New York-London; Almqvist \& Wiksell, Stockholm-G\"{o}teborg-Uppsala, 1963. 

\bibitem[HW08]{hayden+winter08}
P.\ Hayden and A.\ Winter,
\newblock Counterexamples to the maximal $p$-norm multiplicity conjecture for all $p>1$.
\newblock \emph{Comm. Math. Phys.} 284 (2008), no. 1, 263--280.

\bibitem[HR79]{HR}
E.\ Hewitt and K.\,A.\ Ross,
\newblock {\em Abstract harmonic analysis}.
\newblock Grundlehren der mathematischen Wissenschaften 115, Springer-Verlag, New York 1979.

\bibitem[Hey04]{heyer04}
H.\ Heyer,
\newblock \emph{Structural aspects in the theory of probability. A primer in probabilities on algebraic-topological structures.}
\newblock Series on Multivariate Analysis, 7. World Scientific Publishing Co., Inc., River Edge, NJ, 2004.

\bibitem[Hey77]{heyer77}
H.\ Heyer,
\newblock {\em Probability measures on locally compact groups}.
\newblock Springer-Verlag, Berlin, 1977.

\bibitem[HP84]{hudson+parthasarathy84}
R.\,L.\ Hudson and K.\,R.\ Parthasarathy,
\newblock Quantum {I}to's formula and stochastic evolutions.
\newblock {\em Comm. Math. Phys.}, 93(3):301--323, 1984.

\bibitem[Kac65]{Kac}
G.\,I.\ Kac,
\newblock Ring groups and the principle of duality, I,II.
\newblock \emph{Trans. Moscow Math. Soc.}  \textbf{12} (1963), 291-339; \textbf{13} (1965), 94--126.

\bibitem[KS97]{klimyk+schmudgen97}
A.\ Klimyk and K.\ Schm\"udgen,
\newblock \emph{Quantum groups and their representations}.
\newblock Texts and Monographs in Physics. Springer-Verlag, Berlin, 1997.

\bibitem[KS98]{korogodski+soibelman98}
L.\,I.\ Korogodski and Y.\,S.\ Soibelman,
\newblock {\em Algebras of functions on quantum groups. {P}art {I}}, volume~56
  of {\em Mathematical Surveys and Monographs}.
\newblock American Mathematical Society, Providence, RI, 1998.

\bibitem[Kye11]{kyed11}
 D.\ Kyed,
 \newblock A cohomological description of property (T) for quantum groups.
 \newblock {\it J. Funct. Anal.} 261 (2011), no. 6, 1469--1493.


\bibitem[Lg10]{liggett10} T.M.\,Liggett,
 \newblock {\em Continuous time Markov processes. An introduction.} Graduate Studies in Mathematics, 113.
 \newblock American Mathematical Society, Providence, RI, 2010

\bibitem[Lin05]{lindsay05}
J.\,M.\ Lindsay,
\newblock Quantum stochastic analysis—an introduction.
\newblock In: {\em Quantum independent increment processes. I}, 181--271, {\em Lecture Notes in Math.} 1865, Springer, Berlin, 2005.

\bibitem[LS08]{lindsay+skalski08}
J.\,M.\ Lindsay and  A.\,G.\ Skalski,
\newblock Quantum stochastic convolution cocycles. {II}.
\newblock {\em Comm. Math. Phys.}, 280(3):575--610, 2008.

 \bibitem[LS11]{lindsay+skalskiMZ}
 J.\,M.\ Lindsay and  A.\,G.\ Skalski,
\newblock Convolution semigroups of states.
\newblock {\em Math. Z.} 267 (2011), no. 1-2, 325--339.

\bibitem[MT03]{mankiewicz+tomczak03}
P.\ Mankiewicz and N.\ Tomczak-Jaegermann,
\newblock Quotients of finite-dimensional Banach spaces; random phenomena.
\newblock Handbook of the geometry of Banach spaces, Vol. 2, 1201--1246,
\newblock North-Holland, Amsterdam, 2003.

\bibitem[Mey93]{meyer93}
P.-A.\ Meyer.
\newblock {\em Quantum Probability for Probabilists}, volume 1538 of {\em
  Lecture Notes in Math.}
\newblock Springer-Verlag, Berlin, 1993.

\bibitem[Mur90]{Mur}
G.\,J.\ Murphy,
\newblock {\em $C^*$-algebras and operator theory}.
\newblock Academic Press, Inc., Boston, 1990.

\bibitem[Par92]{parthasarathy92}
K.\,R.\ Parthasarathy.
\newblock {\em An Introduction to Quantum Stochastic Calculus}.
\newblock Birkh{\"a}user, 1992.

\bibitem[Ped79]{pedersen79}
G.\,K.\ Pedersen,
\newblock \emph{$C^*$-algebras and their automorphism groups}.
\newblock London Mathematical Society Monographs, 14. Academic Press, Inc. [Harcourt Brace Jovanovich, Publishers], London-New York, 1979.

\bibitem[Pod95]{Pod}
P.\ Podle\'s,
\newblock Symmetries of quantum spaces. Subgroups and quotient spaces of quantum ${\rm SU}(2)$ and ${\rm SO}(3)$ groups.
\newblock \emph{Comm.\,Math.\,Phys.} \textbf{170}  (1995),  no. 1, 1--20.

\bibitem[Rin72]{Ringrose}
 J.\,R.\ Ringrose,
\newblock Automatic continuity of derivations of operator algebras.
\newblock \emph{J. London Math. Soc.} \textbf{5} (1972), 432--438.

\bibitem[Sak60]{Sakai}
S.\,Sakai,
\newblock On a conjecture of Kaplansky.
\newblock \emph{T\^ohoku Math. J.} \textbf{12} (1960), 31--33.

\bibitem[SC04]{saloff-coste04}
L.\ Saloff-Coste,
\newblock Random walks on finite groups.
\newblock In: {\em Probability on discrete structures}, volume 110 of {\em
  Encyclopaedia Math. Sci.}, 263--346. Springer, Berlin, 2004.

\bibitem[Sch93]{schurmann93}
M.\ Sch\"{u}rmann,
\newblock {\em White noise on Bialgebras}.
\newblock Lecture Notes in Math.\ 1544, Springer 1993.

 \bibitem[SS98]{schurmann+skeide98}
M.\ Sch\"{u}rmann and M.\ Skeide,
\newblock Infinitesimal generators on the quantum group $SU_q(2)$.
\newblock \emph{Infin. Dimens. Anal. Quantum Probab. Relat. Top.} 1 (1998), no. 4, 573--598.

\bibitem[Ska10]{WM}
A.\ Skalski,
\newblock Probabilistyka na topologicznych grupach kwantowych.
\newblock \emph{Wia\-do\-mo\'s\-ci Matematyczne} \textbf{46} (2010), no.\,2, 163--196.


\bibitem[Ska14]{Metabief}
A.\ Skalski,
\newblock Quantum symmetry groups and related topics.
\newblock \emph{Lectures delivered at the Winter School on Operator Spaces, Noncommutative Probability and Quantum Groups}, in preparation.

\bibitem[Tak79]{takesaki}
M.\ Takesaki,
\newblock {\em Theory of Operator Algebras, Vol. I}.
\newblock Encyclopaedia of Mathematical Sciences, Vol. 124, Springer-Verlag 1979.

\bibitem[vDa97]{vandaele97}
A.\ Van~Daele.
\newblock The {H}aar measure on finite quantum groups.
\newblock {\em Proc. Amer. Math. Soc.}, 125(12):3489--3500, 1997.

\bibitem[Wan95]{wang95}
S.\ Wang,
Free Products of Compact Quantum Groups.
{\em Commun. Math. Phys.} 167, 671--692 (1995).

\bibitem[Wan98]{wang98}
S.\ Wang,
\newblock Quantum symmetry groups of finite spaces.
\newblock \emph{Comm.\,Math.\,Phys.} \textbf{195}
(1998),  no.\,1, 195--211.

\bibitem[Wan99]{wang99}
S.\ Wang,
\newblock Ergodic actions of universal quantum groups on operator algebras.
\newblock \emph{Comm.\ Math.\ Phys.} \textbf{203} (1999),  no.\,2, 481--498 (1999).

\bibitem[SLW87]{woronowicz87}
S.\,L.\  Woronowicz,
\newblock Compact matrix pseudogroups.
\newblock {\em Commun. Math. Phys.}, 111:613--665, 1987.

\bibitem[SLW88]{woronowicz88}
S.\,L.\  Woronowicz,
\newblock Tannaka-{K}re\u\i n duality for compact matrix pseudogroups.
  {T}wisted {${\rm SU}(N)$} groups.
\newblock {\em Invent. Math.}, 93(1):35--76, 1988.

\bibitem[SLW98]{woronowicz98}
S.L.\  Woronowicz,
\newblock Compact quantum groups.
\newblock In A.~Connes, K.~Gawedzki, and J.~Zinn-Justin, editors, {\em
  Sym{\'e}tries Quantiques, Les Houches, Session LXIV, 1995}, pages 845--884.
  Elsevier Science, 1998.
	
\end{thebibliography}
\end{document}